\tikzstyle{block}=[draw opacity=0.7,line width=1.4cm]
\newcolumntype{B}[1]{>{\centering\arraybackslash}m{#1}}
\theoremstyle{plain}
\newtheorem{corollary}{Corollary}[section]
\theoremstyle{plain}
\newtheorem{theorem}{Theorem}[section]
\theoremstyle{plain}
\newtheorem{assumption}{Assumption}[section]
\theoremstyle{definition}
\newtheorem{definition}{Definition}[section]
\theoremstyle{plain}
\newtheorem{lemma}{Lemma}[section]
\theoremstyle{remark}
\newtheorem{remark}{Remark}[section]
\theoremstyle{remark}
\newtheorem{example}{Example}[section]
\DeclareMathOperator*{\argmin}{\arg \min}
\DeclareMathOperator*{\arrow}{\longrightarrow}
\DeclareMathOperator{\bv}{\text{BV}}
\DeclareMathOperator{\hilbert}{\mathcal{H}} 
\DeclareMathOperator{\domain}{\mathcal{U}} 
\DeclareMathOperator{\range}{\mathcal{V}}
\DeclareMathOperator{\tv}{\text{TV}}
\DeclareMathOperator{\tvast}{\tv_\ast}
\newcommand{\dom}{\text{dom}}
\newcommand{\energy}{E}
\newcommand{\clean}{f}
\newcommand{\fidelfct}{F} 
\newcommand{\fidelity}[2]{F(#1, #2)} 
\newcommand{\linbound}[2]{\mathcal{L}(#1, #2)} 
\newcommand{\minsol}{u^\dagger}
\newcommand{\noisy}{f^\delta}
\newcommand{\N}{\mathbb{N}} 
\newcommand{\R}{\mathbb{R}} 
\newcommand{\Z}{\mathbb{Z}} 
\newcommand{\rangedata}{f_\regparambold^\dagger}
\newcommand{\regdomain}{A} 
\newcommand{\regparam}{\alpha} 
\newcommand{\regparambold}{{\boldsymbol{\regparam}}} 
\newcommand{\betabold}{{\boldsymbol{\beta}}} 
\newcommand{\regfct}{J} 
\newcommand{\regfctarg}[2][\regparambold]{\regfct(#2, #1)} 
\newcommand{\regop}{R} 
\newcommand{\regopit}{R_I} 
\newcommand{\regoparg}[2][\regparambold]{\regop(#2, #1)} 
\newcommand{\regopitarg}[2][\regparambold]{\regopit(#2, #1)} 
\newcommand{\regsol}[1][\regparambold]{u^{#1}}
\newcommand{\Rinf}{\mathbb{R} \cup \{ \infty \}} 
\newcommand{\scdata}{g}
\newcommand{\ul}{u_\lambda} 
\newcommand{\select}{{\mathcal S}}
\newcommand{\surrogate}{\regfct_\tau}
\newcommand{\surrogatearg}[2][\regparam]{\surrogate(#2, #1)}
\newcommand{\oprange}{{\mathcal R}}
\newcommand{\nullspace}{{\mathcal N}}
\newcommand{\distance}{D}
\newcommand{\wlim}[1][\domain]{\arrow_{\tau_{#1}}}
\newcommand{\bregdis}[4][\regfctarg{\cdot}]{D_{#1}^{#2}(#3, #4)} 
\newcommand{\symbreg}[3][\regfctarg{\cdot}]{D_{#1}^{\text{symm}}(#2, #3)} 
\newcommand{\prox}[1][\regfctarg{\cdot}]{\left( I + \partial #1 \right)^{-1}}
\begin{document}
\title{Modern Regularization Methods for Inverse Problems}
\author{Martin Benning and Martin Burger}
\date{December 18, 2017}
\maketitle

\begin{abstract}
Regularization methods are a key tool in the solution of inverse problems. They are used to introduce prior knowledge and make the approximation of ill-posed (pseudo-)inverses feasible. In the last two decades interest has shifted from linear towards nonlinear regularization methods even for linear inverse problems. The aim of this paper is to provide a reasonably comprehensive overview of this development towards modern nonlinear regularization methods, including their analysis, applications, and issues for future research. 

In particular we will discuss variational methods and techniques derived from those, since they have attracted particular interest in the last years and link to other fields like image processing and compressed sensing. We further point to developments related to statistical inverse problems, multiscale decompositions, and learning theory.

\noindent {\bf Keywords: } Regularization, Inverse Problems, Image Reconstruction, Variational Methods, Bregman Iteration, Convergence, Error Estimation
\end{abstract}

\section{Introduction}

Starting from the development of tomography and related techniques, the last fifty years have seen a constant rise of interest in the development of  {\em inverse problems} as a research field, in mathematics as well as applied fields such as medical imaging, geophysics and oil industry, or steel industry to mention only a few (cf. e.g. \cite{bertero1998introduction,cakoni2005survey,chadan1997introduction,colton2012inverse,colton2012surveys,engl2012inverse,Groetsch,isakov2006inverse,isakov2008inverse,Natterer02,Natterer:MathematicalMethods,tarantola1982inverse,tarantola2005inverse}). Connected with the rise of interest in inverse problems is the development and analysis of {\em regularization methods}, which are a necessity in most inverse problems due to their ill-posedness (cf. e.g. \cite{tikhonov1987ill,Engl:Regularization}). In particular there is usually no continuous dependence between the data and the solution of the inverse problem, hence in the presence of measurement errors one rather solves approximate problems with a stable dependence instead. The controlled construction and analysis of such modified problems is called regularization, usually with a regularization parameter encoding the level of the approximation. 

The canonical example of an ill-posed inverse problem at the abstract level is the linear operator equation
\begin{equation} \label{eq:basicequation}
K u = \clean,
\end{equation} 
with a linear operator $K$ between Banach spaces, whose generalized inverse $K^\dagger$ is unbounded. A regularization method is then some parametric approximation $R_\alpha$ of $K^\dagger$, which has better stability properties. In the case of linear regularization methods, $R_\alpha$ is a family of bounded linear operators converging pointwise to $K^\dagger$ on the domain of the latter as $\alpha \rightarrow 0$. A key question in this respect is the convergence in the case of noisy data, related to the choice of the regularization parameter $\alpha$ in dependence on the noise level $\delta$, the latter being a bound for the noise in the deterministic setting or some kind of variance in a stochastic setting. 

While at the end of the 20th century, a rather complete understanding of such linear regularization methods was available based on spectral decompositions of the operators, the case of nonlinear regularization methods, i.e., nonlinear maps $R_\alpha$ (possibly even multi-valued) became a field of intensive study. This was driven in particular by developments related to variational methods such as total variation techniques (cf. \cite{rudin1992nonlinear,Acar:AnalysisOfBV,tvzoo}) or sparsity and compressed sensing (cf.\cite{Donoho:CompressedSensing,donoho2006stable,candes2002recovering}), but also by statistical approaches such as advanced Bayesian prior models (cf. \cite{LSS09,HL11,KLNS12}). Due to the rise of big data and learning techniques there is further interest in applying such paradigms to inverse problems in recent years. This is a somehow delicate task, since in most inverse problems there are no ground truth data, but only results that have been reconstructed with a certain regularization method and specific noise. Hence, it poses a lot of particular challenges for future research.

In this paper we will provide a survey of developments on modern (nonlinear) regularization methods in the last decades, their analysis and applications. Moreover, we will try to provide a quite structured overview of this field, including some fundamentals of nonlinear regularization methods. In particular we will give clear definitions of what to expect from a regularization method and its convergence reminiscent of the rather complete treatment of linear regularization methods in the seminal book by Engl, Hanke, and Neubauer \cite{Engl:Regularization} now dating back more than twenty years.

Throughout the paper we assume that $K:\domain \rightarrow \range$ is a bounded linear operator on Banach spaces $\domain$ and $\range$. In many parts there are obvious extensions to nonlinear operators and even metric spaces, but we mainly leave them out in order to increase readability, some links to such extensions are given at the end of the paper. 

We will start with a rather historical exposition on regularization methods in the next section and then proceed to nonlinear variational models, which are the class of methods driving most development on nonlinear regularizations. Section 4 will discuss some basic properties of and requirements on regularization methods, which are then discussed in detail for variational regularizations in Section 5. Subsequently, we turn to iterative regularization methods in Section 6. As a result of some insights in these sections we are led to a discussion on bias and scales in regularization methods in Section 7 and Section 8 will provide some examples of applications. Section 9 will discuss advanced aspects such as nonlinear regularization methods for nonlinear inverse problems and links to machine learning. Finally we conclude and provide an outlook to relevant future topics in Section 10.

\section{A Little History of Regularization Methods}

It seems rather difficult to date back the origin of regularization methods, but it is common now to identify it with the pioneering work of Tikhonov (cf. \cite{tikhonov1943stability,tikhonov1963solution,tikhonov1966stability}) and the subsequent strong developments in the Russian community in the 1960s (cf. e.g. \cite{ivanov1962linear,bakushinskii1967general}). The starting motivation obviously comes from the concept of {\em ill-posedness}, negating the definition of a well-posed problem. The latter, consisting of existence, uniqueness, and stable dependence upon the input data is usually attributed to the work of Hadamard in the context of partial differential equations (cf. \cite{hadamard1902problemes,hadamard1923lectures}), however the third condition was not clearly formulated in those problems and seems to have found its way as an equally important one later, e.g. in the work of John \cite{john1960continuous}. As a motivation for regularization theory and in particular for their convergence, the lack of stability seems to be the most crucial issue however. 

Already in the early works it was understood that in order to have any chance to compute meaningful solutions, the problem needs to be approximated by well-posed ones, usually a family parametrized by the regularization parameter. The obvious first answer of a topologist like Tikhonov was to restrict the domain to a compact set in some topology (or some kind of family thereof), leading to the concept of conditional well-posedness. A natural choice in a Hilbert space are norm balls around zero, which are compact in the weak topology. The radius of the balls (or its inverse) can naturally serve as a regularization parameter. This was also called {\em selection method} and the corresponding solutions were phrased quasi-solutions. Given a minimization problem, e.g. least-squares $\Vert Ku -f \Vert^2$ for \eqref{eq:basicequation} in Hilbert spaces, it is a short way to the variational formulation (see Section 3 for a detailed discussion of variational models) of what is now called {\em Tikhonov} or {\em Tikhonov-Phillips} regularization. Indeed, with an appropriate Lagrange parameter $\alpha$, this is equivalent to the variational problem
\begin{equation}
\hat u = \argmin_{u \in \domain} \frac{1}2 \Vert Ku - f \Vert^2 + \frac{\alpha}2 \Vert u\Vert^2. 
\end{equation}
Some of the early work in the Soviet community was already formulated in a much more general variational way, replacing the least-squares term by some discrepancy and the regularization by some appropriate functional, somehow a precursor of the modern theory. At this time the study was restricted to a rather abstract way focusing on convergence proofs, neither strong motivations for other functionals in inverse problems nor further methods for quantitative estimates were available. The concepts and methods were further developed in the Soviet literature, including the question of the regularization parameter choice in dependence on the noise level. Instead of giving a detailed overview we here refer to the influential book by Tikhonov and Arsenin \cite{tikhonovsolution}, which also made the results more broadly accessible.

As an alternative approach a lot of work also considered what Tikhonov called the {\em regularization method} (and what seems to be the first appearance of this term in literature), namely the approximation of $K$ by regular operators, respectively of its generalized inverse by bounded operators. In parallel there was similar development in the western community, a similar approach as the conditional well-posedness by Tikhonov was developed by Phillips for integral equations of the first kind in \cite{phillips1962technique}, consequently the term Tikhonov-Phillips regularization is also used in literature. In a discrete setting of statistical regression, a similar idea to deal with ill-conditioned problems was developed under the term {\em ridge regression} (cf. \cite{hoerl1959optimum,hoerl1970ridge}). A related approach to solve ill-posed problems for partial differential equations was the quasi-reversibility method (cf. \cite{lattes1967methode}), although hardly analyzed in the setting of a regularization method.
 
A different route to the construction of regularization methods was taken by Backus and Gilbert \cite{backus1968resolving} from a very applied perspective. Using linear filters the noisy data were smoothed to be in the range of the forward operator $K$, subsequently a direct inversion (or application of the generalized inverse) can be performed. It took quite a while until such methods were understood in a unified way with other regularizations such as Tikhonov regularization (cf. \cite{Engl:Regularization}), the key step was to relate the smoothing action of the filters to the operator $K$ respectively its adjoint. This was made clear later in the linear functional strategy by Anderssen \cite{anderssen1986linear} and also in the development of the approximate inverse method by Louis \cite{louis1996approximate}, which turned out to be highly useful in tomography problems, where explicit reconstruction formulas and fast methods for the computation of the inverse are available. 

In the seventies and eighties of the last century the study of linear regularization methods was progressing further, with a study of many different regularization techniques such as iterative regularization by early stopping of stable iteration methods, truncated singular value decompositions, regularization by discretization and projection
(cf. e.g. \cite{nashed1974regularization,nashed1974generalized,wahba1977practical,elden1977algorithms,bakushinskii1977methods,bakushinskii1979principle,bates1983truncated,engl1987choice,hansen1987truncatedsvd}).
Most work was based on using spectral methods for the construction and detailed analysis of regularization methods. This includes the basic analysis of linear regularization methods in Hilbert spaces, the convergence as noise level and regularization parameter tend to zero, as well as first error estimates in dependence of the noise level (cf. e.g. \cite{nashed1974convergence,groetsch1979extrapolation,natterer1984error,neubauer1988posteriori}). Moreover, various asymptotic parameter choice rules were suggested and investigated, either founded by theory such as the discrepancy principle or other a-posteriori rules using the noise level (cf. \cite{morozov1966regularization,bakushinskii1973proof,raus1984residue,engl1985optimal,engl1987discrepancy,engl1987optimal,gfrerer1987posteriori,engl1988posteriori,raus1992regularization}) or heuristic ones such as quasi-optimality or the L-curve method (cf. \cite{tikhonovsolution,bakushinskii1984remarks,thompson1991study,hansen1992analysis}). The development of linear regularization methods in the early nineties was rather complete, culminating in the seminal book by Engl, Hanke, and Neubauer \cite{Engl:Regularization} that provides a unified overview.

From the application point of view strong focus was put on models with integral equations of the first kind and image reconstruction in tomography became a driving field of application (cf. \cite{Natterer02,Natterer:MathematicalMethods} and references therein). In parallel various applications of inverse problems in partial differential equations such as inverse scattering or parameter identifications became relevant and were tackled by regularization methods (cf. e.g. \cite{payne1975improperly,kravaris1985identification,colton1988inverse,banks1989estimation,colton1990inverse}). This drove the interest in regularization theory from linear towards nonlinear problems.

The end of the eighties marks the beginning of the systematic analysis of regularization methods for nonlinear inverse problems (replacing $K$ by a nonlinear operator), in particular with the papers by Seidman and Vogel \cite{seidman1989well} giving a well-posedness and convergence analysis of Tikhonov regularization for such problems and by Engl, Kunisch and Neubauer \cite{engl1989convergence} providing first error estimates respectively convergence rates. Many techniques had to be developed to avoid spectral theory arguments that are not available for nonlinear operators, it is not surprising that many of those ideas were also influential for nonlinear regularization (of linear inverse problems). In the nineties there was a boost of studies for nonlinear inverse problems, in particular a theory of iterative regularization methods was worked out, which is particularly attractive since the nonlinear problems had to be solved anyway with iterative methods. Prominent examples are Landweber and steepest-descent methods (cf. e.g. \cite{hanke1995convergence}), regularized Newton methods (cf. e.g. \cite{kaltenbacher1997some}), and iterated Tikhonov methods (cf. e.g.\cite{scherzer1993convergence}). We refer to \cite{kaltenbacher2009iterative} for a comprehensive overview.

 In parallel another paradigm evolved in particular in the image processing community from the seminal papers of Rudin, Osher, Fatemi \cite{ROF} and Mumford-Shah \cite{mumford1989optimal}, who proposed nonlinear variational models to solve denoising (and in the second case also segmentation) problems. From a regularization point of view this means that a nonlinear regularization method is used to solve a linear inverse problem, a rather unusual idea at this time. From a technical point of view it poses additional challenges of analyzing schemes in anisotropic Banach spaces like the space of functions of bounded variation, while previous theory was formulated mainly in Hilbert spaces. In the case of variational regularization methods basic well-posedness and convergence analysis can be carried out using techniques from variational calculus \cite{Acar:AnalysisOfBV,Eggermont}, while quantitative estimates need completely novel approaches. Early progress in this direction was made for maximum entropy regularization (cf.  \cite{Eggermont,engl1993convergence}), in this case the regularization technique could be related directly to regularization of nonlinear inverse problems in Hilbert spaces by a change of variables (cf.  \cite{engl1993convergence}). However, in a more general setup the convergence rate theory remained quite open until the dawn of the 21st century, when strong progress was made by employing techniques from convex analysis to variational regularization methods. We mention at this point that some of these more geometric ideas were also hidden in earlier work on regularization in Hilbert spaces with convex constraints (cf. \cite{neubauer1988tikhonov,eicke1992iteration}). The improved understanding of variational regularization methods in Banach spaces subsequently led to a variety of other techniques and variants such as iterative regularization methods derived from those, which we will discuss in further detail in the course of this paper. 

Another driving force for investigating regularization methods in Banach spaces became ideas of sparsity including wavelet shrinkage and the variational counterpart of regularization $\ell^1$-type norms, e.g. in Besov spaces (cf. \cite{donoho1992superresolution,donoho1995adapting}). This led in parallel to the field of compressed sensing, where the focus was rather on designing the appropriate measurement setups for optimal compression than to improve reconstructions on a given inverse problem (cf. \cite{Donoho:CompressedSensing,Candes:Robust,candes2002recovering,candestao1,candestao2,candes2007sparsity,donoho2006stable}). Despite the fact that the usual setting in compressed sensing is rather a finite dimensional one, many arguments based on convex analysis are closely related. 

In recent years these techniques also evolved into many practical applications, in particular in the image reconstruction community. The whole list of applications where the methods made impact in different ways might deserve a survey paper for itself. In order to illustrate the change in the first decade of the twentieth century we just provide the following table showcasing the typical state of the art used for inverse problems in medical imaging before or around the year 2000 and the one typically used ten years later: 

\vspace*{6pt} 

\noindent
\begin{tabular}{|l|l|l|}
\hline Modality &        State of the art before 2000	     &       State of the art after 2010  \\ \hline
Full CT &		Filtered Backprojection	&	Filtered Backprojection  \\
Undersampled CT &		Filtered Backprojection	&	TV-type / Wavelet Sparsity  \\
PET / SPECT &	Filtered Backprojection / EM & 	EM-TV / Dynamic Sparsity \\
Photacoustics &	-				& TV-type / Wavelet Sparsity \\
EEG/MEG & LORETA 		& Spatial Sparsity / Bayesian \\
ECG-BSPM	& L2 Tikhonov		&	L1 of normal derivative\\
Microscopy	& None, linear Filter &		TV-type  / Shearlet Sparsity \\ 
PET-CT/MR	&	- &TV-type anatomical priors \\ \hline
\end{tabular}

\vspace*{6pt}

Note that (with the exception of the statistically motivated EM-algorithm) all state of the art methods before 2000 were linear regularization methods. This is completely changed with the exception of fully sampled CT, where there is neither a nullspace nor significant noise, hence the regularization plays a minor role. The details of most other methods, mainly based on variational models, will become clear in the next section.

\section{Variational Modeling}\label{sec:varmod}

The variational approach to regularization methods became very popular in the last decades, since it allows for an intuitive approach to modeling, a framework for its basic analysis, and also a variety of computational methods to be applied, in particular in the case of convex regularization functionals. The key idea to construct a variational regularization method for \eqref{eq:basicequation} consists of finding two functionals: a data fidelity term $\fidelfct$ measuring the distance between $Ku$ and $\clean$ (respectively its noisy version $\noisy$) and a regularization functional $\regfct$ favouring appropriate minimizers respectively penalizing potential solutions with undesired structures. Instead of simply fitting $u$ to data, i.e. minimizing $\fidelfct(Ku,f)$, a weighted version is minimized to obtain
\begin{equation} \label{eq:basicvariational}
	\hat u \in \text{arg}\min_u \left( \fidelfct(Ku,\noisy) + \alpha \regfct(u) \right),
\end{equation}
where $\alpha > 0$ is the regularization parameter controlling the influence of the two terms on the minimizer. Since the problem should approach the pure minimization of the data fidelity in the noise-less case it is natural to think about $\alpha$ as a small parameter. 

The choice of the data fidelity is often straightforward, e.g. as some kind of least squares term (squared norm distance in a Hilbert space), or motivated from statistical arguments by some likelihood functional for the noise. In the latter case the variational model can be interpreted as a regularized likelihood model, the data term usually corresponds to the negative log likelihood of the noise model. A prominent example is the case of additive Gaussian noise, which leads to a least-squares data term $\frac{1}2 \Vert K u - f \Vert^2$, where the specific Hilbert space norm to be used is determined by the covariance operator of the noise. Appropriate choices for the latter can have significant impact, e.g. choosing likelihoods for Poisson noise appearing in photon count data leads to strong improvement over least squares terms in particular in large noise regimes (cf. e.g. \cite{Brune:FBEMTV,Brune2009}).  Throughout this paper we will assume that $\fidelfct$ is Frechet-differentiable on $\range$ unless further noticed.

The choice of a regularization functional seems less natural at first glance. Based on the original ideas by Tikhonov the key ingredient for a successful regularization are its topological properties, thus frequently the regularization functional is chosen as some power of a norm (or seminorm) in a Banach space. Classical examples are Tikhonov-Phillips in Hilbert spaces like $L^2(\Omega)$, $H^1(\Omega)$, or in some sequence space $\ell^2(\mathbb{N})$. As a generalization in function spaces, regularization functionals depending on the gradient (or higher order derivatives) of $u$ became popular. Those correspond to a rather direct intuition when smooth solutions are preferable due to prior knowledge. Nonsmooth and oscillatory functions will lead to large or even infinite values of the derivatives and thus very high values of the regularization functionals. Hence, they are no suitable candidates as a minimizer of \eqref{eq:basicvariational}. 

In many cases in inverse problems such as image reconstruction one is rather interested in nonsmooth solutions and in particular their discontinuity sets. A simple class of such are piecewise constant functions with reasonable edge sets, which are not contained in any Sobolev space $W^{k,p}(\Omega)$ for $k,p \geq 1$, since their gradient is already a concentrated measure (cf. \cite{Ambrosio:BoundedVariation,evansgariepy}). This motivates to use the space of functions of bounded variations $BV(\Omega)$, which consists of all functions in $L^1(\Omega)$ whose distributional gradients are vectorial Radon measures. The regularization with the total variation, i.e., 
\begin{equation}
 TV(u) = |u|_{BV} = \int_\Omega d|Du|,
\end{equation}
where $Du$ is the gradient measure of $u$, proposed for denoising by Rudin-Osher-Fatemi \cite{rudin1992nonlinear} and the subsequent popularity of investigating such methods can be seen as the advent of modern regularization methods.

The details of reconstructions to be achieved strongly depend on the specific norm used however. 
It is common folklore that the regularization functional is chosen such that desired solutions matching prior knowledge have a small value of $\regfct$ and are thus preferred as the appropriate solutions. This is however true only to some extent, but the overall effect of a regularization functional is rather determined by the effect it has on possible minimizers than purely a comparison of functional values. Consider as a simple example one-dimensional total variation regularization. It will of course rather prefer solutions with small total variation over oscillatory functions with high variation. On the other hand, it still selects among functions with the same total variation. Structural results on the solution of total variation regularization problems show that canonical solutions for noisy data are piecewise constant, even if the exact solution is not (cf. \cite{ring2000structural,chambolle2010introduction,jalalzai2016some}). This means that total variation actively selects piecewise constant solutions over smooth solutions that have the same total variation, i.e. are a-priori indistinguishable by the regularization functional. The reason for this behaviour can be seen by inspecting the optimality condition, given by (assuming $\fidelfct$ to be Fr\'{e}chet-differentiable)
\begin{equation} \label{eq:basicoptimality}
	K^* \partial_x \fidelfct (Ku,f) + \alpha p = 0, \qquad p \in \partial \regfct(u).
\end{equation}
Here $\partial_x$ denotes the (partial) Fr\'{e}chet-derivative in the first argument, and $\partial \regfct(u)$ is the subdifferential of $\regfct$ at position $u$, see \cite[Section 23]{Rockafellar:ConvexAnalysis}, or \cite{ekelandtemam,bausch11}. Solving for the subgradient $p$ we always obtain a relation of the form $p=K^* \tilde w$ for some $\tilde w \in \range$, i.e. the variational method will select smooth subgradients due to the smoothing properties of the operator $K$ and its adjoint. We will detail the relation between the properties of the subgradients of the solution for total variation and other examples of regularization in the next sections.

In a stochastic setup, the variational approach is often formulated from Bayesian estimation (cf. \cite{kaipio2006statistical,Stuart}, in particular {\em maximum a-posteriori probability} (MAP) estimators. Assume for the sake of simpler presentation that we are in a finite-dimensional setting for the inverse problem $Ku=f$ and can write down probability densities for the prior $\pi_0(u)$ and the likelihood $\pi(f|u)$ of measuring the data $f$ given the true solution $u$. Then Bayes' theorem provides the posterior probability density via
\begin{equation}
	 \pi(u|f) = \frac{1}{\pi_*(f)}  \pi(f|u) \pi_0(u), 
\end{equation}
with 
\begin{equation}
	\pi_*(f) = \int \pi(f|u) \pi_0(u) ~du
\end{equation}
being the effective prior probability on the data. A MAP estimate $\hat u$ is defined as a maximizer of the posterior probability density, respectively a minimizer of its negative logarithm. Since the part $\pi_*(f)$ independent of $u$ is irrelevant for the minimizer, we thus have
\begin{equation}
	\hat u \in \text{arg}\min_u \left( - \log \pi(f|u) - \log \pi_0(u) \right).
\end{equation}
This formulation is closely related to the variational modelling point of view when interpreting $-\log \pi(f|u)$ as a data fidelity and $- \log \pi_0(u)$ as the regularization term. Indeed, for many standard stochastic (noise) models one obtains 
\begin{equation}
	\pi(f|u) \sim \exp(-\fidelfct(Ku,f) ) .
\end{equation}
Examples are additive Gaussian noise leading to a least-squares fidelity and Poisson noise leading to the Kullback-Leibler divergence. 
Assuming further that the prior is related to some regularization functional $\regfct$
\begin{equation}
	\pi_0(u) \sim \Phi(-\regfct(u))
\end{equation}
for some monotone function $\Phi$, we see that the MAP estimation problem becomes
\begin{equation}
	\hat u \in \text{arg}\min_u \fidelfct(Ku,f) - \log( \Phi(-\regfct(u))).
\end{equation}
This problem can be reformulated in a more conventional form, even if the prior $\Phi$ is not exactly specified. By a standard argument we see that there exists $\gamma > 0$ such that 
$$ 	\hat u \in \text{arg}\min_{u,  \regfct(u) \leq \gamma} \fidelfct(Ku,f) , $$
and with the existence of a Lagrange parameter $\alpha > 0$ for the constraint $J(u) \leq \gamma$ (which is easily verified for a scalar constraint) we obtain
\begin{equation}
	\hat u \in \text{arg}\min_u \fidelfct(Ku,f) + \alpha \regfct(u).
\end{equation}
We mention that similar reasoning in infinite dimensions is not as straightforward, even the definition of the MAP estimate is a non-obvious task (cf. \cite{Dashti13,HelinBurger15}). Recent results however provide a good characterization in many relevant cases (cf. \cite{HelinBurger15,lie2017equivalence,agapiou2017sparsity} ). A relation between Bayesian estimators and the variational approach also exists beyond the MAP estimate by the {\em Bayes cost} method. Given a cost $\psi$ measuring a distance on the input space, the Bayes cost approach looks for a minimizer of the posterior expecation of $\psi$, i.e.,
\begin{equation}
	\hat u \in \text{arg}\min_u \int \psi(u,v) ~\pi(v|f)~dv,
\end{equation}
i.e. a functional that depends in a more implicit way on the data and the forward model. 

\subsection{Total variation and related regularizations}

As mentioned above total variation regularization has been one of the driving examples in developing regularization methods in Banach spaces starting from \cite{rudin1992nonlinear,Acar:AnalysisOfBV}. Since then it has been a constant source of motivation for further developing mathematical analysis (cf. e.g. \cite{chambolle1997image,strong1996exact,scherzer1998denoising,chavent1997regularization,ring2000structural,strong2003edge,Burger:ConvergenceRates,caselles2007discontinuity,allard2007total}), computational optimization techniques for nonsmooth problems (cf. e.g. \cite{chan1999nonlinear,Vogel2002,chambolle2004algorithm,kunisch2004total,chambolle2011first}), and development of advanced models (cf. e.g. \cite{TV2,osher2005iterative,burger2005nonlinear,burger2006nonlinear,Bredies:TGV,hu2012higher,lenzen2013adaptive,benningbrunemueller}).

The key step for modern analysis and computational methods is the (pre-)dual formulation of total variation
\begin{equation} \label{rigorousBV}
TV(u) = |u|_{BV} := \sup_{g  \in C_0^\infty(\Omega)^d, g \in {\cal C}} \int_\Omega u \nabla \cdot g ~dx, 
\end{equation}
with the convex set 
$$ {\cal C} = \{ g \in L^\infty(\Omega)~|~ \vert g(x) \vert \leq 1 \text { a.e. in } \Omega \}.$$
This characterization allows to understand the structure of subgradients as elements of ${\cal C}$ absolutely continuous with respect to the gradient measure $D$ such that 
$$ \int_\Omega g \cdot dDu = |u|_{BV}. $$
The optimality condition \eqref{eq:basicoptimality}
\begin{equation} 
	K^* \partial_x \fidelfct (Ku,f) + \alpha \nabla \cdot g = 0,
\end{equation}
where $g$ is a vector field such that $g |Du|$ is a polar decomposition of the vector measure (cf. \cite{Ambrosio:BoundedVariation}). 

In spatial dimension one the structure of solutions can be understood directly from the optimality condition. If there is an open set where $u$ is not constant, either with positive or negative derivative, then $g$ equals $+1$ or $-1$, hence its derivative vanishes. Thus, in such regions the generalized residual $K^* \partial_x \fidelfct (Ku,f)$ vanishes. In the case of noisy data this is usually not happening for larger sets, thus $u$ is typically piecewise constant. In higher spatial dimension this is not completely true, but still the case $|g(x)| < 1 $ is the canonical one, so in many cases solutions are piecewise constant. On the other hand, piecewise constant structures are not optimal in all instances, in particular total variation methods are well-known to exhibit staircasing phenomena, i.e. smoothly varying parts in the solution are often approximated by piecewise constant structures with many jumps resembling a stair structure. For this sake many modifications and variants of total variation regularization have been investigated in the last decades. An immediate option are higher-order total variation approaches, that formally replace the one-norm of the gradient by the one-norm of a higher-order derivative like the Laplacian, the Hessian or the symmetric part of the Hessian (cf. e.g. \cite{TV2,CEP,hinterberger2006variational,papafitsoros2014combined}). The disadvantage of such an approach is that solutions of the regularization model will be too regular and discontinuity sets (edges) are lost. In view of \eqref{rigorousBV} such approaches can be characterized by ${\cal C}$ not being a bounded set in $L^\infty(\Omega)$, but rather being derivatives of bounded measurable functions. 

An alternative model trying to take advantage of total variation and higher-order total variation is a decomposition into two or more parts, i.e., $u=u_1+u_2$ with $u_1$ and $u_2$ being regularized differently. This has been proposed in this context for the first time in \cite{chambolle1997image} as an infimal convolution of first and second order total variation, the effective regularization functional is given by
$$ J(u) = \inf_{u_1+u_2 = u} \left( |u_1|_{BV} + |\nabla u_2|_{BV}\right) . $$
A popular alternative became the TGV-type models as proposed by \cite{Bredies:TGV}, which effectively do not decompose $u$ but the gradient measure $Du$ into $Du_1$ and some vector field $u_2$. One version of the regularization functional is then given by 
$$ J(u) = \inf_{Du_1+u_2 = Du} \left( |u_1|_{BV} + |u_2|_{BV}\right) . $$
The fact that the higher-order part is an arbitrary vector field provides additional freedom that can be benefitial compared to the infimal convolution model (cf. \cite{Bredies:TGV,benningbrunemueller,PhDthesisJoana}). We also mention that the original TGV-model in \cite{Bredies:TGV} does not use a bounded variation model for $u_2$, but only bounded deformations, i.e. the symmetric part of the gradient. Moreover, the approach can be formulated for arbitrary order of regularization. 
In the dual formulation \eqref{rigorousBV} approaches like infimal convolution or TGV still lead to ${\cal C}$ being a subset of the unit ball in $L^\infty(\Omega)$, which implies
$$ J(u)  \leq |u|_{BV} \qquad \forall u \in BV(\Omega). $$
On the other hand, for many of them a lower bound inequality can be shown at least when excluding a low (finite) dimensional nullspace (cf. \cite{benningbrunemueller}), i.e. there exists a positive constant $c$ and some linear functionals $\ell_i$ such that 
$$ J(u) \geq c |u|_{BV}  \qquad \forall u \in BV(\Omega), \quad \text{such that }\ell_i(u) = 0, i=1,\ldots M. $$
Hence, $J$ is an equivalent norm on the subspace of $BV$ excluding the nullspace. For the combination of first- and second-order derivatives the nullspace naturally consists of piecewise affine functions (thus $M=d+1$). For a further discussion and advanced aspects we refer to 
\cite{Bredies:TGV,benningbrunemueller,ranftl2013minimizing,bredies2014regularization,bredies2015tgv,bredies2015tgv2,burger2015infimal,burger2016infimal,bergounioux2016mathematical,SetzerSteidlTeuber,holler2014infimal,gao2017infimal,bergounioux2018anisotropic}.

In certain cases it is also interesting to use total variation regularization on some transform of the image. Motivated by research in image analysis taking into account orientations via local Radon transforms (cf. \cite{krause2008retinal}), in \cite{burger2014total} total variation regularization on the Radon transform respectively combined with total variation on the image itself was investigated to promote piecewise constant images with very thin structures resembling lines. In \cite{PhDthesisJoana} total variation on the spherical Radon transform (equivalent to circular Hough transform in computer vision) was investigated in order to reconstruct small circular structures.  

Another variant are total variation regularization methods for vector fields, e.g. arising for color images
(cf. \cite{bresson2008fast,blomgren1998color}), flow fields (cf. \cite{hinterberger2002analysis,zach2007duality}) or joint reconstruction problems (cf. \cite{knoll2017joint}). 
While many aspects remain the same as in the scalar case it is particularly interesting which matrix norm is used for $Du$, respectively which dual norm for $g$, noticing that this becomes a matrix in \eqref{rigorousBV}.

\subsection{Sparsity Regularization}

Total variation regularization, in particular its discrete version, can be interpreted as a functional favouring sparsity, in this case of the gradient. The paradigm of sparsity has developed in parallel to the total variation regularization (cf. \cite{donoho1992superresolution}). A key insight driving sparsity priors was the (approximate) sparsity of signals and natural images in wavelet bases (cf. \cite{mallat93,huang1999statistics,mallat2008wavelet,starck2010sparse}). Further improvements were made by replacing the orthonormal bases by frames (cf. \cite{christensen2003introduction}) such as curvelets (cf. \cite{candes2000curvelets,candes2000curveletsb}) or shearlets (cf. \cite{labate2005sparse,guo2007optimally,kutyniok2012introduction}). 

Sparsity is naturally measured by the $\ell^0$-norm, the number of nonzero entries. Since the minimization of $\ell^0$ is highly non-convex and even NP complete, it is usually relaxed to the convex $\ell^1$-norm. In the {\em analysis formulation} a frame system $\phi_i$ is used to test sparsity of $\langle u, \phi_i \rangle$, the corresponding regularization functional is given by
\begin{equation}
  J(u) = \sum_i |\langle u, \phi_i \rangle|.
\end{equation}
If $(\phi_i)$ is an orthonormal system, this is equivalent to the {\em synthesis formulation}, which is based on writing 
\begin{equation}
  J(u) = \sum_i \vert c_i \vert \qquad \text{where } u= \sum_i c_i \phi_i.
\end{equation}
Note that in general the two formulations may differ for  frames (cf. \cite{elad2007analysis}).

In the analysis formulation we can effectively define the variational problem on the coeffcient vector $c$, i.e.
$$ \tilde K: \ell^2(\N) \rightarrow \range, \quad c \mapsto \sum_i c_i K\phi_i $$
and compute
$$ \hat u = \sum_i \hat c_i \phi_i, \quad c_i \in \text{arg}\min_c \fidelfct(\tilde K c, f) + \alpha \vert c \vert_1. $$
The corresponding optimality condition is given by
$$
(\tilde K^* \partial_x \fidelfct (\tilde Kc,f))_i + \alpha s_i = 0, $$
with $s_i$ being a multivalued sign of $c_i$, i.e. an element of $[-1,1]$ for $c_i =0$. If $\tilde K$ is a bounded linear operator on $\ell^2(\N)$, then its adjoint maps into the same space, and hence $(s_i) \in \ell^2(\N)$. This implies in particular that $\vert s_i \vert < 1 $, hence $c_i = 0$, for $i$ sufficiently large. Thus, we always obtain some sparsity with this model. 

In the analysis formulation the optimality condition is given by
$$ K^* \partial_x \fidelfct (Ku,f) + \alpha s_i \phi_i = 0,$$
instead, with $s_i$ being a multivalued sign for $\langle u, \phi_i \rangle$. Here the understanding of the sparsity property is more complicated, the $s_i$ are actually related to the residual via the linear system
$$ \sum_j \langle \phi_i , \phi_j \rangle s_j = - \frac{1}\alpha \langle K \phi_i, \partial_x \fidelfct (Ku,f) \rangle. $$
We refer to \cite{vaiter2013robust,vaiter2013local} for a detailed analysis in this case. 
Sparsity models for inverse problems have been studied with different frames and applications extensively in the last decade (cf. e.g. \cite{cotter2005sparse,chaux2007variational,colonna2010radon,recht2010guaranteed})

There are several relevant extensions of sparsity priors to multidimensional systems, in particular in a synthesis type formulation
$$ u = \sum_{i,j} c_{ij} \phi_i \otimes \psi_j .$$
The different dimensions are often space (characterized by basis functions $\phi_i$) and time or frequency (characterized by basis functions $\psi_j$). Instead of overall sparsity more detailed prior knowledge can be introduced. The most popular example is joint or collaborative sparsity, which means that only few of the basis functions, e.g. in the second dimension, can be used to explain the solution. This means that $c_{\cdot j}$ vanishes for most $j$, respectively also any norm of it. A common regularization for this case is the joint or collaborative sparsity prior
$$ J(u) = \sum_j \Vert c_{\cdot j} \Vert_{\ell^r}, $$
usually with $r=2$ or $r=\infty$ (cf. \cite{duarte2005joint,teschke2007iterative,fornasier2008recovery,gholami2010regularization,lee2011compressive})
An alternative type of prior knowledge is local sparsity, which means that for each $i$ only few basis functions $\psi_j$ are used. The term local is due to an imaging interpretation of the $\phi_i$ as basis functions local in space (e.g. for each pixel). This is a common issue in dynamic or spectral imaging, where one can assume that only few materials and their characteristic evolutions or spectral curves can be found in each pixel. A regularization functional proposed for this issue (cf. \cite{heins2015locally}) is
$$ J(u) = \max_i \Vert c_{i \cdot } \Vert_{\ell^1} +  \beta \sum_i \Vert c_{i \cdot } \Vert_{\ell^1}.$$

An infinite-dimensional extension of the above sparsity models is sparsity in a space of Radon measures, i.e. the regularization functional is given as the total variation norm of the measure $u$ 
$$ J(u) = \int_\Omega d|u| =\sup_{g \in C_0(\Omega), \Vert g \Vert_\infty \leq 1} \int_\Omega g ~du.$$
This yields a convex regularization functional for reconstructing multiple peaks at unknown locations and has been proposed for inverse problems in \cite{bredies2013inverse}, respectively for superresolution problems in \cite{candes2013super,candes2014towards,Fernandez:Noise}. The reconstruction properties in deconvolution problems have been analyzed in \cite{duval2015sparse,denoyelle2017support}, asymptotics from finite-dimensional problems with sparsity priors are found in \cite{heins2014reconstruction,duval2015sparse}.

\subsection{Low Rank Regularization}

In many applications one seeks a decomposition of the form
\begin{equation}
 U = \sum_{i} \Phi_i \otimes \Psi_i 
\end{equation}
with unknown $\Phi_i, \Psi_i$ and the additional prior knowledge that there are as few elements as possible in the sum. In a finite dimensional setting this means that the matrix $U$ has low rank, i.e. the rank of $U$ would be the obvious regularization functional. However, since the rank is very far from being convex, several relaxations have been proposed instead. The most popular one, originally proposed for matrix completion problems, is the nuclear norm (cf. \cite{candes2009exact,recht2010guaranteed,cai2010singular,cai2013fast,yang2013seismic})
\begin{equation}
\Vert U \Vert_* = \sum \sigma_i,
\end{equation}
where $\sigma_i$ are the singular values of $U$.

In many applications the low rank part alone does not suffice to model the structure of solutions, frequently a low rank plus sparsity (L+S) model is employed instead (cf. \cite{otazo2015low}), which is again based on a decomposition
\begin{equation}
 J(u) = \inf_{u_1+u_2 = u}  \left( \Vert u_1 \Vert_* + \Vert T u_2 \Vert_1 \right)
\end{equation}
with a sparsifying transform $T$ (often some derivative like in total variation). In particular in videos the low rank part captures background and certain slow dynamics, while the sparse part captures the key changes. 

For inverse problems an infinite-dimensional function space setting would be more appropriate, which has not yet been investigated. In particular a formulation in a space of trace class operators between Hilbert spaces $H_1$ and $H_2$ (cf. \cite{reed1978iv}) would be natural. Let us mention that the choice of Hilbert spaces $H_i$ opens novel opportunities for improved regularization that are so far unexploited, even in the finite-dimensional case.

\subsection{Infimal Convolutions} 

As we have seen above, infimal convolution is a versatile tool to combine different regularization approaches, respectively define a novel functional that combines their advantages. We want to highlight this approach in the following by providing formal definitions:

\begin{definition}
Let $J_i: \domain \rightarrow \R \cup \{+\infty\}$, $i=1,2$ be proper convex functionals. Then their infimal convolution
$J_1\square J_2:\domain \rightarrow \R \cup \{+\infty\}$ is defined via
\begin{equation}
 (J_1 \square J_2) (u) = \inf_{v \in \domain} (J_1(u-v) + J_2(v)).
\end{equation}
\end{definition}

Even more general, we can define an infimal convolution for an arbitrary number of convex functionals:
\begin{definition}
Let $J_i: \domain \rightarrow \R \cup \{+\infty\}$, $i=1,\ldots,M$ be proper convex functionals. Then their infimal convolution
$J:\domain \rightarrow \R \cup \{+\infty\}$ is defined via
\begin{equation}
 J(u) = \inf_{u_i \in \domain,\sum u_i = u}  \sum_{i=1}^M J_i(u_i)
\end{equation}
\end{definition}

We mention that a-priori it is unclear whether the infima above are actually minima. If a minimizer $v$ exists for the infimal convolution of $J_1$ and $J_2$, it can be used to deduce optimality conditions, since
$$p \in \partial J(u) \quad \text{if } p \in \partial J_1(u-v) \cap \partial J_2(v). $$ 
As the above examples for sparsity and in particular higher-order total variation show that there is quite some freedom in designing infimal convolution models for regularization. Consequently, a lot of options for future research remain open and interesting results are still to be expected.

\subsection{Bregman Distances}

From a single regularization functional several variants can be constructed by using a nontrivial prior $u_0$ and the so-called Bregman distance (originally introduced in \cite{bregman67} for proximal-point type methods). Instead of shifting the functional directly from $J(u)$ to $J(u-u_0)$, the approach in the Bregman distance performs a shift in the convex conjugate. In the original formulation this amounts to the following:
\begin{definition}
Let $J: \domain \rightarrow \R \cup \{+\infty\}$ be a convex functional and let $p_0 \in \partial J(u_0)$. Then the Bregman distance between $u \in \domain$ and $u_0 \in \domain$ with subgradient $p_0$ is given by
\begin{equation}
\distance_{J}^{p_0}(u,u_0) := J(u) - J(u_0) - \langle p_0, u - u_0 \rangle
\end{equation}
\end{definition}
Note that the Bregman distance is not a strict distance, i.e. it can vanish for $u \neq u_0$ if $J$ is not strictly convex. It is also not symmetric, but can be made symmetric by taking a sum of one-sided distances (cf. \cite{burger2016bregman} for a more detailed discussion). For absolutely one-homogeneous regularization functionals as above, the identity
$J(u_0) = \langle p_0, u_0 \rangle$ holds, thus the Bregman distance becomes 
\begin{equation} \label{eq:Bregmanonehomog}
\distance_{J}^{p_0}(u,u_0) := J(u) - \langle p_0, u  \rangle,
\end{equation}
thus it is effectively independent of $u_0$, only the subgradient $p_0$ matters. This is particularly relevant if the subdifferential of $J $ is not a singleton or vice versa a subgradient $p_0 \in \partial J(u_0)$ can be an element of the subdifferential also at other values of $u$.

Note that in the typical case of $u_0 =0$ being a minimizer of $J$, i.e. $0 \in \partial J(0)$, the regularization with $J$ can be reinterpreted as penalizing the Bregman distance to $u_0=0$. In
\cite{bleyer2009tikhonov} a basic analysis of such a variational regularization was carried out. The topic received recent interest in particular in the context of TV-type regularization in imaging, since it allows to introduce structural information. The key insight in total variation is that the subgradient encodes information about the discontinuity set, more precisely $p=\nabla \cdot g$, with $g$ being equal to the unit normal vector to the discontinuity set where it is regular. This is again related to \eqref{eq:Bregmanonehomog}, the total variation does not depend directly on $u_0$ and in particular the contrast in the image. It rather vanishes for all $u$ of the form
$$ u(x) = f(u_0(x))$$ 
with a monotonically increasing function $f$, i.e. a simple contrast change (cf. \cite{resmerita06}). Assuming that $g$ is a vector field realizing the supremum in the dual definition of the total variation, the Bregman distance becomes 
$$
\distance_{TV}^{p_0}(u,u_0)  = |u|_{BV} - \int_\Omega( \nabla \cdot g_0 ) u~dx = 
\int_\Omega( \nabla \cdot (g-g_0) ) u~dx,
$$
and if $u$ is piecewise constant with regular discontinuity set $S_u$ 
$$ 
\distance_{TV}^{p_0}(u,u_0)  =  \int_{S_u} [u]
(g-g_0) \cdot\nu~d\sigma = \int_{S_u} [u]
(1-g_0 \cdot \nu)~d\sigma,
$$
where $[u]$ denotes the jump along $S_u$ and $\nu$ the unit normal (oriented such that $[u]$ is positive). One thus observes that the Bregman distance measures differences in the discontinuity set and its orientation, which is perfect for imaging applications with a structural prior (cf. \cite{kaipio1999inverse}) that mainly yields information about edges, i.e. discontinuity sets. An example are anatomical priors in medical imaging, where a high resolution modality such as CT or MR is used to obtain information about organ boundaries and other anatomical features, which are the natural candidates for edge sets in functional modalities like PET, SPECT, or MR imaging with special contrast. In some cases also a joint reconstruction is of interest, the most obvious case being color or hyperspectral images, where naturally intensity changes at the same locations, usually even in the same direction (cf. \cite{moeller12,Moeller:ColorBregmanTV}). 

In some applications one may find contrast inversion, i.e., the jump of the two images along the discontinuity set has different sign. In such cases the normals are parallel, which means they point into opposite directions and hence lead to large values in the Bregman distance. A potential solution to avoid such issues is the infimal convolution of Bregman distances, in this cases with the two normal fields and thus subgradients of opposite sign (cf. \cite{Moeller:ColorBregmanTV,rasch2017dynamic})
$$ J = \distance_{TV}^{p_0}(\cdot,u_0) \square \distance_{TV}^{-p_0}(\cdot,-u_0) .$$
We also mention some other related approaches to modify total variation functionals such as the parallel level set models (cf. \cite{ehrhardt2014vector,ehrhardt2014joint,ehrhardt2016pet}), which can be related to the Bregman distance for total variation (cf. \cite{rasch2017joint}), or  
directional / structural total variation (cf. \cite{bungert2017blind,ehrhardt2016multicontrast,hintermuller2017function,grasmair2010anisotropic}), formally 
$$
TV_{g_0}(u) = \int_\Omega \vert (I-g_0 \otimes g_0) \nabla u \vert ~dx,
$$

\section{Fundamentals of Nonlinear Regularization}

Before discussing the detailed analysis of nonlinear regularization methods, we first aim at providing a suitable basis on how to understand regularization methods and their convergence. We start with the case of linear regularization methods in Hilbert spaces, recalling the abstract theory from \cite{Engl:Regularization}, and then try to work out a suitable analogue for the nonlinear case in Banach spaces.

\subsection{Abstract Linear Regularization Methods}

We start our exposition with a discussion of possible limits of regularization schemes.
In basically all linear methods such as Tikhonov regularization, truncated SVD or iterative regularization in Hilbert spaces it is clear which solutions are approximated as the regularization parameter tends to zero, namely the ones obtained from a generalized inverse. 
The following definitions are made to characterize these limiting solutions:
\begin{definition} Let $K:\domain \rightarrow \range$ be a bounded linear operator between Hilbert spaces and $f \in \range$. We call $\hat u \in \domain$ a {\em best approximate solution} of \eqref{eq:basicequation} if
\begin{equation}
\Vert K\hat u - f \Vert_{\range} \leq \Vert Ku - f \Vert_{\range}, \qquad \forall~ u \in \domain.
\end{equation}
Moreover, we call $\hat u$ a {\em minimal norm solution} if it is a best approximate solution and
\begin{equation}
\Vert \hat u \Vert_{\domain} \leq \Vert u \Vert_{\domain} \qquad \forall~ u \in \domain, ~\Vert K\hat u - f \Vert_{\range} =\Vert Ku - f \Vert_{\range}.
\end{equation}
\end{definition}
Note that due to the strict convexity of the square of a Hilbert space norm, the minimum solution - being its minimizer on a linear manifold - is a unique object. An abstract regularization method is now a collection of continuous operators approximating the (discontinuous) generalized inverse of $K$:
\begin{definition}
A family of bounded linear operators $R_\alpha: \range \rightarrow \domain$ defined for $\alpha$ in $(0,\alpha_0)$ is called {\em linear regularization operator}. Together with a parameter choice strategy $\alpha$ depending on the noise level $\delta$ and the data $f^\delta$, i.e., a function
\begin{equation}
\alpha: (0,\delta_0)  \times \range \rightarrow (0,\alpha_0)
\end{equation}
it is called {\em linear regularization method}. 

A linear regularization method is called {\em convergent}, if for all $f \in \oprange(K)$ the condition  
\begin{equation}
\lim_{\delta \rightarrow 0} \sup \{ \Vert R_{\alpha(\delta,f^\delta)}(f^\delta) - \minsol \Vert_{\domain} ~|~  
 f^\delta \in \range, \Vert f -f^\delta \Vert_{\range} \leq \delta \} = 0
\end{equation}
holds with $u^*$ being the minimum norm solution of \eqref{eq:basicequation}.
\end{definition}

For ill-posed problems it is well-known that convergence can be arbitrarily slow (cf. \cite{schock1985approximate}). Thus, convergence rates can be obtained only on a restricted subset  $M_\nu$ with a parameter $\nu > 0$ measuring the smoothness respectively order of convergence. The standard definition is given by: 
\begin{definition} 
A regularization method is called {\em convergent at order} $\nu$ on a set ${\mathcal M}_\nu$ if for all $f=K\minsol$, $\minsol \in {\mathcal M}_\nu$, there exists a constant $C_\nu$ such that for all data $f^\delta$ with $\Vert f^\delta - f \Vert \leq \delta$ the estimate
\begin{equation} 
\Vert \R_{\alpha(\delta,f^\delta)}(f^\delta) - \minsol \Vert \leq C_\nu \delta^\nu,
\end{equation}
holds.
\end{definition}
It is well-known that the set $M_\nu$ can be related to the source condition
$$ \minsol = (K^*K)^\mu w $$ for some $w \in \domain$ and appropriate $\mu > 0$ related to $\nu$ (cf. \cite{Engl:Regularization}). The constant $C_\nu$ is then related to the norm of $w$. The simplest cases of source conditions are $\mu = \frac{1}2$, which can be reformulated as 
$$ \minsol = K^* \tilde w, $$
for some $\tilde w \in \range$, and the case $\mu=1$. Source conditions induce conditional well-posedness of the problem, e.g. for $\mu=\frac{1}2$ one has for $u_i = K^* \tilde w_i$
$$ \Vert u_1 - u_2 \Vert^2 = \langle u_1 - u_2, K^* (\tilde w_1- \tilde w_2) \rangle = \langle K( u_1 - u_2), \tilde w_1-\tilde w_2 \rangle. $$
The Cauchy-Schwarz and triangle inequality then imply the H\"older stability
$$ \Vert u_1 - u_2 \Vert \leq C \sqrt{\Vert Ku_1 - K u_2 \Vert},$$
with $C=\sqrt{\Vert \tilde w_1\Vert +\Vert \tilde w_2\Vert }. $

\subsection{Extension to Nonlinear Methods}

The examples of variational regularization models in the previous section call for a more general theory of nonlinear regularization methods. While the concept of a best-approximate solution is rather straightforward to generalize, other aspects of convergence and limiting solutions are less obvious. In a general variational regularization, as in the examples discussed above, it would be natural to replace the minimum norm solution by a solution minimizing the regularization functional. The latter is not necessarily unique however, hence some possible multi-valuedness needs to be introduced in the characterization. Similar issues apply to the regularized problem and hence the definition of a regularization operator.
In the following we will try to provide a fundamental setting for nonlinear regularization methods. As in the case of linear regularizations we first generalize the possible types of solutions we would like to approximate. The generalization of the first notion is rather straightforward, we only allow for more general distance measures, e.g. functionals related to negative log-likelihoods for non-Gaussian distributions: 
\begin{definition} \label{eq:bestapproximatedef}
Given an error measure $\fidelfct: \range \times \range \rightarrow \R_+ \cup \{+\infty\}$, we call $\hat u \in \domain$ a {\em best approximate solution} of \eqref{eq:basicequation} with respect to $\fidelfct$ if 
\begin{equation}
\fidelfct(K \hat u, f) \leq \fidelfct(Ku, f) \qquad \forall~u \in \domain.
\end{equation}
\end{definition}

A suitable generalization of the definition of a minimum norm solution is more involved, in particular we would like to give a unified concept including the selection via minimizing a regularization functional or maximizing some prior probability. We encode the selection of specific solutions due to prior knowledge in a (multivalued) selection operator:
\begin{definition}
A multivalued operator $\select: \oprange(K) \rightrightarrows \domain$ is called {\em selection operator} if $\select(Ku) \subset u + \nullspace(K)$ for all $u \in \domain$. A best approximate solution $\hat u$ is called {\em prior selected solution} of \eqref{eq:basicequation} if and only if $\hat u \in \select{(K\hat u)}$.
\end{definition}

The general set-valued definition of a selection operator, which we use in order to take care of all the possible cases in regularization methods, also needs to use set-valued ways of convergence. For this sake we recall the definition of {\em Kuratowski convergence} in a metric space:
\begin{definition} Given a metric space $X$ with metric $d$ and - by abuse of notation  - for $x \in X$ and $S \subset X$
\begin{equation}
d(u,S) := \inf_{v \in S} d(u,v), 
\end{equation}
the Kuratowski limit inferior and superior of a sequence of sets $S_n \subset X$ are defined as follows:
\begin{align}
K-\lim\inf_n (S_n) &= \{ x \in X~|~\lim\sup_n d(x,S_n) = 0 \} \\
K-\lim\sup_n (S_n) &= \{ x \in X~|~\lim\inf_n d(x,S_n) = 0 \}. 
\end{align}
\end{definition}
For our sake the limsup will be of particular interest, we will use a minimal definition of stability often adopted in the literature on nonlinear methods after Seidman and Vogel \cite{seidman1989well} respectively Engl, Kunisch, and Neubauer \cite{engl1989convergence}. Stability is expressed by subsequences of selected solutions having a limit and each limit of a subsequence being a solution of the limiting problem. The liminf is less interesting, since there is no reason to ask that any solution of a problem can be the limit of approximate problems. We call an inverse problem stable if for $f_n \rightarrow f$ (usually in terms of norm convergence in $\range$) we have that 
\begin{equation} \label{eq:stabilitydefinition}
K-\lim\sup_n \select(f_n) \subset \select(f), \quad \text{ and } K-\lim\sup_n \select(f_n)  \neq \emptyset.
\end{equation}
The metric used for the Kuratowski limsup will usually be a metrization of some weak or even weak-star convergence in a Banach space, one might also use an extension of the definition to other distance measures.


Having defined what are the solutions we would like to approximate, the obvious next step is to define what actually is a (convergent) regularization method. We start in a deterministic setting, generalizing to a vectorial regularization parameter $\regparambold \in \R_+^M$ however, which is useful in many examples, e.g. the TGV and infimal convolution models with multiple parameters mentioned above. 
Given an error measure $\fidelfct$ and $\clean=K\minsol$ for some exact solution $\minsol \in \domain$, we call $\delta > 0$ noise level if it is the best available bound for available data $\noisy$, i.e.,
\begin{equation} \label{noiselevel}
\fidelity{\clean}{\noisy} \leq \delta.
\end{equation}
We will be interested  in the convergence of regularized solutions to prior selected solutions as the noise level tends to zero. For the ease of presentation and since this is available in almost any known example, we restrict ourselves to convergence with respect to a metric topology $\tau$, which is usually a weak or weak-star topology (on some bounded set in the Banach space). 

\begin{definition}
A family of multivalued operators $\regop(\cdot,\regparambold): \range \rightrightarrows \domain$ defined for $\regparambold$ in a subset $\regdomain$  of $\R^M$ is called {\em regularization operator}, if for each $\regparambold \in \regdomain$ the operator $\regop$ satisfies the stability property
\begin{equation}
\emptyset\neq K-\lim\inf_n \regop(f^{\delta_n}) \subset \regop(\noisy)
\end{equation} 
for all $\noisy \in \range$ and sequences $f^{\delta_n} \in \range$ converging to $\noisy$.
Together with a parameter choice strategy $\regparambold$ depending on the noise level $\delta$ and the data $\noisy$, i.e., a function
\begin{equation}
\regparambold: (0,\delta_0)  \times \range \rightarrow \regdomain,
\end{equation}
it is called {\em regularization method}. 

A regularization method is called {\em convergent}, if for all sequences $\delta_n \rightarrow 0$, data $f^{\delta_n}$ satisfying
\begin{equation} 
\fidelity{\clean}{f^{\delta_n}} \leq \delta_n,
\end{equation}
we have 
\begin{equation}
\emptyset\neq K-\lim\inf_n R_{\regparambold(\delta_n,f^{\delta_n})}(f^{\delta_n}) \subset \select(\clean).
\end{equation} 
\end{definition}
We mention that - besides the very general setup - our definition of a regularization method deviates from the usual theory since we do not assume any kind of convergence of the regularization parameter $\regparambold$. In the classical theory and most examples $\regparambold$ is a scalar positive value and assumed to converge to zero (or to infinity) as the noise level tends to zero. However, apart from the convenience there seems to be no reason to put such convergence into the definition. Note that in order to approximate a really ill-posed problem each clustering point of $\regparambold(\delta_n,f^{\delta_n})$ will automatically lie outside $\regdomain$. The canonical examples are $\regdomain=(0,\regparam_0)$ for variational regularization or $\regdomain=\mathbb{N}$ for iterative regularization, where the limiting parameter will converge to zero or infinity. However, we may also consider multi-parameter regularization, where it depends on the formulation whether each component of $\regparambold$ has a limit outside the admissible set. Take for example an infimal convolution of two functionals $R_1$ and $R_2$. If $\regparambold = (\regparam_1, \regparam_2)$ are the coefficients of $R_1$ and $R_2$, then obviously both should tend to zero in the limit. If however $\regparam_2$ is a relative parameter, i.e. $\regparam_1$ is the coefficient of $R_1$ and $\regparam_1 \regparam_2$ the coefficient of $R_2$, then it is natural to have a positive limit of $\regparam_2$. Another motivation for our general definition are recent approaches to learning regularization methods for inverse problems, where the $\regparambold$ can represent the parameters of the learning scheme. To get a consistent infinite-dimensional theory one could even generalize to non-parametric learning that would amount to choosing $\regparambold$ in some Banach space. Note that in the remainder of this article we will often write $\regparam$ instead of $\regparambold$ if $\regparambold$ is only a scalar.

In order to define convergence rates we will further need an error measure $\distance: \domain \times \domain \rightarrow \R_+ \cup \{+\infty\}$, since there is no natural norm measure as in the Hilbert space. Moreover, we need a restriction to appropriate classes of smoothness, which we denote by $M_\nu$ with a parameter $\nu > 0$ measuring the smoothness. 
\begin{definition}\label{def:dconvergence}
A regularization method is called $D$-convergent if 
\begin{equation}
\lim_{\delta \rightarrow 0} \sup \{ D( \regsol_\delta, \minsol ) ~|~  
\regsol_\delta \in \regoparg{\noisy}, \noisy \in \range, \fidelity{\clean}{\noisy} \leq \delta \} = 0.
\end{equation}

A regularization method is called {\em convergent at order} $\nu$ on a set if for all $f=K\minsol$, $\minsol \in {\mathcal M}_\nu$, there exists a constant $C_\nu$ such that for all data $g$ with \eqref{noiselevel} the estimate
\begin{equation} 
D(\regoparg[\regparambold(\delta,\noisy)]{\noisy},\minsol) \leq C_\nu \delta^\nu,
\end{equation}
holds.
\end{definition}
Of course the above definition only makes sense for suitable choices of the distance functional and the smoothness classes. Remember that in the classical linear Hilbert space theory those were just norms and spaces obtained by source conditions. We will discuss generalizations of such in the nonlinear setting in particular related to variational and iterative regularization methods in Banach spaces related to convex regularization functionals. Note also that more general rates than just polynomial ones have been considered in the literature (cf. e.g. \cite{hohage1997logarithmic,kaltenbacher2008note}). 

From an abstract point of view the key insight to generalize source conditions is the range of the regularization operator. It is easy to see for many linear regularization methods in Hilbert spaces that the source condition $\minsol = K^*\tilde w$ means that there exist some data $f^\dagger$ with $\minsol=R(f^\dagger,\regparambold)$. As examples take Tikhonov regularization
$$ \regop(\cdot,\regparambold) = (K^*K + \alpha I)^{-1} K^* = K^* (KK^* + \alpha I)^{-1}.$$ 
Due to the invertibility of $(KK^* + \alpha I)^{-1}$ the range of the regularization operator coincides with the range of $K^*$. Instead of defining source conditions at an abstract level we thus make the following 
\begin{definition}[Range condition]
An element $\minsol \in \select(\clean, \regparambold)$ for $\clean \in \oprange(K)$ satisfies the \emph{range condition} if $\minsol \in \oprange(\regoparg{\cdot})$, i.e. there exists $f_\regparambold^\dagger$ such that
$$ \minsol \in \regoparg{f_\regparambold^\dagger}.$$
\end{definition}
We mention that in the case of nonlinear variational methods (with quadratic fidelity), the equivalence of a nonlinear source condition and the range condition was shown in \cite{burgerosher}, confirming again the appropriateness of this definition.

Roughly speaking error estimates can now be obtained by some continuity property of the regularization operator, which implies
$$ d_{\domain}(u_\regparambold^\delta,\minsol) \leq C(\alpha) d_{\range}(\noisy,f_\regparambold^\dagger), $$
with appropriate distances $d_{\domain}$ and $d_{\range}$. With some kind of triangle inequality the right-hand side can be estimated by a distance between $\clean$ and $\noisy$, which is related to the noise level as well as a distance between $\clean$ and $f_\regparambold^\dagger$, which is related to the bias of the regularization. This will be discussed in detail for the case of variational regularization methods in Section 5.
A weaker concept are approximate source conditions (cf. e.g. \cite{Schusterbuch,burger2016large}) that effectively measure how well the range condition can be approximated. On the other hand stronger conditions can be obtained if $f_\regparambold^\dagger$ above is not arbitrary but in the range of the forward operator $K$.

\subsection{Stochastic Approaches}

In addition to the deterministic viewpoint a statistical approach has become popular also in infinite-dimensional problems more recently (cf. \cite{bissantz2004consistency,bissantz2007convergence,Cavalier2008,kekkonen14,gine2015mathematical,hohage2016inverse}). In such a setup the data $\noisy$ are considered to be random variables drawn from a measure $\mu_f$ centered around the exact data $f$ (often representing the expected value and $\delta$ some kind of variance). A regularization operator can then still be applied to each realization and defined in the same way, but we need a different definition of the noise level and the convergence of the regularization method. As a generalization of variance we use the statistical noise level in the mean
\begin{equation} \label{statisticalnoiselevel}
\mathbb{E}(\fidelfct(f,f^\delta)) = \delta.
\end{equation}

\begin{definition}
A regularization operator $\regop$ with a parameter choice strategy $\regparambold$ depending on the statistical noise level $\delta$ and the data $\noisy$, i.e., a function
\begin{equation}
\regparambold: (0,\delta_0)  \times \range \rightarrow \regdomain
\end{equation}
is called {\em statistical regularization method}. 

A statistical regularization method is called {\em convergent} if for all sequences $\delta_n \rightarrow 0$, random variables $f^{\delta_n}$ satisfying
\begin{equation} 
\mathbb{E}(\fidelfct(\clean,f^{\delta_n})) \leq \delta_n,
\end{equation}
and each choice of random variables $u_n \in R_{\regparambold_n}(f^{\delta_n})$ there exists a convergent subsequence $u_{n_k}$ in probability in the topology $\tau$ and the limiting random variable $\minsol$ satisfies $\minsol \in \select(f)$ with probability one.
\end{definition}

An extension of this viewpoint is the Bayesian approach to inverse problems, which does not only deal with point estimates, but analogous question for the full posterior distributions. This topic is beyond the scope of this survey, we refer to \cite{Kaipio:InverseProblems,neubauer2008convergence,Stuart,KLNS12,Castillo2014,castillo2014bernstein,kekkonen15,burger2016large,nickl2017nonparametric} for further details.

\section{Variational Regularization Methods}
 
We now return to \eqref{eq:basicvariational} with the viewpoint as in the previous section, we show how variational methods define a regularization operator and then proceed to its further analysis. 
In this canonical variational regularization method it is apparent how to choose the best approximate and prior selected solution according to Definition \ref{eq:bestapproximatedef}. First of all, the distance measure in the definition of the best approximate solution clearly coincides with the data fidelity. It is just the solution of the variational problem for $\regparambold$ in the boundary of $A$, in the simplest case of a scalar regularization parameter usually $\alpha =0$. Of course, the existence of such an element is not obvious, for this sake we define an effective range of the forward operator as 
\begin{equation}
\oprange_\fidelfct(K) = \left\{ f \in \range~\left|~\argmin_{u \in \domain, J(u) < \infty}   \fidelity{Ku}{f} \neq \emptyset \right. \right\}. 
\end{equation}

The selection operator is constructed by minimizing the regularization functional on the set of best approximate solutions. Let $f \in \oprange_\fidelfct(K)$,  then we define
\begin{equation} \label{eq:variationalselect}
\select(\clean, \regparambold) = \argmin_{u \in \domain} \left\{\regfctarg{u}~\left|~ u \in \argmin_{\tilde u \in \domain}   \fidelity{K\tilde u}{f} \right.  \right\}
\end{equation}
\begin{remark}
We want to point out that if $\regparambold = \regparam$ is just a scalar, the selection operator does not depend on $\regparam$ for regularization functionals of the form $\regfctarg[\regparam]{u} = \regparam \regfct_1(u)$. In this particular case we simply have
\begin{equation*}
\select(\clean) = \argmin_{u \in \domain} \left\{\regfct_1(u)~\left|~ u \in \argmin_{\tilde u \in \domain}   \fidelity{K\tilde u}{f} \right.  \right\} \, ,
\end{equation*}
as the minimizer is not affected by multiplication with a positive scalar. As mentioned above there are also cases where the selection operator only requires a subset of the parameters as its argument, for example in case of infimal convolution regularizations of the form $\regfctarg{u} := \inf_v \regparam_1 \left( \regfct_1(u - v) + \regparam_2 \regfct_2(v) \right)$, for $\regparambold = (\regparam_1, \regparam_2)$ and $\regdomain = (0, \infty) \times (0, \infty)$. Here $\select(\clean, \regparambold) = \select(\clean, \regparam_2)$ only depends on $\regparam_2$.
\end{remark}
We will show below that this selection operator is well-defined under standard conditions, which are also used to analyze the variational regularization method.


Following up on variational modeling as described in Section \ref{sec:varmod}, we define a generic variational regularization operator as follows.

\begin{definition}[Variational Regularization]\label{def:varreg}
Let $\fidelfct:\range \times \range \rightarrow \R_+ \cup \{\infty\}$ be continuous with $F(f,f)=0$ for all $f \in \oprange_\fidelfct(K)$ and $\regfct:\domain \times \regdomain \rightarrow \Rinf$ be proper, lower semi-continuous and convex functionals, and let $K \in \linbound{\domain}{\range}$. Then the potentially set-valued operator $\regop:\range \times \regdomain \rightrightarrows \domain$ defined as 
\begin{align}  
\regoparg{\noisy} := \argmin_{u \in \domain} \left\{ \fidelity{Ku}{\noisy} + \regfctarg{u} \right\}\label{eq:varreg}
\end{align}
is said to be a \emph{variational regularization}, for fixed regularization parameter(s) $\regparambold \in \regdomain$.
\end{definition}

\begin{remark}
We want to highlight that for convex $J$ and $F$ that is convex in its first argument any $\regsol \in \regoparg{\noisy}$ can equivalently be characterized via the optimality condition of \eqref{eq:varreg}, i.e.
\begin{align}
- K^\ast \partial_x \fidelity{K\regsol}{\noisy} \in \partial \regfctarg{\regsol}\label{eq:varregopt}
\end{align}
for all $\regsol \in \regoparg{\noisy}$.
\end{remark}


\subsection{Analysis of Variational Regularization}

In the following we will discuss the basic analysis of variational regularization methods, again we try to give a rather general perspective that covers most of the results in literature (but due to its generality does not simply reproduce them). Since we focus on the nonlinear regularization we will make the assumption that $\range$ is a separable Hilbert space. A first key issue is the existence of minimizers, which of course depends strongly on the choice of the regularization functional $\regfct$ and possibly also the operator $K$ and the fidelity $F$. As usual the key issues are lower semicontinuity and compactness in some topology. The latter is always obtained by coercivity in a Banach space norm, which is concluded from the boundedness of the fidelity and in particular the regularization functional. Consequently, the type of compactness is always weak or weak-star, since it is derived from the Banach-Alaoglu theorem (cf. \cite{rudin}).

A natural assumption to make for an existence proof is the following:
\begin{assumption} \label{existenceassumption}
Let $\domain = Z^*$ for some normed space $Z$ and let the weak-star topology on $\domain$ be metrizable on bounded sets. Assume moreover
\begin{itemize}
\item $K=L^*$ for a bounded linear operator $L:\range \rightarrow Z$.

\item $\regfct = H^*$ for some proper functional $H: Z \rightarrow \R \cup \{+\infty\}$ and $\regfct$ is nonnegative.

\item $\fidelfct$ is a proper, nonnegative, convex functional and for every $g \in \range$ there exists $u$ with  
$$ \fidelfct(Ku,g) + J(u,\regparambold) < \infty. $$

\item For each $g \in \range$ and $\regparambold \in { A}$, there exists a constant $c=c(a,b,\Vert g\Vert)$ depending monotonically non-decreasing on all arguments such that 
$$ \Vert u \Vert_{\domain} \leq c \qquad \text{if } \fidelity{Ku}g \leq a, \quad\regfctarg{u} \leq b. $$
\end{itemize}
\end{assumption}
Note that the above assumptions on $K$ and $\fidelfct$ are reminiscent of the setup used by \cite{bredies2013inverse} and later by \cite{brinkmann2017bias}. An alternative setup is to use a compactness assumption on $K$ or some condition on the range of $K$.
Moreover, the assumption on $\regfct$ to be the polar of a proper functional implies convexity, which is predominant in most approaches in regularization theory.  With these assumptions we can first verify well-posedness of the selection operator.

\begin{lemma} \label{selectlemma}
Let Assumption \ref{existenceassumption} be satisfied. Then for every $f \in \oprange_\fidelfct(K)$ the selection operator ${\cal S}$ is well-defined by \eqref{eq:variationalselect} for every $\regparambold \in A$.
\end{lemma}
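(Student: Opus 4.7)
The plan is to apply the direct method of the calculus of variations in the weak-star topology on $\domain = Z^\ast$. Let $S_0(f) := \argmin_{\tilde u \in \domain} \fidelity{K\tilde u}{f}$ denote the set of best approximate solutions. Since $f \in \oprange_\fidelfct(K)$, the set $S_0(f)$ contains at least one element with finite regularization value, so the infimum $j^\ast := \inf\{\regfctarg{u} \,:\, u \in S_0(f)\}$ is finite; denote by $m$ the corresponding common minimal value of the fidelity on $S_0(f)$.

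Next I would pick a minimizing sequence $(u_n) \subset S_0(f)$ with $\regfctarg{u_n} \to j^\ast$. Since $\fidelity{Ku_n}{f} = m$ and $\regfctarg{u_n}$ is eventually bounded, the coercivity hypothesis (fourth bullet of Assumption \ref{existenceassumption}) yields a uniform bound $\Vert u_n \Vert_\domain \leq C$. Because $\domain = Z^\ast$ and the weak-star topology is assumed to be metrizable on bounded sets, the Banach--Alaoglu theorem provides a subsequence $u_{n_k}$ converging weakly-star to some $\hat u \in \domain$.

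The remaining work is to verify $\hat u \in \select(f,\regparambold)$, which reduces to two lower semicontinuity statements. Using the structural hypothesis $K = L^\ast$ with $L: \range \to Z$ bounded and linear, $K$ is continuous from $\domain$ with the weak-star topology into the Hilbert space $\range$ with its weak topology; hence $Ku_{n_k}$ converges weakly to $K\hat u$ in $\range$. The continuity of $F$ combined with its convexity in the first argument implies that $\fidelfct(\cdot,f)$ is strongly lower semicontinuous and convex, therefore weakly lower semicontinuous, and so $\fidelity{K\hat u}{f} \leq \liminf_k \fidelity{Ku_{n_k}}{f} = m$; consequently $\hat u \in S_0(f)$. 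For the regularization term, the representation $\regfctarg{\cdot} = H^\ast$ exhibits $\regfctarg{\cdot}$ as a supremum of weak-star continuous affine functionals on $Z^\ast$, which delivers weak-star lower semicontinuity: $\regfctarg{\hat u} \leq \liminf_k \regfctarg{u_{n_k}} = j^\ast$. Combining both inequalities, $\hat u$ attains the infimum defining $\select(f,\regparambold)$, which is therefore non-empty.

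The main subtlety is bridging two different topologies: coercivity gives compactness only in the weak-star topology of $\domain$, whereas the fidelity lives on $\range$ and needs weak lower semicontinuity there. The hypothesis $K = L^\ast$ is precisely what supplies the weak-star to weak continuity of $K$ into $\range$ needed to pass the fidelity bound to the limit; without this structural assumption one would have to replace it by compactness of $K$ or some additional stability of $F$ under the coarser convergence induced by $K$.
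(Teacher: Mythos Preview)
Your proposal is correct and essentially identical to the paper's own proof: both apply the direct method with weak-star compactness via Banach--Alaoglu (using metrizability on bounded sets), invoke the weak-star-to-weak continuity of $K=L^\ast$ to pass the fidelity to the limit, and use the representation $\regfct = H^\ast$ for weak-star lower semicontinuity of the regularizer. Your write-up is in fact slightly more explicit than the paper's in justifying why $\fidelfct(\cdot,f)$ is weakly lower semicontinuous and why $\regfct$ is weak-star lower semicontinuous, but the structure and ingredients are the same.
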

\begin{proof}
If $f \in \oprange_\fidelfct(K)$ then there exists a minimizer $u^*$ of $\fidelfct(Ku,f)$ with $J(u^*,\regparambold) < \infty$. Since the minimization in the definition of ${\cal S}$ can be restricted to the set 
of $u$ such that $\fidelfct(Ku,f) =\fidelfct(Ku^*,f)=:a$, we obtain an upper bound on the fidelity. On this nonempty set we look for $u$ with $J(u) \leq J(u^*)=:b$. Thus, for the set of such $u$, the norm in $\domain$ is bounded due to  Assumption \ref{existenceassumption} and for each minimizing sequence there exists a weak-star convergent subsequence $u_n$ (we can use the metric version of the Banach-Alaoglu theorem due to the assumption of metrizability on bounded sets). 
Moreover, from our assumptions above it is straight-forward to see that $J(\cdot,\regparambold)$ is sequentially weak-star lower semicontinuous and $\fidelfct(\cdot,f)$ is weakly lower semicontinuous. From our assumption on  $K$ being the adjoint of $L$ we see that it is continuous from the weak-star topology of $\domain$ to the weak topology of $\range$, since for $g \in \range$, because
$$ \langle K u_n , g \rangle = \langle u_n , Lg \rangle $$
and $Lg \in Z$.
As a consequence, the full functional $\fidelfct(\cdot,f)+J(\cdot,\regparambold)$ is weak-star lower semicontinuous. Hence, the weak-star limit of $u_n$ is a minimizer, i.e. ${\cal S}$ is not empty. 
\end{proof}

The next step is to verify well-definedness of the regularization operator:
\begin{theorem}
Let Assumption  \ref{existenceassumption} be satisfied. Then for every $f \in \range$ the variational regularization model has a minimizer in $\domain$ for every $\regparambold \in A$, i.e., the regularization operator $R$ is well-defined by \eqref{eq:varreg}. Moreover, $R(f,\regparambold)$ is a convex set. 
\end{theorem}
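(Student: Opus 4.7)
The plan is to apply the direct method of the calculus of variations, paralleling the proof of Lemma \ref{selectlemma}. Let $m := \inf_{u \in \domain}\{F(Ku, f) + J(u,\regparambold)\}$. The third bullet of Assumption \ref{existenceassumption} guarantees a point with finite objective value, and nonnegativity of $F$ and $J$ gives $m \in [0, \infty)$. I would then pick a minimizing sequence $(u_n) \subset \domain$. For $n$ sufficiently large, $F(Ku_n, f) \leq m + 1$ and $J(u_n, \regparambold) \leq m + 1$, so the coercivity hypothesis (fourth bullet) yields a uniform bound $\|u_n\|_\domain \leq c(m+1, m+1, \|f\|_\range)$. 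By the Banach--Alaoglu theorem, combined with metrizability of the weak-star topology on bounded subsets of $\domain = Z^\ast$, I can extract a subsequence (not relabelled) with $u_n \overset{\ast}{\rightharpoonup} \hat u$ for some $\hat u \in \domain$.

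The second step is to establish sequential weak-star lower semicontinuity of the objective and identify $\hat u$ as a minimizer. Since $K = L^\ast$, for every $g \in \range$ we have $\langle Ku_n, g\rangle_\range = \langle u_n, Lg\rangle_{Z^\ast \times Z} \to \langle \hat u, Lg\rangle = \langle K\hat u, g\rangle_\range$, so $Ku_n \rightharpoonup K\hat u$ weakly in $\range$. Convexity and continuity of $F(\cdot,f)$ then imply weak lower semicontinuity, giving $F(K\hat u, f) \leq \liminf_n F(Ku_n, f)$. Since $J(\cdot, \regparambold) = H^\ast$ is the convex conjugate of a proper functional on $Z$, it is automatically weak-star lower semicontinuous on $Z^\ast$, so $J(\hat u, \regparambold) \leq \liminf_n J(u_n, \regparambold)$. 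Summing both inequalities yields $F(K\hat u, f) + J(\hat u, \regparambold) \leq m$, forcing equality and proving $\hat u \in R(f, \regparambold)$. Convexity of the set $R(f, \regparambold)$ is then immediate: $F(\cdot, f)$ is convex, $K$ is linear, and $J(\cdot, \regparambold)$ is convex, so the full objective is convex on $\domain$, and the argmin of a convex function is a convex set.

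The main technical point is the careful pairing of topologies. The weak-star topology on $\domain$ is the natural choice because it delivers compactness through Banach--Alaoglu, but to actually obtain lower semicontinuity of the fidelity one needs $K$ to map weak-star convergent sequences in $\domain$ into weakly convergent sequences in $\range$. This is precisely what the assumption $K = L^\ast$ provides, and it is the reason why the predual structure is built into Assumption \ref{existenceassumption}. The analogous role for the regularizer is played by the representation $J = H^\ast$, which guarantees weak-star lower semicontinuity of a convex conjugate; without such a structural hypothesis one would have to impose weak-star lower semicontinuity on $J$ directly. Apart from this interplay, every other step is a routine application of convex analysis.
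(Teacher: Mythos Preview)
Your proof is correct and follows essentially the same approach as the paper: both apply the direct method of the calculus of variations, using the third and fourth bullets of Assumption \ref{existenceassumption} for an a-priori bound, Banach--Alaoglu with metrizability for weak-star compactness, and the structural hypotheses $K=L^\ast$ and $J=H^\ast$ for the required lower semicontinuity, with convexity of $R(f,\regparambold)$ coming from convexity of the objective. The paper's proof simply refers back to Lemma \ref{selectlemma} for the compactness and lower semicontinuity steps, whereas you spell them out explicitly.
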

\begin{proof}
In order to obtain an a-priori bound we use the assumption that there exists $\tilde u$ with 
$$ a:= \fidelfct(K\tilde u,f) + J(\tilde u,\regparambold) < \infty.$$ 
Hence, we can restrict the minimization to those $u$ with functional value less or equal $a$. Setting $b=a$ and using the nonnegativity of both terms we obtain the boundedness of the norm on this subset due to  Assumption \ref{existenceassumption}. The remaining weak star compactness and lower semicontinuous arguments to verify the existence of a minimizer are analogous to the proof of Lemma \ref{selectlemma}. The convexity of $R(f,\regparambold)$ follows from the convexity of the set of minimizers of a convex functional. 
\end{proof}

In order to verify the generalized stability as well as the convergence of the variational regularization, a further condition on $\fidelfct$ with respect to the second variable is needed. There are several options, the easiest one being satisfied by standard examples such as squared norms is continuity. 
\begin{theorem} \label{theorem:stability}
Let Assumption  \ref{existenceassumption} be satisfied and let $\fidelfct$ be continuous with respect to the second variable. Then for $\regparambold \in A$ and every sequence $f_n \rightarrow f \in \range$ there exists a subsequence $u_{n_k} \in R(f_{n_k},\regparambold)$ converging to an element $u^* \in   R(f,\regparambold)$
in the weak star topology.
\end{theorem}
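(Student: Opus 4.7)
The plan is the direct method of the calculus of variations: take any $u_n \in \regoparg{f_n}$, produce a uniform $\domain$-bound, extract a weak-star convergent subsequence $u_{n_k} \stackrel{\ast}{\rightharpoonup} u^*$, and verify that $u^* \in \regoparg{f}$. To obtain the bound I would first fix a comparison element $\tilde u \in \domain$ with $\fidelity{K\tilde u}{f} + \regfctarg{\tilde u} < \infty$, which is available by Assumption \ref{existenceassumption}. Continuity of $\fidelfct$ in its second argument gives $\fidelity{K\tilde u}{f_n} \to \fidelity{K\tilde u}{f}$, so that the minimality inequality
\begin{equation*}
\fidelity{Ku_n}{f_n} + \regfctarg{u_n} \leq \fidelity{K\tilde u}{f_n} + \regfctarg{\tilde u}
\end{equation*}
yields uniform upper bounds $a := \sup_n \fidelity{Ku_n}{f_n}$ and $b := \sup_n \regfctarg{u_n}$ via the nonnegativity of both terms on the left. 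Since $\Vert f_n\Vert_\range$ is also bounded (by norm convergence), the coercivity clause of Assumption \ref{existenceassumption} delivers a uniform $\domain$-norm bound on $(u_n)$, and Banach--Alaoglu together with the metrizability of the weak-star topology on bounded sets produces the subsequence $u_{n_k} \stackrel{\ast}{\rightharpoonup} u^*$.

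To identify $u^*$ as an element of $\regoparg{f}$ I would compare $u_{n_k}$ against an arbitrary competitor $u \in \domain$ using its own optimality,
\begin{equation*}
\fidelity{Ku_{n_k}}{f_{n_k}} + \regfctarg{u_{n_k}} \leq \fidelity{Ku}{f_{n_k}} + \regfctarg{u},
\end{equation*}
and pass to the limit as $k \to \infty$. The right-hand side converges to $\fidelity{Ku}{f} + \regfctarg{u}$ by the continuity hypothesis on $\fidelfct$. For the left-hand side, $\regfctarg{\cdot}$ is weak-star lower semicontinuous as a Fenchel conjugate $H^*$, and the identification $K=L^*$ makes $K$ weak-star-to-weak continuous (exactly as exploited in the proof of Lemma \ref{selectlemma}), so $K u_{n_k} \rightharpoonup K u^*$ weakly in $\range$ while $f_{n_k} \to f$ strongly.

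The main obstacle is to bound $\liminf_k \fidelity{Ku_{n_k}}{f_{n_k}}$ from below by $\fidelity{Ku^*}{f}$, since both arguments of $\fidelfct$ are moving simultaneously. I would exploit convexity of $\fidelfct$ together with continuity in the second variable: a proper convex functional that is continuous in its second argument is jointly lower semicontinuous with respect to weak-in-first/strong-in-second convergence on bounded sets, which is precisely the mode of convergence established above. Granting this mixed lower semicontinuity, passing to $\liminf$ in the comparison inequality yields $\fidelity{Ku^*}{f} + \regfctarg{u^*} \leq \fidelity{Ku}{f} + \regfctarg{u}$ for every $u \in \domain$, so $u^* \in \regoparg{f}$. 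If the mixed semicontinuity does not follow from the stated hypotheses in full generality, the natural remedy is to strengthen the continuity requirement on $\fidelfct$ to a joint lower semicontinuity condition; this is standard in the variational regularization literature and is automatic for the canonical examples (squared Hilbert norm, Kullback--Leibler divergence, Bregman-type fidelities).
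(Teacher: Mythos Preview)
Your proposal is correct and follows essentially the same route as the paper's proof: compare $u_n$ against a fixed element to obtain uniform bounds via continuity of $\fidelfct$ in the second variable, extract a weak-star convergent subsequence, and pass to the limit in the minimality inequality using weak-star lower semicontinuity of $\regfct$ and the weak-star-to-weak continuity of $K$. You are in fact more explicit than the paper about the mixed lower semicontinuity of $\fidelfct$ under weak-in-first/strong-in-second convergence, which the paper simply subsumes under ``lower semicontinuity arguments as in the results above and the continuity of $\fidelfct$ with respect to the second variable.''
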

\begin{proof}
By definition of the regularization operator we find for 
 $u_n \in R(f_n,\regparambold)$ that for any $u \in \domain$
$$ \fidelfct(Ku_n,f_n) + J(u_n,\regparambold) \leq  \fidelfct(Ku ,f_n) + J(u,\regparambold) .$$
Due to the convergence of $f_n$ and the continuity of $\fidelfct$ in the second argument the right-hand side in the last estimate is uniformly bounded by some constant $a$, which again provides uniform bounds for both terms on the left-hand side. Consequently
$$ \Vert u_n \Vert \leq c(a,a,\Vert f_n\Vert).$$ 
The boundedness of  $\Vert f_n\Vert$ and monotone dependence of $c$ yields a uniform bound on $\Vert u_n \Vert$, thus a weakly converging subsequence.  Using lower semicontinuity arguments as in the results above and the continuity of $\fidelfct$ with respect to the second variable we see that for the limit $u^*$ the inequality
\begin{align*}
 \fidelfct(Ku^* ,f) + J(u^*,\regparambold) & \leq \lim \inf \fidelfct(Ku_{n_k},f_{n_k}) + J(u_{n_k},\regparambold) \leq \lim \fidelfct(Ku ,f_{n_k}) + J(u,\regparambold) \\ & = \fidelfct(Ku ,f ) + J(u,\regparambold)  .
 \end{align*}
 Hence $u^* \in R(f,\regparambold).$
\end{proof}

As mentioned earlier, the type of convergence in Theorem \ref{theorem:stability} corresponds exactly to the type of stability in the Kuratowski limit superior. 
We finally provide a comment on the convergence of the regularization method only, the proof is very analogous to the stability result, an a-priori bound is obtained by the estimate 
$$ 
\fidelfct(Ku^\regparambold,f^\delta) + J(u^\regparambold,\regparambold) \leq
\fidelfct(Ku^\dagger,f^\delta) + J(u^\dagger,\regparambold) \leq \delta +  J(u^\dagger,\regparambold) $$ for $u^{\regparambold} \in R(f^\delta,\regparambold)$ and any element  $u^\dagger \in {\cal S}(f, \regparambold)$. Depending on the specific dependence on $\regparambold$ some condition on the interplay of the noise level and the limit of $\regparambold$ is needed, to pass to the limit in 
$$ J(u^\regparambold,\regparambold)  \leq \delta +  J(u^\dagger,\regparambold) .$$
An abstract condition as $\regparambold$ converges to $\regparambold^*$ outside $A$ is 
$$ \lim_{\regparambold \rightarrow \regparambold^\ast} \frac{\delta}{J(u^\dagger,\regparambold)} = 0 , $$
then 
$$ \limsup_{\regparambold \rightarrow \regparambold^\ast} \frac{J(u^\regparambold,\regparambold)}{J(u^\dagger,\regparambold)} \leq 1. $$
In the standard case $J(u,\regparambold) = \alpha J(u)$ the condition is simply $\frac{\delta}\alpha \rightarrow 0$. Hence, for such parameter choices, variational regularization methods define indeed convergent regularization operators.

\subsection{Error Estimates}
When it comes to the solution of ill-posed, inverse problems, an important question to address is the question of how errors in the measurement data are being propagated in the regularization process; in particular, convergence with respect to the noise level $\delta$ and the rate of convergence are of major interest. Following up on Definition \ref{def:dconvergence}, we look into $D$-convergence in the case of $D$ being a Bregman distance.

In order to derive error estimates, we restrict ourselves to the following smoothness-class $\mathcal{M}_\nu$. Given some unknown ground truth solution $\minsol \in \select(\clean, \regparambold)$, we ensure $\minsol \in \oprange(\regoparg{\cdot})$, i.e. we have to ensure that there exists data $\rangedata$ such that $\minsol \in \regoparg{\rangedata}$ is a solution of the corresponding variational regularization problem.
\begin{definition}[(Variational) Range condition]
An element $\minsol \in \select(\clean, \regparambold)$ for $\clean \in \oprange_\fidelfct(K)$ satisfies the \emph{range condition} if $\minsol \in \oprange(\regoparg{\cdot})$. If $K \in \linbound{\domain}{\range}$, $\fidelfct$ is convex and Fr\'{e}chet-differentiable w.r.t. its first argument, and $\regfctarg{\cdot}$ is proper, convex and l.s.c., then this is equivalent to
\begin{align}
\exists \, p^\dagger \in \partial \regfctarg{\minsol}\, , \exists \, \rangedata \in \range: \qquad p^\dagger = - K^\ast \partial_x \fidelity{K\minsol}{\rangedata} \, .\tag{RC}\label{eq:rc}
\end{align}
\end{definition}
From now on we assume $K \in \linbound{\domain}{\range}$, convexity and Fr\'{e}chet-differentiability of $\fidelfct$ in its first argument, and properness, convexity and lower semi-continuity of $\regoparg{\cdot}$ for the remainder of this section, which will allow us to use an appropriate optimality condition. 

Let us sketch the basic idea in the case of a quadratic fidelity $\fidelfct(f,g) = \frac{1}2 \Vert f-g\Vert^2$ with some norm in a Hilbert space and $J(u,\regparambold) = \alpha J(u)$. The optimality condition \eqref{eq:basicvariational} is given by
$$ K^* (Ku^\alpha - f^\delta) + \alpha p^\alpha = 0, \qquad p^\alpha \in \partial J(u^\alpha). $$
In order to satisfy the range condition for $u^\dagger$ we need to assume the existence $f_\alpha^\dagger$ such that $p^\dagger \in \partial J(u^\dagger)$. In order to satisfy the range condition for $u^\dagger$ we need to assume the existence $f_\alpha^\dagger$ such that $p^\dagger \in \partial J(u^\dagger)$ and
$$ K^* (Ku^\dagger - f_\alpha^\dagger) + \alpha p^\dagger = 0.$$
We see that this equation implies the condition $p^\dagger = K^*v$ for some $v$ (noticing $K^* (Ku^\dagger -f^\dagger) = 0$). On the other hand if this condition is satisfied we can construct $f_\alpha^\dagger = \clean - \alpha v$, i.e., $p^\dagger = K^*v$ is equivalent to the range condition \eqref{eq:rc}. An error estimate can then be obtained by subtracting both optimality conditions
$$K^* K (u^\alpha - u^\dagger) + \alpha (p^\alpha -p^\dagger)= K^* (f^\delta - f_\alpha^\dagger).
$$ 
Taking a duality product with $u^\alpha - u^\dagger$ yields 
$$ \Vert K(u^\alpha - u^\dagger) \Vert^2 + \alpha D_J^{p^\alpha}(u^\dagger,u^\alpha) + \alpha
D_J^{p^\dagger}(u^\alpha,u^\dagger) = 
\langle K(u^\alpha - u^\dagger),f^\delta - f_\alpha^\dagger \rangle.  $$
Applying Young's inequality on the right-hand side and inserting the special form of $f_\alpha^\dagger$ then immediately yields an error estimate (cf. \cite{burger2016bregman}). Note that we obtain an upper bound on the residual as well as the symmetric Bregman distance
\begin{equation}
\symbreg{\minsol}{\regsol} = \bregdis{p^\alpha}{u^\dagger}{u^\alpha} +  
\bregdis{p^\dagger}{u^\alpha}{u^\dagger}.
\end{equation}
For further interpretations of the error estimates see \cite{burgerosher,burger2007error,resmerita06,burger2016bregman}).

We now want to show that \eqref{eq:rc} coincides with the well-known source condition (cf. \cite{chavent1997regularization,burgerosher}) for a certain class of fidelity functionals. Before we proceed, we have to define this source condition first.
\begin{definition}[Source condition]
An element $\minsol \in \select(\clean, \regparambold)$ for $\clean \in \oprange_K(\fidelfct)$ satisfies the \emph{source condition} if
\begin{align*}
\mathcal{R}(K^\ast) \cap \partial \regfctarg{\minsol} \neq \emptyset \, .
\end{align*}
This is equivalent to
\begin{align}
\exists \, p^\dagger \in \partial \regfctarg{\minsol}, \, \exists \, v \in \range^\ast \setminus \{ 0 \}: \qquad p^\dagger = K^\ast v \, . \tag{SC}\label{eq:sc}
\end{align}
\end{definition}
\begin{remark}\label{rem:sc}
For scalar regularization parameters $\regparambold = \regparam$ and regularization functionals of the form $\regfctarg[\regparam]{u} = \regparam \regfct_1(u)$ the source condition for $\regparam = 1$ can be written as $K^\ast v \in \partial \regfct_1(\minsol) = \partial \regfctarg[1]{\minsol}$. Every other potential source condition $K^\ast v_\regparam \in \partial \regfctarg[\regparam]{\minsol}$ can be expressed in terms of $v$ via the relation $v_\regparam = \regparam v$.
\end{remark}
It is obvious that \eqref{eq:rc} implies \eqref{eq:sc}. However, we want to go one step further and show that \eqref{eq:rc} and \eqref{eq:sc} are even equivalent conditions for fidelity functionals $\fidelity{Ku}{\noisy} := G(Ku - \noisy)$, where $G$ is a Legendre functional. Legendre functionals are defined as follows.
\begin{definition}[{\cite[Definition 5.2]{bauschke2001essential}}]\label{def:legendre}
Let $G:\range \rightarrow \Rinf$ be a proper, convex and l.s.c. functional. We say that $G$ is 
\begin{itemize}
\item \emph{essentially smooth}, if $\partial G$ is both locally bounded and single-valued on its domain.
\item \emph{essentially strictly convex}, if $(\partial G)^{-1}$ is locally bounded on its domain and $G$ is strictly convex on every convex subset of $\dom(\partial G)$.
\item \emph{Legendre}, if $G$ is both essentially smooth and essentially strictly convex.
\end{itemize}
\end{definition}
\noindent Now we show that \eqref{eq:rc} and \eqref{eq:sc} are equivalent for $G$ being a Legendre functional.
\begin{theorem}\label{thm:rcscequivalence}
Let $\range$ be reflexive, and suppose $\fidelity{\clean}{\noisy} := G(\clean - \noisy)$ for any $\clean, \noisy \in \range$, where $G:\range \rightarrow \Rinf$ is a Legendre functional. Then \eqref{eq:rc} and \eqref{eq:sc} are equivalent conditions.
\begin{proof}
"$\Rightarrow$": Condition \eqref{eq:rc} trivially implies \eqref{eq:sc} if we define $v := - \partial_x \fidelity{K\minsol}{\rangedata} = - G^\prime(K\minsol - \rangedata)$.\\
"$\Leftarrow$": The source condition \eqref{eq:sc} can be written as
\begin{align*}
0 &= p^\dagger - K^\ast v \, ,\\
\Leftrightarrow 0 &= p^\dagger + K^\ast G^\prime( (G^\ast)^\prime( - v ) ) \, ,
\end{align*}
where $G^\ast :\range^\ast \rightarrow \Rinf$ denotes the convex conjugate of $G$. Note that $G^\ast$ is also a Legendre functional since $\range$ is reflexive (see \cite[Corollary 5.5]{bauschke2001essential}), and that the last equality is valid for all $v \in \dom(G)$ due to \cite[Theorem 5.9]{bauschke2001essential}. Hence, if we define 
\begin{align*}
\rangedata := K\minsol - (G^\ast)^\prime(-v)
\end{align*}
we ensure that the range condition \eqref{eq:rc} is satisfied.
\end{proof}
\end{theorem}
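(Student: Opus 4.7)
The forward direction is essentially immediate: if the range condition holds with auxiliary data $\rangedata$, then since $\fidelfct(\cdot,\rangedata) = G(\cdot - \rangedata)$ and $G$ is essentially smooth (hence $\partial G$ is single-valued on its domain and coincides with the Fréchet derivative $G'$), the subgradient $p^\dagger$ equals $-K^\ast G'(K\minsol - \rangedata)$, and one simply reads off the element $v := -G'(K\minsol - \rangedata) \in \range^\ast$ that realises \eqref{eq:sc}. (One should briefly check that $v \neq 0$; this follows because $p^\dagger \neq 0$ in the nontrivial case, and otherwise the claim is vacuous after possibly perturbing.)

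The interesting direction is SC $\Rightarrow$ RC. Given $p^\dagger \in \partial \regfctarg{\minsol}$ and $v \in \range^\ast$ with $p^\dagger = K^\ast v$, the task is to construct $\rangedata \in \range$ such that $-G'(K\minsol - \rangedata) = v$, for then $-K^\ast \partial_x \fidelity{K\minsol}{\rangedata} = K^\ast v = p^\dagger$ and \eqref{eq:rc} holds. The natural candidate is
\begin{equation*}
\rangedata := K\minsol - (G^\ast)'(-v),
\end{equation*}
so the question reduces to verifying the identity $G'((G^\ast)'(-v)) = -v$ whenever the right-hand side makes sense.

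The key ingredient is the duality between $\partial G$ and $\partial G^\ast$ for Legendre functionals on a reflexive space. By \cite[Corollary 5.5]{bauschke2001essential}, since $\range$ is reflexive and $G$ is Legendre, the conjugate $G^\ast$ is also Legendre; in particular $G^\ast$ is essentially smooth, so $(G^\ast)'$ is well-defined and single-valued on $\mathrm{int}\,\mathrm{dom}(G^\ast)$. Moreover, \cite[Theorem 5.9]{bauschke2001essential} gives that $\partial G$ and $\partial G^\ast$ are bijections between the interiors of their respective domains, inverse to one another, which yields exactly $G'((G^\ast)'(w)) = w$ for $w$ in the appropriate set. Applying this with $w = -v$ gives the desired relation and hence the construction of $\rangedata$.

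The main obstacle I anticipate is a careful handling of the domain issues: one needs $-v$ to lie in the interior of $\mathrm{dom}(G^\ast)$ so that $(G^\ast)'(-v)$ is defined, and conversely the resulting point $(G^\ast)'(-v)$ must lie in the interior of $\mathrm{dom}(G)$ so that $G'$ can be applied. This is precisely what is guaranteed by the essential smoothness/strict convexity conditions bundled into the Legendre property, combined with reflexivity (which is needed to identify $\range^{\ast\ast}$ with $\range$ when dualising). Once these domain conditions are recorded, the proof is a one-line application of the cited Bauschke–Borwein–Combettes results; the rest is bookkeeping.
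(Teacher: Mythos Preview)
Your proposal is correct and follows essentially the same route as the paper: both directions are handled identically, reading off $v = -G'(K\minsol - \rangedata)$ for RC $\Rightarrow$ SC and constructing $\rangedata := K\minsol - (G^\ast)'(-v)$ via the Legendre duality $G' \circ (G^\ast)' = \mathrm{id}$ for SC $\Rightarrow$ RC, with the same appeals to \cite[Corollary~5.5 and Theorem~5.9]{bauschke2001essential}. If anything, your discussion of the domain issues (that $-v$ must lie in $\mathrm{int}\,\mathrm{dom}(G^\ast)$) is more careful than the paper's, which glosses over this point.
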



The range condition \eqref{eq:rc} allows us to derive error estimates in a Bregman distance setting for these very generic variational regularization methods. The following lemma builds the basis by estimating Bregman distances between $\regsol$ and $\minsol$ in terms of differences of the data fidelities. 

\begin{lemma}\label{lem:rcequality}
Let \eqref{eq:rc} be satisfied. Then we observe
\begin{align}
\begin{split}
&\bregdis[\fidelity{K \cdot}{\noisy}]{}{\minsol}{\regsol} + \bregdis[\fidelity{K \cdot}{\scdata}]{}{\regsol}{\minsol} + \symbreg{\minsol}{\regsol} \\
{} = {} &\fidelity{K\minsol}{\noisy} - \fidelity{K\minsol}{\scdata} + \fidelity{K\regsol}{\scdata} - \fidelity{K\regsol}{\noisy}
\end{split}\label{eq:errorreform}
\end{align}
for every $\regsol \in \regoparg{\noisy}$.
\begin{proof}
Computing the optimality condition \eqref{eq:varregopt} of \eqref{eq:varreg} and subtracting $p^\dagger \in \partial \regfctarg{\minsol}$ from both sides of the equality yields
\begin{align*}
p_\regparambold - p^\dagger = - K^\ast \partial_x \fidelity{K\regsol}{\noisy} - p^\dagger \, , 
\end{align*}
for any $p_\regparambold \in \partial \regfctarg{\regsol}$. Taking a duality product with $\regsol - \minsol$ then yields
\begin{align*}
\symbreg{\regsol}{\minsol} = \underbrace{\langle K^\ast \partial_x \fidelity{K\regsol}{\noisy}, \minsol - \regsol \rangle}_{= \fidelity{K\minsol}{\noisy} - \fidelity{K\regsol}{\noisy} - \bregdis[{\fidelity{K\cdot}{\noisy}}]{}{\minsol}{\regsol}} - \langle p^\dagger, \regsol - \minsol \rangle \, .
\end{align*}
Hence, we conclude
\begin{align}
\bregdis[{\fidelity{K\cdot}{\noisy}}]{}{\minsol}{\regsol} + \symbreg{\regsol}{\minsol} = \fidelity{K\minsol}{\noisy} - \fidelity{K\regsol}{\noisy} - \langle p^\dagger, \regsol - \minsol \rangle \, .\label{eq:errestcmp1}
\end{align}
If we now choose $p^\dagger = -K^\ast \partial_x \fidelity{K\minsol}{\rangedata}$ -- which is possible since \eqref{eq:rc} holds true -- we obtain the equality
\begin{align}
\begin{split}
- \langle p^\dagger, \regsol - \minsol \rangle &= \langle K^\ast \partial_x \fidelity{K\minsol}{\rangedata}, \regsol - \minsol \rangle \\
&= \fidelity{K\regsol}{\rangedata} - \fidelity{K\minsol}{\rangedata} - \bregdis[{\fidelity{K\cdot}{\rangedata}}]{}{\regsol}{\minsol}
\end{split}\, .\label{eq:errestcmp2}
\end{align}
Inserting \eqref{eq:errestcmp2} into \eqref{eq:errestcmp1} then yields \eqref{eq:errorreform}.
\end{proof}
\end{lemma}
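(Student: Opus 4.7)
My plan is to combine the subgradient optimality condition \eqref{eq:varregopt} for $\regsol$ with the range condition \eqref{eq:rc} for $\minsol$, and then convert the resulting dual pairings into Bregman distances of the fidelity via the elementary three-term identity
\begin{equation*}
\langle K^\ast \partial_x \fidelity{Kv}{g}, w - v \rangle = \fidelity{Kw}{g} - \fidelity{Kv}{g} - \bregdis[\fidelity{K\cdot}{g}]{}{w}{v},
\end{equation*}
which is just a rearrangement of the definition of the Bregman distance and is valid because $\fidelfct(\cdot,g)$ is convex and Fréchet-differentiable in its first slot. I will apply this identity twice with different choices of $(v,g)$: once at $v=\regsol$ against the noisy data $\noisy$, and once at $v=\minsol$ against the auxiliary data $\scdata$ supplied by \eqref{eq:rc}.

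First I would write \eqref{eq:varregopt} as $-K^\ast \partial_x \fidelity{K\regsol}{\noisy} \in \partial \regfctarg{\regsol}$, subtract a subgradient $p^\dagger \in \partial \regfctarg{\minsol}$ from both sides, and pair with $\regsol - \minsol$. On the subgradient side this produces precisely the symmetric Bregman distance $\symbreg{\minsol}{\regsol}$ by its definition as a sum of two one-sided distances; on the operator side it leaves the expression $\langle K^\ast \partial_x \fidelity{K\regsol}{\noisy}, \minsol - \regsol \rangle - \langle p^\dagger, \regsol - \minsol \rangle$. The first use of the three-term identity (with $v=\regsol$, $g=\noisy$, $w=\minsol$) rewrites the first pairing as $\fidelity{K\minsol}{\noisy} - \fidelity{K\regsol}{\noisy} - \bregdis[\fidelity{K\cdot}{\noisy}]{}{\minsol}{\regsol}$.

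The decisive step is to invoke \eqref{eq:rc} to replace $p^\dagger$ by $-K^\ast \partial_x \fidelity{K\minsol}{\scdata}$; the remaining term $-\langle p^\dagger, \regsol - \minsol \rangle$ then becomes $\langle K^\ast \partial_x \fidelity{K\minsol}{\scdata}, \regsol - \minsol \rangle$ and is amenable to a second application of the same identity, now with $v=\minsol$, $g=\scdata$, $w=\regsol$, producing $\fidelity{K\regsol}{\scdata} - \fidelity{K\minsol}{\scdata} - \bregdis[\fidelity{K\cdot}{\scdata}]{}{\regsol}{\minsol}$. Assembling the two applications, moving both fidelity Bregman distances to the left, and using symmetry of $\symbreg{\cdot}{\cdot}$ yields \eqref{eq:errorreform} verbatim. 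I do not anticipate any real obstacle beyond sign-bookkeeping; the only conceptual point worth emphasising is that \eqref{eq:rc} is precisely what symmetrises the estimate, by letting one re-express the otherwise opaque term $\langle p^\dagger, \cdot \rangle$ as a fidelity derivative at a second reference datum, so that a Bregman distance of $\fidelfct(K\cdot,\scdata)$ naturally emerges to partner with the one of $\fidelfct(K\cdot,\noisy)$.
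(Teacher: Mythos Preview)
Your proposal is correct and follows essentially the same route as the paper's proof: start from the optimality condition \eqref{eq:varregopt}, subtract $p^\dagger$, pair with $\regsol-\minsol$ to produce the symmetric Bregman distance, then apply the Bregman three-term identity once at $(\regsol,\noisy)$ and, after invoking \eqref{eq:rc} to write $p^\dagger=-K^\ast\partial_x\fidelity{K\minsol}{\scdata}$, once more at $(\minsol,\scdata)$. The only difference is cosmetic notation ($\scdata$ versus $\rangedata$); the logical structure, order of steps, and use of the range condition are identical.
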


Before we proceed, we make the following observation for data fidelities $\fidelfct$ that are also Bregman distances.
\begin{corollary}\label{cor:bregmanofbregman}
Let $\fidelfct:\range \times \range \rightarrow \R$ be a Bregman distance, i.e.
\begin{align*}
\fidelity{\clean}{\rangedata} = G(\clean) - G(\rangedata) - \langle G^\prime(\rangedata), \clean - \rangedata \rangle \geq 0 \, , 
\end{align*}
for all $\clean, \rangedata \in \range$, and some functional $G:\range \rightarrow \R$. Then we already observe
\begin{align*}
\bregdis[\fidelity{\cdot}{\rangedata}]{}{\clean}{\noisy} = \fidelity{\clean}{\noisy} \, ,
\end{align*}
for all $\clean, \rangedata, \noisy \in \range$.
\end{corollary}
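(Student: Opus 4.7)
The plan is a direct computation: expand both sides of the claimed identity using the definition of $F$ as a Bregman distance generated by $G$, and observe that the terms involving $g$ cancel.

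First I would write out the Bregman distance on the left-hand side explicitly. By definition of $D_{F(\cdot, \rangedata)}$ applied at the points $\clean$ and $\noisy$, I have
\begin{equation*}
\bregdis[\fidelity{\cdot}{\rangedata}]{}{\clean}{\noisy} = \fidelity{\clean}{\rangedata} - \fidelity{\noisy}{\rangedata} - \langle \partial_x \fidelity{\noisy}{\rangedata}, \clean - \noisy \rangle.
\end{equation*}
Since $\fidelity{h}{\rangedata} = G(h) - G(\rangedata) - \langle G'(\rangedata), h - \rangedata \rangle$, the Fr\'{e}chet derivative in the first argument is $\partial_x \fidelity{\noisy}{\rangedata} = G'(\noisy) - G'(\rangedata)$.

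Next I would substitute these expressions and collect terms. Writing everything out, the $\rangedata$-dependent constants $G(\rangedata)$ cancel between $\fidelity{\clean}{\rangedata}$ and $\fidelity{\noisy}{\rangedata}$, and the $\langle G'(\rangedata), \cdot \rangle$ contributions coming from the three pieces of the expression (the two fidelity terms and the derivative term) likewise cancel exactly, since $-\langle G'(\rangedata), \clean - \rangedata\rangle + \langle G'(\rangedata), \noisy - \rangedata\rangle + \langle G'(\rangedata), \clean - \noisy\rangle = 0$. What remains is precisely
\begin{equation*}
G(\clean) - G(\noisy) - \langle G'(\noisy), \clean - \noisy \rangle = \fidelity{\clean}{\noisy},
\end{equation*}
which is the right-hand side.

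There is essentially no obstacle here, so the proof is a short verification; the conceptual content is simply that Bregman distances generated by the same $G$ are invariant under the choice of reference point in the sense formalized by the identity. The only thing to be slightly careful about is the bookkeeping of the $\rangedata$-terms; once these are tracked, the claim follows.
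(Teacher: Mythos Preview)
Your proof is correct and follows essentially the same approach as the paper: both expand the Bregman distance of $F(\cdot,\rangedata)$ using the definition, compute $\partial_x \fidelity{\noisy}{\rangedata} = G'(\noisy) - G'(\rangedata)$, and then cancel the $\rangedata$-dependent terms to arrive at $G(\clean) - G(\noisy) - \langle G'(\noisy), \clean - \noisy\rangle = \fidelity{\clean}{\noisy}$. The paper merely writes out the intermediate cancellation steps line by line, whereas you summarize them verbally, but the argument is identical.
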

\begin{proof}
We simply compute
\begin{align*}
\bregdis[\fidelity{\cdot}{\rangedata}]{}{\clean}{\noisy} {} = {} &\fidelity{\clean}{\rangedata} - \fidelity{\noisy}{\rangedata} - \langle \partial_x \fidelity{\noisy}{\rangedata}, \clean - \noisy \rangle\\
{} = {} &G(\clean) - G(\rangedata) - \langle G^\prime(\rangedata), \clean - \rangedata \rangle \\
&-G(\noisy) + G(\rangedata) + \langle G^\prime(\rangedata), \noisy - \rangedata \rangle\\
&- \langle G^\prime(\noisy) - G^\prime(\rangedata), \clean - \noisy \rangle \, ,\\
{} = {} &G(\clean) - G(\noisy) - \langle G^\prime(\rangedata), \clean - \noisy \rangle \\
&- \langle G^\prime(\noisy) - G^\prime(\rangedata), \clean - \noisy \rangle \, , \\
{} = {} &\bregdis[G]{}{\clean}{\noisy} = \fidelity{\clean}{\noisy} \, ,
\end{align*}
and hence, prove the result.
\end{proof}
As a consequence, Lemma \ref{lem:rcequality} reads as follows for data fidelities that are also Bregman distances.
\begin{lemma}
Let the assumptions of Lemma \ref{lem:rcequality} and Corollary \ref{cor:bregmanofbregman} hold true. Then we have
\begin{align*}
\symbreg{\minsol}{\regsol} = \langle G^\prime(\rangedata) - G^\prime(K\minsol) - ( G^\prime(\noisy) - G^\prime(K\regsol) ), K\minsol - K\regsol \rangle \, .
\end{align*}
\begin{proof}
From Corollary \ref{cor:bregmanofbregman} we know that 
\begin{align*}
\bregdis[\fidelity{K \cdot}{\noisy}]{}{\minsol}{\regsol} = \fidelity{K\minsol}{K\regsol} \qquad \text{and} \qquad
\bregdis[\fidelity{K \cdot}{\rangedata}]{}{\regsol}{\minsol} = \fidelity{K\regsol}{K\minsol} \, .
\end{align*}
Hence, we observe
\begin{align*}
\begin{split}
\bregdis[\fidelity{K \cdot}{\noisy}]{}{\minsol}{\regsol} + \bregdis[\fidelity{K \cdot}{\rangedata}]{}{\regsol}{\minsol} &= \fidelity{K\minsol}{K\regsol} + \fidelity{K\regsol}{K\minsol}\\
&= \symbreg[G]{K\minsol}{K\regsol}\\
&= \langle G^\prime(K\minsol) - G^\prime(K\regsol), K\minsol - K\regsol \rangle
\end{split} \, .
\end{align*}
We also discover
\begin{align*}
&\fidelity{K\minsol}{\noisy} - \fidelity{K\minsol}{\scdata} + \fidelity{K\regsol}{\scdata} - \fidelity{K\regsol}{\noisy} \\
\begin{split}
{} = {} &G(K\minsol) - G(\noisy) - \langle G^\prime(\noisy), K\minsol - \noisy \rangle\\
 &- \left( G(K\minsol) - G(\rangedata) - \langle G^\prime(\rangedata), K\minsol - \rangedata \rangle \right)\\
 &+ G(K\regsol) - G(\rangedata) - \langle G^\prime(\rangedata), K\regsol - \rangedata \rangle\\
 &- \left( G(K\regsol) - G(\noisy) - \langle G^\prime(\noisy), K\regsol - \noisy \rangle \right)
\end{split}\, ,\\
 {} = {} &\langle G^\prime(\noisy), K\regsol - K\minsol \rangle + \langle G^\prime(\rangedata), K\minsol - K\regsol \rangle \, ,\\
 {} = {} &\langle G^\prime(\noisy) - G^\prime(\rangedata), K\regsol - K\minsol \rangle \, .
\end{align*}
Combining these two equalities with \eqref{eq:errorreform} yields the desired result.
\end{proof}
\end{lemma}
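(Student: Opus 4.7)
The plan is to combine the identity from Lemma \ref{lem:rcequality} with Corollary \ref{cor:bregmanofbregman} and then expand everything through the definition $\fidelity{x}{y} = G(x) - G(y) - \langle G^\prime(y), x - y \rangle$, tracking cancellations carefully. The starting point is the equality
\begin{align*}
\bregdis[\fidelity{K \cdot}{\noisy}]{}{\minsol}{\regsol} + \bregdis[\fidelity{K \cdot}{\rangedata}]{}{\regsol}{\minsol} + \symbreg{\minsol}{\regsol} = \fidelity{K\minsol}{\noisy} - \fidelity{K\minsol}{\rangedata} + \fidelity{K\regsol}{\rangedata} - \fidelity{K\regsol}{\noisy},
\end{align*}
from which $\symbreg{\minsol}{\regsol}$ is to be isolated.

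First, I would handle the left hand side. Since $\fidelfct$ is a Bregman distance generated by $G$, the composed functional $u \mapsto \fidelity{Ku}{\noisy}$ is, up to an affine term in $u$ depending only on $\noisy$, just $G \circ K$. Affine parts do not contribute to Bregman distances, so Corollary \ref{cor:bregmanofbregman} (applied after observing that the composition with the linear map $K$ commutes with the Bregman construction in the sense that $\partial_u \fidelity{Ku}{\noisy} = K^\ast(G^\prime(Ku) - G^\prime(\noisy))$) gives
\begin{align*}
\bregdis[\fidelity{K \cdot}{\noisy}]{}{\minsol}{\regsol} = \fidelity{K\minsol}{K\regsol}, \qquad \bregdis[\fidelity{K \cdot}{\rangedata}]{}{\regsol}{\minsol} = \fidelity{K\regsol}{K\minsol}.
\end{align*}
Their sum is the symmetric Bregman distance of $G$ between $K\minsol$ and $K\regsol$, which collapses to $\langle G^\prime(K\minsol) - G^\prime(K\regsol), K\minsol - K\regsol \rangle$ by direct computation.

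Next, I would expand the right hand side of the identity from Lemma \ref{lem:rcequality}. Writing each of the four terms $\fidelity{Ka}{b}$ as $G(Ka) - G(b) - \langle G^\prime(b), Ka - b\rangle$, the four copies of $G(K\minsol)$ and $G(K\regsol)$ cancel in pairs, and so do the $G(\noisy)$ and $G(\rangedata)$ contributions. What remains is the linear combination of inner products
\begin{align*}
-\langle G^\prime(\noisy), K\minsol - \noisy\rangle + \langle G^\prime(\rangedata), K\minsol - \rangedata\rangle - \langle G^\prime(\rangedata), K\regsol - \rangedata\rangle + \langle G^\prime(\noisy), K\regsol - \noisy\rangle,
\end{align*}
and the $\noisy$ and $\rangedata$ pieces inside each inner product cancel, leaving $\langle G^\prime(\rangedata) - G^\prime(\noisy), K\minsol - K\regsol \rangle$.

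Combining both sides and solving for the symmetric Bregman distance yields
\begin{align*}
\symbreg{\minsol}{\regsol} = \langle G^\prime(\rangedata) - G^\prime(\noisy), K\minsol - K\regsol \rangle - \langle G^\prime(K\minsol) - G^\prime(K\regsol), K\minsol - K\regsol \rangle,
\end{align*}
which rearranges to the claimed expression. The manipulations are all routine; the one place to be careful is the application of Corollary \ref{cor:bregmanofbregman} to the composed fidelity $\fidelity{K\cdot}{\noisy}$, where one must verify that composition with the bounded linear operator $K$ is compatible with the corollary's identity so that the Bregman distance of the composition equals $\fidelity{K\minsol}{K\regsol}$. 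Everything else is bookkeeping of the linear and constant terms in $G$.
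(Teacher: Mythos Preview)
Your proof is correct and follows essentially the same route as the paper: apply Corollary~\ref{cor:bregmanofbregman} to collapse the two Bregman-of-fidelity terms into the symmetric $G$-Bregman distance $\langle G^\prime(K\minsol)-G^\prime(K\regsol),K\minsol-K\regsol\rangle$, expand the right-hand side of \eqref{eq:errorreform} and cancel, then subtract. Your explicit remark that one must check the compatibility of Corollary~\ref{cor:bregmanofbregman} with the composition by $K$ (via $\langle K^\ast(G'(K\regsol)-G'(\noisy)),\minsol-\regsol\rangle=\langle G'(K\regsol)-G'(\noisy),K\minsol-K\regsol\rangle$) is a nice point of care that the paper leaves implicit.
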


\begin{example}
We can use \eqref{eq:errorreform} to derive the same error estimates presented in \cite{burger2007error} for the choice $\fidelity{Ku}{\noisy} = \frac{1}{2}\| Ku - \noisy \|_{\hilbert}^2$, where $\hilbert$ is a Hilbert space. In this case we observe 
\begin{align*}
\bregdis[{\fidelity{K\cdot}{\noisy}}]{}{\minsol}{\regsol} = \bregdis[\fidelity{K \cdot}{\scdata}]{}{\regsol}{\minsol} = \frac{1}{2} \| K(\minsol - \regsol) \|_{\hilbert}^2 \, .
\end{align*}
Hence, Equation \eqref{eq:errorreform} reads as
\begin{align*}
\begin{split}
&\| K(\minsol - \regsol) \|_{\hilbert}^2 + \symbreg{\minsol}{\regsol} + \frac{1}{2}\| K\regsol - \noisy \|_{\hilbert}^2\\
{} = {} &\frac{1}{2} \| K\minsol - \noisy \|_{\hilbert}^2 + \frac{1}{2} \| K\regsol - \rangedata \|_{\hilbert}^2 - \frac{1}{2} \| K\minsol - \rangedata \|_{\hilbert}^2
\end{split} \, .
\end{align*}
If we make use of the estimate $\frac{1}{2} \| K\minsol - \noisy \|_{\hilbert}^2 \leq \frac{1}{2} \| \clean - \noisy \|_{\hilbert}^2 \leq \delta$, the previous equality transforms into the inequality
\begin{align*}
\begin{split}
&\| K(\minsol - \regsol) \|_{\hilbert}^2 + \symbreg{\minsol}{\regsol} + \frac{1}{2}\| K\regsol - \noisy \|_{\hilbert}^2\\
{} \leq {} &\delta + \frac{1}{2} \| K(\regsol - \minsol) + (K\minsol - \rangedata) \|_{\hilbert}^2 - \frac{1}{2} \| K\minsol - \rangedata \|_{\hilbert}^2\\
{} \leq {} &\delta + \| K(\minsol - \regsol) \|_{\hilbert}^2 + \| K\minsol - \rangedata \|_{\hilbert}^2 - \frac{1}{2} \| K\minsol - \rangedata \|_{\hilbert}^2
\end{split}\, .
\end{align*}
Subtracting $\| K(\minsol - \regsol) \|_{\hilbert}^2$ on both sides of the inequality then yields the error estimate
\begin{align}
\symbreg{\minsol}{\regsol} + \frac{1}{2}\| K\regsol - \noisy \|_{\hilbert}^2 \leq \delta + \frac{1}{2} \| K\minsol - \rangedata \|_{\hilbert}^2 \, .\label{eq:sqhilbertfidelerrest}
\end{align}
We want to emphasize that the constant $\frac{1}{2} \| K\minsol - \rangedata \|_{\hilbert}^2$ on the right-hand-side of the inequality does depend on the choice of $\alpha$. From Remark \ref{rem:sc} and the proof of Theorem \ref{thm:rcscequivalence} it follows that if we consider regularizations of the form $\regfctarg{u} = \alpha \regfct(u)$, the source condition \eqref{eq:sc} and the range condition \eqref{eq:rc} are linked via the relation $\rangedata = K\minsol + \regparam v$, where $v$ is the source condition element for $\regparam = 1$, i.e. $K^\ast v \in \partial \regfctarg[1]{\minsol} = \partial \regfct(\minsol)$. In this setting, the error estimate \eqref{eq:sqhilbertfidelerrest} then reads as
\begin{align*}
\symbreg[\regfct]{\minsol}{\regsol} + \frac{1}{2\regparam}\| K\regsol - \noisy \|_{\hilbert}^2 \leq \frac{\delta}{\regparam} + \frac{\regparam}{2} \| v \|_{\hilbert}^2
\end{align*}
Hence, choosing $\regparam(\delta) = \sqrt{2 \delta} / \| v \|_{\hilbert}$ then yields $\symbreg[\regfct]{\minsol}{\regsol} = \mathcal{O}(\sqrt{\delta})$.
\end{example}


There are various routes and generalizations that can be taken from these types of estimates, e.g. to weaker source conditions with the concepts of approximate or variational source conditions (cf. \cite{Schusterbuch,flemming2010new,flemming2013variational,flemmingexistence,Hohage2016}), improved estimates for stronger conditions (cf. \cite{Resmerita2005,grasmair2013variational}), or large noise that is not necessarily in $\range$ (cf. \cite{burger2016large}). In special cases such as $\ell^1$-regularization improved results can be obtained, due to the effective finite-dimensionality this case is on the borderline to being well-posed (cf. 
\cite{grasmair2011necessary,grasmair2011linear,burger2013convergence,flemming2015l1,flemming2016unified,flemming2017injectivity}).
Recently also converse results could be obtained (cf. \cite{2017arXiv171201499F,Hohage2016}).

\subsection{Variational Eigenvalue Problems}\label{sec:vareigprob}

The standard tool for the analysis of linear regularization methods is singular value decomposition.  In the case of nonlinear regularization no analogue of singular values and singular vectors was known for a long time. A generalization for nonlinear variational methods was made in \cite{benning2013ground}, which we discuss in the following. We generalize singular vectors as eigenvectors of the variational regularization operator $\regop$ as defined in Definition \ref{def:varreg}, i.e. we look for functions $\ul$ that satisfy 
\begin{align}
\lambda \ul \in \regoparg{\sigma K\ul} \, , \label{eq:eigenvalue}
\end{align}
for constants $\lambda, \sigma \in [0, \infty)$, typically $\sigma =1$.   For simplicity we focus on the case where $\regparambold = \regparam$ is a scalar, and $\fidelity{Ku}{\noisy} = G(Ku - \noisy)$, where $G$ is a Legendre functional for the remainder of this section. If we consider the optimality condition \eqref{eq:varregopt} of \eqref{eq:varreg} we immediately observe that any $\ul$ satisfying \eqref{eq:eigenvalue} also has to satisfy
\begin{align}
-K^\ast G^\prime\left( (\lambda - \sigma) K\ul \right) \in \partial \regfctarg[\regparam]{\lambda \ul} \, . \label{eq:eigprev}
\end{align}
We now assume that both $G^\prime$ and $\partial \regfct$ are homogeneous in the sense that they satisfy $G^\prime(c u) = s_1(c) G^\prime(u)$ and $\partial \regfctarg{c u} = s_2(c, \regparam) \partial \regfct(u)$ for constants $c \in \R$ and functions $s_1, s_2:\R \rightarrow \R$. Then \eqref{eq:eigprev} simplifies to
\begin{align}
-\frac{s_1(\lambda - \sigma)}{s_2(\lambda/\sigma, \regparam)} K^\ast G^\prime\left( K\ul \right) \in \partial \regfct(\sigma \ul) \, . \label{eq:eig}
\end{align}
Equation \eqref{eq:eig} paves the way for the following definition of generalized singular vectors.
\begin{definition}[Generalized singular system]
Let $\{ u_\sigma, v_\sigma, \sigma \}$ satisfy
\begin{align}
Ku_\sigma = \sigma v_\sigma \quad \text{and} \quad K^\ast G^\prime\left(v_\sigma \right) \in \partial \regfct(\sigma u_\sigma)\label{eq:singvec}
\end{align}
for $\sigma > 0$. Then $\{ u_\sigma, v_\sigma, \sigma \}$ is called a \emph{generalized singular system}.
\end{definition}
\begin{remark}
In case of $G(v) = \frac{1}{2}\| v \|_{L^2(\Sigma)}^2$ and $\regfctarg[\regparam]{u} = \frac{\regparam}{2} \| u \|_{L^2(\Omega)}^2$ this definition is consistent with the classical singular vector theory for compact operators.
\end{remark}
\begin{example}
Suppose $G(v) = \frac{1}{2}\| v \|_{L^2([0, 1])^2}$, $K:\bv([0, 1]) \rightarrow L^2([0, 1])$ is the embedding operator and $\regfctarg[\regparam]{u} = \regparam \tvast(u)$, where $\tvast$ denotes the (one-dimensional) total variation with Dirichlet-zero boundary conditions. It has been shown in \cite{benning2013ground} that Haar Wavelets are generalized singular vectors of $\tvast$. Precisely, we have $v_{\sigma_n, k} = \sigma_n u_{\sigma_n, k} \in \partial \tvast(\sigma_n u_{\sigma_n, k}) = \partial \tvast(u_{\sigma_n, k})$ for $\sigma_n := 2^{-\frac{n + 4}{2}}$ and $u_{\sigma_n, k}$ defined as
\begin{align*}
u_{\sigma_n, k}(x) := 2^{\frac{n}{2}} \Psi(2^n x - j) \quad \text{with} \quad \Psi(x) := \begin{cases}
1 & x \in \left[0, \frac{1}{2} \right) \\
-1 & x \in \left[\frac{1}{2}, 1\right) \\
0 & \text{else}
\end{cases} \, .
\end{align*}
The singular value $\sigma_n$ is determined via \eqref{eq:singvec}. The dual singular vector $v_{\sigma_n, k}$ has to satisfy $v_{\sigma_n, k} \in \partial \tvast(\sigma_n u_{\sigma_n, k})= \tvast(u_{\sigma_n, k})$. If we make use of $v_{\sigma_n, k} = u_{\sigma_n, k} / \sigma_n$ and take a dual product with $u_{\sigma_n, k}$, we immediately observe $\sigma_n = \| u_{\sigma_n, k} \|_{L^2([0, 1])}^2 / \tvast(u_{\sigma_n, k})$. In Figure \ref{fig:singvecexample} we see the Haar wavelet $u_{\sigma_1, 1/2}$ and its scaled version $v_{\sigma_1, 1/2} = \sigma_1 u_{\sigma_1, 1/2} = 2^{-\frac{5}{2}} u_{\sigma_1, 1/2}$.
\begin{figure}[!t]
\begin{center}
\subfloat[$u_{\sigma_1, 1/2}$]{
\begin{tikzpicture}[scale=0.48,
declare function={ 
    haar(\x) = and(\x >= 0, \x < 1/2) * (+1) + and(\x >= 1/2, \x < 1) * (-1);
}
]
\begin{axis}[%
            domain = 0:1,
            samples = 250,
			ymin = -(2*sqrt(2)+0.5),
			ymax = (2*sqrt(2)+0.5),
            xlabel = {$x$},
            ylabel = {$u_\sigma(x)$},
            ]
            \addplot[line width=0.5mm,  blue] {(2^(1/2))*haar((2^1)*(x - 0.25))};                        
        \end{axis}
\end{tikzpicture}
}\hspace{0.3cm}
\subfloat[$v_{\sigma_1, 1/2}$]{
\begin{tikzpicture}[scale=0.48,
declare function={ 
    haar(\x) = and(\x >= 0, \x < 1/2) * (+1) + and(\x >= 1/2, \x < 1) * (-1);
}
]
\begin{axis}[%
            domain = 0:1,
            samples = 250,
			ymin = -(2*sqrt(2)+0.5),
			ymax = (2*sqrt(2)+0.5),
            xlabel = {$x$},
            ylabel = {$v_\sigma(x)$},
            ]
            \addplot[line width=0.5mm, blue] {sqrt(2)*(2^(1/2))*haar((2^1)*(x - 0.25))};                        
        \end{axis}
\end{tikzpicture}
}
\end{center}
\caption{The Haar wavelet $u_{\sigma_1, 1/2}$ and its scaled version $v_{\sigma_1, 1/2}$. In \cite{benning2013ground} it has been shown that together with $\sigma_1 = 2^{-\frac{5}{2}}$ they form a generalized singular system in the sense of \eqref{eq:singvec} with $K$ being the identity in $L^2$.}\label{fig:singvecexample}
\end{figure}
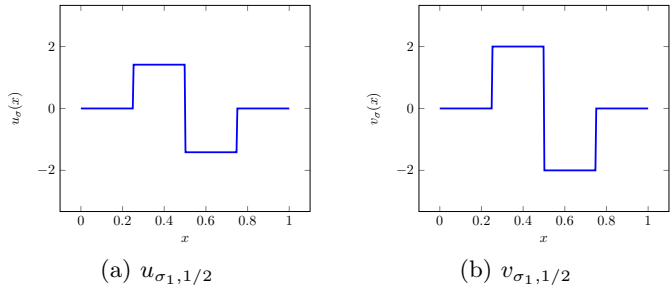
\end{example}
\noindent The generalized singular system is defined so that \eqref{eq:eig} and \eqref{eq:singvec} coincide for 
\begin{align}
s_1(\lambda - \sigma) = -s_2(\lambda/\sigma, \regparam) \, .\label{eq:singveceigcond}
\end{align}
Hence, if we choose $\regparam$ and $\lambda$ such that \eqref{eq:singveceigcond} holds true, we already know that \eqref{eq:eigenvalue} is satisfied for these particular choices of $\lambda$ and $\regparam$.  
\begin{example}
For $G(v) = \frac{1}{2}\| v \|_{L^2(\Sigma)}^2$ and $\regfctarg[\regparam]{u} = \frac{\regparam}{2} \| u \|_{L^2(\Omega)}^2$, with $\Sigma$ and $\Omega$ being domains in $\R^{d_1}$ respectively $\R^{d_2}$ we observe $s_1(x) = x$ and $s_2(x, \regparam) = \regparam x$. Hence, \eqref{eq:singveceigcond} simplifies to $\sigma - \lambda = (\regparam \lambda) / \sigma$. Solving for $\lambda$ then yields
\begin{align*}
\lambda = \frac{\sigma}{\sigma^2 + \regparam} \, ,
\end{align*}
which perfectly coincides with the singular value decomposition representation of Tikhonov regularization.
\end{example}
\begin{example}
For $G(v) = \frac{1}{2}\| v \|_{L^2(\Sigma)}^2$ and $\regfctarg{u} = \regparam \tv(u)$ we have $s_1(x) = x$ and $s_2(x, \regparam) = \regparam$. Consequently, \eqref{eq:singveceigcond} solved for $\lambda$ reads as
\begin{align*}
\lambda = \frac{1 - \regparam}{\sigma} \, .
\end{align*}
This eigenvalue of this particular regularization operator is consistent with classical singular value theory in the sense that it satisfies $\lim_{\regparam \downarrow 0} \lambda = 1/\sigma$.
\end{example}

An interesting observation from the previous examples is that $\regparam > 0$ automatically implies $\lambda < 1/\sigma$ (unless $u_\sigma \in \ker(\regfct)$). This implies that there always is a systematic error when it comes to recovering singular vectors with variational regularization methods that have quadratic fidelity. This is also true for input data that is not given in terms of a singular vector, see \cite[Theorem 7]{benning2013ground}. In the next section we see that iterative regularization methods can overcome this systematic reconstruction bias.

\section{Iterative Regularization Methods}\label{sec:iterreg}

Iterative regularization is based on a different paradigm then variational regularization and based on the simple observation that most iterative procedures can be applied in a robust fashion to ill-posed problems. The standard example in a Hilbert space is the Landweber iteration (cf. \cite{landweber1951iteration})
$$ u^k = u^{k-1} - \tau K^* (Ku^{k-1} -f^\delta),$$ 
which only applies the continuous operators $K$ and $K^*$. Let us mention again at this point that with standard initial values such as $u^0=0$ the iterates satisfy a range condition $u^k \in \oprange(K^*)$. At an abstract level we construct an iteration procedure
\begin{equation}  \label{eq:iterationprocedure}
u^k = R_I(f^\delta,v^{k-1},\regparambold),
\end{equation}
with some iteration operator $R_I$ and a collection of variables $v^{k-1}$ summarizing the information used about the first $k-1$ steps. In this case the parameter set ${\regparambold}$ will contain the iteration index as well as auxiliary parameters such as the step size $\tau$. In the simplest case of a one-step method like the Landweber iteration we simply have $v^{k-1}=u^{k-1}$, for multistep methods the variable $v^{k-1}$ could be a collection of several previous iterations. As we shall see in the methods below $v^{k-1}$ could also collect some auxiliary variables.  

For such methods one observes a so-called {\em semi-convergence} phenomenon. In the case of exact data $f \in \oprange(K)$ the method is converging, while in the case of noisy data it seems to approximate the exact solution for an initial phase of the iteration and then starts to diverge. This behaviour naturally leads to the idea of achieving a regularizing effect by stopping the iterations early. A standard approach is the so-called discrepancy principle, which monitors the residual during the iteration and compares it with the noise level. Since the exact solution could lead to a residual at this level there is no particular reason to iterate further once the residual is at the size of the noise level:
 
\begin{definition}[Morozov's discrepancy principle]\label{def:morozov}
Let $\clean$ and $\noisy$ satisfy $\fidelity{\clean}{\noisy} \leq \delta$. If we choose $\eta \geq 1$ and $k^\ast := k^\ast(\delta, f^\delta)$ such that
\begin{align*}
\fidelity{Ku^{k^\ast}}{\noisy} \leq \eta \delta < \fidelity{Ku^k}{\noisy}
\end{align*}
is satisfied for $u^{k^\ast} \in R_I(\noisy, v^{k^\ast - 1},\regparambold)$ and $u^k \in R_I(\noisy, v^{k - 1},,\regparambold)$ for all $k < k^\ast$, then $u^k$ is said to satisfy \emph{Morozov's discrepancy principle}.
\end{definition}

Given a stopping rule to determine $k^*(\delta,f^\delta)$ such as the discrepancy principle we can define the full regularizaton operator:
\begin{equation}
 R(f^\delta,\regparambold) = u^{k_*(\delta,f^\delta)},
\end{equation}
where for $k=1,\ldots,k_*(\delta,f^\delta)$ the iterates $u^k$ are determined by \eqref{eq:iterationprocedure} with some fixed initial value $v^0$ including $u^0$. 

The semiconvergence behaviour of such a method is then the standard convergence of a nonlinear regularization method, in particular for consistency we need $u^k \wlim u^\dagger$ as $k \rightarrow \infty$ in the case of clean data $\clean \in \oprange_\fidelfct(K)$ and $\minsol \in \select(\clean, \regparambold)$ . A standard tool used to prove the convergence of an iterative regularization method is to find some error measure to the true solution that is decreasing until the stopping index is reached. For the methods below constructed from a regularization functional $J$ we will see that this is the case for the Bregman distance, i.e.,
\begin{itemize}
\item $ \bregdis{p^{k + 1}}{u^\dagger}{u^{k + 1}} \leq \bregdis{p^k}{u^\dagger}{u^k}$, for $u^k \in \regop_I(\noisy, v^{k - 1}, \regparambold)$ and all $k \leq k^\ast-1$,
\item $ \lim_{\delta \rightarrow 0} \bregdis{p^k}{u^k_\delta}{u^k} = 0$, for $u^k_\delta \in \regop_I(\noisy, v^{k - 1}_\delta, \regparambold)$ and $u^k \in \regop_I(\clean, v^{k - 1}, \regparambold)$.
\end{itemize}
With some further effort one can then conclude the convergence of the regularization method in this sense: 
\begin{align*}
\lim_{\delta \rightarrow 0} \sup \left\{ \left. \bregdis{p^{k^\ast(\delta, \noisy)}}{\minsol}{\regoparg{\noisy}} \, \right| \, \noisy \in \range, \fidelity{\clean}{\noisy} \leq \delta \right\} = 0 \, ,
\end{align*}
for $\regoparg{\noisy} = u^{k^\ast(\delta, \noisy)}$ (cf. \cite{osher2005iterative,Schusterbuch}).

As in the case of Banach spaces such as BV there is no immediate analogue of simple iterative procedures in Hilbert  spaces one often resorts to define an iteration operator $R_I$ by solving a variational problem. This approach will be detailed in the next sections.

\subsection{Bregman Iteration}
The concept of Bregman iteration -- also known as proximal minimization algorithm -- introduces an iteration into the variational regularization framework by replacing the regularization function $\regfctarg{u}$ with the corresponding generalized Bregman distance $\bregdis{p}{u}{v}$, for $v \in \domain$ and $p \in \partial \regfct(v)$. For the choice $\regfctarg{u} = \frac{\regparambold}{2} \| u \|_{\hilbert}^2$ it is also known as iterated Tikhonov regularization, which dates back to the works of Kryanev \cite{KRYANEV197424}, further analyzed e.g. in \cite{groetsch1977sequential,thomas1979approximation}  . The extension to more general choices of Bregman distances has first been proposed by Censor and Zenios in \cite{censor1992proximal}, shortly followed by Teboulle in \cite{teboulle1992entropic}, and has since been subject to extensive research \cite{eckstein1993nonlinear,kiwiel1997proximal}. Notably, it has been extended to generalized Bregman distances that allow for subdifferentiable instead of differentiable functionals in \cite{osher2005iterative}. Note that in such cases there is no one-to-one relation between $u^{k-1}$ and its subgradient $p^{k-1}$, hence we set $v^{k-1}=(u^{k-1},p^{k-1})$.  With a set-valued iteration operator, the Bregman iteration can be written as
\begin{align*}
u^k \in R_I(\noisy, v^{k - 1},\regparambold) &= \argmin_{u \in \domain} \left\{ \fidelity{Ku}{\noisy} + \bregdis{p^{k - 1}}{u}{u^{k - 1}} \right\} \, ,\\
p^k &= p^{k - 1} - K^\ast \partial_x \fidelity{K u^k}{\noisy} \, , \vphantom{\argmin_{u \in \domain} \left\{ \fidelity{Ku}{\noisy} + \bregdis{p^{k - 1}}{u}{u^{k - 1}} \right\}}
\end{align*}
for $p^0 \in \partial \regfctarg{u^0}$. The entire method is summarized in Algorithm \ref{alg:bregiter}.
\begin{algorithm}[!t]
\caption{Bregman iteration}
\label{alg:bregiter}
\begin{algorithmic}
\State{Initialize $\regparambold \in \regdomain$, $f^\delta \in \range$, $u^0 \in \domain$ and $p^0$ with $p^0 \in \partial \regfctarg{u^0}$}
\For{$k = 1, \ldots, k^\ast$}
\State{Compute $\regop_I(\noisy, v^{k - 1}, \regparambold) = \argmin_{u \in \domain} \left\{ \fidelity{Ku}{f^\delta} + \bregdis{p^{k - 1}}{u}{u^{k - 1}} \right\}$}
\State{Pick $u^k \in \regop_I(\noisy, v^{k - 1}, \regparambold) \vphantom{\argmin_{u \in \domain} \left\{ \fidelity{Ku}{\noisy} + \bregdis{p^{k - 1}}{u}{u^{k - 1}} \right\}}$}
\State{Update $p^k = p^{k - 1} - K^\ast \partial_x \fidelity{K u^k}{f^\delta} \vphantom{\argmin_{u \in \domain} \left\{ \fidelity{Ku}{\noisy} + \bregdis{p^{k - 1}}{u}{u^{k - 1}} \right\}}$}
\State{Set $v^k = (u^k, p^k) \vphantom{\argmin_{u \in \domain} \left\{ \fidelity{Ku}{\noisy} + \bregdis{p^{k - 1}}{u}{u^{k - 1}} \right\}}$}
\EndFor\\
\Return{$u^{k^\ast}$, $p^{k^\ast}$}
\end{algorithmic}
\end{algorithm}
\begin{remark}\label{rem:primdualsumformula}
The update for the subgradient can also be written as
\begin{align}
p^k = p^0 - \sum_{n = 1}^k K^\ast \partial_x \fidelity{Ku^n}{\noisy} .\label{eq:bregiterdualupdate}
\end{align}
Hence, we can rewrite the primal update to
\begin{align}
\regop_I(\noisy, \{ u^n \}_{n = 1}^{k - 1}, p^0, \regparambold) = \argmin_{u \in \domain} \left\{ \fidelity{Ku}{\noisy} + \regfctarg{u} - \left\langle p^0 - \sum_{n = 1}^{k - 1} K^\ast \partial_x \fidelity{Ku^n}{\noisy}, u \right\rangle \right\} \, .\label{eq:bregiterprimalupdate}
\end{align}
\end{remark}
In the following we want to recall (or derive) a few important properties of Algorithm \ref{alg:bregiter}. We start with a trivial monotonic decrease of the data fidelity.
\begin{corollary}[Monotonic decrease of the data fidelity]\label{cor:mondecbreg}
Suppose $u^0$ satisfies $\fidelity{Ku^0}{\noisy} < \infty$. Then the iterates of Algorithm \ref{alg:bregiter} satisfy
\begin{align*}
\fidelity{K u^{k + 1}}{\noisy} + \bregdis{p^k}{u^{k + 1}}{u^k} &\leq \fidelity{K u^k}{\noisy} \, ,
\intertext{and}
\lim_{k \rightarrow \infty} \bregdis{p^k}{u^{k + 1}}{u^k} &= 0 \, ,
\end{align*}
for $u^k \in \regopit(\noisy, v^{k - 1}, \regparambold)$ and all $k \in \N$.
\begin{proof}
The first statement follows trivially from the convexity of $\fidelfct$ (in its first argument) and $\regfct$, and the fact that $u^{k + 1}$ is a minimizer of $\energy(u) := \fidelity{K u}{\noisy} + \bregdis{p^k}{u}{u^k}$. The first statement then implies
\begin{align*}
\sum_{k = 0}^{N - 1} \bregdis{p^k}{u^{k + 1}}{u^k} &\leq \fidelity{K u^0}{\noisy} - \fidelity{K u^N}{\noisy}\\
&\leq \fidelity{K u^0}{\noisy} < \infty \, .
\end{align*}
Taking the limit $N \rightarrow \infty$ then yields the second statement.
\end{proof}
\end{corollary}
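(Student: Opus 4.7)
The plan is to exploit two simple facts: (i) $u^{k+1}$ is a minimizer of the functional $E(u) := \fidelity{Ku}{\noisy} + \bregdis{p^k}{u}{u^k}$, and (ii) the data fidelity $F$ is nonnegative. The first fact yields the monotonic decrease, and telescoping the decrease gives a summable series of Bregman distances, forcing the terms to vanish. There is no serious technical obstacle; the main care is in making sure each step is meaningful (finite).

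For the first inequality, I would simply evaluate $E$ at the previous iterate. By the very definition of the Bregman distance one has $\bregdis{p^k}{u^k}{u^k} = \regfctarg{u^k} - \regfctarg{u^k} - \langle p^k, 0\rangle = 0$, so $E(u^k) = \fidelity{Ku^k}{\noisy}$. Since $u^{k+1}$ is a minimizer of $E$,
\begin{equation*}
\fidelity{Ku^{k+1}}{\noisy} + \bregdis{p^k}{u^{k+1}}{u^k} \;=\; E(u^{k+1}) \;\leq\; E(u^k) \;=\; \fidelity{Ku^k}{\noisy},
\end{equation*}
which is the claimed monotonic decrease. A small induction, starting from the hypothesis $\fidelity{Ku^0}{\noisy} < \infty$, shows that all quantities $\fidelity{Ku^k}{\noisy}$ and $\bregdis{p^k}{u^{k+1}}{u^k}$ remain finite, so this comparison is never vacuous.

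For the limit, I would telescope. Summing the first inequality for $k = 0, \dots, N-1$ gives
\begin{equation*}
\sum_{k=0}^{N-1} \bregdis{p^k}{u^{k+1}}{u^k} \;\leq\; \fidelity{Ku^0}{\noisy} - \fidelity{Ku^N}{\noisy} \;\leq\; \fidelity{Ku^0}{\noisy},
\end{equation*}
where the last inequality uses $F \geq 0$. Since $\fidelity{Ku^0}{\noisy} < \infty$ and each summand is nonnegative (Bregman distances are always $\geq 0$ for convex $J$ with $p^k \in \partial J(u^k)$, a property built into the iteration by construction), the series converges as $N \to \infty$, and therefore its general term $\bregdis{p^k}{u^{k+1}}{u^k}$ tends to zero.

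The only place requiring slight vigilance is the well-definedness of the Bregman distance at every step, i.e.\ $p^k \in \partial \regfctarg{u^k}$. This is automatic from Algorithm \ref{alg:bregiter}: the optimality condition \eqref{eq:varregopt} for the subproblem reads $-K^\ast \partial_x \fidelity{Ku^k}{\noisy} \in \partial \regfctarg{u^k} - p^{k-1}$, and the dual update $p^k = p^{k-1} - K^\ast \partial_x \fidelity{Ku^k}{\noisy}$ is exactly what is needed for $p^k \in \partial \regfctarg{u^k}$. With this in hand, nothing beyond convexity and nonnegativity is used.
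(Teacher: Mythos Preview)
Your proof is correct and follows essentially the same approach as the paper: both use that $u^{k+1}$ minimizes $E(u) = \fidelity{Ku}{\noisy} + \bregdis{p^k}{u}{u^k}$ and compare with $E(u^k)$ to obtain the decrease, then telescope and use nonnegativity of $F$ to deduce summability and hence the vanishing limit. Your version simply spells out a few details (the vanishing of $\bregdis{p^k}{u^k}{u^k}$, the finiteness induction, and the verification that $p^k \in \partial \regfctarg{u^k}$) that the paper leaves implicit.
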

If we want to show that the Bregman iteration is a convergent regularization method in the sense of Definition \ref{def:dconvergence}, a first step towards this result would be the following monotonicity lemma.
\begin{lemma}[Fej\'{e}r monotonicity of Algorithm \ref{alg:bregiter}]\label{lem:fejer}
Let $\clean \in \oprange_\fidelfct(K)$, $\minsol \in \select(\clean, \regparambold)$ and let $\noisy \in \range$ with $\fidelity{\clean}{\noisy} \leq \delta$. We further assume that the iterates of Algorithm \ref{alg:bregiter} satisfy Definition \ref{def:morozov} for $\eta = 1$. Then the iterates also satisfy the strict Fej\'{e}r monotonicity 
\begin{align*}
\bregdis{p^k}{\minsol}{u^k} < \bregdis{p^{k - 1}}{\minsol}{u^{k - 1}} \, ,
\end{align*}
for $u^k \in \regop_I(\noisy, v^{k - 1}, \regparambold)$ and all $k < k^\ast$.
\begin{proof}
Through straight-forward computations we obtain
\begin{align*}
\bregdis{p^k}{\minsol}{u^k} - \bregdis{p^{k - 1}}{\minsol}{u^{k - 1}} {} = {} &\underbrace{-\bregdis{p^{k - 1}}{u^k}{u^{k - 1}}}_{< 0}\\
&- \langle p^k - p^{k - 1}, \minsol - u^k \rangle\\
&\leq \langle K^\ast \partial_x \fidelity{K u^k}{f^\delta}, \minsol - u^k \rangle\\
&\leq \left( \delta - \fidelity{K u^k}{f^\delta} \right)\\
&< 0
\end{align*}
for $k < k^\ast$, where we have made use of the convexity of $F$ in its first argument, and $\fidelity{K\minsol}{\noisy} \leq \fidelity{\clean}{\noisy} \leq \delta$.
\end{proof}
\end{lemma}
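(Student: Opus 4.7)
The plan is to compute the difference $\bregdis{p^k}{\minsol}{u^k} - \bregdis{p^{k-1}}{\minsol}{u^{k-1}}$ explicitly via a three-point identity for Bregman distances, then dispose of each of the resulting pieces using the dual update of Algorithm \ref{alg:bregiter}, convexity of $\fidelfct$ in its first argument, and Morozov's rule. First I would expand both Bregman distances using their definition, rearrange the $J$- and subgradient-terms by adding and subtracting $\langle p^{k-1}, u^k - u^{k-1}\rangle$, and recognise the combination as
\begin{equation*}
\bregdis{p^k}{\minsol}{u^k} - \bregdis{p^{k-1}}{\minsol}{u^{k-1}} = -\bregdis{p^{k-1}}{u^k}{u^{k-1}} - \langle p^k - p^{k-1}, \minsol - u^k \rangle.
\end{equation*}

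Next I would substitute the dual update $p^k - p^{k-1} = -K^\ast \partial_x \fidelity{Ku^k}{\noisy}$ into the cross term, shift $K^\ast$ across the duality pairing, and apply the subgradient inequality for $\fidelfct(\cdot, \noisy)$ at $Ku^k$ to obtain
\begin{equation*}
-\langle p^k - p^{k-1}, \minsol - u^k \rangle = \langle \partial_x \fidelity{Ku^k}{\noisy}, K\minsol - Ku^k \rangle \leq \fidelity{K\minsol}{\noisy} - \fidelity{Ku^k}{\noisy}.
\end{equation*}

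The last move combines two inputs. On the one hand, $\fidelity{K\minsol}{\noisy} \leq \fidelity{\clean}{\noisy} \leq \delta$; on the other, Morozov's principle with $\eta=1$ forces $\fidelity{Ku^k}{\noisy} > \delta$ whenever $k < k^\ast$. The cross term is therefore \emph{strictly} negative, while the first piece $-\bregdis{p^{k-1}}{u^k}{u^{k-1}}$ is merely non-positive by the defining nonnegativity of Bregman distances. Adding them yields the strict Fej\'er monotonicity.

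The step I expect to be the main obstacle is the bound $\fidelity{K\minsol}{\noisy} \leq \fidelity{\clean}{\noisy}$. The best-approximate-solution property only compares fidelities to the \emph{exact} data $\clean$, not to $\noisy$, so this inequality secretly uses $\clean \in \oprange_\fidelfct(K)$ together with some identification of $K\minsol$ with $\clean$ in the fidelity sense (e.g.\ $\fidelfct(f,f)=0$ being a minimum of $\fidelfct(\cdot, \clean)$, forcing $\fidelfct(K\minsol,\clean)=0$ and hence $\fidelfct(K\minsol,\cdot)=\fidelfct(\clean,\cdot)$ in the cases of interest). Once this point is granted, the remainder is routine telescoping and standard Bregman arithmetic.
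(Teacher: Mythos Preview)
Your argument is essentially identical to the paper's: the same three-point Bregman identity, the same substitution of the dual update, the same convexity bound on the cross term, and the same appeal to Morozov's principle for strictness. The only cosmetic difference is that the paper marks $-\bregdis{p^{k-1}}{u^k}{u^{k-1}}$ as strictly negative while you (more carefully) treat it as merely $\leq 0$ and extract strictness from the Morozov step alone; your version is the cleaner reading, and the concern you flag about $\fidelity{K\minsol}{\noisy}\leq\fidelity{\clean}{\noisy}$ is exactly the point the paper also uses without further comment.
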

\begin{corollary}\label{cor:sumfidelbound}
Let $\clean \in \oprange_\fidelfct(K)$ and $\minsol \in \select(\clean, \regparambold)$. Then the iterates of Algorithm \ref{alg:bregiter} satisfy
\begin{align}
\sum_{k = 0}^\infty \fidelity{Ku^k}{\clean} < \infty \label{eq:fidelbound}
\end{align}
for $\delta = 0$ (and, thus, $\noisy = \clean$) and $u^0$ (with $p^0 \in \partial \regfctarg{u^0}$) chosen s.t. $\bregdis{p^0}{\minsol}{u^0} < \infty$.
\begin{proof}
For $\delta = 0$ we conclude
\begin{align*}
\fidelity{Ku^k}{\clean} \leq \bregdis{p^{k - 1}}{\minsol}{u^{k - 1}} - \bregdis{p^k}{\minsol}{u^k}
\end{align*}
from Lemma \ref{lem:fejer}. Summing up from $k = 0$ to some $k = k^\ast$ therefore yields
\begin{align*}
\sum_{k = 0}^{k^\ast} \fidelity{Ku^k}{\clean} \leq \bregdis{p^0}{\minsol}{u^0} - \bregdis{p^{k^\ast}}{\minsol}{u^{k^\ast}} \leq \bregdis{p^0}{\minsol}{u^0} < \infty \, .
\end{align*}
Taking the limit $k^\ast \rightarrow \infty$ yields the assertion.
\end{proof}
\end{corollary}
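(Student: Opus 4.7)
The plan is to extract the key intermediate inequality from the proof of Lemma \ref{lem:fejer}, specialize it to the noiseless case, and telescope.

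First, I would reproduce the computation underlying the proof of Lemma \ref{lem:fejer}. Expanding the Bregman distance and substituting the subgradient update $p^k - p^{k-1} = -K^\ast \partial_x \fidelity{Ku^k}{\noisy}$ from Algorithm \ref{alg:bregiter} gives
\begin{align*}
\bregdis{p^k}{\minsol}{u^k} - \bregdis{p^{k-1}}{\minsol}{u^{k-1}} = -\bregdis{p^{k-1}}{u^k}{u^{k-1}} + \langle \partial_x \fidelity{Ku^k}{\noisy}, K\minsol - Ku^k \rangle,
\end{align*}
and convexity of $\fidelfct$ in its first argument bounds the inner product by $\fidelity{K\minsol}{\noisy} - \fidelity{Ku^k}{\noisy}$.

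Second, I would specialize to $\delta = 0$, so that $\noisy = \clean$. Since $\minsol \in \select(\clean, \regparambold)$ is a best-approximate solution and $\fidelity{\clean}{\clean} = 0$ by the assumption on $\fidelfct$ in Definition \ref{def:varreg}, one obtains $\fidelity{K\minsol}{\clean} \leq \fidelity{\clean}{\clean} = 0$, and hence $\fidelity{K\minsol}{\clean} = 0$ by nonnegativity of $\fidelfct$. Rearranging and dropping the nonnegative term $\bregdis{p^{k-1}}{u^k}{u^{k-1}}$ on the left then produces the per-step estimate
\begin{align*}
\fidelity{Ku^k}{\clean} \leq \bregdis{p^{k-1}}{\minsol}{u^{k-1}} - \bregdis{p^k}{\minsol}{u^k}.
\end{align*}

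Third, I would sum from $k = 1$ to $k = N$; the right-hand side telescopes and is bounded above by $\bregdis{p^0}{\minsol}{u^0} < \infty$. Letting $N \to \infty$ yields $\sum_{k=1}^\infty \fidelity{Ku^k}{\clean} < \infty$, and adding the finite term $\fidelity{Ku^0}{\clean}$ (finite by the well-posedness of the initial Bregman step, as in Corollary \ref{cor:mondecbreg}) gives \eqref{eq:fidelbound}. The argument contains no real obstacle; the only subtlety worth flagging is the identification $\fidelity{K\minsol}{\clean} = 0$, which uses both the best-approximate property of $\minsol$ and the normalization $\fidelity{\clean}{\clean} = 0$ — beyond that, the result is a clean telescoping of the noiseless Fejér estimate.
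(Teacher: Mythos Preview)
Your proposal is correct and follows essentially the same approach as the paper: extract the per-step inequality $\fidelity{Ku^k}{\clean} \leq \bregdis{p^{k-1}}{\minsol}{u^{k-1}} - \bregdis{p^k}{\minsol}{u^k}$ from the Fej\'er monotonicity argument, telescope, and bound by $\bregdis{p^0}{\minsol}{u^0}$. If anything, you are slightly more careful than the paper in two places: you explicitly justify $\fidelity{K\minsol}{\clean}=0$ via the best-approximate property and the normalization $\fidelity{\clean}{\clean}=0$, and you treat the $k=0$ term separately (the paper sums from $k=0$ even though the telescoping estimate is only derived for $k\geq 1$).
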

\begin{remark}\label{rem:oplimit}
Given the continuity of $\fidelfct$ and $K \in \linbound{\domain}{\range}$, Equation \eqref{eq:fidelbound} already implies
\begin{align}
Ku^k \wlim[\range] f \, , \label{eq:oplimit}
\end{align}
if $\tau_{\range}$ is an appropriate topology in $\range$ related to $\fidelfct$.
\end{remark}


\begin{lemma}\label{lem:bregiterselectsol}
Suppose that after a finite number of iterations the $k^\ast$-th iterate of Algorithm \ref{alg:bregiter} satisfies $Ku^{k^\ast} = \clean$, for $u^{k^\ast} = \regoparg{\clean}$, $\clean \in \oprange_K(\fidelfct)$ and $p^0 \in \oprange(K^\ast)$. Then $u^{k^\ast} \in \select(\clean, \regparambold)$.
\begin{proof}
We know $\bregdis{p^{k^\ast}}{u}{u^{k^\ast}} \geq 0$ for all $u \in \domain$ and $u^{k^\ast} = \regoparg{\clean}$, since $\regfct$ is convex; this in particular holds true for any $\hat{u} \in \{ u \, | \, Ku = \clean \}$. Hence, we observe
\begin{align*}
J(u^{k^\ast}) &\leq J(\hat{u}) - \left\langle p^{k^\ast}, \hat{u} - u^{k^\ast} \right\rangle\\
&= J(\hat{u}) - \langle p^0, \hat{u} - u^{k^\ast} \rangle + \sum_{n = 1}^{k^\ast} \left\langle K^\ast \partial_x \fidelity{Ku^n}{\clean} , \hat{u} - u^{k^\ast} \right\rangle \, ,\\
&\leq J(\hat{u}) - \langle q^0, \underbrace{K\hat{u} - Ku^{k^\ast}}_{= 0} \rangle + \sum_{n = 1}^{k^\ast} \left\langle \partial_x \fidelity{Ku^n}{\clean} , \vphantom{K^\ast \partial_x \fidelity{Ku^n}{\clean} , \hat{u} - u^{k^\ast}}\right. \underbrace{K\hat{u} - Ku^{k^\ast}}_{= 0} \left. \vphantom{K^\ast \partial_x \fidelity{Ku^n}{\clean} , \hat{u} - u^{k^\ast}} \right\rangle \, ,\\
&= J(\hat{u}) \, ,
\end{align*}
for the substitution $p^0 := K^\ast q^0$, possible due to $p^0 \in \oprange(K^\ast)$. Here we have made use of Equation \eqref{eq:bregiterdualupdate}. Consequently, we conclude $u^{k^\ast} \in \select(\clean, \regparambold)$.
\end{proof}
\end{lemma}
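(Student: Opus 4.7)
The plan is to verify the two defining properties of a prior selected solution: that $u^{k^\ast}$ is a best approximate solution with respect to $\fidelfct$, and that it minimizes $\regfctarg{\cdot}$ among all such best approximate solutions. The first property is immediate: since $\fidelfct(\clean,\clean)=0$, $\fidelfct\ge 0$, and $Ku^{k^\ast}=\clean$, we have $\fidelity{Ku^{k^\ast}}{\clean}=0$ attained. So the real work lies in the second property.

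For the second property, I would start from the subproblem optimality of $u^{k^\ast}$ in Algorithm \ref{alg:bregiter}, which yields $p^{k^\ast}\in\partial\regfctarg{u^{k^\ast}}$, and then use convexity of $\regfct$ to obtain
\begin{equation*}
\regfctarg{\hat u}\ \ge\ \regfctarg{u^{k^\ast}}+\langle p^{k^\ast},\hat u-u^{k^\ast}\rangle
\end{equation*}
for every competitor $\hat u$. The task then reduces to showing that the duality pairing on the right-hand side vanishes whenever $\hat u$ is a best approximate solution.

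The key observation, and the step I would emphasize, is that the telescoping formula \eqref{eq:bregiterdualupdate} combined with the hypothesis $p^0\in\oprange(K^\ast)$ forces $p^{k^\ast}$ itself to lie in $\oprange(K^\ast)$. Writing $p^0=K^\ast q^0$, we get
\begin{equation*}
p^{k^\ast}=K^\ast\Bigl(q^0-\sum_{n=1}^{k^\ast}\partial_x\fidelity{Ku^n}{\clean}\Bigr),
\end{equation*}
so that
\begin{equation*}
\langle p^{k^\ast},\hat u-u^{k^\ast}\rangle=\Bigl\langle q^0-\sum_{n=1}^{k^\ast}\partial_x\fidelity{Ku^n}{\clean},\,K\hat u-Ku^{k^\ast}\Bigr\rangle,
\end{equation*}
which vanishes as soon as $K\hat u=Ku^{k^\ast}=\clean$. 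Combined with the convexity inequality this gives $\regfctarg{u^{k^\ast}}\le\regfctarg{\hat u}$.

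The one point needing care, and the main conceptual obstacle, is the identification of the set of best approximate solutions. The algebra above only covers competitors satisfying $K\hat u=\clean$, so one must argue that any best approximate solution $\hat u$ automatically satisfies this. Since $\clean\in\oprange_\fidelfct(K)\subset\oprange(K)$ and $u^{k^\ast}$ attains $\fidelfct=0$, every best approximate solution $\hat u$ has $\fidelity{K\hat u}{\clean}=0$, and a non-degeneracy property of $\fidelfct$ in its first argument (e.g. strict positivity off the diagonal, which holds for the Legendre fidelities used throughout this section) then yields $K\hat u=\clean$. Granting this, the lemma follows; notably, neither the Fej\'er monotonicity of Lemma \ref{lem:fejer} nor the summability of Corollary \ref{cor:sumfidelbound} is needed here, only the subgradient update \eqref{eq:bregiterdualupdate} and the range condition on $p^0$.
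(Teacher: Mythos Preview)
Your proposal is correct and follows essentially the same approach as the paper: both use the subgradient inequality (equivalently, nonnegativity of the Bregman distance $D_J^{p^{k^\ast}}(\hat u,u^{k^\ast})\ge 0$), expand $p^{k^\ast}$ via the telescoping formula \eqref{eq:bregiterdualupdate}, and exploit $p^0=K^\ast q^0$ to transfer the duality pairing to $K\hat u-Ku^{k^\ast}=0$. Your treatment is in fact slightly more careful than the paper's, since you explicitly address why every best approximate solution must satisfy $K\hat u=\clean$ (via $\fidelity{\clean}{\clean}=0$ and a non-degeneracy assumption on $\fidelfct$), a point the paper simply takes for granted by working directly on the set $\{u\mid Ku=\clean\}$.
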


In the limiting case $k^\ast \rightarrow \infty$ the selection is not as clear, one cannot prove in general that the limit is minimizing $J$. To make this more apparent consider the case of a least squares fidelity
$\fidelfct(Ku,f) = \frac{1}2 \Vert Ku - f\Vert^2$ and initial value being a minimizer of the regularization, i.e. $p^0 = 0$. Then the estimate as in the last proof (at arbitrary index $m$) with $\hat u = u^\dagger$ becomes
$$J(u^{m}) \leq J(u^\dagger) - \sum_{n = 1}^m \left\langle K^\ast (f^\delta - Ku^n), \hat{u} - u^m  \right\rangle =  J(\hat{u}) - \sum_{n = 1}^m \left\langle f^\delta - Ku^n, f - K u^m  \right\rangle.$$
Using Young's inequality and monotonicity of the residual ($\Vert Ku^m - f^\delta \Vert \leq \Vert Ku^n -f^\delta \Vert$) we conclude 
$$ J(u^{m}) \leq J(u^\dagger) + \frac{3}2 \sum_{n = 1}^m \Vert Ku^n -\noisy \Vert^2 +  \frac{m}2 \delta^2. $$
Summing the estimate in the proof of the Fejer monotonicity we further find
$$  \sum_{n = 1}^m \Vert Ku^n -\noisy \Vert^2 \leq D_J^{p^0}(u^\dagger,u^0) = J(u^\dagger).$$ 
Thus, we find
$$ J(u^{k^\ast}) \leq \frac{5}2 J(u^\dagger) +  \frac{k^\ast}2 \delta^2. $$
Since for the discrepancy principle one can show that 
$k^\ast \delta^2 \rightarrow 0$ in the limit $\delta \rightarrow 0$ (cf. \cite{osher2005iterative}) the limit of the regularization has a functional value $J$ bounded by $\frac{5}2 J(u^\dagger)$. With a more fine argument on Young's inequality this upper bound can be decreased to $2 J(u^\dagger)$, but not to $J(u^\dagger)$. On the other hand this might be advantageous, since an estimate of $J(u^{k^\ast})$ smaller than $J(u^\dagger)$ might mean a bias depending on $J$, since the value of the regularization functional is actually underestimated. This means e.g. in the case of total variation that the contrast is underestimated by variational methods, which is improved by iterative regularization (cf. \cite{osher2005iterative}). 
To conclude this section, we show numerical results of Bregman-iterative regularization in the context of deconvolution, which demonstrates the effect on total variation regularization. 


\begin{example}\label{exm:deconvolution}
We consider the inverse problem of the convolution operation, i.e. $Ku = \clean$ with
\begin{align}
(Ku)(y) := \int_{\R^2} u(x) h(x - y) \, dx \, ,\label{eq:convolution}
\end{align}
which is therefore also known as \emph{deconvolution}. Here, $h$ denotes the convolution kernel that we assume to know a-priori. Since we cannot expect to know $\clean$ but just $\noisy$ with $\fidelity{\clean}{\noisy} \leq \delta$, we need to approximate the inverse problem solution through regularization. In Figure \ref{fig:pixelbregiter} we can see selected iterates of Algorithm \ref{alg:bregiter} for a single parameter $\regparam = 1/4$, the data fidelity term $\fidelity{Ku}{\noisy} = \frac{1}{2}\| Ku - \noisy \|_{L^2(\R^2)}^2$, and the regularization functional $\regfctarg[\regparam]{u} = \regparam \tv(u)$. The data $\noisy = \clean + n$ is the sum of $\clean$, created via a discretized version of the exact forward model \eqref{eq:convolution}, and noise $n \in \mathcal{N}(0, 0.05)$. For the particular example used here, the fidelity-noise-bound is $\fidelity{\clean}{\noisy} = 5.95$. The inner variational regularization method is solved via the primal-dual hybrid gradient method (PDHGM), see \cite{zhu2008efficient,pock2009algorithm,Esser:GeneralFramework,ChambollePock,chambolle2016introduction}. We clearly observe the inverse scale-space nature of the Bregman iteration. The first iterate only contains features at a very coarse scale, and then more and more features at finer and finer scales are introduced throughout the course of the iteration.
\end{example}

\begin{figure}[!t]
\begin{center}
\subfloat[Original $\minsol$]{\includegraphics[width=0.24\textwidth]{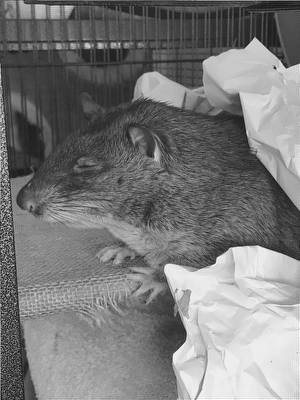}\label{subfig:pixelbregiter1}}\vspace{0.05cm}
\subfloat[Blurred \& noisy $\noisy$]{\begin{tikzpicture}[      
        every node/.style={anchor=south west,inner sep=0pt},
        x=1mm, y=1mm,
      ]   
     \node (fig1) at (0,0)
       {\includegraphics[width=0.24\textwidth]{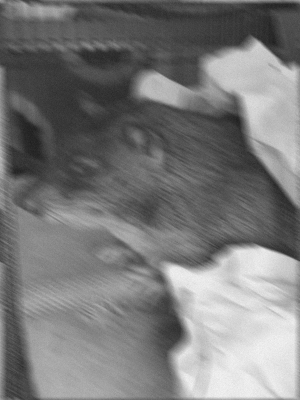}};
     \node (fig2) at (0,0)
       {\includegraphics[width=0.1\textwidth]{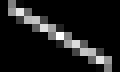}};  
\end{tikzpicture}\label{subfig:pixelbregiter2}}\vspace{0.05cm}
\subfloat[1st iterate]{\includegraphics[width=0.24\textwidth]{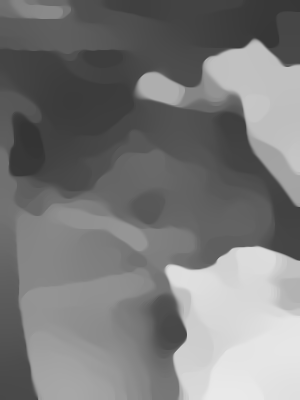}\label{subfig:pixelbregiter3}}\vspace{0.05cm}
\subfloat[3rd iterate]{\includegraphics[width=0.24\textwidth]{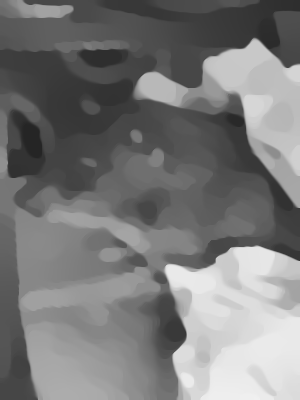}}\vspace{0.05cm}\\
\subfloat[6th iterate]{\includegraphics[width=0.24\textwidth]{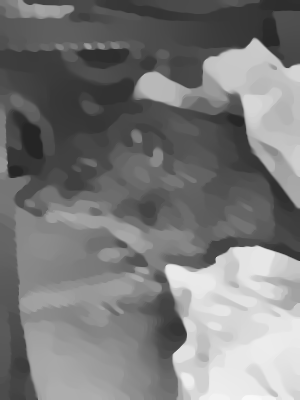}}\vspace{0.05cm}
\subfloat[20th iterate]{\includegraphics[width=0.24\textwidth]{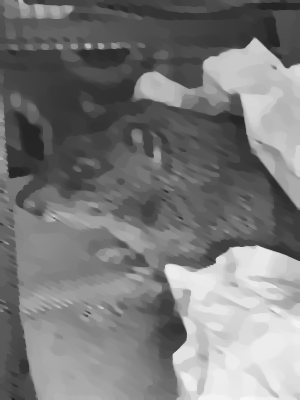}}\vspace{0.05cm}
\subfloat[55th iterate]{\includegraphics[width=0.24\textwidth]{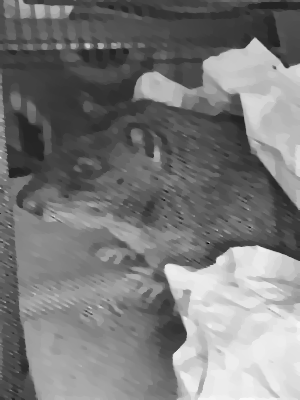}}\vspace{0.05cm}
\subfloat[96th iterate]{\includegraphics[width=0.24\textwidth]{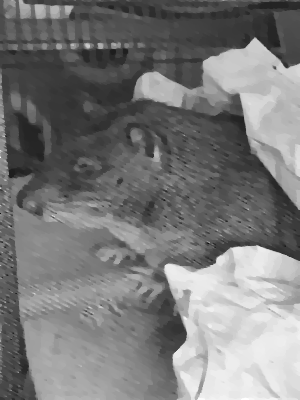}\label{subfig:pixelbregiter4}}
\end{center}
\caption{Figure \ref{subfig:pixelbregiter1} shows an image $\minsol \in \R^{400 \times 300}$ of Pixel, the Gambian pouched rat. In Figure \ref{subfig:pixelbregiter2} we see a degraded and noisy version $\noisy \in \R^{400 \times 300}$ of the original image $\minsol$. The degradation stems from a discretized version of the convolution (see \eqref{eq:convolution}) with periodic boundary conditions and the convolution kernel depicted in the bottom left corner of Figure \ref{subfig:pixelbregiter2}. Figure \ref{subfig:pixelbregiter3} - \ref{subfig:pixelbregiter4} show different iterates of Algorithm \ref{alg:bregiter} for $\fidelity{Ku}{\noisy} = \frac{1}{2}\| Ku - \noisy \|_{L^2(\R^2)}^2$, $\regfctarg[\regparam]{u} = \regparam \tv(u)$ and $\regparam = 1/4$. The 96th iterate visualized in Figure \ref{subfig:pixelbregiter4} is the first that violates Definition \ref{def:morozov}, for $\eta = 1$ and $\delta = 5.95$.}\label{fig:pixelbregiter}
\end{figure}

\subsubsection*{Debiasing generalized Eigenfunctions}\label{subsubsec:debiaseigenfcts}
We want to continue the analysis of the generalized Eigenvalue problem introduced in Section \ref{sec:vareigprob}. We have figured out that there is always as systematic bias of variational regularization methods for $\fidelity{Ku}{\noisy} = \frac{1}{2} \| Ku - \noisy \|_{L^2(\Sigma)}^2$, i.e. $\lambda < 1$ in \eqref{eq:eigenvalue} for $\noisy = v_\sigma$. It has been shown in \cite{Benning:PhdThesis} that this systematic bias can be corrected with the help of Bregman iterations in case of scalar $\regparambold = \regparam$ and $\regfct$ with $\partial \regfctarg[\regparam]{c u} = \regparam \partial \regfct(u)$. Assume that $\regparam$ is chosen such that $u^k = 0$ for all $k < k^\ast - 1$, and $u^{k^\ast - 1} = \frac{1 - \regparam}{\sigma} u_\sigma$ for some $k^\ast \in \N$. Then we can easily conclude from \eqref{eq:varregopt} and \eqref{eq:bregiterprimalupdate} that $u^{k^\ast}$ has to satisfy 
\begin{align*}
\frac{1}{\regparam} \left( \lambda - \frac{1 - \regparam}{\sigma} \right) K^\ast K u^{k^\ast} \in \partial \regfct(\lambda u^{k^\ast}) \, .
\end{align*}
We easily calculate that the above equation simplifies to the singular vector condition \eqref{eq:singvec} for the choice $\lambda = 1/\sigma$. Consequently, $u^{k^\ast} = \regoparg[\regparam]{\noisy} = u_\sigma / \sigma$, and we have corrected for the bias of the previous iterate.\\

\begin{figure}[!t]
\begin{center}
\includegraphics[width=0.5\textwidth]{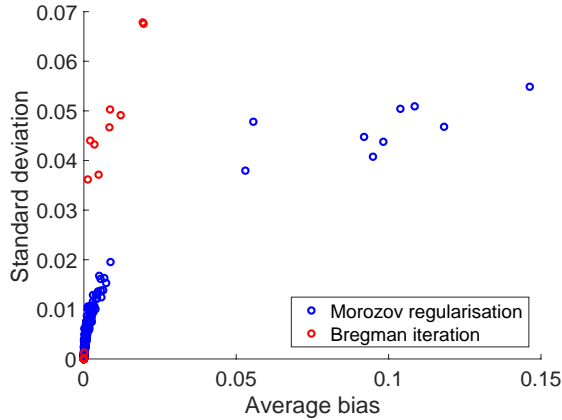}
\end{center}
\caption{We see \eqref{eq:stdandbias} for the compressed sensing toy example in Section \ref{subsubsec:debiaseigenfcts}. The blue circles represent the standard deviation and average absolute bias values for all coefficients recovered with \eqref{eq:biasmorozov}. The red circles show the same quantities for all coefficients recovered with \eqref{eq:biasbregman}. It becomes evident that for this example the average bias is significantly reduced, while the standard deviation of the reconstructed coefficients is comparable.}\label{fig:biasreduction}
\end{figure}

The previous computations demonstrate that Bregman iterations correct for the systematic bias of variational regularization reconstructions of generalized singular vectors in case of one-homogeneous regularization functionals $\regfct$. However, the phenomenon is not limited to singular vectors. The following numerical toy example shows that the average reconstruction bias can be significantly reduced with the help of Bregman iterations. Suppose the following setup. Our forward model $K \in \R^{m \times n}$, for $m = 128$ and $n = 512$, is a matrix with its entries drawn randomly from $\mathcal{N}(0, 1)$. We define a sparse vector $\minsol \in \R^n$ with nine non-zero entries, drawn randomly from $\mathcal{N}(0, 1)$, and set $\clean = K\minsol$. Subsequently, we create one hundred instances of noisy data via $\noisy_j := \clean + n_j$, for $n \in \mathcal{N}(0, 0.5)$ and $j \in \{1, \ldots, 100\}$. We now compute reconstructions for each of the one hundred instances with the following two regularization methods:
\begin{align}
\regop_{\text{Morozov}}(\noisy_j, \delta_j) = \argmin_{u \in \R^n} \left\{ \| u \|_1 \quad \text{subject to} \quad \| Ku - \noisy_j \|_2 \leq \delta_j \right\} \, ,\label{eq:biasmorozov}
\end{align}
and
\begin{align}
\regop_{\text{Bregman}}(\noisy_j, \{u^n_j \}_{n = 1}^{k - 1}, \regparam) = \argmin_{u \in \R^n} \left\{ \frac{1}{2} \left\| Ku - \left( k \noisy - \sum_{n = 1}^{k - 1} Ku^n_j \right) \right\|_2^2 + \regparam \| u \|_1 \right\} \, ,\label{eq:biasbregman}
\end{align}
for $u^{\text{Morozov}}_j \in \regop_{\text{Morozov}}(\noisy_j, \delta_j)$, $\delta_j := \frac{1}{2} \| K\minsol - \noisy_j \|_2^2$, $u^{\text{Bregman}}_j \in \regop_{\text{Bregman}}(\noisy_j, \{u^n_j \}_{n = 1}^{k^\ast - 1}, \regparam)$, and $k^\ast$ chosen according to Definition \ref{def:morozov} for $\eta = 1$ and $\delta = \delta_j$. We then compute the average absolute bias and the standard deviation of the reconstructions, i.e. we compute
\begin{align}
\left| \minsol - \frac{1}{100} \sum_{j = 1}^{100} \hat{u}_j \right| \quad \text{and} \quad \sqrt{\frac{1}{99} \sum_{j = 1}^{100} \left(\hat{u}_j - \frac{1}{100} \sum_{j = 1}^{100} \hat{u}_j \right)^2}\label{eq:stdandbias}
\end{align}
for $\hat{u}_j \in \{ u^{\text{Morozov}}_j, u^{\text{Bregman}}_j \}$. Both average absolute bias and standard deviation are visualized for each of the $n = 512$ coefficients in Figure \ref{fig:biasreduction}. We clearly observe that with similar standard deviation, the average absolute bias is significantly reduced by the Bregman iteration in comparison to the Morozov regularization model.     

\subsection{Linearized Bregman Iteration}\label{subsec:linbregiter}

As the name suggests, the linearized Bregman iteration can be derived from Algorithm \ref{alg:bregiter} by replacing the term $\fidelity{Ku^k}{\noisy}$ with its linearization
\begin{align*}
\fidelity{Ku^k}{\noisy} \approx \fidelity{Ku^{k - 1}}{\noisy} + \langle \partial_x \fidelity{Ku^{k - 1}}{\noisy} , Ku^k - Ku^{k - 1} \rangle \, .
\end{align*}
Hence, if we replace $\fidelity{Ku^k}{\noisy}$ in Algorithm \ref{alg:bregiter} with this linearization multiplied by some constant $\tau > 0$, we obtain 
\begin{align*}
\regopitarg{\noisy, v^{k - 1}} {} = {} &\argmin_{u \in \domain} \left\{ \tau \left( \fidelity{Ku^{k - 1}}{\noisy} + \langle \partial_x \fidelity{Ku^{k - 1}}{\noisy} , Ku^k - Ku^{k - 1} \rangle \right) \right.\nonumber\\
&\left. \quad + \bregdis[\regfct(\cdot, \regparam)]{p^{k - 1}}{u}{u^{k - 1}} \right\} \nonumber \, ,\\
{} = {} &\argmin_{u \in \domain} \left\{ \tau \langle \partial_x \fidelity{Ku^{k - 1}}{\noisy} , Ku^k - Ku^{k - 1} \rangle + \bregdis[\regfct(\cdot, \regparam)]{p^{k - 1}}{u}{u^{k - 1}} \right\} \, ,\\
u^k {} \in {} &\regopitarg{\noisy, v^{k - 1}} \nonumber\\
p^k {} = {} &p^{k - 1} - \tau K^\ast \partial_x \fidelity{Ku^{k - 1}}{\noisy} \, ,
\end{align*}
for $\regparambold = (\tau, \regparam)$ and $v^{k - 1} := (u^k, p^k)$ for all $k \in \N$. These equations are summarized in Algorithm \ref{alg:linbregiter}.

The linearized Bregman iteration is a generalization of the Landweber regularization \cite{landweber1951iteration} for the choices $\fidelity{Ku}{\noisy} = \frac{1}{2} \| Ku - \noisy \|_{L^2(\Sigma)}^2$ and $\regfct(u, \regparam) = \frac{\regparam}{2} \| u \|_{L^2(\Omega)}^2$, for some signal domains $\Omega$ and $\Sigma$. It is also a generalization of the mirror descent algorithm proposed in \cite{nemirovskii1983problem}, where $\regfct(\cdot, \regparam)$ is a Legendre functional in the sense of Definition \ref{def:legendre}. This connection for convex, differentiable $\fidelfct$ and strongly-convex and differentiable $\regfct(\cdot, \regparam)$ was made in \cite{beck2003mirror}. The extension to subdifferentiable convex $\regfct(\cdot, \regparam)$ was first proposed in \cite{darbon2007fast} and has since been studied extensively \cite{yin,cai1,cai2,yin2}.

\begin{algorithm}[!ht]
\caption{Linearized Bregman iteration}
\label{alg:linbregiter}
\begin{algorithmic}
\State{Initialize $u^0 \in \domain$, $p^0$ with $p^0 \in \partial \regfctarg[\regparam]{u^0}$, $r^0 = K^\ast \partial_x \fidelity{K u^0}{\noisy}$, $\regparambold = (\regparam, \tau) \in \regdomain$}
\While{stopping criterion is not satisfied}
\State{Compute $\regopitarg{\noisy, v^{k - 1}} = \argmin_{u \in \domain} \left\{ \tau \langle r^{k - 1}, u \rangle + \bregdis[\regfct(\cdot, \regparam)]{p^{k - 1}}{u}{u^{k - 1}} \right\}$}
\State{Pick $u^k \in \regopitarg{\noisy, v^{k - 1}} \vphantom{\argmin_{u \in \domain} \left\{ \tau \langle r^{k - 1}, u \rangle + \bregdis{p^{k - 1}}{u}{u^{k - 1}} \right\}}$}
\State{Update $p^k = p^{k - 1} - \tau \, r^{k - 1} \vphantom{\argmin_{u \in \domain} \left\{ \tau \langle r^{k - 1}, u \rangle + \bregdis{p^{k - 1}}{u}{u^{k - 1}} \right\}}$}
\State{Compute $r^k = K^\ast \partial_x \fidelity{K u^k}{\noisy} \vphantom{\argmin_{u \in \domain} \left\{ \tau \langle r^{k - 1}, u \rangle + \bregdis{p^{k - 1}}{u}{u^{k - 1}} \right\}}$}
\State{Set $v^k = (u^k, p^k) \vphantom{\argmin_{u \in \domain} \left\{ \tau \langle r^{k - 1}, u \rangle + \bregdis{p^{k - 1}}{u}{u^{k - 1}} \right\}}$}
\EndWhile\\
\Return $u^{k^\ast}$, $p^{k^\ast}$
\end{algorithmic}
\end{algorithm}

\noindent Similar to Remark \ref{rem:primdualsumformula}, we can rewrite the dual update of the linearized Bregman iteration as
\begin{align}
p^k = p^0 - \sum_{n = 0}^{k - 1} K^\ast \partial_x \fidelity{Ku^n}{\noisy} \, ,\label{eq:linbregiterdualupdate}
\end{align}
and the primal update as
\begin{align}
\regopit(\noisy, \{ u^n \}_{n = 0}^{k - 1}, p^0, \regparam) = \argmin_{u \in \domain} \left\{ \regfct(u, \regparam) - \left\langle p^0 - \sum_{n = 0}^{k - 1} K^\ast \partial_x \fidelity{Ku^n}{\noisy}, u \right\rangle \right\} \, .\label{eq:linbregiterprimalupdate}
\end{align}

In order to carry out a convergence analysis similar to the analysis for the standard Bregman iteration, we define the surrogate functional
\begin{align}
\surrogatearg{u} := \regfct(u, \regparam) - \tau \fidelity{Ku}{\noisy} \, .
\end{align}
We further assume for the remainder of this section that $\regfct$ and $\tau$ are chosen such that $\surrogate$ is convex. In practice, this requires strong convexity properties of $\regfct$, which can simply be established by adding a sufficiently strongly convex functional to the original choice of $\regfct$.
\begin{example}
Let $K \in \linbound{L^2(\Omega)}{L^2(\Sigma)}$ and $\fidelity{Ku}{\noisy} = \frac{1}{2}\|Ku - \noisy\|_{L^2(\Sigma)}^2$, for domains $\Omega \subset \R^n$ and $\Sigma \subset \R^m$, and let $\regfct_1$ be a proper, l.s.c. and convex functional. Then the functional $\regfct_\tau(u, \regparam) := \regfctarg[\regparam]{u} - \frac{\tau}{2}\|Ku - \noisy\|_{L^2(\Sigma)}^2$ is convex for the choices
\begin{align*}
\regfctarg[\regparam]{u} := \frac{1}{2} \| u \|^2_{L^2(\Omega)} + \regfct_1(u) \quad \text{and} \quad \tau < \frac{1}{\| K \|_{\linbound{L^2(\Omega)}{L^2(\Sigma)}}^2} \, .
\end{align*}
\end{example}

Similar to the Bregman iteration analysis, we start with a statement about the monotonic decrease of the data fidelity.
\begin{corollary}[Monotonic decrease of the data fidelity]\label{cor:mondeclinbreg}
Suppose $u^0$ satisfies $\fidelity{Ku^0}{\noisy} < \infty$. Then the iterates of Algorithm \ref{alg:linbregiter} satisfy 
\begin{align}
\fidelity{Ku^{k + 1}}{\noisy} + \frac{1}{\tau} \bregdis[\surrogatearg{\cdot}]{q^k}{u^{k + 1}}{u^k} \leq \fidelity{Ku^{k + 1}}{\noisy} \label{eq:mondeclinbreg}
\end{align}
and
\begin{align*}
\lim_{k \rightarrow \infty} \bregdis[\surrogatearg{\cdot}]{q^k}{u^{k + 1}}{u^k} = 0 \, ,
\end{align*}
for $u^k \in \regopitarg{\noisy, v^{k - 1}}$ and $q^k \in \partial \surrogatearg{u^k}$.
\begin{proof}
First of all we highlight that $\langle r^{k - 1}, u^k - u^{k - 1} \rangle = \langle K^\ast \partial_x \fidelity{Ku^{k - 1}}{\noisy}, u^k - u^{k - 1} \rangle$ can be written as
\begin{align*}
\langle K^\ast \partial_x \fidelity{Ku^{k - 1}}{\noisy}, u^k - u^{k - 1} \rangle = \fidelity{Ku^k}{\noisy} - \fidelity{Ku^{k - 1}}{\noisy} - \bregdis[\fidelity{K \cdot}{\noisy}]{}{u^k}{u^{k - 1}} \, ,
\end{align*}
for all $k \in \N$. Hence, the (primal) update of the linearized Bregman iteration can be rewritten to
\begin{align*}
\regopitarg{\noisy, v^{k - 1}} = \argmin_{u \in \domain} \left\{ \tau \left( \fidelity{Ku^k}{\noisy} - \fidelity{Ku^{k - 1}}{\noisy} \right) + \bregdis[\surrogatearg{\cdot}]{q^{k - 1}}{u^k}{u^{k - 1}} \right\} \, ,
\end{align*}
for $q^{k - 1} = p^{k - 1} - \tau K^\ast \partial_x \fidelity{Ku^{k - 1}}{\noisy} \in \partial \surrogatearg{u^{k - 1}}$ and $p^{k - 1} \in \partial \regfct(u^{k - 1}, \regparam)$. Hence, we conclude
\begin{align*}
&\tau \left( \fidelity{Ku^k}{\noisy} - \fidelity{Ku^{k - 1}}{\noisy} \right) + \bregdis[\surrogatearg{\cdot}]{q^{k - 1}}{u^k}{u^{k - 1}} \\
{} \leq {} &\underbrace{\tau \left( \fidelity{Ku^{k - 1}}{\noisy} - \fidelity{Ku^{k - 1}}{\noisy} \right) }_{= 0} + \underbrace{\bregdis[\surrogatearg{\cdot}]{q^{k - 1}}{u^{k - 1}}{u^{k - 1}}}_{= 0} \, ,
\end{align*}
and thus, Equation \eqref{eq:mondeclinbreg}. In the same fashion as in the proof of Corollary \ref{cor:mondecbreg} we further conclude $\lim_{k \rightarrow \infty} \bregdis[\surrogatearg{\cdot}]{q^k}{u^{k + 1}}{u^k} = 0$.
\end{proof}
\end{corollary}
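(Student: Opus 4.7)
The plan is to exploit the fact that the linearized Bregman update is itself a minimization problem whose objective can be rewritten entirely in terms of the surrogate functional $\surrogatearg{u}$. I read the target inequality as the obvious typo-corrected version
$$\fidelity{Ku^{k+1}}{\noisy} + \frac{1}{\tau}\bregdis[\surrogatearg{\cdot}]{q^k}{u^{k+1}}{u^k} \leq \fidelity{Ku^k}{\noisy},$$
which is the natural analog of Corollary~\ref{cor:mondecbreg}. The key algebraic identity I would establish first is the Bregman decomposition
$$\tau\langle K^\ast \partial_x \fidelity{Ku^{k-1}}{\noisy},\,u-u^{k-1}\rangle \;=\; \tau\fidelity{Ku}{\noisy} - \tau\fidelity{Ku^{k-1}}{\noisy} - \tau\bregdis[\fidelity{K\cdot}{\noisy}]{}{u}{u^{k-1}},$$
which follows directly from the definition of the Bregman distance applied to the (differentiable) functional $u\mapsto\fidelity{Ku}{\noisy}$.

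Next I would substitute this identity into the objective of the update in Algorithm~\ref{alg:linbregiter} and regroup. Writing $q^{k-1} := p^{k-1} - \tau K^\ast \partial_x \fidelity{Ku^{k-1}}{\noisy}$ and using the linearity of the Bregman distance with respect to the underlying functional and its subgradient, I expect to find
$$\bregdis[\regfctarg[\regparam]{\cdot}]{p^{k-1}}{u}{u^{k-1}} - \tau\bregdis[\fidelity{K\cdot}{\noisy}]{}{u}{u^{k-1}} = \bregdis[\surrogatearg{\cdot}]{q^{k-1}}{u}{u^{k-1}},$$
so that the linearized Bregman update is equivalently the minimization of
$$\tau\left(\fidelity{Ku}{\noisy}-\fidelity{Ku^{k-1}}{\noisy}\right) + \bregdis[\surrogatearg{\cdot}]{q^{k-1}}{u}{u^{k-1}}.$$
The crucial point here is that, under the standing assumption that $\surrogate_\tau$ is convex, $q^{k-1}$ is a legitimate element of $\partial\surrogatearg{u^{k-1}}$, so this Bregman distance is nonnegative.

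With the reformulation in hand, the main inequality is immediate: evaluating the objective at the feasible point $u^{k-1}$ yields $0$, so the minimizer $u^k$ satisfies
$$\tau\left(\fidelity{Ku^k}{\noisy}-\fidelity{Ku^{k-1}}{\noisy}\right) + \bregdis[\surrogatearg{\cdot}]{q^{k-1}}{u^k}{u^{k-1}} \leq 0,$$
which is exactly the target (after shifting indices and dividing by $\tau>0$). For the limiting statement, I would telescope the inequality from $k=0$ to $k=N-1$ to obtain
$$\sum_{k=0}^{N-1}\frac{1}{\tau}\bregdis[\surrogatearg{\cdot}]{q^k}{u^{k+1}}{u^k} \leq \fidelity{Ku^0}{\noisy} - \fidelity{Ku^N}{\noisy} \leq \fidelity{Ku^0}{\noisy} < \infty,$$
using nonnegativity of $F$. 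Passing $N\to\infty$ gives a convergent series of nonnegative terms, whose general term must then tend to zero.

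The main obstacle I anticipate is not analytical but notational: making sure the subgradient arithmetic $q^{k-1} = p^{k-1} - \tau K^\ast \partial_x\fidelity{Ku^{k-1}}{\noisy} \in \partial\surrogatearg{u^{k-1}}$ is correctly justified. This requires that $\regfct(\cdot,\regparam)$ be subdifferentiable at $u^{k-1}$ with $p^{k-1}\in\partial\regfctarg[\regparam]{u^{k-1}}$ (guaranteed by construction of the iteration) and that the differentiable correction $-\tau\fidelity{K\cdot}{\noisy}$ can be added without changing the subdifferential structure, which follows from the Moreau–Rockafellar sum rule once $\surrogate_\tau$ is known to be convex. Beyond that, every step is essentially a one-line manipulation of Bregman distances, entirely parallel to Corollary~\ref{cor:mondecbreg}.
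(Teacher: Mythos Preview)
Your proposal is correct and follows essentially the same route as the paper: rewrite the linear term via the Bregman identity for $F(K\cdot,f^\delta)$, absorb the resulting $-\tau D_{F(K\cdot,f^\delta)}$ into the $J$-Bregman distance to obtain a $J_\tau$-Bregman distance with subgradient $q^{k-1}=p^{k-1}-\tau K^\ast\partial_x F(Ku^{k-1},f^\delta)$, then compare the objective at the minimizer with its value at $u^{k-1}$; the limiting statement follows by telescoping exactly as in Corollary~\ref{cor:mondecbreg}. Your observation that the right-hand side of \eqref{eq:mondeclinbreg} should read $\fidelity{Ku^k}{\noisy}$ rather than $\fidelity{Ku^{k+1}}{\noisy}$ is also correct.
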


As in the case of the standard Bregman iteration, the linearized Bregman iteration also satisfies Fe\'{j}er monotonicity in case the discrepancy principle is not violated.

\begin{lemma}[Fej\'{e}r monotonicity of Algorithm \ref{alg:linbregiter}]
Let $\clean \in \oprange_\fidelfct(K)$, $\minsol \in \select(\clean, \regparam)$ and let $\noisy \in \range$ with $\fidelity{\clean}{\noisy} \leq \delta$. We further assume that the iterates of Algorithm \ref{alg:linbregiter} satisfy Definition \ref{def:morozov} for $\eta = 1$. Then the iterates also satisfy the strict Fej\'{e}r monotonicity 
\begin{align*}
\bregdis[\surrogatearg{\cdot}]{q^k}{\minsol}{u^k} < \bregdis[\surrogatearg{\cdot}]{q^{k - 1}}{\minsol}{u^{k - 1}} \, ,
\end{align*}
for all $u^k \in \regopitarg{\noisy, v^{k - 1}}$ and $q^k \in \partial \surrogatearg{u^k}$, for all $k \leq k^\ast$.
\begin{proof}
Through straight-forward computations we obtain
\begin{align*}
\bregdis[\surrogatearg{\cdot}]{q^k}{\minsol}{u^k} - \bregdis[\surrogatearg{\cdot}]{q^{k - 1}}{\minsol}{u^{k - 1}} {} = {} &\underbrace{-\bregdis[\surrogatearg{\cdot}]{q^{k - 1}}{u^k}{u^{k - 1}}}_{< 0}\\
&- \langle q^k - q^{k - 1}, \minsol - u^k \rangle\\
&\leq \left\langle p^k - p^{k - 1} - \tau K^\ast (\partial_x \fidelity{Ku^k}{\noisy} - \partial_x \fidelity{Ku^{k - 1}}{\noisy}), \minsol - u^k\right\rangle \\
&= \tau \langle K^\ast \partial_x \fidelity{Ku^k}{\noisy}, \minsol - u^k \rangle\\
&\leq \tau \left( \delta - \fidelity{Ku^k}{f^\delta} \right)\\
&< 0
\end{align*}
for $k \leq k^\ast$, where we have made use of the convexity of $F$ in its first argument, and $F(Ku^\dagger, \noisy) \leq \delta$.
\end{proof}
\end{lemma}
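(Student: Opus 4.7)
The plan is to mirror the strategy used for the standard Bregman iteration in Lemma \ref{lem:fejer}, but with the surrogate functional $\surrogate$ playing the role of $\regfct$, and the subgradient $q^k\in\partial\surrogatearg{u^k}$ in place of $p^k\in\partial\regfct(u^k)$. The first step is to expand
\begin{align*}
\bregdis[\surrogatearg{\cdot}]{q^k}{\minsol}{u^k} - \bregdis[\surrogatearg{\cdot}]{q^{k-1}}{\minsol}{u^{k-1}}
\end{align*}
directly from the definition of the Bregman distance, cancel the two copies of $\surrogatearg{\minsol}$, and regroup the linear terms so that the difference becomes
\begin{align*}
-\bregdis[\surrogatearg{\cdot}]{q^{k-1}}{u^k}{u^{k-1}} \; - \; \langle q^k - q^{k-1}, \minsol - u^k\rangle.
\end{align*}
The first bracket is non-positive by convexity of $\surrogate$ and, as observed in Corollary \ref{cor:mondeclinbreg}, it is in fact strictly negative whenever $u^k\neq u^{k-1}$, which I will argue must hold prior to reaching the stopping index.

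Next I would evaluate $q^k - q^{k-1}$ using the fact that $q^k = p^k - \tau K^\ast \partial_x\fidelity{Ku^k}{\noisy}$ together with the primal-dual update $p^k = p^{k-1} - \tau K^\ast\partial_x\fidelity{Ku^{k-1}}{\noisy}$. The two occurrences of $\partial_x\fidelity{Ku^{k-1}}{\noisy}$ cancel, leaving simply $q^k - q^{k-1} = -\tau K^\ast \partial_x\fidelity{Ku^k}{\noisy}$. Substituting into the inner product and taking the adjoint over to the $\range$ side gives
\begin{align*}
-\langle q^k - q^{k-1}, \minsol - u^k\rangle \;=\; \tau\,\langle \partial_x\fidelity{Ku^k}{\noisy},\, K\minsol - Ku^k\rangle.
\end{align*}

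Then I invoke the convexity of $\fidelfct$ in its first argument to bound this by $\tau\bigl(\fidelity{K\minsol}{\noisy} - \fidelity{Ku^k}{\noisy}\bigr)$, and use $\fidelity{K\minsol}{\noisy}\leq\fidelity{\clean}{\noisy}\leq\delta$ exactly as in the standard-Bregman proof to conclude this is at most $\tau(\delta - \fidelity{Ku^k}{\noisy})$. Combining with the non-positivity of the Bregman term shows that the full difference is $\leq \tau(\delta - \fidelity{Ku^k}{\noisy})$, which is strictly negative as long as Morozov's principle has not been triggered, i.e.\ for every $k$ with $\fidelity{Ku^k}{\noisy}>\delta$.

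The main subtlety is stitching together strictness: the Bregman-distance term gives strict decrease only on iterations where $u^k\neq u^{k-1}$, while the residual-bound term gives it whenever the discrepancy $\fidelity{Ku^k}{\noisy}$ still exceeds $\delta$; both are available on the Morozov range and together deliver the claimed strict inequality. A secondary caveat worth flagging is that $\surrogate$ must be convex, which is why the preceding discussion requires $\tau$ small enough relative to the curvature of $\regfct$; without that, the step where we treat $-\bregdis[\surrogatearg{\cdot}]{q^{k-1}}{u^k}{u^{k-1}}$ as non-positive fails, and so the analysis necessarily uses the surrogate rather than $\regfct$ itself. Everything else is a faithful transcription of the standard Bregman argument.
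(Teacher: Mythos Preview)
Your proposal is correct and follows essentially the same route as the paper: the three-point identity for the Bregman distance of $\surrogate$, the explicit computation $q^k-q^{k-1}=-\tau K^\ast\partial_x\fidelity{Ku^k}{\noisy}$ via the dual update, and the convexity bound on $\fidelfct$ combined with $\fidelity{K\minsol}{\noisy}\le\delta$. If anything, your treatment of strictness is more careful than the paper's, which simply labels $-\bregdis[\surrogatearg{\cdot}]{q^{k-1}}{u^k}{u^{k-1}}$ as strictly negative without comment.
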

\noindent In analogy to Corollary \ref{cor:sumfidelbound}, we can show the same result for the linearized Bregman iteration
\begin{corollary}
Let $\clean \in \oprange_\fidelfct(K)$ and $\minsol \in \select(\clean, \regparam)$. Then the iterates of Algorithm \ref{alg:linbregiter} satisfy \eqref{eq:fidelbound}, for $\delta = 0$ (and thus $\noisy = \clean$) and $u^0$ (with $q^0 \in \partial \surrogatearg{u^0}$) chosen such that $\bregdis[\surrogatearg{\cdot}]{q^0}{\minsol}{u^0} < \infty$.
\begin{proof}
The proof follows the exact same steps as the proof of Corollary \ref{cor:sumfidelbound}.
\end{proof}
\end{corollary}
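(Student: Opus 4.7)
The plan is to mimic the proof of Corollary \ref{cor:sumfidelbound} step-for-step, using the Fejér-monotonicity estimate just established for Algorithm \ref{alg:linbregiter} in place of the one for Algorithm \ref{alg:bregiter}. First, I would specialize the previous lemma to $\delta = 0$ (so $\noisy = \clean$), in which case the chain of inequalities in its proof yields
\begin{align*}
\bregdis[\surrogatearg{\cdot}]{q^k}{\minsol}{u^k} - \bregdis[\surrogatearg{\cdot}]{q^{k-1}}{\minsol}{u^{k-1}} \leq -\bregdis[\surrogatearg{\cdot}]{q^{k-1}}{u^k}{u^{k-1}} - \tau \fidelity{Ku^k}{\clean},
\end{align*}
since $F(K\minsol,\clean)\leq F(\clean,\clean)=0$. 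Dropping the non-positive term $-\bregdis[\surrogatearg{\cdot}]{q^{k-1}}{u^k}{u^{k-1}}$ (non-positive because $\surrogate$ is convex by standing assumption of this subsection) gives the one-step decrease
\[
\tau \fidelity{Ku^k}{\clean} \leq \bregdis[\surrogatearg{\cdot}]{q^{k-1}}{\minsol}{u^{k-1}} - \bregdis[\surrogatearg{\cdot}]{q^k}{\minsol}{u^k}.
\]

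Next, I would telescope this estimate from $k=1$ to $k=N$, obtaining
\[
\tau \sum_{k=1}^{N} \fidelity{Ku^k}{\clean} \leq \bregdis[\surrogatearg{\cdot}]{q^0}{\minsol}{u^0} - \bregdis[\surrogatearg{\cdot}]{q^N}{\minsol}{u^N} \leq \bregdis[\surrogatearg{\cdot}]{q^0}{\minsol}{u^0},
\]
again using the non-negativity of the surrogate Bregman distance that follows from convexity of $\surrogate$. By hypothesis the right-hand side is finite, so the partial sums are uniformly bounded in $N$; letting $N\to\infty$ and adding the (finite) term $\fidelity{Ku^0}{\clean}$ from the initialization then produces \eqref{eq:fidelbound}.

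The only subtlety — hardly an obstacle — is to make sure the argument is carried out at the level of the surrogate functional $\surrogate$ rather than $\regfct$, because it is $\surrogate$ (and the associated subgradients $q^k\in\partial\surrogatearg{u^k}$) that plays the role here which $\regfct$ and $p^k$ played in the proof of Corollary \ref{cor:sumfidelbound}. Once this substitution is made, and the convexity of $\surrogate$ (guaranteed by the choice of $\tau$ and $\regfct$ assumed throughout the section) is invoked to ensure non-negativity of the relevant Bregman distances, the proof proceeds verbatim as in the Bregman-iteration case, as the author indicates.
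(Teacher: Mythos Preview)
Your proposal is correct and follows exactly the approach the paper intends: you specialize the Fej\'er-monotonicity lemma for Algorithm~\ref{alg:linbregiter} to $\delta=0$, extract the one-step estimate $\tau\,\fidelity{Ku^k}{\clean}\le \bregdis[\surrogatearg{\cdot}]{q^{k-1}}{\minsol}{u^{k-1}}-\bregdis[\surrogatearg{\cdot}]{q^k}{\minsol}{u^k}$, telescope, and use convexity of $\surrogate$ for non-negativity of the Bregman terms. You have in fact spelled out more detail than the paper does (including the harmless factor~$\tau$ and the need to append the $k=0$ term), and your emphasis that the substitution $\regfct\leadsto\surrogate$, $p^k\leadsto q^k$ is the only real adjustment is precisely the point.
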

\noindent As in the case of Bregman iteration, Remark \ref{rem:oplimit} follows from this result.

\begin{figure}[!t]
\begin{center}
\subfloat[Original $\minsol$]{\includegraphics[width=0.24\textwidth]{Images/Pixelsmall.png}\label{subfig:pixellinbregiter1}}\vspace{0.05cm}
\subfloat[Blurred \& noisy $\noisy$]{\begin{tikzpicture}[      
        every node/.style={anchor=south west,inner sep=0pt},
        x=1mm, y=1mm,
      ]   
     \node (fig1) at (0,0)
       {\includegraphics[width=0.24\textwidth]{Images/Pixelblurrednoisy.png}};
     \node (fig2) at (0,0)
       {\includegraphics[width=0.1\textwidth]{Images/convkernel.png}};  
\end{tikzpicture}\label{subfig:pixellinbregiter2}}\vspace{0.05cm}
\subfloat[1st iterate]{\includegraphics[width=0.24\textwidth]{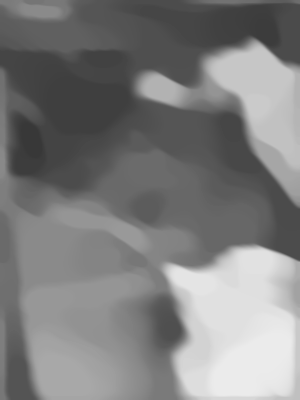}\label{subfig:pixellinbregiter3}}\vspace{0.05cm}
\subfloat[3rd iterate]{\includegraphics[width=0.24\textwidth]{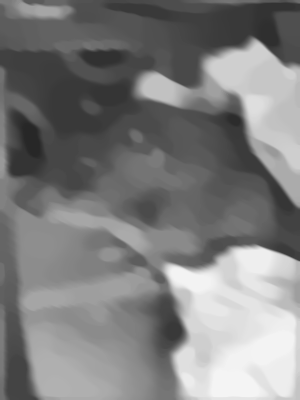}}\vspace{0.05cm}\\
\subfloat[6th iterate]{\includegraphics[width=0.24\textwidth]{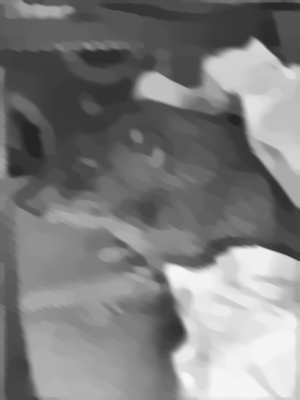}}\vspace{0.05cm}
\subfloat[20th iterate]{\includegraphics[width=0.24\textwidth]{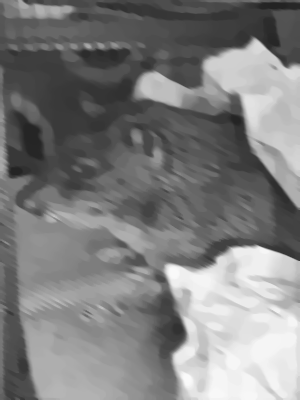}}\vspace{0.05cm}
\subfloat[70th iterate]{\includegraphics[width=0.24\textwidth]{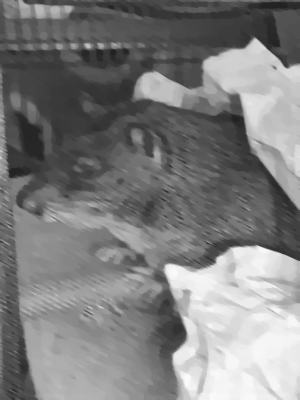}}\vspace{0.05cm}
\subfloat[128th iterate]{\includegraphics[width=0.24\textwidth]{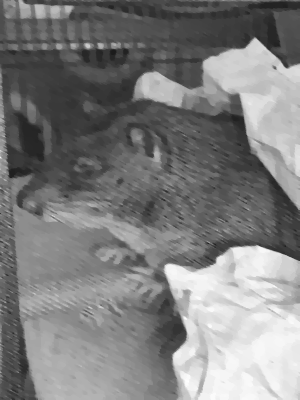}\label{subfig:pixellinbregiter4}}
\end{center}
\caption{Figure \ref{subfig:pixellinbregiter1} shows the image $\minsol \in \R^{400 \times 300}$ of Pixel, the Gambian pouched rat, originally introduced in Figure \ref{subfig:pixelbregiter1}. In Figure \ref{subfig:pixellinbregiter2} we see the same degraded and noisy version $\noisy \in \R^{400 \times 300}$ together with the convolution kernel $h$ as shown in Figure \ref{subfig:pixelbregiter2}. Figure \ref{subfig:pixellinbregiter3} - \ref{subfig:pixellinbregiter4} show different iterates of Algorithm \ref{alg:linbregiter} for $\fidelity{Ku}{\noisy} = \frac{1}{2}\| Ku - \noisy \|_{L^2(\R^2)}^2$, $\regfct(u, \regparam) = \frac{1}{2} \| u \|_{L^2(\R^2)}^2 + \regparam \tv(u)$ and $\regparam = 1/4$. The 128th iterate visualized in Figure \ref{subfig:pixellinbregiter4} is the first that violates Definition \ref{def:morozov}, for $\delta = 5.95$.}\label{fig:pixellinbregiter}
\end{figure}

The following result guarantees converge to a solution in $\select(\clean, \regparam)$ in case $Ku^{k^\ast} = \clean$ is satisfied after an finite number $k^\ast$ of iterations of Algorithm \ref{alg:linbregiter}.
\begin{lemma}\label{lem:linbregiterselectsol}
Suppose that after a finite number of iterations the $k^\ast$-th iterate of Algorithm \ref{alg:linbregiter} satisfies $Ku^{k^\ast} = \clean$, for $u^{k^\ast} \in \regoparg{\clean}$, $\clean \in \oprange_K(\fidelfct)$ and $p^0 \in \oprange(K^\ast)$. Then $u^{k^\ast} \in \select(\clean, \regparam)$.
\begin{proof}
The proof is almost identical to the proof of Lemma \ref{lem:bregiterselectsol}; the only difference is that we use \eqref{eq:linbregiterdualupdate} instead of \eqref{eq:bregiterdualupdate}.
\end{proof}
\end{lemma}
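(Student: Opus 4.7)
The plan is to mirror the proof of Lemma \ref{lem:bregiterselectsol} essentially verbatim, with the sole substitution of the dual-update identity \eqref{eq:linbregiterdualupdate} for \eqref{eq:bregiterdualupdate}. Fix any competitor $\hat u \in \{u \in \domain : Ku = \clean\}$; such competitors exist because $\clean \in \oprange_\fidelfct(K)$ and because $u^{k^\ast}$ itself is one. Since $\clean$ is attained exactly and $F(\clean,\clean)=0$, any $u$ with $Ku=\clean$ is a best approximate solution, so to prove $u^{k^\ast} \in \select(\clean,\regparam)$ it suffices to show $\regfctarg[\regparam]{u^{k^\ast}} \leq \regfctarg[\regparam]{\hat u}$ for every such $\hat u$.

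The first step is to use the nonnegativity of the Bregman distance at $\hat u$: since $p^{k^\ast} \in \partial \regfct(\cdot,\regparam)(u^{k^\ast})$,
\begin{align*}
0 \leq \bregdis[\regfct(\cdot,\regparam)]{p^{k^\ast}}{\hat u}{u^{k^\ast}} = \regfctarg[\regparam]{\hat u} - \regfctarg[\regparam]{u^{k^\ast}} - \langle p^{k^\ast}, \hat u - u^{k^\ast}\rangle,
\end{align*}
so that $\regfctarg[\regparam]{u^{k^\ast}} \leq \regfctarg[\regparam]{\hat u} - \langle p^{k^\ast}, \hat u - u^{k^\ast}\rangle$.

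The second step is to eliminate the subgradient term. Insert \eqref{eq:linbregiterdualupdate},
\begin{align*}
p^{k^\ast} = p^0 - \sum_{n=0}^{k^\ast-1} K^\ast \partial_x \fidelity{Ku^n}{\clean},
\end{align*}
and then use the assumption $p^0 \in \oprange(K^\ast)$ to write $p^0 = K^\ast q^0$ for some $q^0 \in \range^\ast$. This lets every inner product be transported through $K$, producing
\begin{align*}
\langle p^{k^\ast}, \hat u - u^{k^\ast}\rangle = \langle q^0, K\hat u - Ku^{k^\ast}\rangle - \sum_{n=0}^{k^\ast-1} \langle \partial_x \fidelity{Ku^n}{\clean}, K\hat u - Ku^{k^\ast}\rangle.
\end{align*}
Since $K\hat u = \clean = Ku^{k^\ast}$ by construction, every summand on the right vanishes, and hence $\langle p^{k^\ast}, \hat u - u^{k^\ast}\rangle = 0$. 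Combining with the first step yields $\regfctarg[\regparam]{u^{k^\ast}} \leq \regfctarg[\regparam]{\hat u}$ for all admissible $\hat u$, which is exactly the defining property of $\select(\clean,\regparam)$.

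There is no real obstacle beyond bookkeeping: the only difference from the Bregman-iteration case is that the dual sum starts at $n=0$ instead of $n=1$, which is irrelevant because every term is killed by $K\hat u - Ku^{k^\ast}=0$. The crucial structural ingredients are (i) the exact data match $Ku^{k^\ast}=\clean$, which is needed to make both the $q^0$-term and the residual-terms disappear, and (ii) the range condition $p^0 \in \oprange(K^\ast)$, without which the $p^0$-term could not be pulled through $K$; these two together make the argument for the linearized scheme identical in spirit to its Bregman-iteration counterpart.
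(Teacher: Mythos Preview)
Your proof is correct and follows exactly the approach the paper intends: you use the nonnegativity of the Bregman distance to get $\regfctarg[\regparam]{u^{k^\ast}} \leq \regfctarg[\regparam]{\hat u} - \langle p^{k^\ast}, \hat u - u^{k^\ast}\rangle$, substitute the dual update \eqref{eq:linbregiterdualupdate}, write $p^0 = K^\ast q^0$, and kill every term via $K\hat u - Ku^{k^\ast}=0$. This is precisely the argument of Lemma~\ref{lem:bregiterselectsol} with the sum index shifted from $n=1,\ldots,k^\ast$ to $n=0,\ldots,k^\ast-1$, which, as you note, is immaterial.
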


\begin{remark}
Note that the statements of Lemma \ref{lem:bregiterselectsol} and Lemma \ref{lem:linbregiterselectsol} look identical, but one needs to remember that the underlying functionals $\regfct$ will most likely not be. This is due to the fact that for the linearized Bregman iteration additional terms have to be added in order to also make $\surrogate$ convex.
\end{remark}

We conclude this section with numerical results for the same deconvolution example introduced in Example \ref{exm:deconvolution}. We observe that with the same choice of regularization parameter and the same initialization, Algorithm \ref{alg:linbregiter} requires more iterations in order to converge to a solution that violates the discrepancy principle with the same error bound. On the other hand, the variational subproblems are computationally cheaper to solve compared to the standard Bregman iteration case, at least with the (accelerated) PDHGM used for this example.

\subsection{Coupled and Modified Bregman Iterations}

The Bregman iteration (as well as its linearized variant) leave some freedom for modification, an obvious one comes with respect to the choice of the subgradient $p^{k-1}$. The update from the optimality condition is of course the obvious one and particularly suited for a convergence proof. However, one may use different ways to determine a subgradient $p^k$ from $u^k$. As an example one may solve some variational problem
$$ p^k \in \argmin_p \{ H(p,p^{k-1}) ~|~ p \in \partial J(u^{k},{\bf \alpha}) \} , $$
with some convex functional $H$. In the case of $\ell^1$ minimization one might choose $H(p,p^{k-1})=\Vert p \Vert_2$, which yields the minimal subgradient, i.e. choosing again sign$_0$ in the case of a multivalued sign. 

Another option for choosing subgradients has been investigated in \cite{Moeller:ColorBregmanTV} when one solves joint reconstruction problems for multiple unknowns $u_1,\ldots,u_M$. There a coupled Bregman iteration was proposed and analyzed, which is based on choosing a new subgradient for the Bregman iteration in the $i$-th image $u_i$ from a linear combination of the subgradients in the other channels. In this way a joint subgradient for all the channels is approximated, which means a structural joint sparsity in the case of the $\ell^1$-norm or joint edge information in the total variation case. 
In \cite{rasch2017joint} an infimal convolution version of the coupled Bregman iteration has been investigated for an application to PET-MR imaging.

\section{Bias and Scales}

The previous arguments related to eigenfunctions demonstrate that bias and scale are closely related (at least when interpreting scale in terms of eigenfunctions and eigenvalues). The bias of variational regularization methods is larger on small scale features. Thus, debiasing and multiscale aspects in regularization methods appear closely related as it has been worked out very recently. We discuss those ideas in the following. 

\subsection{Inverse Scale space}
For regularization functionals of the form $\regfctarg[\regparambold]{u} = \regparam \regfct_1(u)$ we can write the dual Bregman iteration update as
\begin{align*}
\frac{p^k - p^{k - 1}}{\Delta t} = - K^\ast \partial_x \fidelity{Ku^k}{\noisy}
\end{align*}
for $\Delta t := 1/\regparam$ and $p^k \in \partial \regfct_1(u^k)$, for all $k \in \N$. Thus, taking the limit $\alpha \rightarrow \infty$, respectively $\Delta t \rightarrow 0$, yields the following time-continuous formulation of the Bregman iteration, also known as the \emph{inverse scale space flow} \cite{burger2005nonlinear,burger2006nonlinear,burger2007inverse},
\begin{align}
\partial_t p(t) = -K^\ast \partial_x \fidelity{Ku(t)}{\noisy} \, ,\label{eq:invscalspaceflow}
\end{align}
for $p(t) \in \partial \regfct_1(u(t))$.

For many typical choices of regularization functionals $\regfct_1$ it is difficult to numerically compute solutions of \eqref{eq:invscalspaceflow}, with the $\ell^1$ norm and in general any polyhedral regularization functional being the exception (cf. \cite{ourpaper,moel12,moeller2013multiscale}). Nevertheless, \eqref{eq:invscalspaceflow} is very useful to study theoretical properties of iterative regularizations in the limiting case.

Unsurprisingly, it is straight-forward to carry out an Eigenanalysis similar to the one discussed in Section \ref{sec:vareigprob} and Section \ref{subsubsec:debiaseigenfcts} for the regularization operator $\regoparg[t]{\noisy} = u(t)$ with $u(t)$ satisfying \eqref{eq:invscalspaceflow} in case of $\fidelity{Ku}{\noisy} = G(Ku - \noisy)$. The following result is a generalization of \cite[Theorem 9]{benning2013ground}.
\begin{theorem}
Let $(u_\sigma, v_\sigma)$ be a pair of generalized singular vectors with singular value $\sigma$, $\clean = v_\sigma$ and suppose $\regfct_1$ is (absolutely) one-homogeneous, i.e. $\regfct_1(c u) = |c| \regfct_1(u)$ for all $c \in \R$. Then $0 \in \regoparg[t]{v_\sigma}$ for $0 \leq t < t_\ast$ and
\begin{align*}
\frac{1}{\sigma} u_\sigma \in \regoparg[t]{v_\sigma}
\end{align*}
for $t \geq t_\ast = 1$.
\begin{proof}
Firstly, we verify $0 \in \regoparg[t]{v_\sigma}$ for $0 \in [0, t_\ast )$. From \eqref{eq:singvec} and the absolute one-homogeneity of $\regfct_1$ we observe $\regfct_1(u_\sigma) = \langle G^\prime(v_\sigma), Ku_\sigma \rangle$. We further see from the definition of the subdifferential that $t \leq 1 =  \langle G^\prime(v_\sigma), Ku_\sigma \rangle / \regfct_1(u_\sigma)$ implies $p(t) := t K^\ast G^\prime(v_\sigma) \in \partial \regfct_1(0)$. Since $\partial_t p(t) = K^\ast G^\prime(v_\sigma)$ and $p(0) = 0$, we have shown that $u(t) = 0$ is a solution of \eqref{eq:invscalspaceflow}.

\noindent For $t \geq t_\ast$ a continuous extension of $p(t)$ is 
\begin{align*}
p(t) = p(t_\ast) + (t_\ast - t) K^\ast G^\prime(Ku(t) - v_\sigma) \, .
\end{align*}
We immediately see that $u(t) = u_\sigma / \sigma$ is a solution for $t \geq t_\ast$, since $p(t_\ast) = t_\ast K^\ast G^\prime(v_\sigma) \in 
\partial \regfct_1(u_\sigma / \sigma)$ and $\partial_t p(t) = 0$. 
\end{proof}
\end{theorem}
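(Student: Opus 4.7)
The strategy is to construct, in each of the two time regimes, an explicit pair $(u(t), p(t))$ with $p(t) \in \partial J_1(u(t))$ satisfying the flow equation $\partial_t p(t) = -K^* \partial_x F(Ku(t), v_\sigma)$, and to exhibit this $u(t)$ as an element of $R(v_\sigma, t)$. Since $R(v_\sigma, t)$ is potentially set-valued, it suffices to produce one such trajectory rather than argue uniqueness.

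For $t \in [0, t_*)$, the natural ansatz is $u(t) \equiv 0$ together with $p(t) := t\, K^* G'(v_\sigma)$. Then $\partial_t p(t) = K^* G'(v_\sigma)$, which matches $-K^* \partial_x F(0, v_\sigma) = -K^* G'(-v_\sigma)$ under the evenness inherited from a symmetric data fidelity $G(\cdot)$. The real content is to verify $p(t) \in \partial J_1(0)$ for $t \in [0,1]$. The singular vector condition says $K^* G'(v_\sigma) \in \partial J_1(\sigma u_\sigma)$, and absolute one-homogeneity of $J_1$ gives $\partial J_1(\sigma u_\sigma) = \partial J_1(u_\sigma)$. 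Euler's identity for one-homogeneous functionals yields $J_1(u_\sigma) = \langle K^* G'(v_\sigma), u_\sigma \rangle$, so inserting this into the subgradient inequality at $u_\sigma$ produces $J_1(v) \geq \langle K^* G'(v_\sigma), v \rangle$ for every $v$, which is exactly $K^* G'(v_\sigma) \in \partial J_1(0)$. Finally, $\partial J_1(0)$ is closed under multiplication by scalars in $[0,1]$ (because $\langle t p, v \rangle \leq t J_1(v) \leq J_1(v)$ whenever $\langle p, v \rangle \geq 0$, and $\leq 0 \leq J_1(v)$ otherwise), so $p(t)$ lies in $\partial J_1(0)$ throughout $[0,1]$. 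The cutoff time is pinned down by the moment at which $p(t)$ saturates the subgradient inequality at $u_\sigma$, giving $t_* = J_1(u_\sigma)/\langle K^* G'(v_\sigma), u_\sigma \rangle = 1$.

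For $t \geq t_* = 1$, I would try $u(t) \equiv u_\sigma / \sigma$. Then $Ku(t) = v_\sigma$, so $\partial_x F(Ku(t), v_\sigma) = G'(0) = 0$ (as $0$ is the minimizer of the Legendre functional $G$), and the flow equation forces $p$ to remain constant. Continuity at $t_*$ selects $p(t) \equiv p(t_*) = K^* G'(v_\sigma)$, and the one-homogeneity identity $\partial J_1(u_\sigma/\sigma) = \partial J_1(u_\sigma)$ together with the singular vector condition confirms $p(t) \in \partial J_1(u_\sigma/\sigma)$, validating the ansatz.

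The main obstacle is the careful bookkeeping around absolute one-homogeneity: one has to exploit three times that $\partial J_1(c u_\sigma) = \partial J_1(u_\sigma)$ for positive $c$, and additionally the perhaps less familiar fact that $\partial J_1(u_\sigma) \subseteq \partial J_1(0)$ for absolutely one-homogeneous nonnegative $J_1$. Once that inclusion is secured, matching the $t$-derivative of the proposed subgradient path with the flow is essentially a direct computation, and the continuity of $p$ at $t_*$ glues the two regimes together.
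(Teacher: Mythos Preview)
Your proposal is correct and follows essentially the same route as the paper: construct $p(t)=t\,K^\ast G'(v_\sigma)$ on $[0,t_\ast)$ and verify $p(t)\in\partial J_1(0)$ via one-homogeneity and Euler's identity, then freeze $u(t)=u_\sigma/\sigma$ for $t\ge t_\ast$ so that $Ku(t)=v_\sigma$ and $p$ stays constant at $K^\ast G'(v_\sigma)\in\partial J_1(u_\sigma/\sigma)$. You are in fact more explicit than the paper on three points it leaves implicit: the oddness of $G'$ needed to match $-K^\ast G'(-v_\sigma)$ with $K^\ast G'(v_\sigma)$, the fact that $G'(0)=0$ because $0$ minimizes $G$, and the closure of $\partial J_1(0)$ under scaling by $[0,1]$.
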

Hence, the inverse scale space reconstruction also has no bias (for input data $v_\sigma$ satisfying \eqref{eq:singvec}), compared to the variational regularization method.

A similar result can be derived even in the case of noisy data $\noisy = v_\sigma + n$, where $n$ is an error term that satisfies the specific source condition 
\begin{align*}
\mu K^\ast G^\prime(v_\sigma) + \eta K^\ast n \in \partial \regfct(\sigma u_\sigma) \, ,
\end{align*}
for constants $\mu$ and $\eta$. For more details we refer to \cite[Theorem 10]{benning2013ground}.

In the following we briefly want to discuss reconstruction guarantees for linear combinations of multiple singular vectors. Precisely we ask for what times can we guarantee
\begin{align*}
\frac{\gamma_j}{\sigma_j} u_{\sigma_j} \in \regop\left(\sum_{j = 1}^n \gamma_j v_{\sigma_j}, t\right) \, ,
\end{align*}
for coefficients $\{ \gamma_j \}_{j \in \N}$. Due to the nonlinearity of $\regfct_1$, we can in general not expect such a decomposition. If we restrict ourselves to the following two conditions, however, such a result can be guaranteed (see \cite[Theorem 3.14]{schmidt2016inverse}). The first condition is $K$-orthogonality of the singular vectors, i.e.
\begin{align}
\langle Ku_{\sigma_i}, Ku_{\sigma_j} \rangle = \begin{cases}
1 & i = j\\ 0 & i \neq j
\end{cases} \, ,\tag{OC}\label{eq:oc}
\end{align}
for $i, j \in \{1, \ldots, n\}$. The second condition is the so-called \eqref{eq:sub0}-condition, which reads as follows:
\begin{definition}[{\cite[Definition 3.1]{schmidt2016inverse}}]
Let $(u_{\sigma_1}, u_{\sigma_2}, \ldots, u_{\sigma_n})$ be an ordered set of primal singular vectors of $\regfct_1$ with corresponding dual singular vectors $(v_{\sigma_1}, v_{\sigma_2}, \ldots, v_{\sigma_n})$ and singular values $(\sigma_1, \sigma_2, \ldots, \sigma_n)$. Then the singular vectors satisfy the \emph{(SUB0) condition} if
\begin{align}
\sum_{j = 1}^k K^\ast G^\prime( v_{\sigma_j} ) \in \partial \regfct_1(0) \, , \tag{SUB0}\label{eq:sub0}
\end{align}
for all $k \in \{1, \ldots, n\}$.
\end{definition}
\noindent Given \eqref{eq:oc} and \eqref{eq:sub0}, we can guarantee the following decomposition result, which is a direct generalization of \cite[Theorem 3.14]{schmidt2016inverse}.
\begin{theorem}
Let $(u_{\sigma_1}, u_{\sigma_2}, \ldots, u_{\sigma_n})$, $(v_{\sigma_1}, v_{\sigma_2}, \ldots, v_{\sigma_n})$ and $(\sigma_1, \sigma_2, \ldots, \sigma_n)$ be a system of ordered singular vectors, for which the $v_j$'s are normalized, and for which \eqref{eq:oc} and \eqref{eq:sub0} are satisfied. Then, for data $\clean = \sum_{j = 1}^n \gamma_j v_{\sigma_j}$ with positive coefficients $(\gamma_1, \ldots, \gamma_n)$ we have $u(t) \in \regop(\clean, t)$, with
\begin{align*}
u(t) = \begin{cases}
0 & 0 \leq t \leq t_1\\
\sum_{j = 1}^k \frac{\gamma_j}{\sigma_j} u_{\sigma_j} & t_k \leq t < t_{k + 1}, \, \text{for all $k = 1, \ldots, n - 1$}\\
\sum_{j = 1}^n \frac{\gamma_j}{\sigma_j} u_{\sigma_j} & t_n \leq t
\end{cases} \, ,
\end{align*}
where $t_k = \gamma_k$ and $t_k < t_{k + 1}$ for all $k \in \{ 1, \ldots, n\}$.
\end{theorem}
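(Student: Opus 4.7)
The plan is a constructive verification in the spirit of the proof of the preceding single-singular-vector theorem. Since the proposed $u(t)$ is piecewise constant in $t$, I first observe that on each interval $[t_k, t_{k+1})$ the residual $Ku(t) - f$ is also piecewise constant: using $Ku_{\sigma_j} = \sigma_j v_{\sigma_j}$ one has $Ku(t) = \sum_{j=1}^k \gamma_j v_{\sigma_j}$ and hence $Ku(t) - f = -\sum_{j=k+1}^n \gamma_j v_{\sigma_j}$. This forces $\partial_t p(t)$ to be constant on each piece and $p(t)$ to be piecewise affine in $t$. Writing $w_j := K^\ast G'(v_{\sigma_j})$, integrating the flow \eqref{eq:invscalspaceflow} from $p(0)=0$, and enforcing continuity of $p$ at each breakpoint $t_k = \gamma_k$, I obtain an explicit closed form
\begin{equation*}
p(t) = \sum_{j=1}^k \gamma_j^2 \, w_j + t \sum_{j=k+1}^n \gamma_j \, w_j, \qquad t \in [t_k, t_{k+1}),
\end{equation*}
with the obvious analogues on $[0, t_1]$ and on $[t_n, \infty)$ (where the residual vanishes and $p$ freezes).

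It then remains to verify the subgradient inclusion $p(t) \in \partial \regfct_1(u(t))$ along this curve. Because $\regfct_1$ is absolutely one-homogeneous, this is equivalent to the pair (i) $p(t) \in \partial \regfct_1(0)$ together with (ii) the duality equality $\langle p(t), u(t)\rangle = \regfct_1(u(t))$. I would establish (ii) by a direct expansion: using (OC) to annihilate all cross terms $\langle w_l, u_{\sigma_i}\rangle$ with $l \neq i$ collapses the inner product to a diagonal sum, which the singular-vector relation $w_j \in \partial \regfct_1(u_{\sigma_j})$ and one-homogeneity identify with $\regfct_1(u(t))$. For (i), I would rewrite $p(t)$ via Abel summation as a non-negative combination of the partial sums $S_m := \sum_{j=1}^m w_j$, each of which lies in $\partial \regfct_1(0)$ by (SUB0); convexity of $\partial \regfct_1(0)$ then gives the inclusion. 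The ordering assumption $\gamma_1 < \gamma_2 < \ldots < \gamma_n$ (equivalently $t_k < t_{k+1}$) is what ensures the rearrangement yields non-negative weights.

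The main obstacle I anticipate is precisely step (i). The coefficient pattern in the closed form for $p(t)$ mixes quadratic-in-$\gamma$ contributions for already activated modes with linear-in-$\gamma$ contributions for dormant ones, and producing a genuine non-negative (or convex) representation in the $S_m$ requires delicate bookkeeping; failure of a sign there would invalidate the convexity argument and the ordering assumption is exactly what must rescue it. Continuity of $p(\cdot)$ at each breakpoint, the initial segment $[0, t_1]$ on which $u=0$, and the terminal segment $t \geq t_n$ on which the residual vanishes should then fall out of the same construction, leaving only the task of checking that the standard notion of set-valued solution at the jump points $t_k$ is satisfied — which is automatic since the continuous $p(t_k)$ is admissible as a subgradient of both $u(t_k^-)$ and $u(t_k^+)$ under the one-homogeneity of $\regfct_1$.
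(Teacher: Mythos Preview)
The paper does not supply a proof of this theorem; it refers to \cite{schmidt2016inverse}. Your constructive strategy is the right one, but your closed form for $p(t)$ is wrong, and the error stems from taking the printed breakpoints $t_k=\gamma_k$ at face value. Enforcing continuity with those values does produce the coefficients $\gamma_j^2$ you wrote, but then both verification steps collapse. In (ii) the diagonal expansion via (OC) gives $\langle p(t),u(t)\rangle=\sum_{j\le k}(\gamma_j^{3}/\sigma_j)\,\regfct_1(u_{\sigma_j})$, which cannot equal $\regfct_1(u(t))$ unless every $\gamma_j=1$. In (i) your Abel rearrangement of $\sum_{j\le k}\gamma_j^{2} w_j$ produces the weights $\gamma_j^{2}-\gamma_{j+1}^{2}$ on the partial sums $S_j$, and these are negative because $t_k=\gamma_k$ increasing forces the $\gamma_k$ to be increasing --- exactly the sign obstruction you anticipated, and the ordering does not rescue it.

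The correct breakpoints are $t_k=1/\gamma_k$, with the $\gamma_k$ ordered decreasingly so that $t_k<t_{k+1}$; the printed statement appears to carry a typo. With these, the integrated dual variable on $[t_k,t_{k+1})$ reads
\[
p(t)=\sum_{j=1}^{k} w_j \;+\; t\sum_{j=k+1}^{n} \gamma_j\, w_j,
\]
and now your program goes through cleanly. For (ii), the (OC) reduction yields $\langle p(t),u(t)\rangle=\sum_{j\le k}(\gamma_j/\sigma_j)\,\regfct_1(u_{\sigma_j})$, which is sandwiched between the subadditivity upper bound for $\regfct_1(u(t))$ and the lower bound coming from $p(t)\in\partial\regfct_1(0)$, forcing equality. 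For (i), Abel summation gives the genuine convex combination $(1-t\gamma_{k+1})S_k+t\sum_{j=k+1}^{n-1}(\gamma_j-\gamma_{j+1})S_j+t\gamma_n S_n$, whose weights are nonnegative precisely because $t<1/\gamma_{k+1}$ and $\gamma_{k+1}>\gamma_{k+2}>\cdots>\gamma_n>0$.
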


We refer to \cite{benning2013ground} for more information on individual generalized singular vectors and the inverse scale space flow. For more theoretical results and analytical as well as numerical examples of ordered sets of singular vectors that satisfy \eqref{eq:oc} and \eqref{eq:sub0}, we refer to \cite{schmidt2016inverse}.

\subsection{Two-Step Debiasing} 

While Bregman iterations and inverse scale space methods perform debiasing in an iterative fashion (and effectively change the variational model), one may also consider two-step procedures that first solve the original variational model and then perform a second step to reduce the bias (cf. \cite{deledalle2015debiasing,deledalle2017clear}). The first and simplest case where this idea was brought up is regularization with the $\ell^1$-norm, where a so-called {\em refitting} strategy (cf. \cite{lederer2013trust}) is quite natural. After the variational problem
\begin{equation}
\regsol[\regparam]_\delta \in \argmin \fidelity{Ku}{\noisy} + \alpha \Vert u \Vert_{\ell^1} 
\end{equation}
is solved, the second step simply consists in minimizing $\fidelity{Ku}{\noisy}$ over the set of all $u$ sharing the support of $u_\alpha^\delta$.  Since this procedure throws away information about the sign of the entries of $u$, one can further improve to define the regularization operator via
\begin{equation}
	\regoparg[\regparam]{\noisy}= \argmin \{ \fidelity{Ku}{\noisy}~|   ~\text{sign}_0(u_i) = \text{sign}_0((\regsol[\regparam]_\delta)_i), \forall~i \}.
\end{equation}
where sign$_0(u_i)$ is the single-valued sign (i.e. zero for $u_i=0$).
Since the sign corresponds to a subgradient of the $\ell^1$-norm, we can reinterpret the debiased regularization operator in a variational way: We minimize the fidelity subject to the constraint of $u$ sharing a subgradient with $\regsol[\regparam]_\delta$. This is a key observation towards a generalization for arbitrary convex regularizations as noticed in (cf. \cite{brinkmann2017bias}). The general debiasing problem can be rephrased as a two step procedure
\begin{equation}
\regsol[\regparam]_\delta	\in \argmin \fidelity{Ku}{\noisy} + \regparam \regfct(u)
\end{equation}
followed by 
\begin{equation}
	\regoparg[\regparam]{\noisy}= \argmin \{ \fidelity{Ku}{\noisy}~|   ~p \in \partial \regfct(u) \cap \partial \regfct(\regsol[\regparam]_\delta) \}.
\end{equation}
For computational purposes the arbitrary choice of the subgradient $p \in \partial \regfct(\regsol[\regparam]_\delta)$ is not suitable, but we can indeed use the subgradient from the first step. Noticing that for differentiable fidelities, the optimality condition reads
\begin{equation}
	p^\alpha_\delta = - \frac{1}\alpha K^\ast \partial \fidelity{Ku}{\noisy} \in \partial \regfct(\regsol[\regparam]_\delta)
\end{equation}
we can use the debiasing procedure
\begin{equation}
	\regoparg[\regparam]{\noisy}= \argmin \{ \fidelity{Ku}{\noisy}~|   ~p^\alpha_\delta \in \partial \regfct(u)  \}.
\end{equation}
The condition $p^\alpha_\delta \in \partial \regfct(u) $ can be reformulated as a vanishing Bregman distance between $u$ and $u_\alpha^\delta$, thus we observe some relations to the Bregman iteration. The second step can be interpreted as a Bregman iteration step in the limit of the regularization parameter to infinity. We refer to \cite{brinkmann2017bias} for a detailed analysis of this debiasing approach.

The effect of the debiasing is illustrated for the simple case of total variation denoising, i.e. the solution of 
\begin{equation}
 \regoparg[\regparam]{\noisy} = \argmin_{u \in BV(\Omega)} \left( \frac{1}2 \Vert u - \noisy\Vert_{L^2(\Omega)}^2  + \alpha |u|_{BV}\right).
\end{equation}
Figure \ref{fig:debiasing1} compares the solution of the variational problem in (c) with the one obtained in the two-step debiasing procedure (d) and the Bregman iteration (e). Both methods reduce the contrast loss of the TV regularization (which is difficult to see in the image, but becomes more apparent in the small background buildings). Overall the Bregman iteration seems to restore more of the small details like the grass structure however. Figure \ref{fig:debiasing2} demonstrates the debiasing effect for increasing regularization parameter, where the variational model destroys more and more details. In particular for larger $\alpha$ one observes the effect of restoring smaller structures apparently contained in the subgradient but not the primal variable of the variational model.
\setlength{\tabcolsep}{0.2mm}
\renewcommand{\arraystretch}{0.1}
\begin{figure} 
 \begin{tabular}{B{0.04\textwidth}B{0.17\textwidth}B{0.17\textwidth}B{0.17\textwidth}B{0.17\textwidth}B{0.17\textwidth}}

   \rotatebox[origin=c]{90}{isotropic TV} &
   \includegraphics[width=0.17\textwidth]{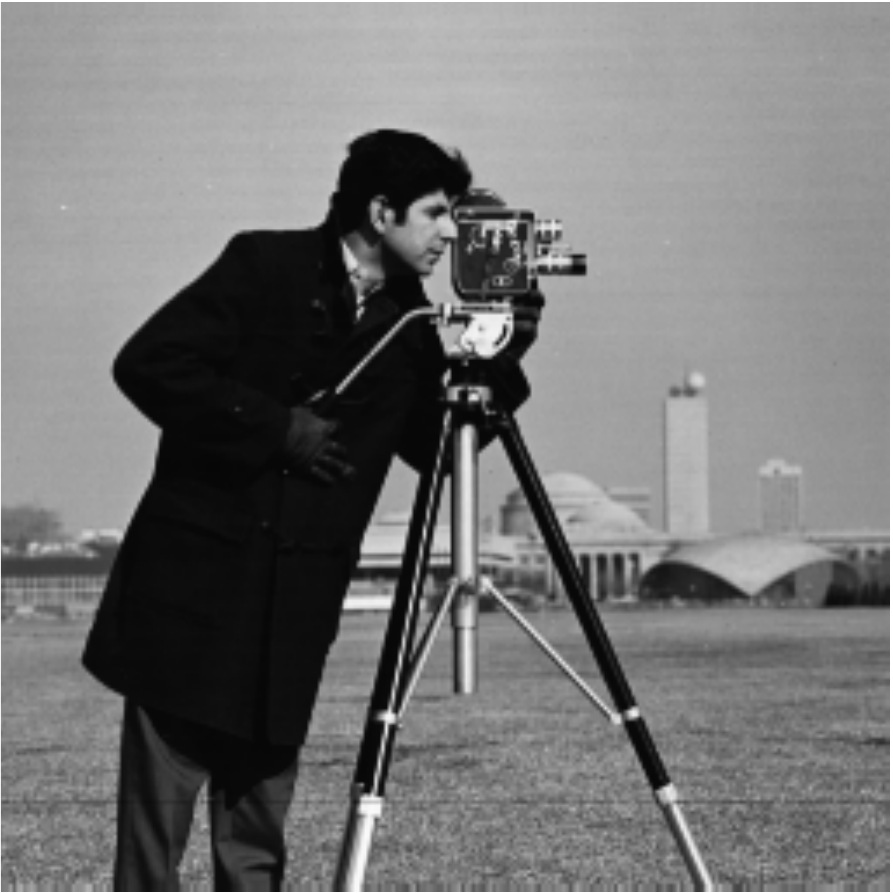} &
   \includegraphics[width=0.17\textwidth]{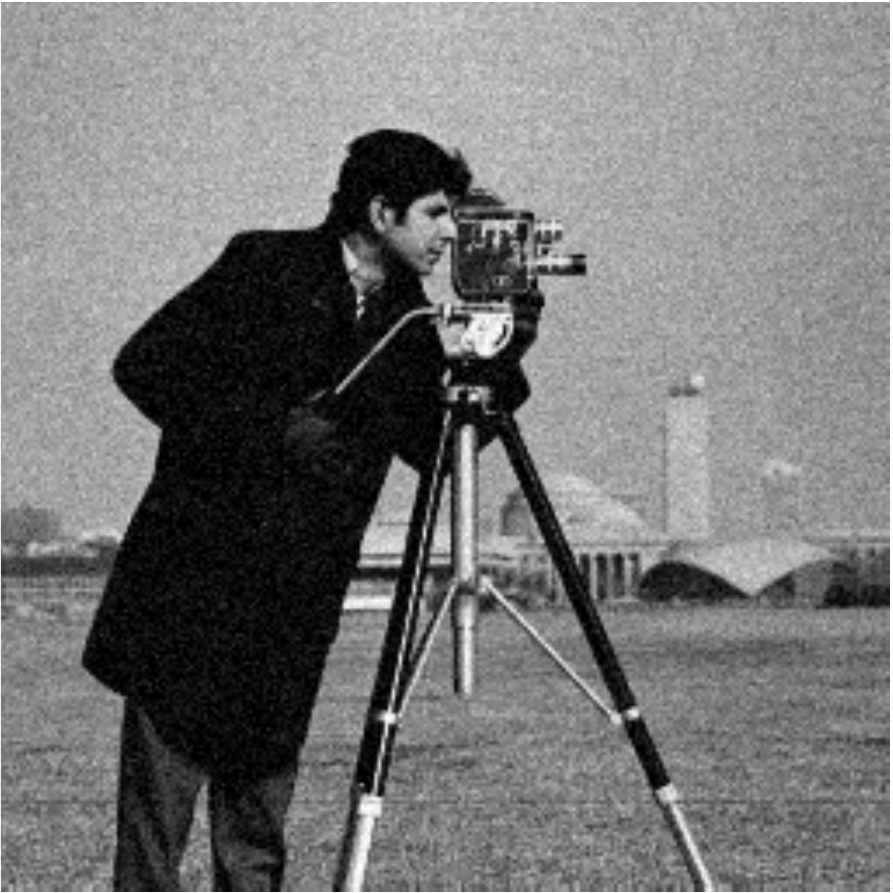} & 
   \includegraphics[width=0.17\textwidth]{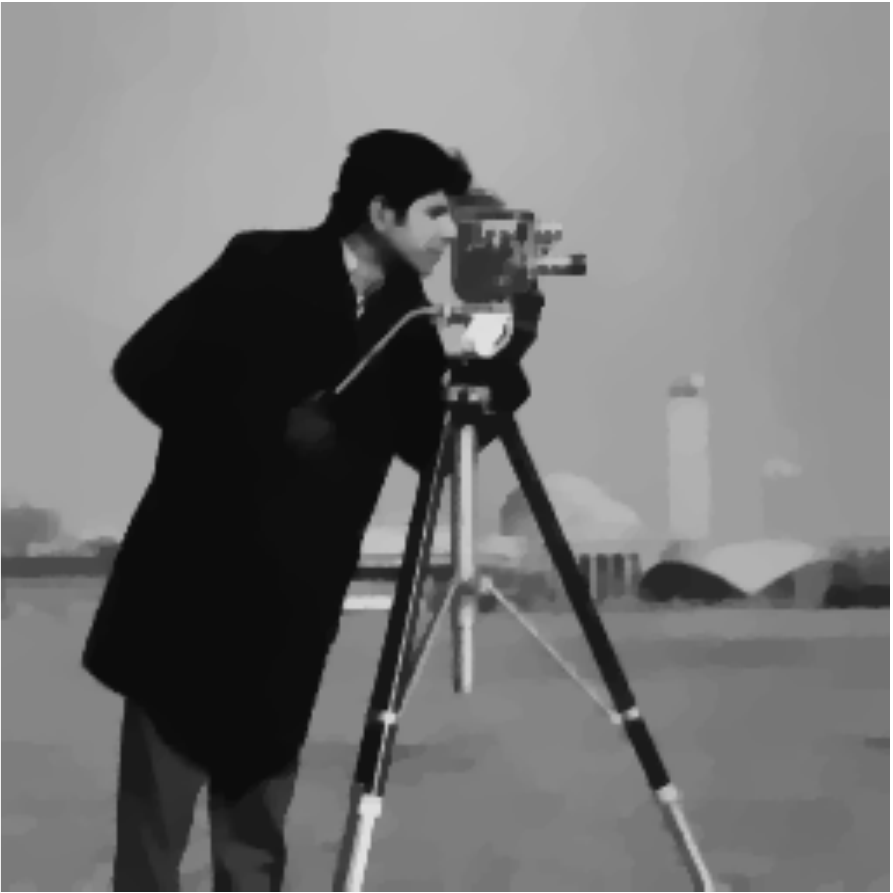} &
   \includegraphics[width=0.17\textwidth]{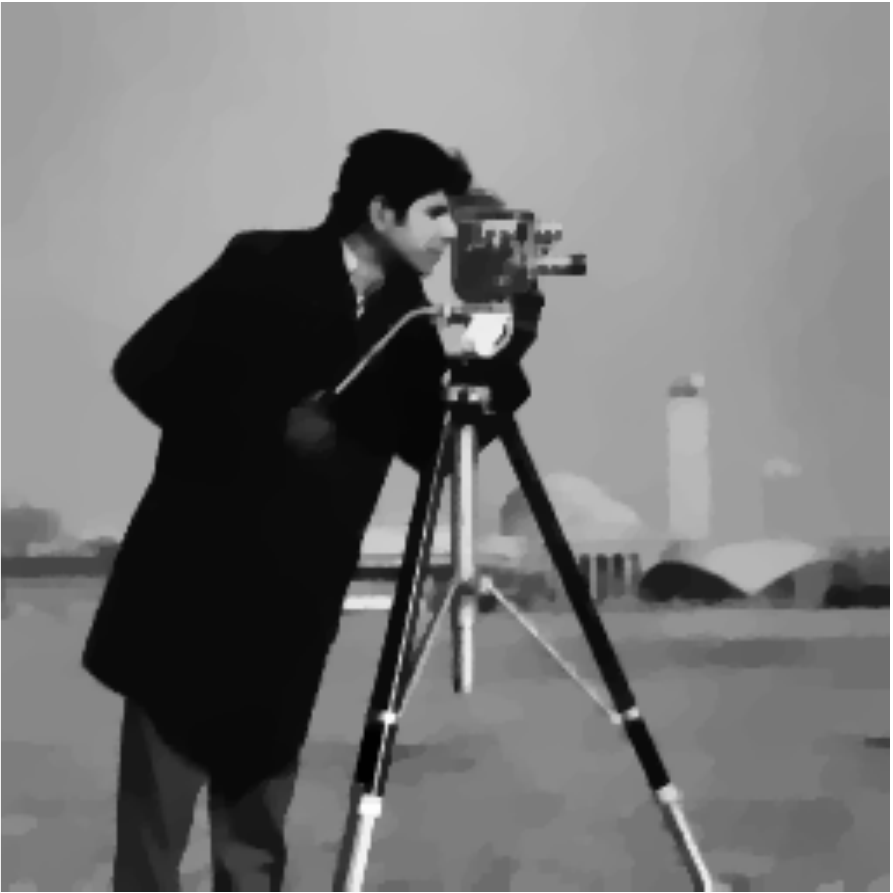} &
   \includegraphics[width=0.17\textwidth]{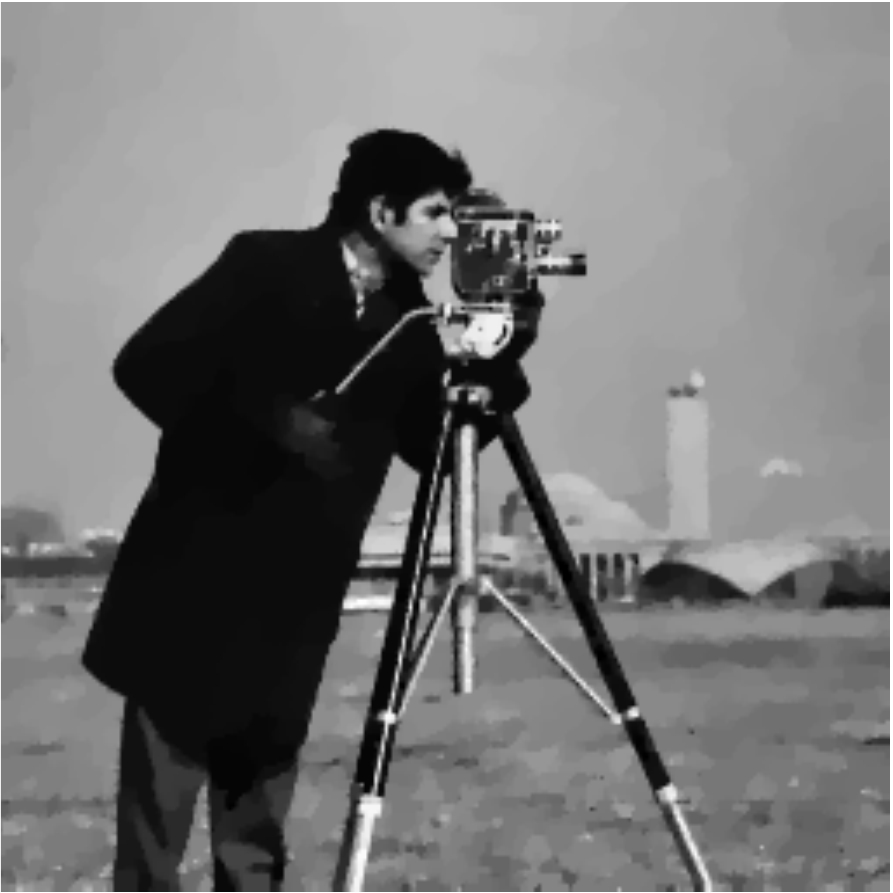}\\ \\[1mm]
   
   & (a) Original & (b) Noisy & (c) TV & (d) Debiasing & (e) Bregman its \\

\end{tabular} 
\caption{Camera man (256x256): Comparison of TV denoising for $\alpha = 0.1$, with the two-step debiasing, and Bregman iterations ($\alpha = 0.5$ and 7 Bregman iterations). \label{fig:debiasing1}}
\end{figure}
\begin{figure}
\begin{tabular}{B{0.04\textwidth}B{0.3\textwidth}B{0.3\textwidth}B{0.3\textwidth}}

   \rotatebox[origin=c]{90}{TV} &
   \includegraphics[width=0.3\textwidth]{Images/cameraman_tv_1} &
   \includegraphics[width=0.3\textwidth]{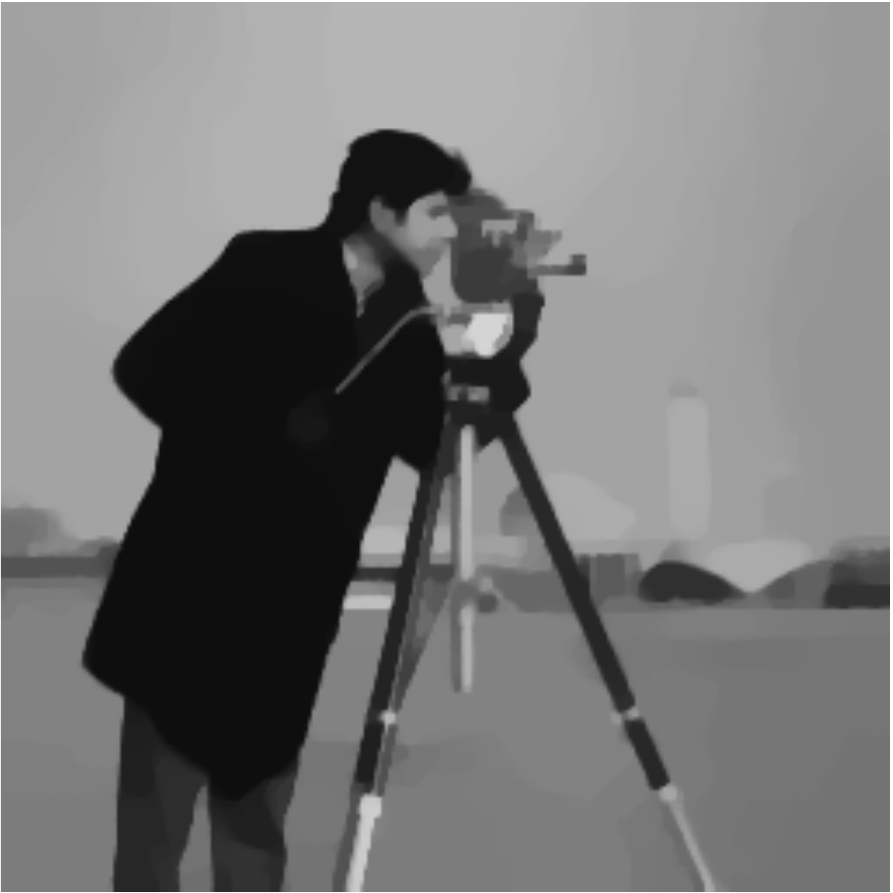} & 
   \includegraphics[width=0.3\textwidth]{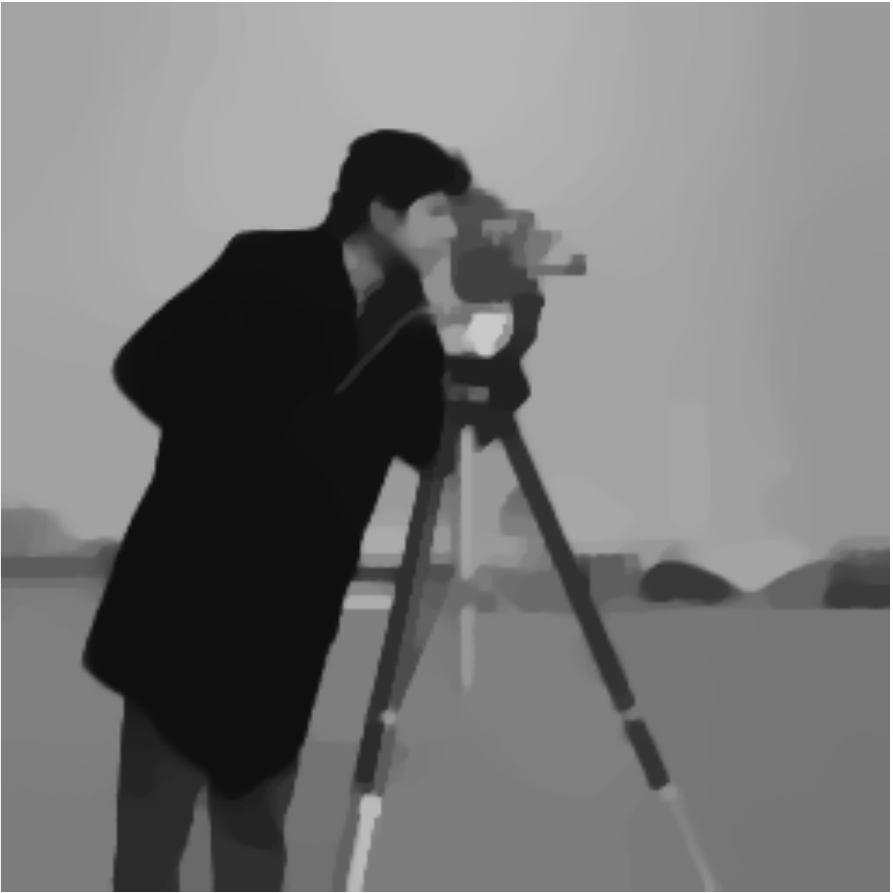} \\
   
   \rotatebox[origin=c]{90}{Debiasing} &
   \includegraphics[width=0.3\textwidth]{Images/cameraman_debiased_1} &
   \includegraphics[width=0.3\textwidth]{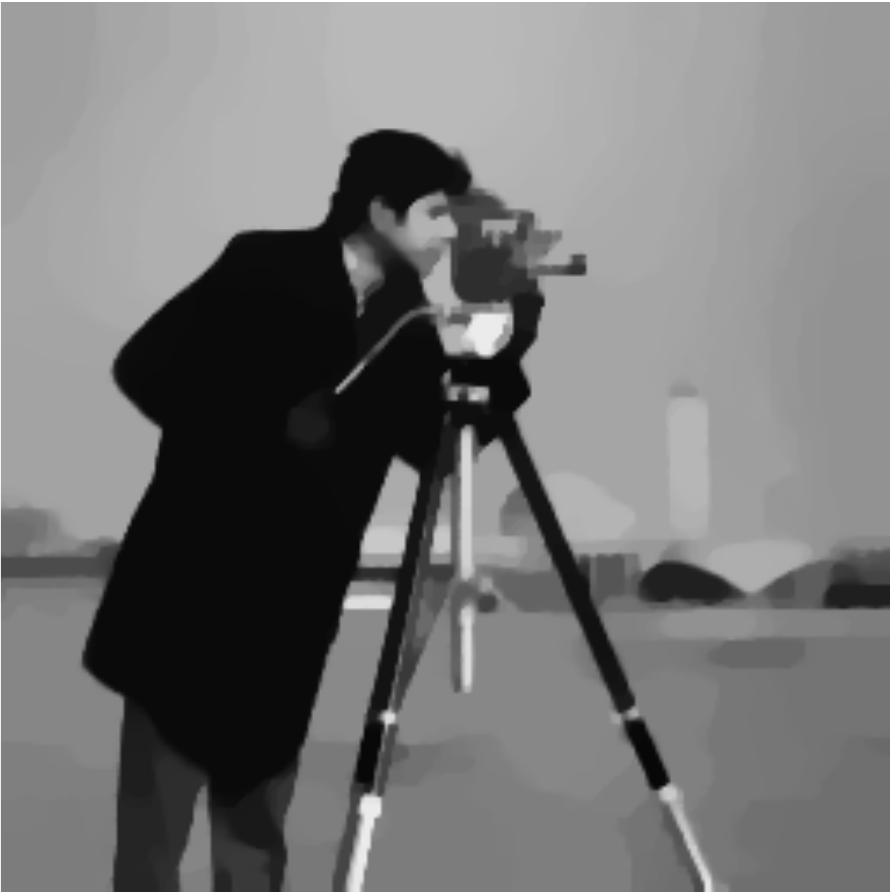} & 
   \includegraphics[width=0.3\textwidth]{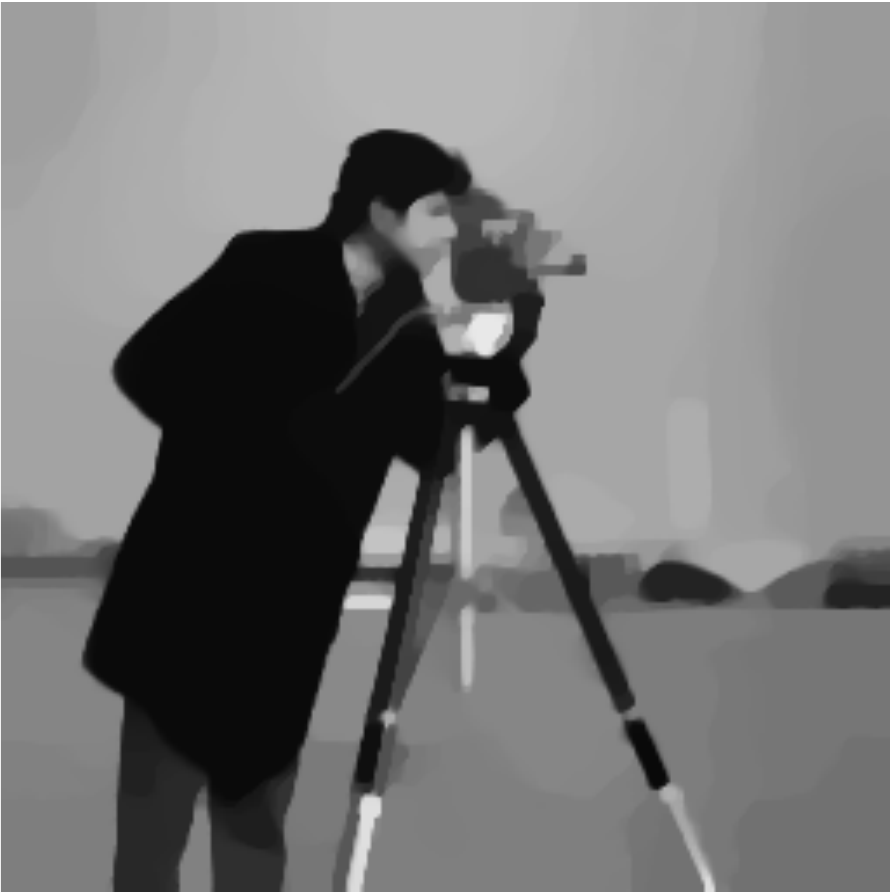} \\ \\[1mm]
   
   & (a) $\alpha = 0.1$ & (b) $\alpha = 0.2$& (c) $\alpha = 0.3$ \\
\end{tabular}
\caption{Camera man TV denoising and debiasing for different values of the regularization parameter. \label{fig:debiasing2}}
\end{figure}

\subsection{Nonlinear Spectral Transform}\label{subsec:nonlinspectrafo}
The iterative regularization methods presented in Section \ref{sec:iterreg} can easily be extended to nonlinear spectral decomposition methods via the following trivial observation. Every iterate $u^k \in \regopitarg{\noisy, v^{k - 1}}$ can be represented as the sum of the differences of two subsequent iterates, i.e.
\begin{align*}
u^k = u^0 + \sum_{n = 1}^n u^n - u^{n - 1} \, . 
\end{align*}
If we define $\varphi^0 := u^0$ and $\varphi^n := u^n - u^{n - 1}$ for $n > 1$, and equip the sum with coefficients $\{ c^n \}_{n = 0}^k$, we can write $u^k$ as
\begin{align*}
u^k = \sum_{n = 0}^k c^n \varphi^n \, . 
\end{align*}
In the following we are going to motivate why such a decomposition is useful for localizing individual scales if the underlying regularization functional is (absolutely) one-homogeneous and where we have a scalar parameter $\regparam$. Following up on the bias correction example for generalized singular vectors in Section \ref{subsubsec:debiaseigenfcts}, we know that for the Bregman iteration $\regopitarg[\regparam]{\clean, v^{k - 1}}$ with $\clean = v_\sigma$ we observe
\begin{align*}
u^k = \begin{cases}
0 & k < k^\ast \\ \frac{k^\ast - \regparam}{\sigma} u_\sigma & k = k^\ast\\ \frac{1}{\sigma} u_\sigma & k \geq k^\ast + 1
\end{cases} \, .
\end{align*}
Replacing $\clean = v_\sigma$ with $\clean = \sigma v_\sigma = Ku_\sigma$ therefore yields  
\begin{align*}
u^k = \begin{cases}
0 & k < k^\ast \\ \left( k^\ast - \frac{\regparam}{\sigma} \right) u_\sigma & k = k^\ast\\ u_\sigma & k \geq k^\ast + 1
\end{cases} \, ,
\end{align*}
and consequently we observe
\begin{align*}
\varphi^n = \begin{cases}
0 & n \not\in \{ k^\ast, k^\ast + 1\} \\ \left( k^\ast - \frac{\regparam}{\sigma} \right) u_\sigma & n = k^\ast\\ \left( 1 + \frac{\regparam}{\sigma} - k^\ast \right) u_\sigma & n =k^\ast + 1
\end{cases} \, .
\end{align*}
The last equation implies that if the input datum is given in terms of the forward model applied to a (primal) singular vector, this primal singular vector is localized in only two components $\varphi^{k^\ast}$ and $\varphi^{k^\ast + 1}$. The index $k^\ast$ depends on the choice of $\regparam$ and on the singular value $\sigma$. Hence, singular vectors with different scales, respectively different values of $\sigma$, will be localized in $\varphi^{\hat{k}}$ and $\varphi^{\hat{k} + 1}$ for $\hat{k} \neq k^\ast$. This is visualized in Figure \ref{fig:singvecspectrum} and Figure \ref{fig:bee}. It is therefore fair to call $\{ \varphi^n \}_{n = 1}^k$ a spectrum and the individual $\varphi^n$, for $n \in \{1, \ldots, k\}$, the spectral components.

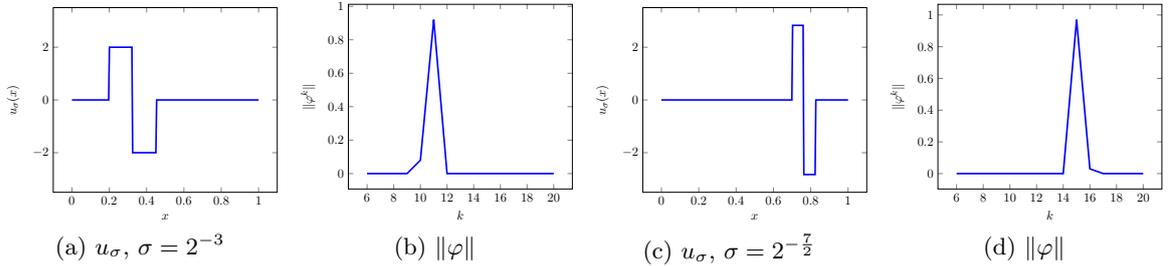
\begin{figure}[!t]
\begin{center}
\subfloat[$u_\sigma$, $\sigma = 2^{-3}$]{
\begin{tikzpicture}[scale=0.43,
declare function={ 
    haar(\x) = and(\x >= 0, \x < 1/2) * (+1) + and(\x >= 1/2, \x < 1) * (-1);
}
]
\begin{axis}[%
            domain = 0:1,
            samples = 250,
			ymin = -3.5,
			ymax = 3.5,
            xlabel = {$x$},
            ylabel = {$u_\sigma(x)$},
            ]
            \addplot[line width=0.5mm, blue] {(2^(2/2))*haar((2^2)*(x - 0.2))};                       
        \end{axis}
\end{tikzpicture}
}
\subfloat[$\| \varphi \|$]{
\begin{tikzpicture}[scale=0.43]
\begin{axis}[%
            domain = 6:17,
            samples = 9,
            xlabel = {$k$},
            ylabel = {$\| \varphi^k \|$},
            ]
            \addplot[line width=0.5mm, blue, forget plot]
            table[row sep=crcr]{
6 0\\
7 0\\ 
8 0\\
9 0\\ 			            
10 0.08\\
11 0.92\\
12 0\\
13 0\\
14 0\\
15 0\\
16 0\\
17 0\\
18 0\\
19 0\\
20 0\\
            }; 
        \end{axis}
\end{tikzpicture}
}
\subfloat[$u_\sigma$, $\sigma = 2^{-\frac{7}{2}}$]{
\begin{tikzpicture}[scale=0.43,
declare function={ 
    haar(\x) = and(\x >= 0, \x < 1/2) * (+1) + and(\x >= 1/2, \x < 1) * (-1);
}
]
\begin{axis}[%
            domain = 0:1,
            samples = 250,
			ymin = -3.5,
			ymax = 3.5,
            xlabel = {$x$},
            ylabel = {$u_\sigma(x)$},
            ]
            \addplot[line width=0.5mm, blue] {(2^(3/2))*haar((2^3)*(x - 0.7))}; 
        \end{axis}
\end{tikzpicture}
}
\subfloat[$\| \varphi \|$]{
\begin{tikzpicture}[scale=0.43]
\begin{axis}[%
            domain = 6:17,
            samples = 9,
            xlabel = {$k$},
            ylabel = {$\| \varphi^k \|$},
            ]
            \addplot[line width=0.5mm, blue, forget plot]
            table[row sep=crcr]{
6 0\\
7 0\\ 
8 0\\
9 0\\ 			            
10 0\\
11 0\\
12 0\\
13 0\\
14 0\\
15 0.97\\
16 0.03\\
17 0\\
18 0\\
19 0\\
20 0\\
            };
        \end{axis}
\end{tikzpicture}
}
\end{center}
\caption{We see two singular vectors of $\regfct = \tvast$ with different $\sigma$-values, and excerpts of their corresponding (analytically computed) spectra, for $\regparam = 1.24$. We clearly observe that both vectors are located at different positions of the spectrum. Hence, both singular vectors could be isolated from a sum of the two by applying a band-pass filter to the spectrum.}\label{fig:singvecspectrum}
\end{figure}

\noindent Consequently, the operator $\mathcal{S}:\range \times \domain^k \times \R^k \times \regdomain \rightarrow \domain$ with 
\begin{align*}
\mathcal{S}(\clean, (u_n)_{n=0}^k, (c_n)_{n = 0}^k, \regparam) := \sum_{n = 0}^k c_n \varphi^n \quad \text{with} \quad \varphi^n := \begin{cases}
u^n - u^{n - 1} & n > 1\\
u^0 & n = 1
\end{cases} \, ,
\end{align*}
for $u^n \in \regoparg[\regparam]{\clean, v^{n - 1}}$ can be seen as a spectral transform of the input signal $\noisy$. For $K:\bv(\Omega) \rightarrow L^2(\Omega)$ this type of spectral transform is a discretization of the inverse-scale-space based spectral transform defined in \cite{burger2015spectral,burger2016spectral}. For $K:\bv(\Omega) \rightarrow L^2(\Omega)$ and $\regfctarg[\regparam]{u} = \regparam \tv(u)$, the idea of generalized spectral transforms goes back to \cite{gilboa2014nonlinear,gilboa2014total}. For a detailed overview on this form of nonlinear spectral transform we refer to \cite{gilboa2016nonlinear}. Another interesting recent extension is the spectral transform in the context of image segmentation \cite{zeune2017multiscale}.

\begin{figure}[!t]
\begin{center}
\includegraphics[width=\textwidth]{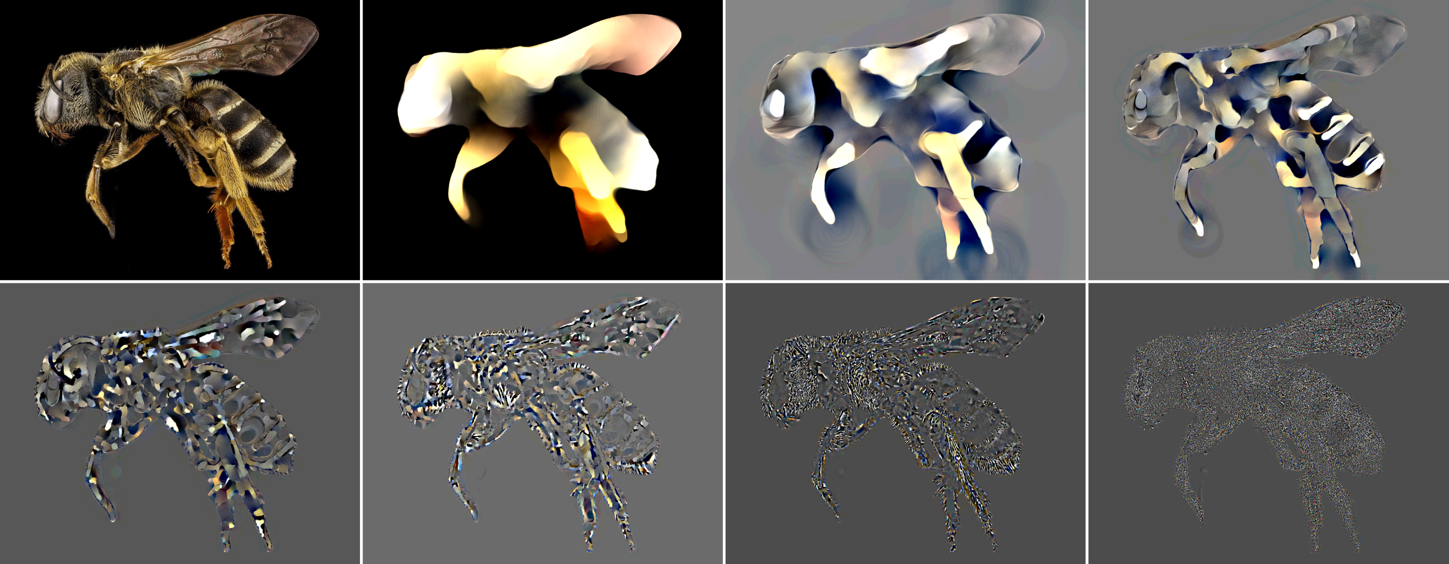}
\end{center}
\caption{Spectral decomposition of the image of a bee. From \cite{Benning2017}.}\label{fig:bee}
\end{figure}


\section{Applications}

Obviously modern regularization methods have found applications in all kind of inverse problems and pushed forward the state of the art. As some examples let us mention TV/TGV Bregman iterations for superresolution (cf. \cite{marquina2008image}), PET reconstruction (cf. \cite{muller2011reconstruction,muller2013advanced}) or STED microscopy (cf. \cite{BSB09a,primaldualbregman}), as well as TGV reconstructions in MR (cf. \cite{knoll2011second}). Providing an overview of the various approaches for well-known imaging modalities would by far exceed the scope and size of this survey. Hence, in the following we provide some novel examples of applications, which are actually driven by advances in regularization techniques.



\subsection{Velocity-Encoded Magnetic Resonance Imaging}\label{sec:velmri}

Magnetic Resonance Imaging (MRI) is an imaging technique that allows to visualize the chemical composition of humans/animals or materials. MRI scanners utilize strong magnetic fields and radio waves to excite subatomic particles such as protons that subsequently emit radio frequency signals which can be measured with the radio frequency coils that initially excited those radio waves, see for example \cite{paul1993principles}. MRI is often used to measure contrast in tissue. However, due to shear, endless possibilities of radio-frequency pulse sequence design and programming of the gradient coils, MRI is a versatile imaging tool with capabilities beyond imaging contrast in tissue. A potential, more sophisticated application is phase-encoded magnetic resonance velocity imaging, which in medical imaging is used to study the distribution and variation in blood flow \cite{gatehouse2005applications}. In the physical sciences, it is being used to study the rheology of complex fluids \cite{callaghan1999rheo}, liquids and gases flowing through packed beds \cite{sederman1998structure,holland2010reducing}, granular flows \cite{holland2008spatially} and multi-phase turbulence flows \cite{tayler2012exploring}. The main advantage of MRI over other modalities when it comes to studying flow is that it is possible to image flows non-invasively. However, the main drawback of the technique is the acquisition time of the measurement. 

\begin{figure}[!h]
\begin{center}
\includegraphics[width=0.3\textwidth]{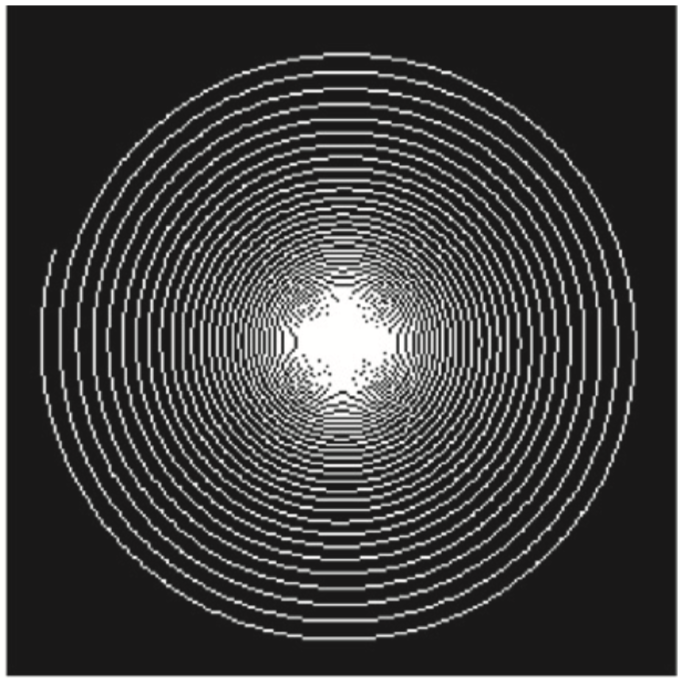}
\end{center}
\caption{A simulated spiral on a cartesian grid. From \cite{benning2014phase}.}\label{fig:spiral}
\end{figure}

In \cite{lustig2007sparse} the idea of sub-sampling in the spatial data domain has been exploited to overcome this limitation and to speed up the MRI acquisition process. Due to fewer measurements compared to unknowns that are being recovered, some form of regularization needs to be integrated into the reconstruction process. Sparsity-promoting variational regularization methods are suitable candidates and most prominently, total variation regularization has successfully been deployed to increase the temporal resolution of MRI acquisitions. Since measurement noise in MRI data can be modeled as being normally distributed, a standard variational regularization approach is
\begin{align}
\regoparg{\noisy} = \argmin_{u \in \domain} \left\{ \frac{1}{2} \| \mathcal{F}u - \noisy \|_2^2 + \regfctarg{u} \right\} \, ,\label{eq:varregmri}
\end{align}
where $\mathcal{F}$ is the operator
\begin{align*}
(\mathcal{F}u)(t^k) := (2\pi)^{-\frac{n}{2}} \int_{\R^n} u(x) \exp\left(-i \int_{t^{k - 1}}^{t^k} x(t) \cdot g(t) \, dt \right) \, dx \, ,
\end{align*}
and $n \in \{2, 3\}$ denotes the dimension of the signal and $g:[0, T] \rightarrow \R^n$ represents the function that controls the gradient coils of the MRI machine. We observe that $\mathcal{F}$ is almost identical to the Fourier transform sampled at discrete locations, if we can approximate $\int_{t^{k - 1}}^{t^k} x(t) \cdot g(t) dt \approx x \cdot \int_{t^{k - 1}}^{t^k} g(t) \, dt$. This can be achieved by adequate programming of the gradient coils. However, $\int_{t^{k - 1}}^{t^k} x(t) \cdot g(t) dt$ can be approximated more generally via the Taylor series
\begin{align*}
\int_{t^{k - 1}}^{t^k} x(t) \cdot g(t) dt \approx \sum_{r = 0}^\infty \frac{x^{(r)}(t^{k - 1})}{r !} \cdot \int_{t^{k - 1}}^{t^k} g(t) t^r \, dt \, , 
\end{align*}
and with clever programming of $g$, other moments such as velocity or acceleration can be encoded. In the following, we assume that the radio-frequency pulse sequence and the gradient coils are programmed such that we first encode the velocity in the z-direction, i.e. for $x = (x_1, x_2, x_3)$ and $g(t) = (g_1(t), g_2(t), g_3(t))$ we have
\begin{align*}
\int_{0}^{t_0} x(t) \cdot g(t) dt \approx \underbrace{x_3^\prime(0)}_{=: v_z} \int_{0}^{t_0} g_3(t) \, t \, dt \, ,
\end{align*}
in the interval $[0, t_0]$, and then perform the spatial encoding such that
\begin{align*}
\int_{t_{k - 1}}^{t^k} x(t) \cdot g(t) dt \approx \left( \begin{array}{c}
x_1(t_{k - 1}) \\ x_2(t_{k - 1}) 
\end{array} \right) \cdot \int_{t^{k - 1}}^{t^k} \left( \begin{array}{c}
g_1(t) \\ g_2(t) 
\end{array} \right) \, dt
\end{align*}
holds true for $t_0 < t_1 < \ldots < t_m = T$. Then, with $x = (x_1(t_{k - 1}), x_2(t_{k - 1}))$ and $g = (g_1, g_2)$ as an abuse of notation, $\mathcal{F}$ reads as
\begin{align}
(\mathcal{F}(u, v_z))(t^k) = \frac{1}{2\pi} \int_{\R^2} u(x) \exp(- i \sigma v_z(x)) \exp\left(-i x \cdot \int_{t^{k - 1}}^{t^k} g(t) \, dt \right) \, dx \, ,\label{eq:velmriforward}
\end{align}
for some constant $\sigma$. In order to avoid non-linearity of the forward model, we couple $u$ and $v_z$ by simply defining $w := u \exp(-i \sigma v_z)$. Then the forward model $\mathcal{F}$ simply reduces to the (sub-sampled) Fourier transform.

\begin{figure}[!t]
\begin{center}
\subfloat[Fully-sampled $u$]{\includegraphics[trim={3cm 1cm 2.2cm 1cm},clip,width=0.32\textwidth]{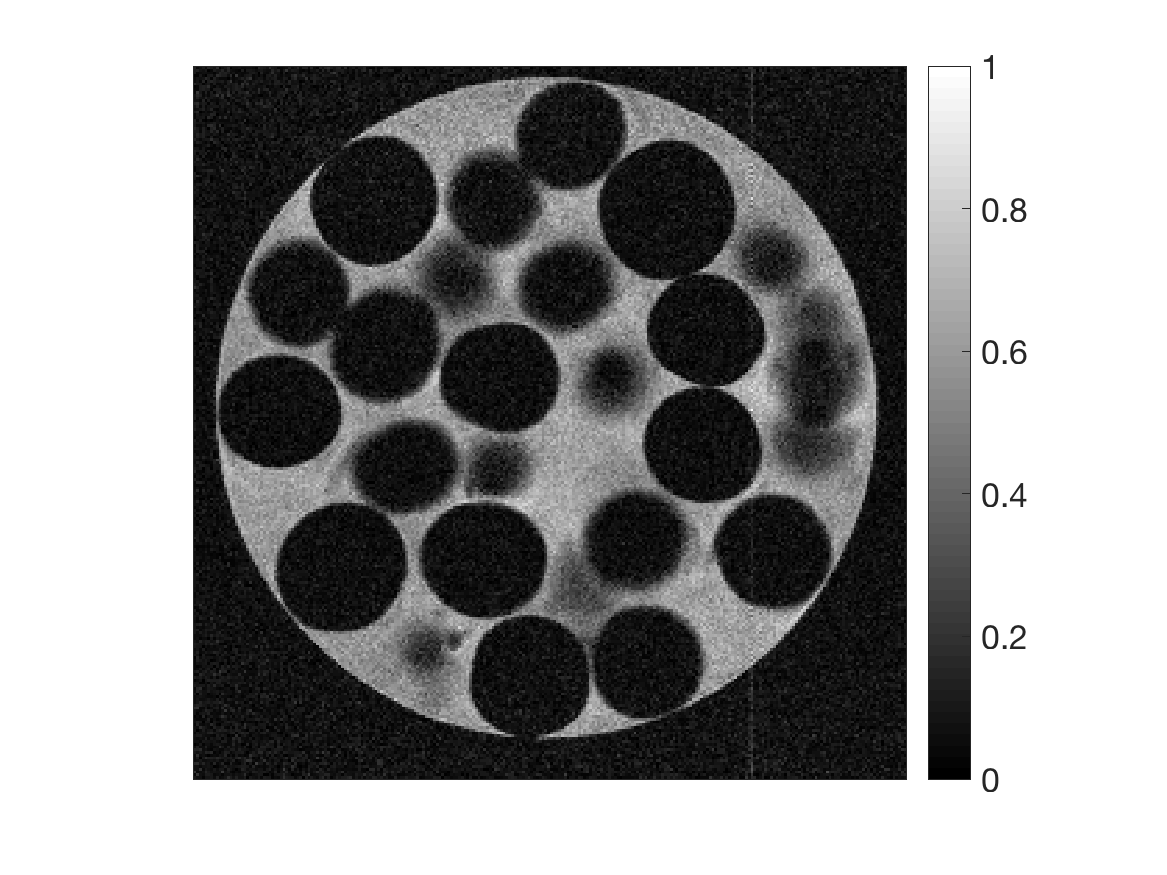}\label{subfig:velmri11}}
\subfloat[Zero-filled $u$]{\includegraphics[trim={3cm 1cm 2.2cm 1cm},clip,width=0.32\textwidth]{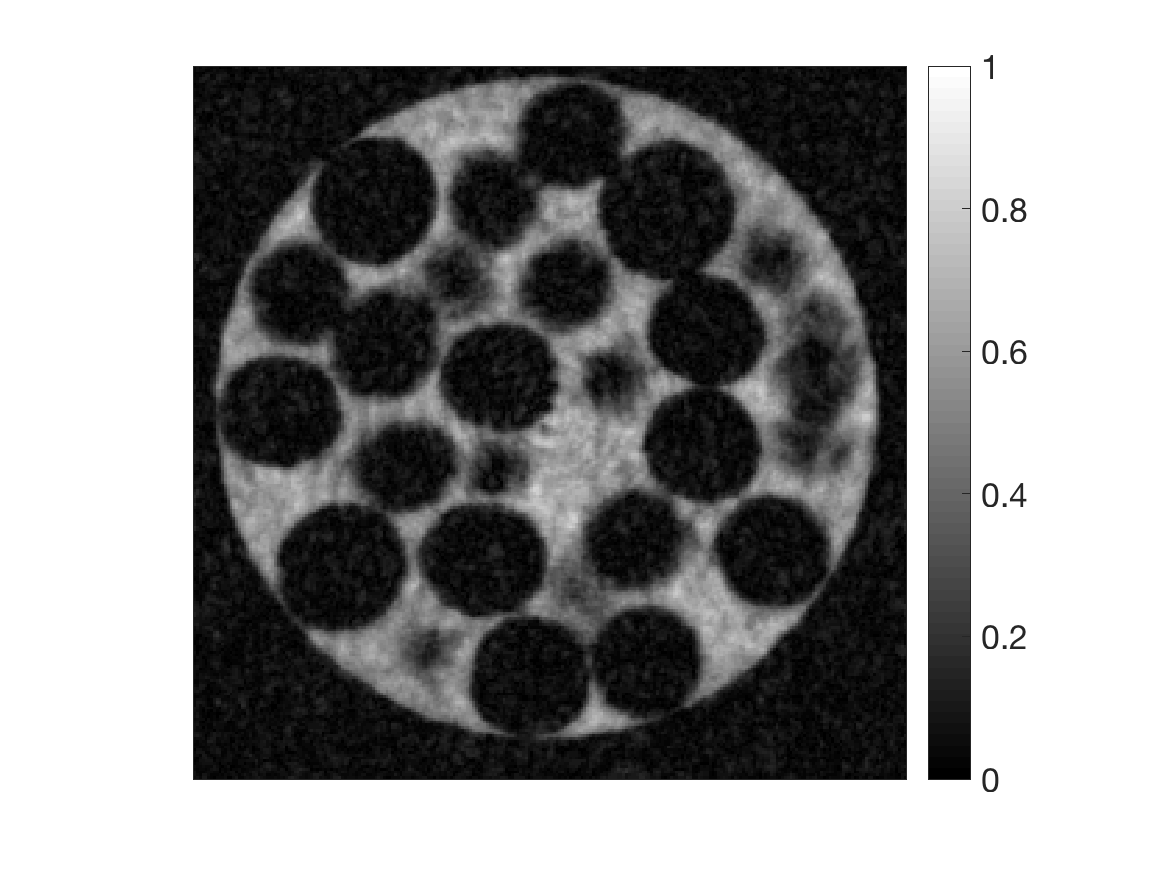}\label{subfig:velmri12}}
\subfloat[TGV-based $u$]{\includegraphics[trim={3cm 1cm 2.2cm 1cm},clip,width=0.32\textwidth]{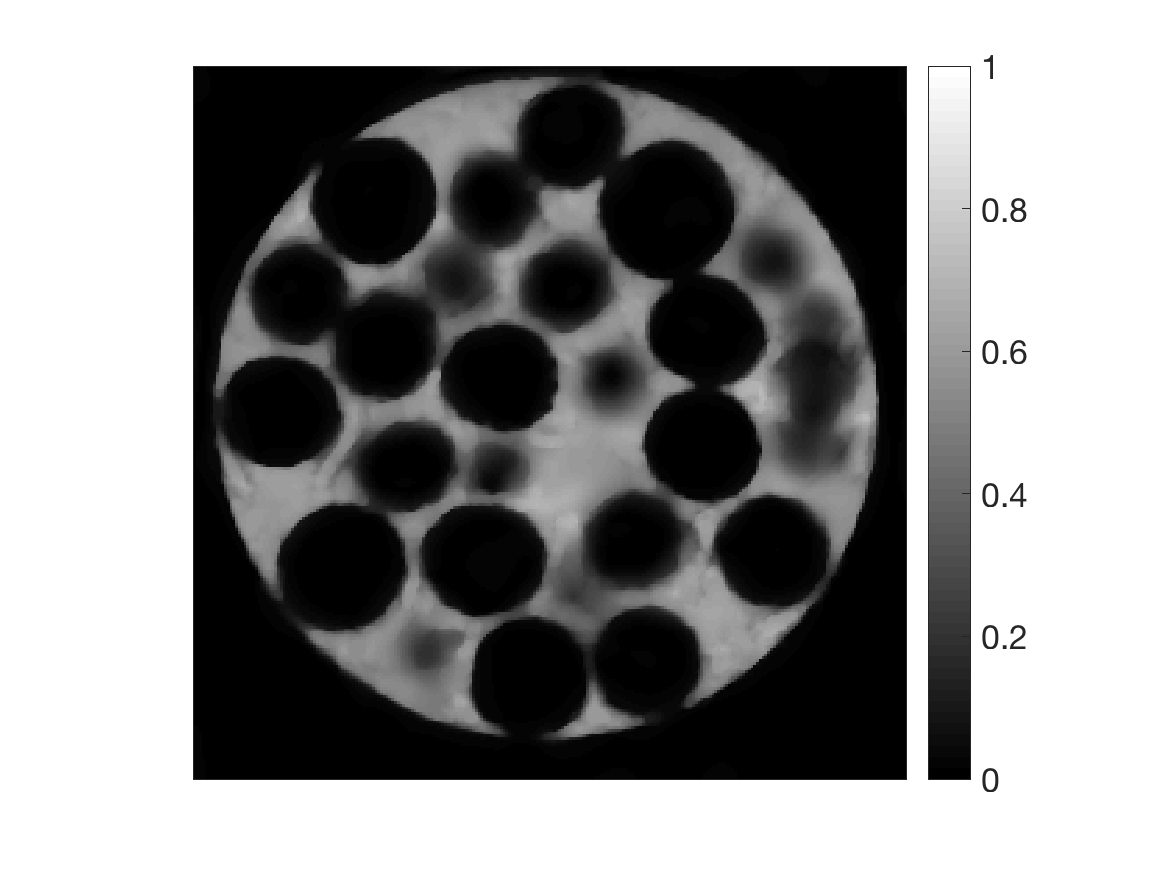}\label{subfig:velmri13}}\\
\end{center}
\caption{Magnitude images of the velocity dataset used in \cite{benning2014phase}, courtesy of Andrew J. Sederman. Figure \ref{subfig:velmri11} shows the magnitude image derived from applying the inverse of the Fourier transform to the fully-sampled Fourier data of the velocity-encoded MRI measurement and subsequently taking the modulus. In Figure \ref{subfig:velmri12} we see the magnitude image that we obtain if we set all Fourier samples to zero that are not part of the spiral visualized in Figure \ref{fig:spiral}, and subsequently proceed as with the fully-sampled data. Finally, Figure \ref{subfig:velmri13} shows the magnitude reconstructions from the $\text{TGV}_\beta^2$-based variational regularization reconstruction \eqref{eq:varregmri}.}
\label{fig:velmri1}
\end{figure}

\noindent In \cite{benning2014phase}, three choices for regularization functionals have been investigated: assuming $\regparambold = (\regparam, \beta)$, we have
\begin{align}
\regfctarg[\alpha, \beta]{u} = \alpha \begin{cases} \text{TV}(u) \\
\text{TGV}_\beta^2(u) \\
\sum_{j = 1}^\infty |\langle u, \varphi_j \rangle |
\end{cases} \, . \label{eq:mriregfcts}
\end{align}
Here $\{ \varphi_j \}_{j \in \Z}$ denotes a wavelet basis. In Figure \ref{fig:velmri1} we see computational solutions of \eqref{eq:varregmri} for the choice $\regfctarg[\regparam, \beta]{u} = \text{TGV}_\beta^2(u)$, a spiral sub-sampling strategy on a cartesian grid, see Figure \ref{fig:spiral}, and the parameter choices $\regparam = 0.1$ and $\beta = 3$. Those results have again been computed with the PDHGM. Subsequently, $v_z$ has been extracted as the principle value of the reconstruction $w \in \regoparg[\regparam, \beta]{\noisy}$. The reconstructed z-velocity $v_z$ is subsequently unwrapped via by solving the linear system
\begin{align*}
\Delta \hat{v}_z = \cos(v_z) \Delta \sin(v_z) - \sin(v_z) \Delta \cos(v_z) 
\end{align*}
for $\hat{v}_z$. Here $\Delta$ denotes the Laplace operator. The unwrapped reconstructed velocity $\hat{v}_z$ is visualized in Figure \ref{fig:velmri2}.

\begin{figure}[!ht]
\begin{center}
\subfloat[Fully-sampled]{\includegraphics[trim={3cm 1cm 2.2cm 1cm},clip,width=0.32\textwidth]{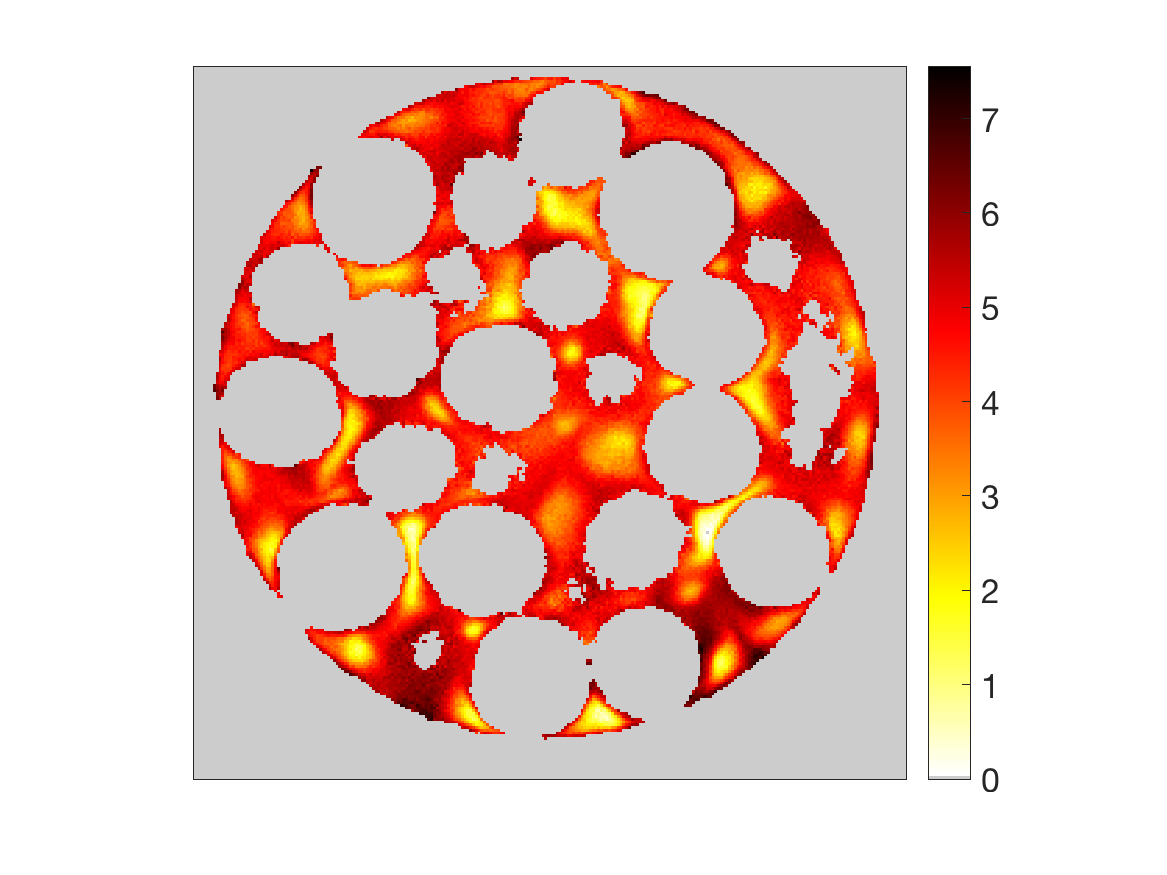}}
\subfloat[Zero-filled]{\includegraphics[trim={3cm 1cm 2.2cm 1cm},clip,width=0.32\textwidth]{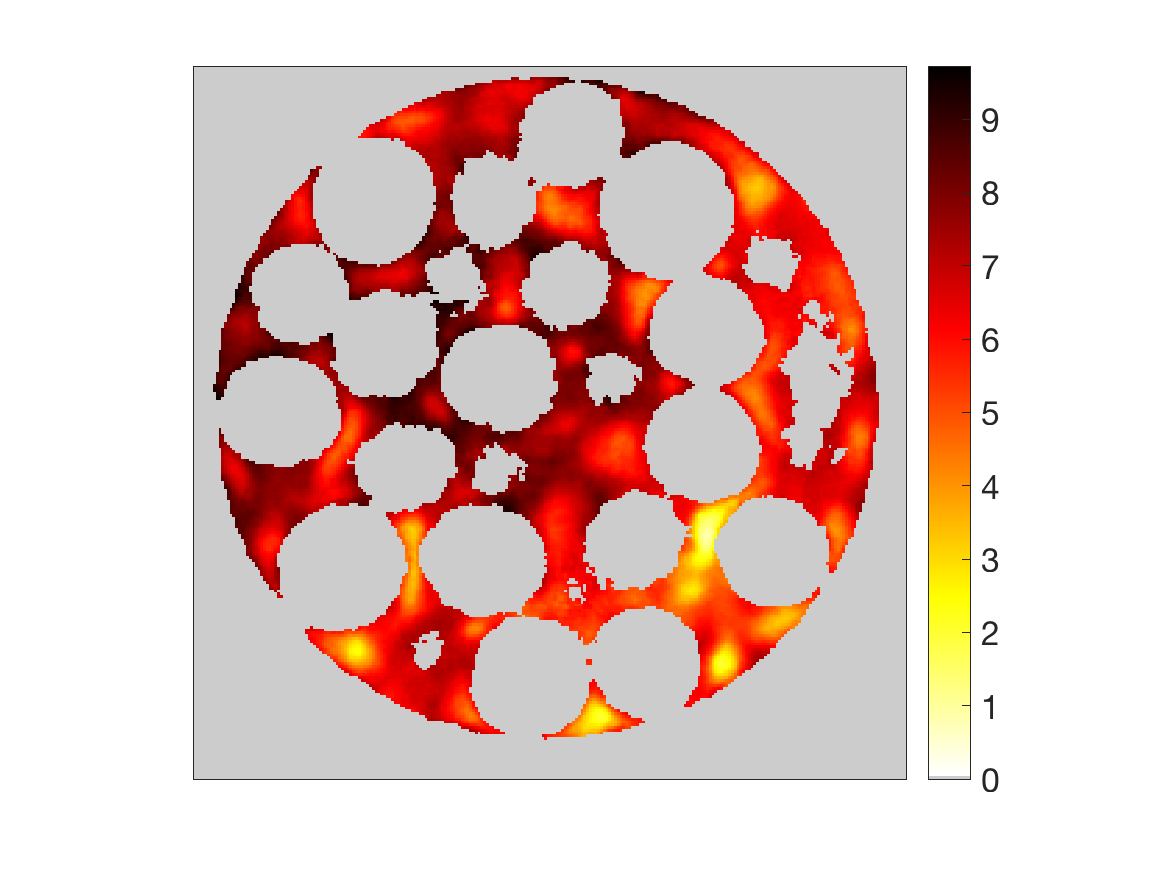}}
\subfloat[TGV-based]{\includegraphics[trim={3cm 1cm 2.2cm 1cm},clip,width=0.32\textwidth]{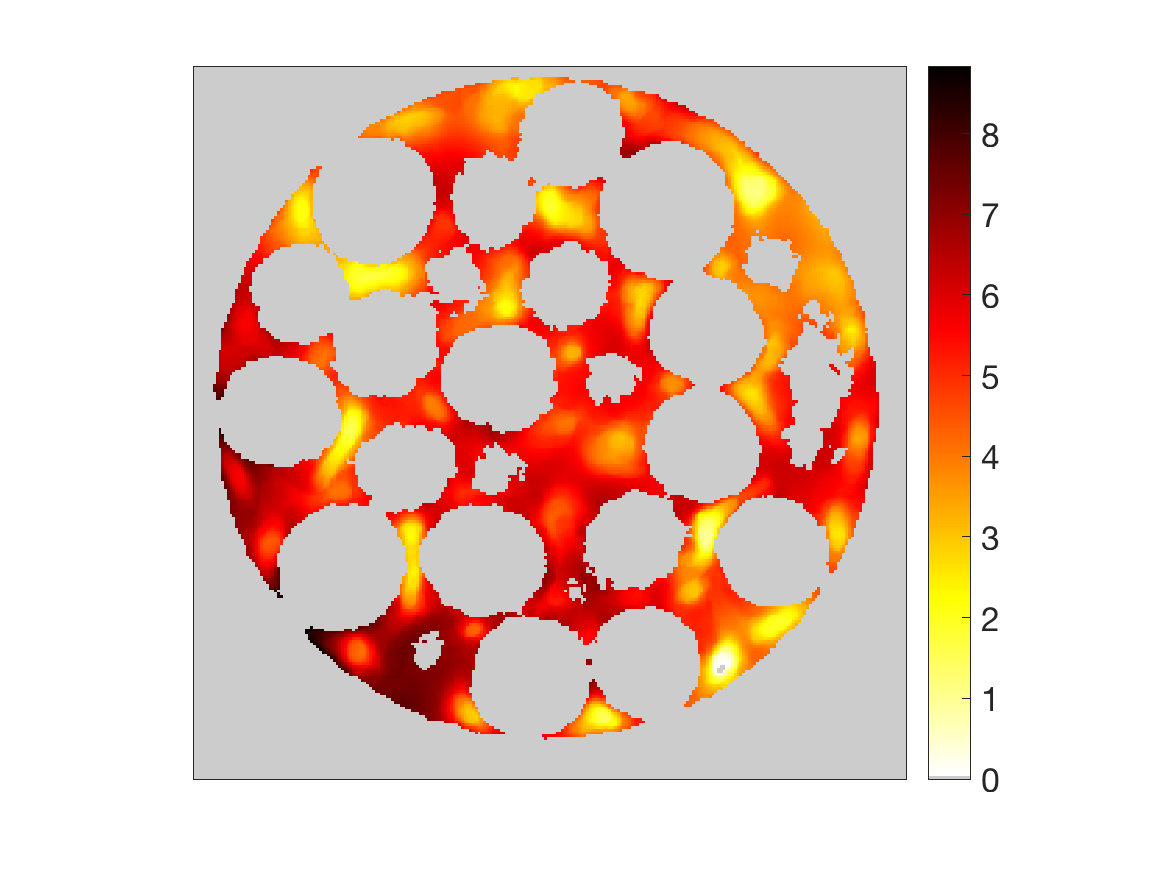}}
\end{center}
\caption{We see the different velocity-reconstructions that correspond to the magnitude reconstructions in Figure \ref{fig:velmri1}. }
\label{fig:velmri2}
\end{figure}

In order to demonstrate the capabilities of the Bregman iteration, Algorithm \eqref{alg:bregiter} has been qualitatively analyzed for different sub-sampling strategies and different initial choices of $\alpha > 0$ in \cite{benning2014phase}. These comparisons for different sub-sampling strategies are visualized in Figure \ref{fig:quantmri}. In Figure \ref{fig:velmribregiter} we see the magnitude images of 20 Bregman iterations computed with Algorithm \ref{alg:bregiter} for the same setup as described earlier, and the parameter choices $\regparam = 1.5$ and $\beta = 3$.

\begin{figure}[!t]
\begin{center}
\includegraphics[width=\textwidth]{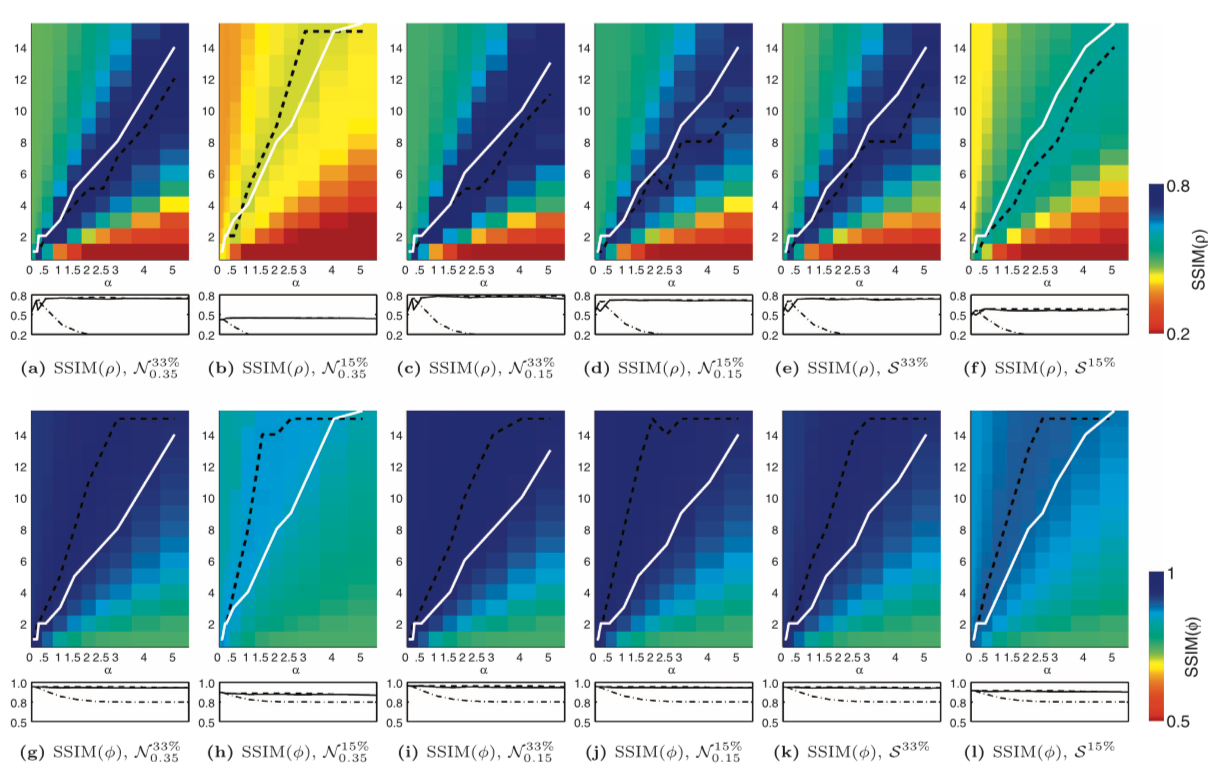}
\end{center}
\caption{The structural similarity index measure (SSIM) (see \cite{wang2004image}) of the magnitude images (top row) and the velocity images (bottom row) for Bregmanized TV reconstructions of computer-generated test data with various sampling patterns and noise $\sigma = 0.2$. The parameter $\regparam$ is on the horizontal and the Bregman iteration on the vertical axis. The colors code the SSIM value, also shown in the small lower graph. The continuous line corresponds to violation of the discrepancy principle, and the dashed line to the optimal SSIM. The dash-dotted line in the small graph indicates the SSIM for the first iteration. From \cite{benning2014phase}.}
\label{fig:quantmri}
\end{figure}

\begin{figure}[p]
\begin{center}
\subfloat[Iterate 1]{\includegraphics[trim={3cm 1cm 2.2cm 1cm},width=0.24\textwidth]{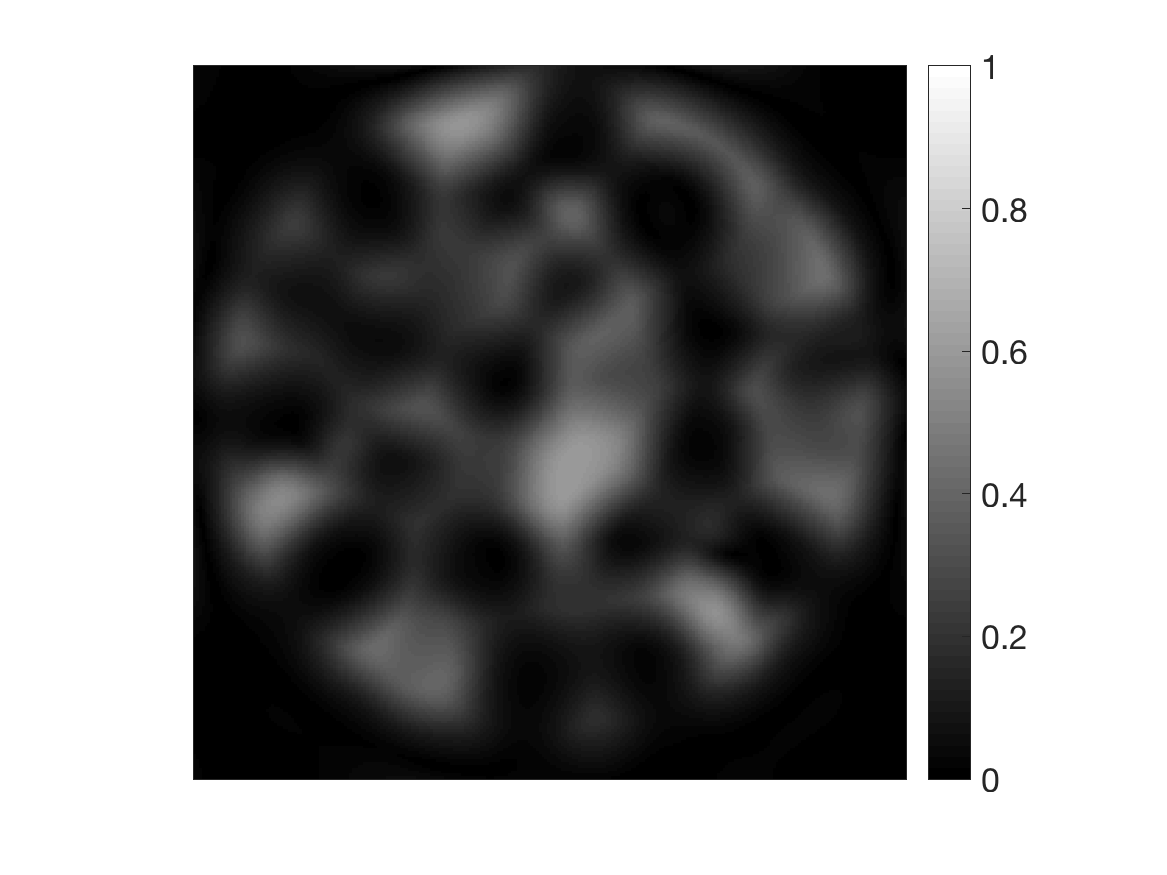}}\vspace{0.05cm}
\subfloat[Iterate 2]{\includegraphics[trim={3cm 1cm 2.2cm 1cm},width=0.24\textwidth]{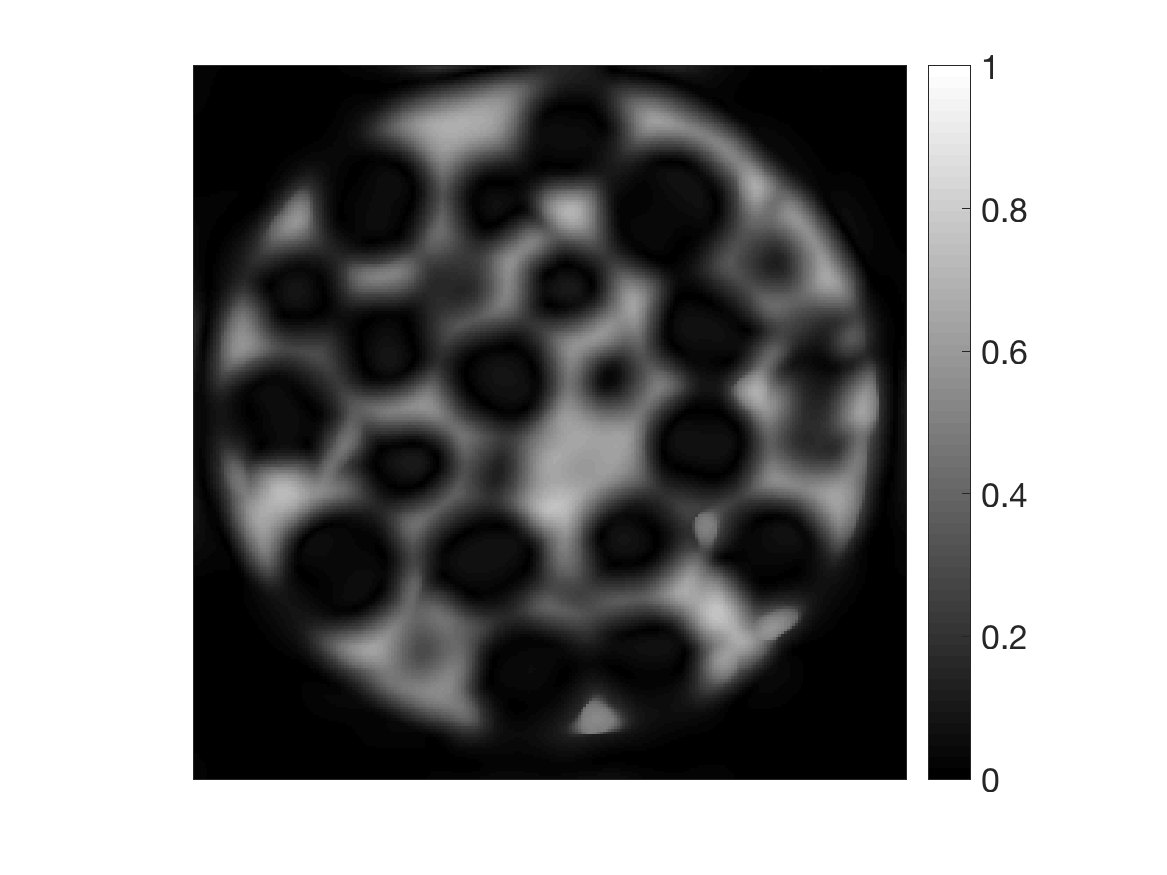}}\vspace{0.05cm}
\subfloat[Iterate 3]{\includegraphics[trim={3cm 1cm 2.2cm 1cm},width=0.24\textwidth]{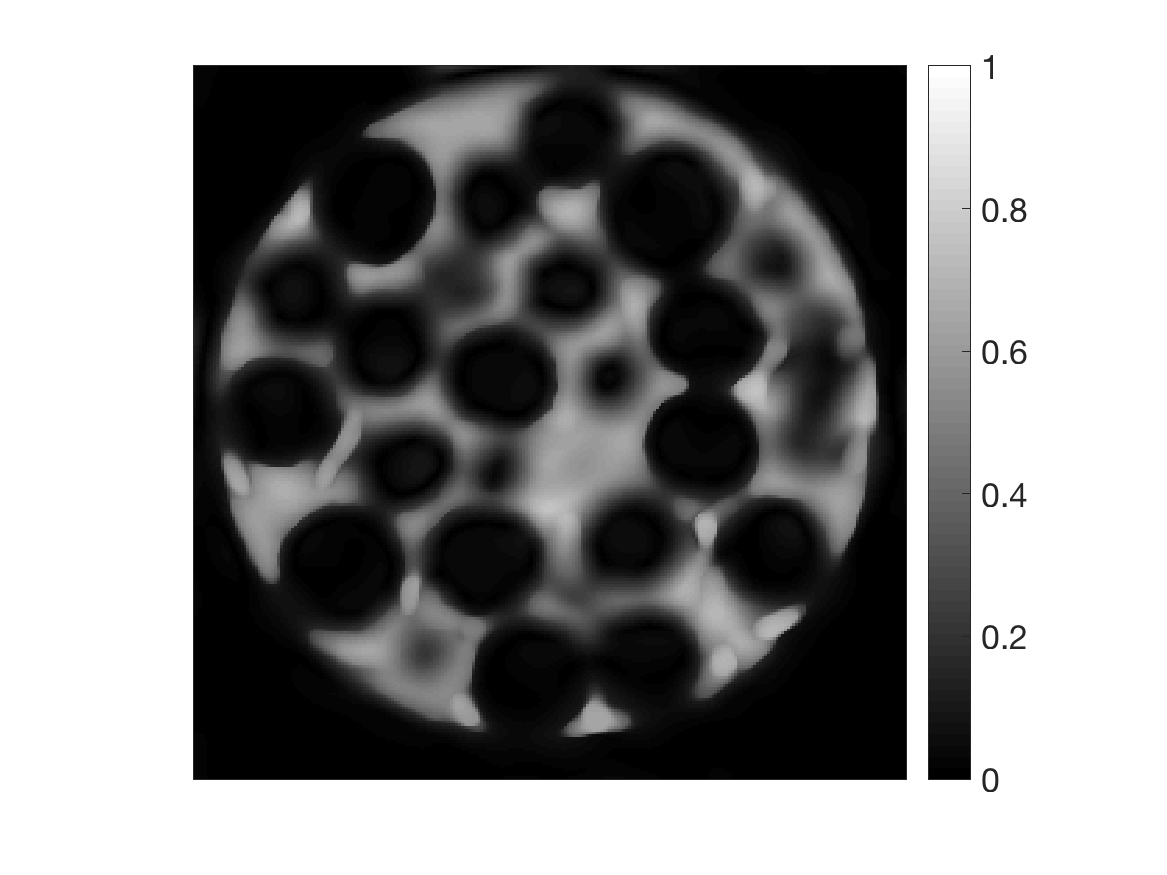}}\vspace{0.05cm}
\subfloat[Iterate 4]{\includegraphics[trim={3cm 1cm 2.2cm 1cm},width=0.24\textwidth]{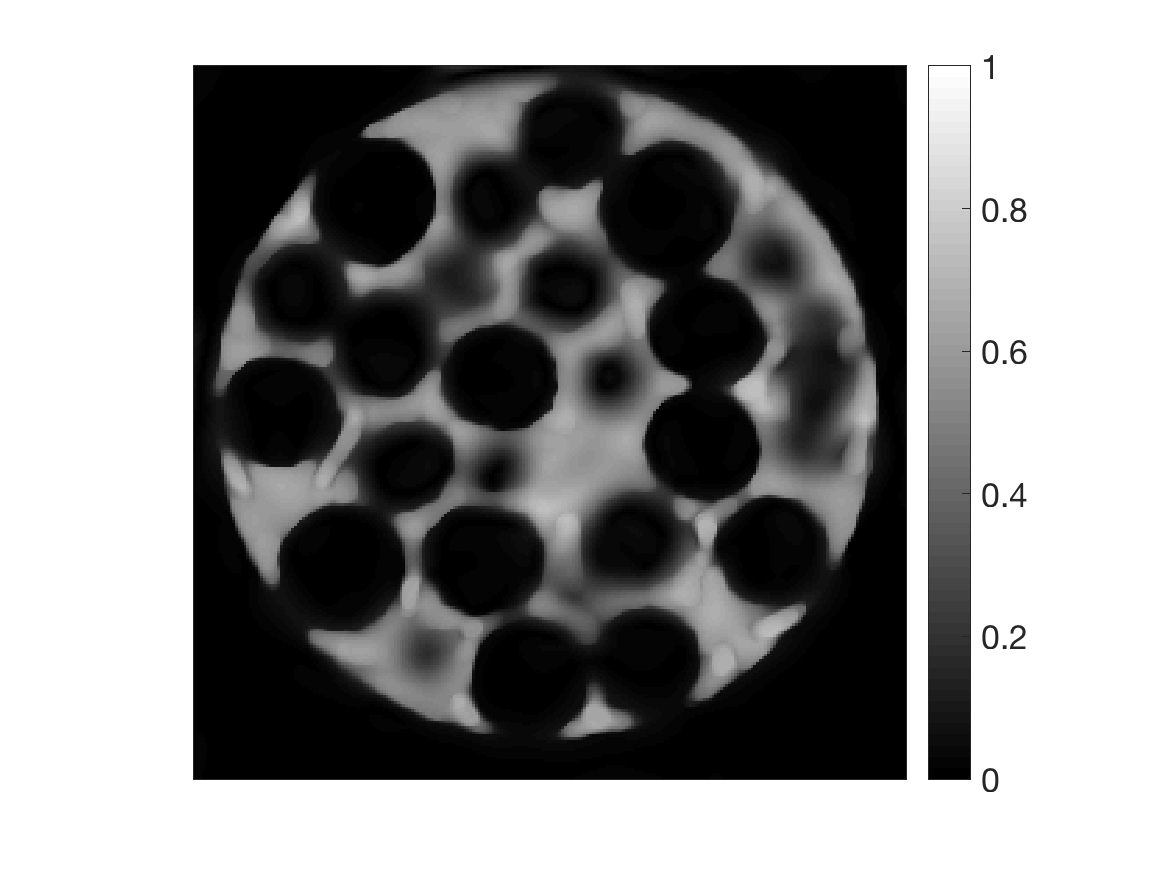}}\\
\subfloat[Iterate 5]{\includegraphics[trim={3cm 1cm 2.2cm 1cm},width=0.24\textwidth]{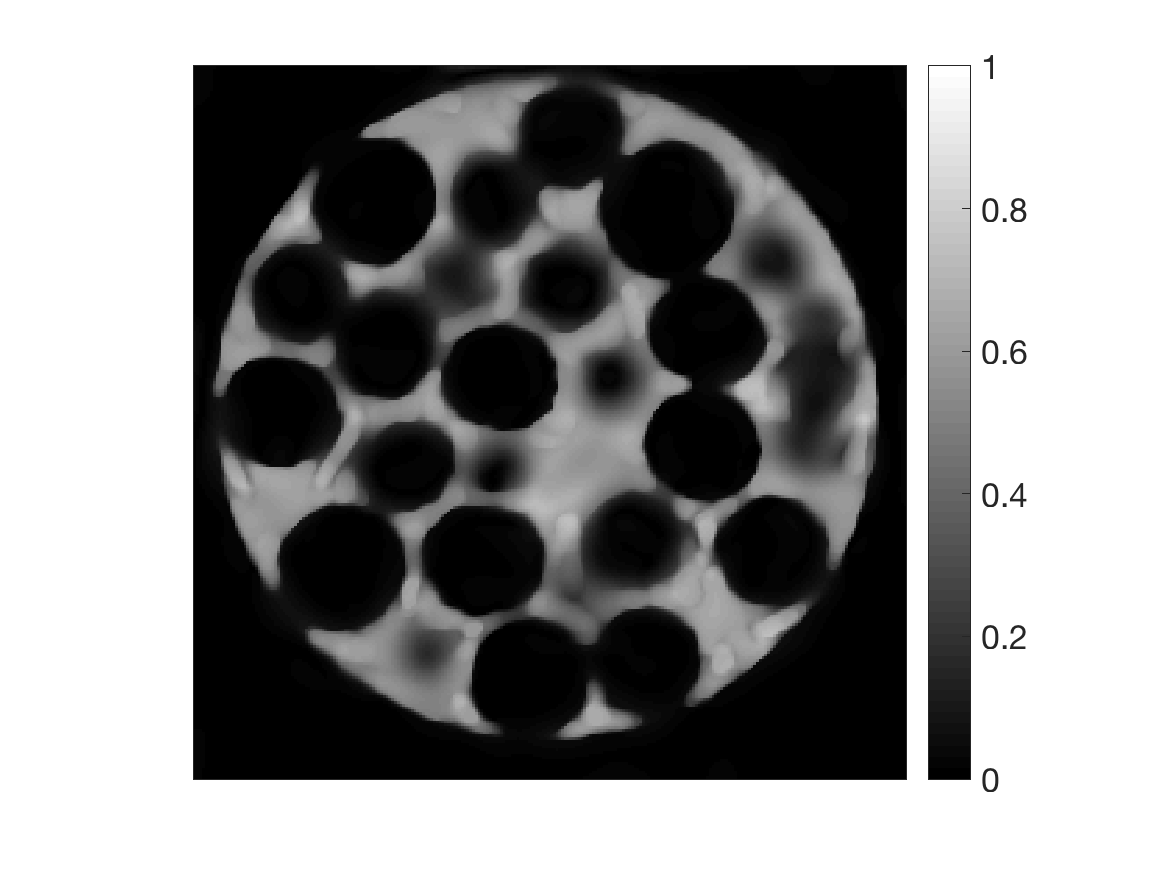}}\vspace{0.05cm}
\subfloat[Iterate 6]{\includegraphics[trim={3cm 1cm 2.2cm 1cm},width=0.24\textwidth]{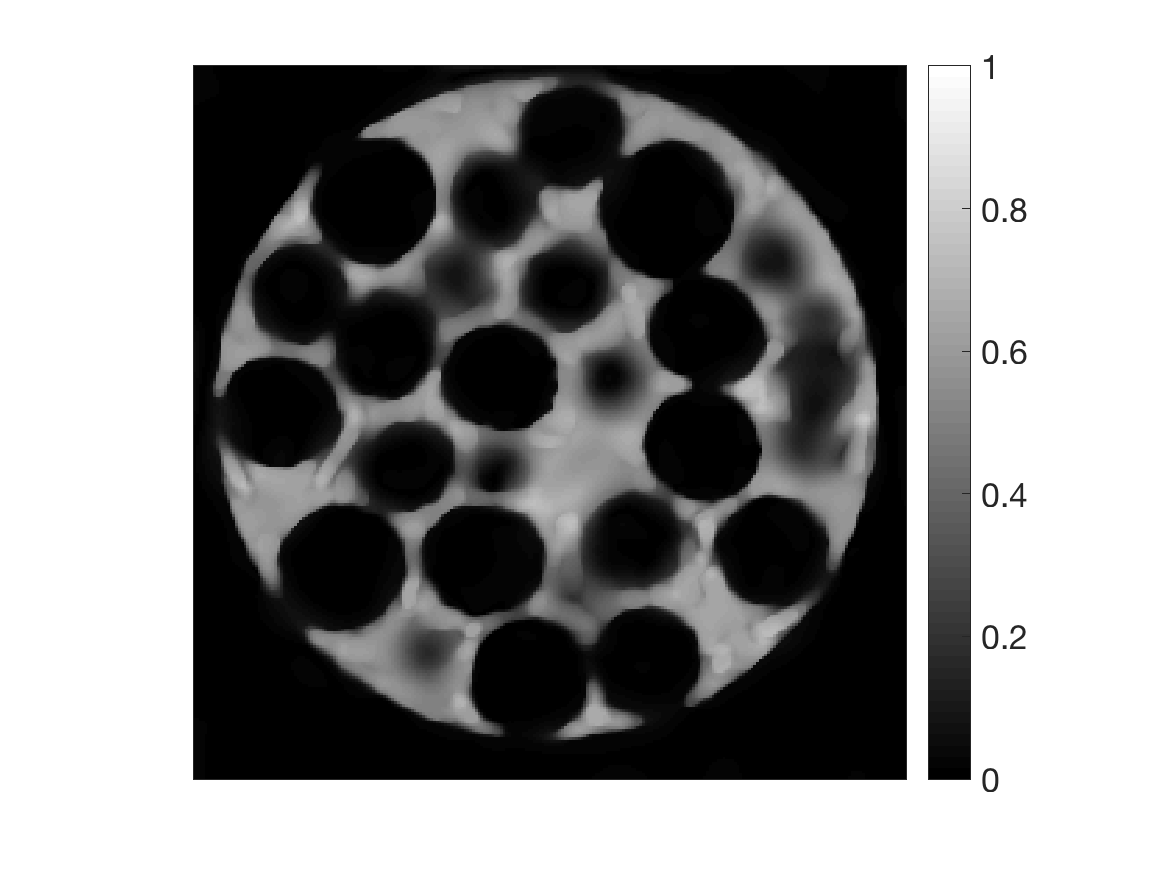}}\vspace{0.05cm}
\subfloat[Iterate 7]{\includegraphics[trim={3cm 1cm 2.2cm 1cm},width=0.24\textwidth]{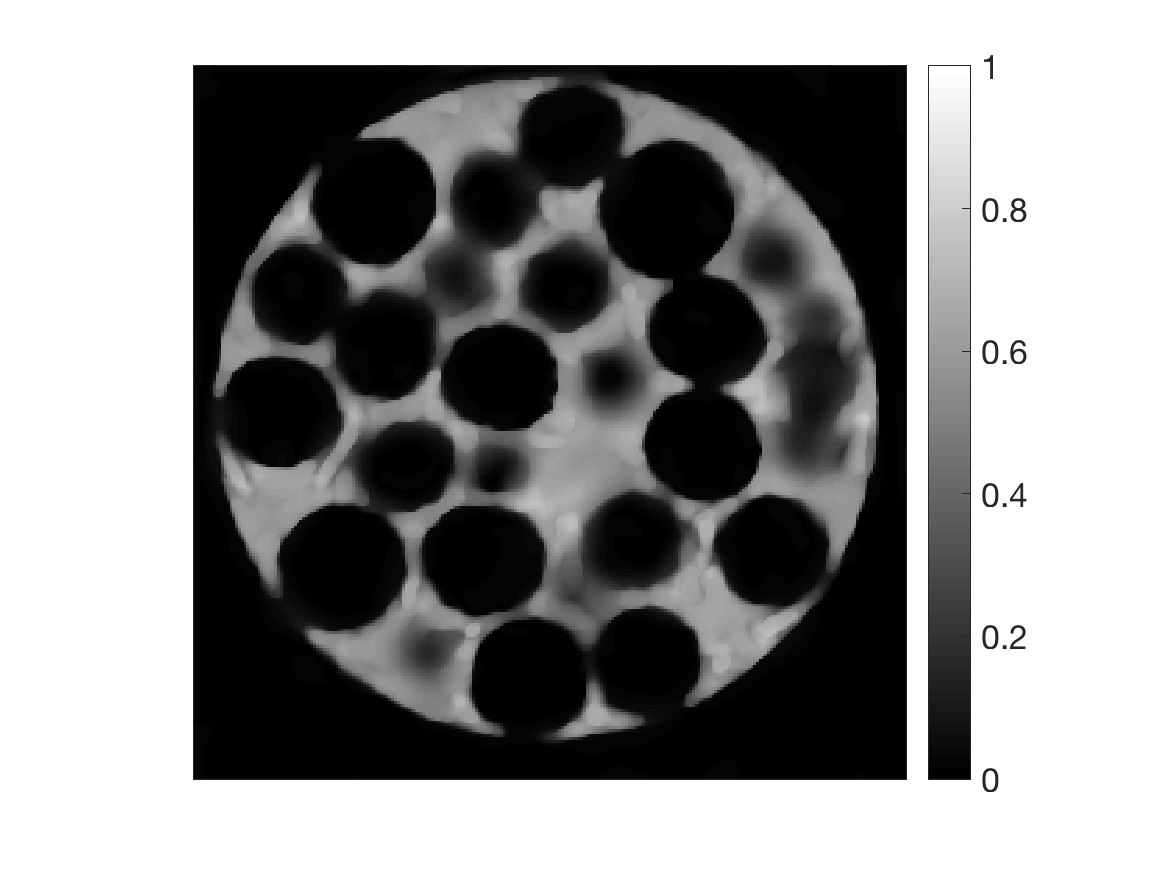}}\vspace{0.05cm}
\subfloat[Iterate 8]{\includegraphics[trim={3cm 1cm 2.2cm 1cm},width=0.24\textwidth]{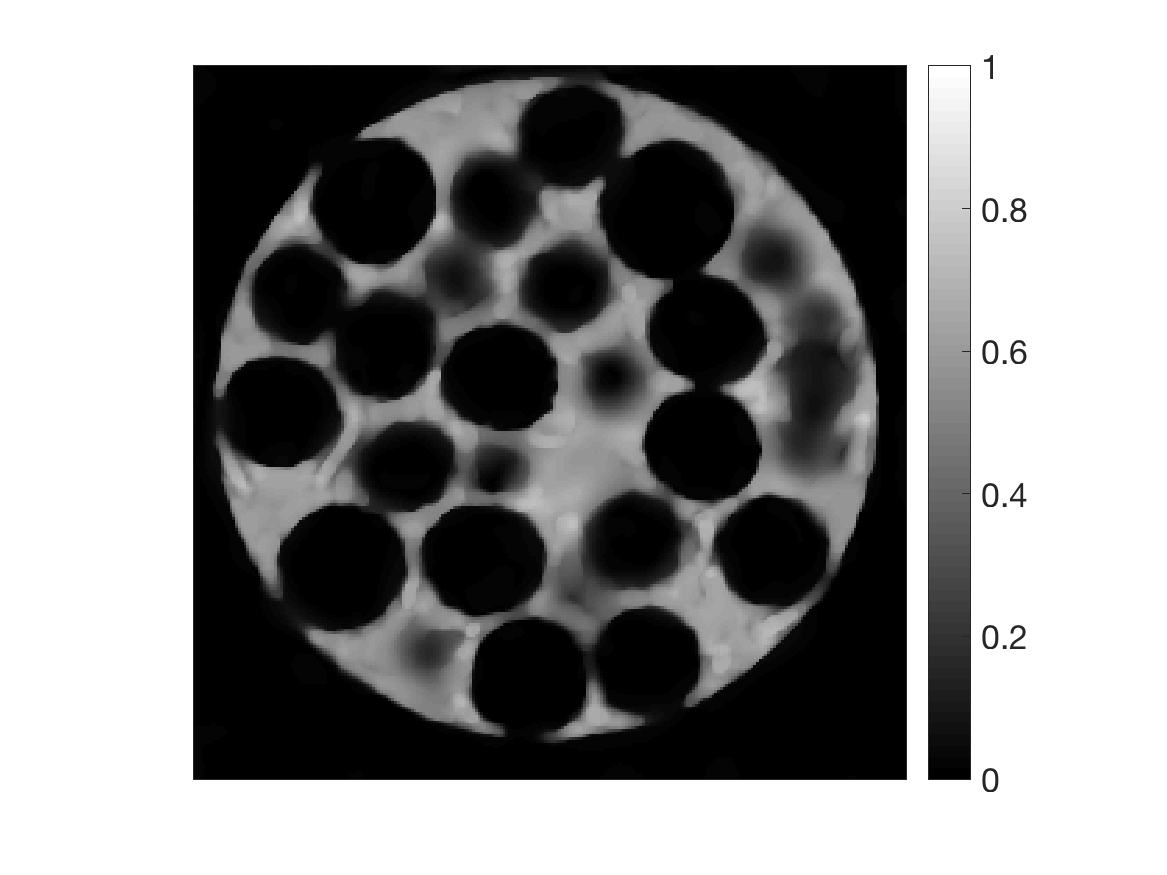}}\\
\subfloat[Iterate 9]{\includegraphics[trim={3cm 1cm 2.2cm 1cm},width=0.24\textwidth]{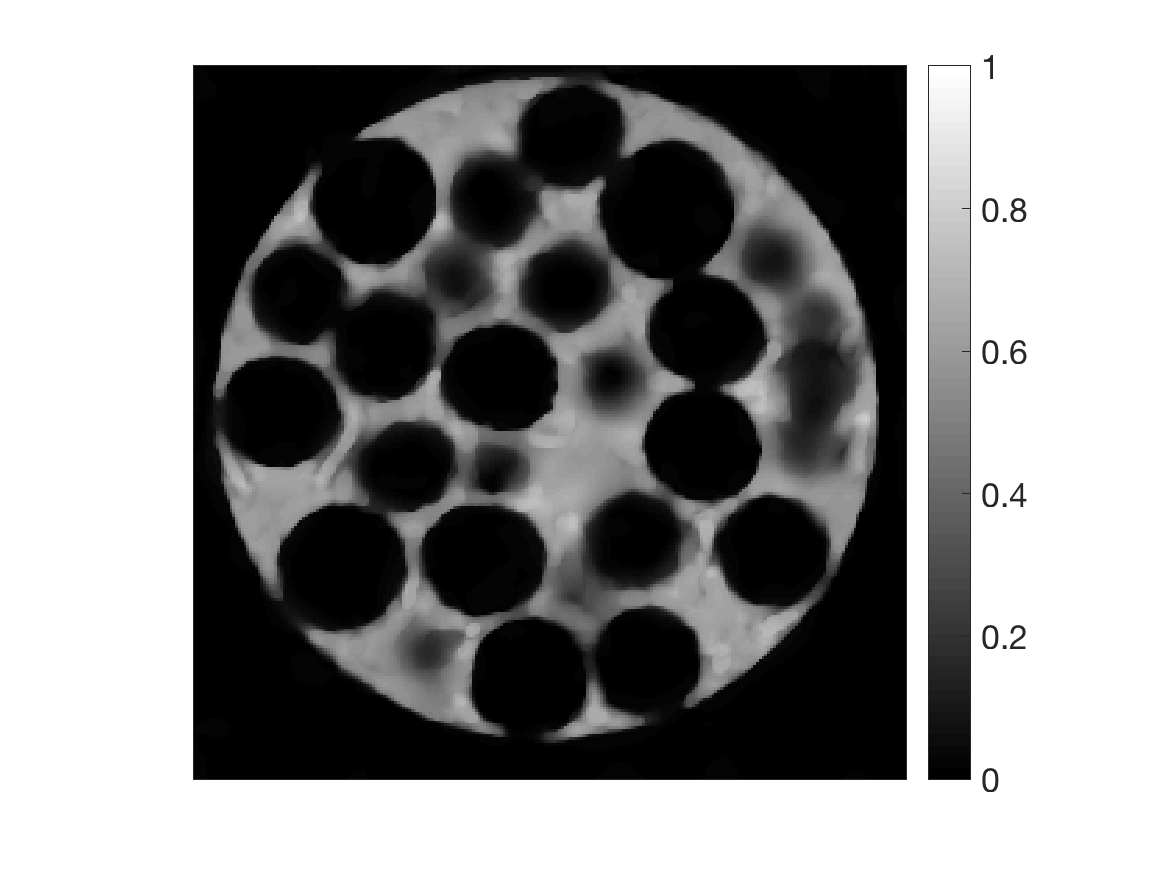}}\vspace{0.05cm}
\subfloat[Iterate 10]{\includegraphics[trim={3cm 1cm 2.2cm 1cm},width=0.24\textwidth]{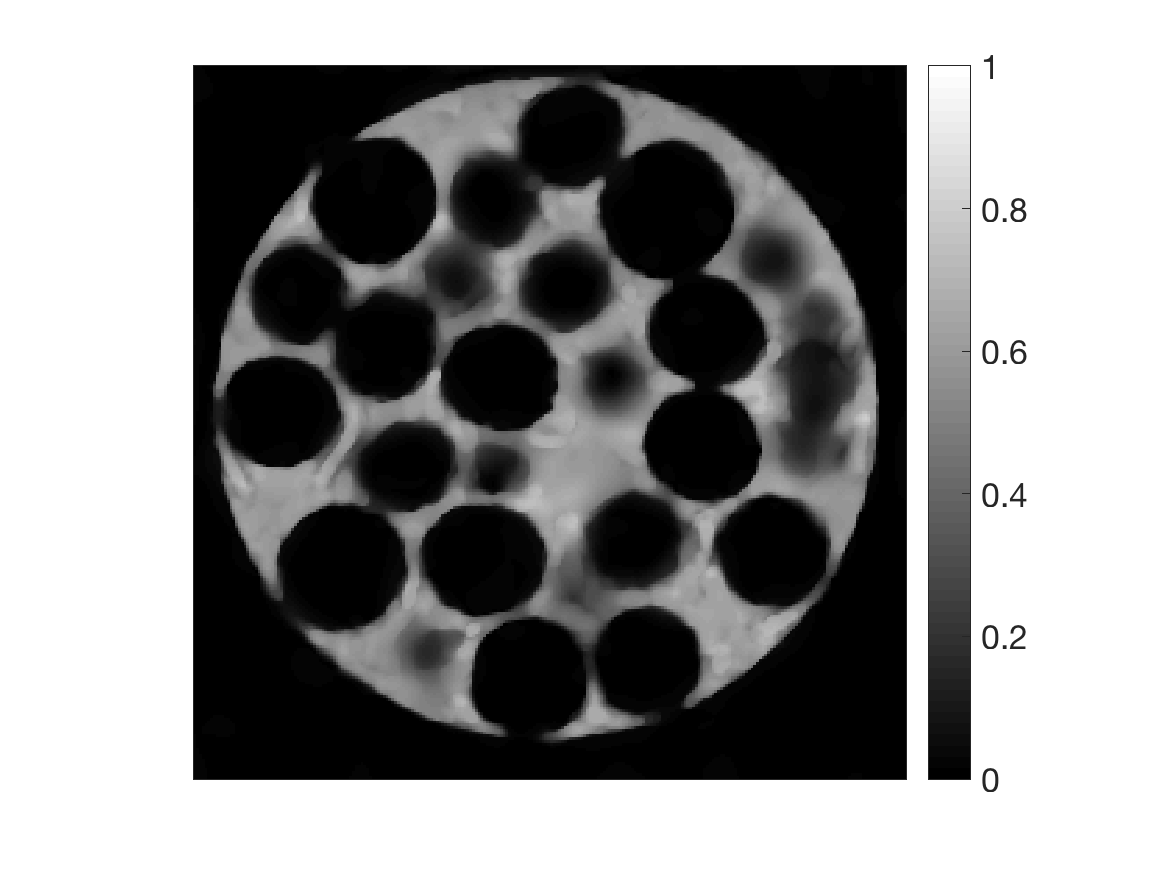}}\vspace{0.05cm}
\subfloat[Iterate 11]{\includegraphics[trim={3cm 1cm 2.2cm 1cm},width=0.24\textwidth]{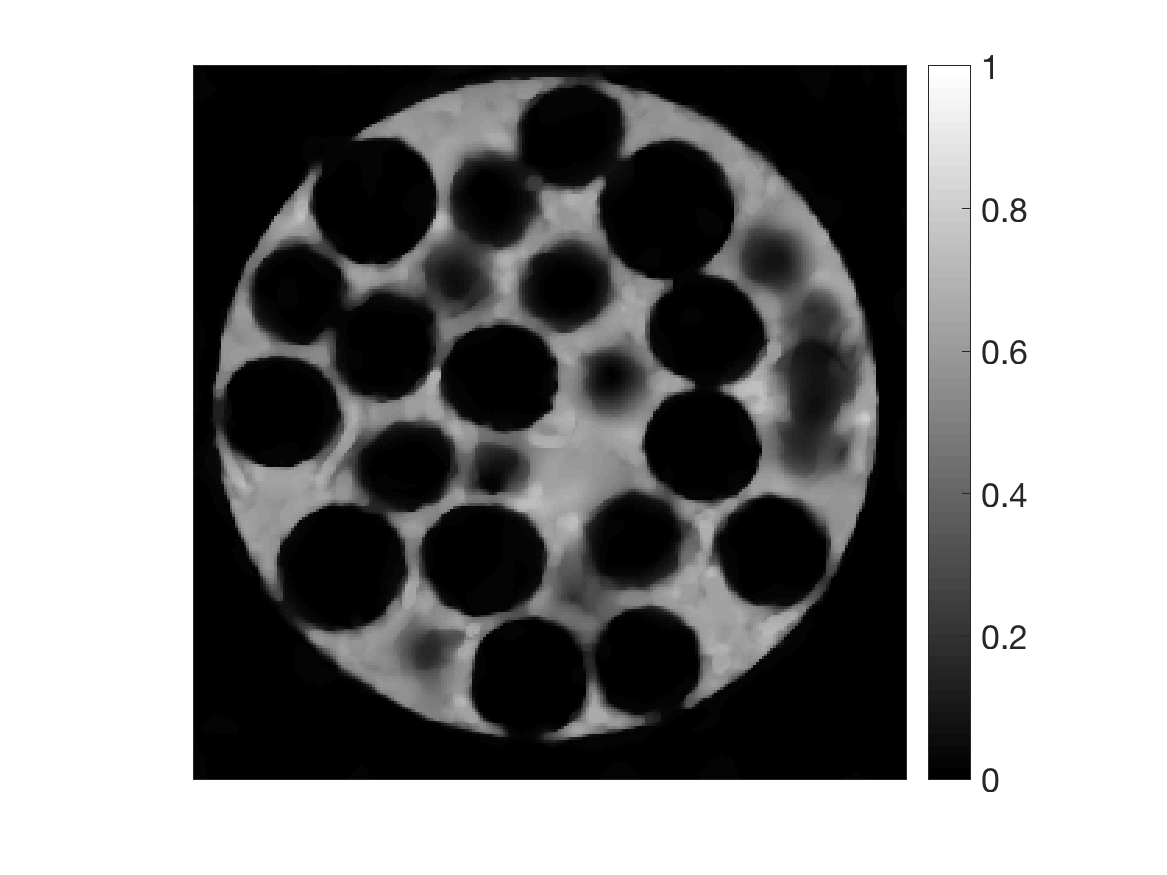}}\vspace{0.05cm}
\subfloat[Iterate 12]{\includegraphics[trim={3cm 1cm 2.2cm 1cm},width=0.24\textwidth]{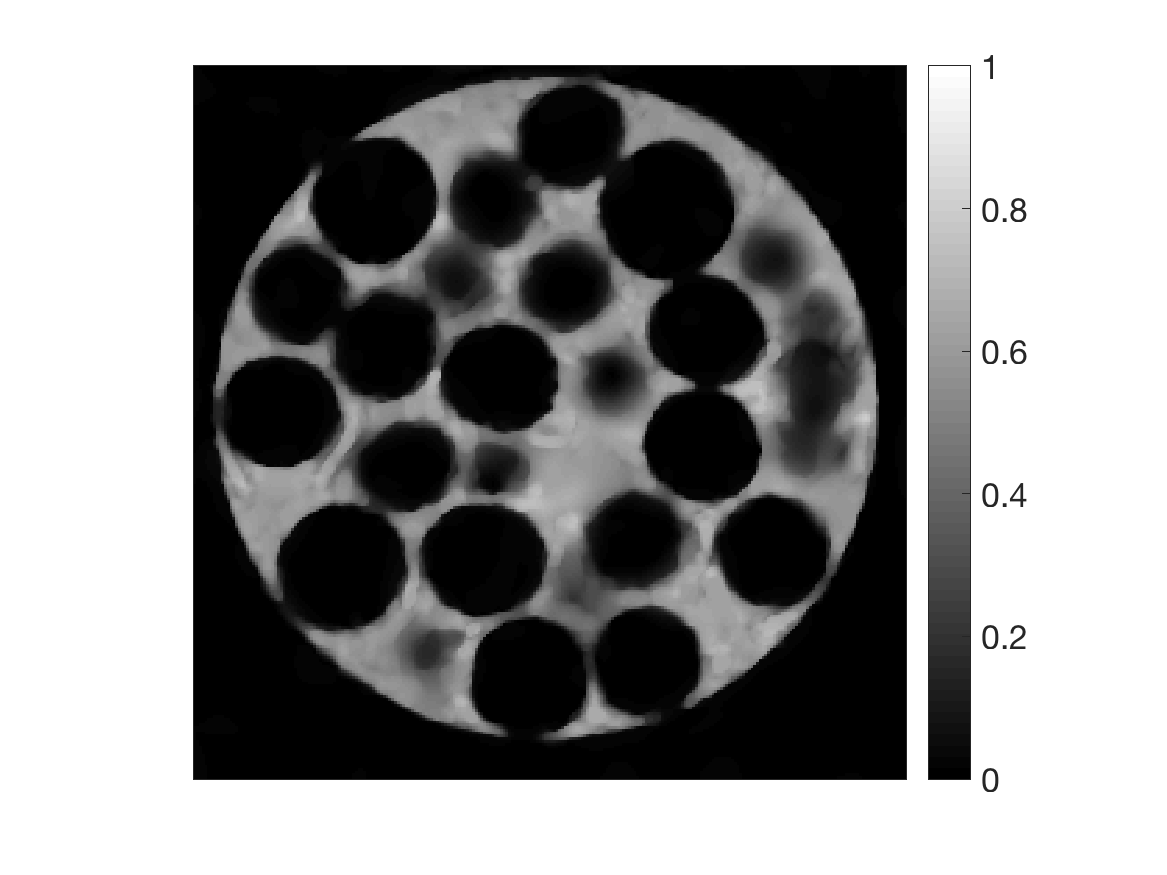}}\\
\subfloat[Iterate 13]{\includegraphics[trim={3cm 1cm 2.2cm 1cm},width=0.24\textwidth]{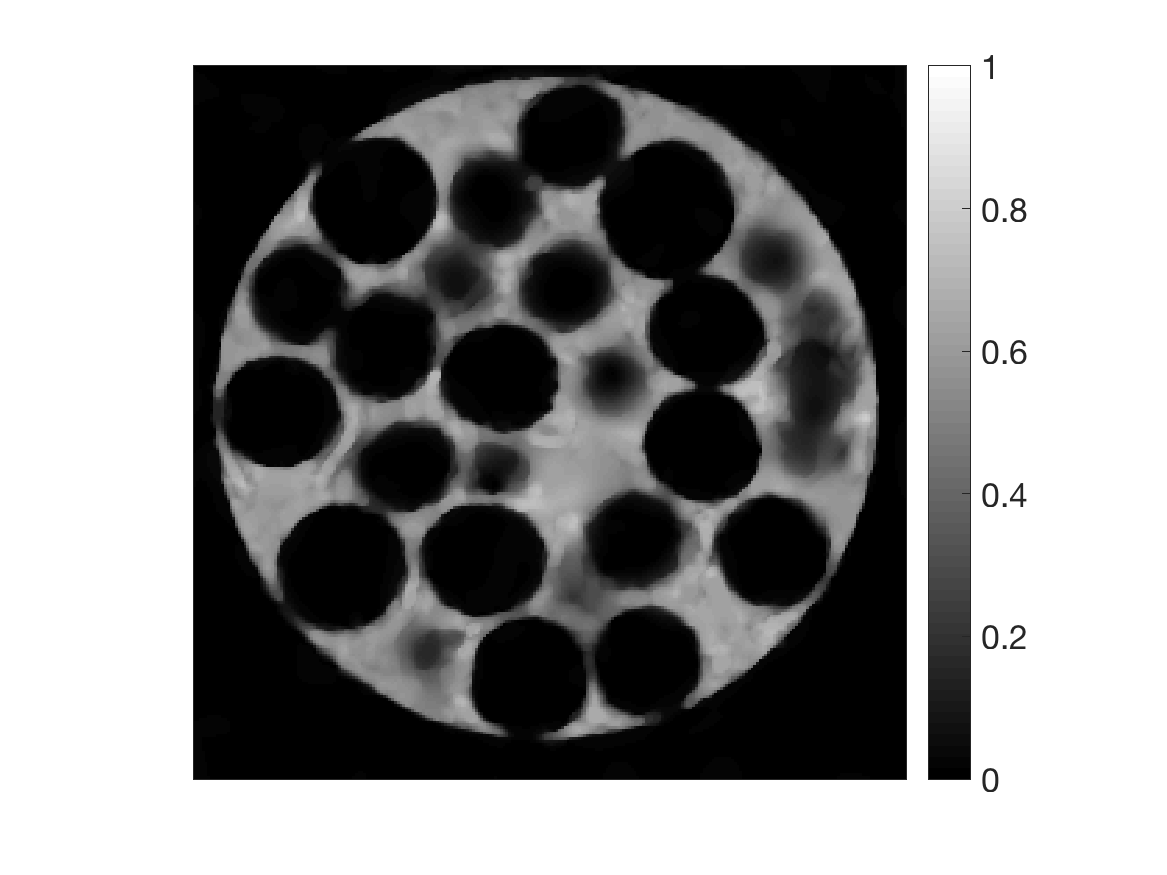}}\vspace{0.05cm}
\subfloat[Iterate 14]{\includegraphics[trim={3cm 1cm 2.2cm 1cm},width=0.24\textwidth]{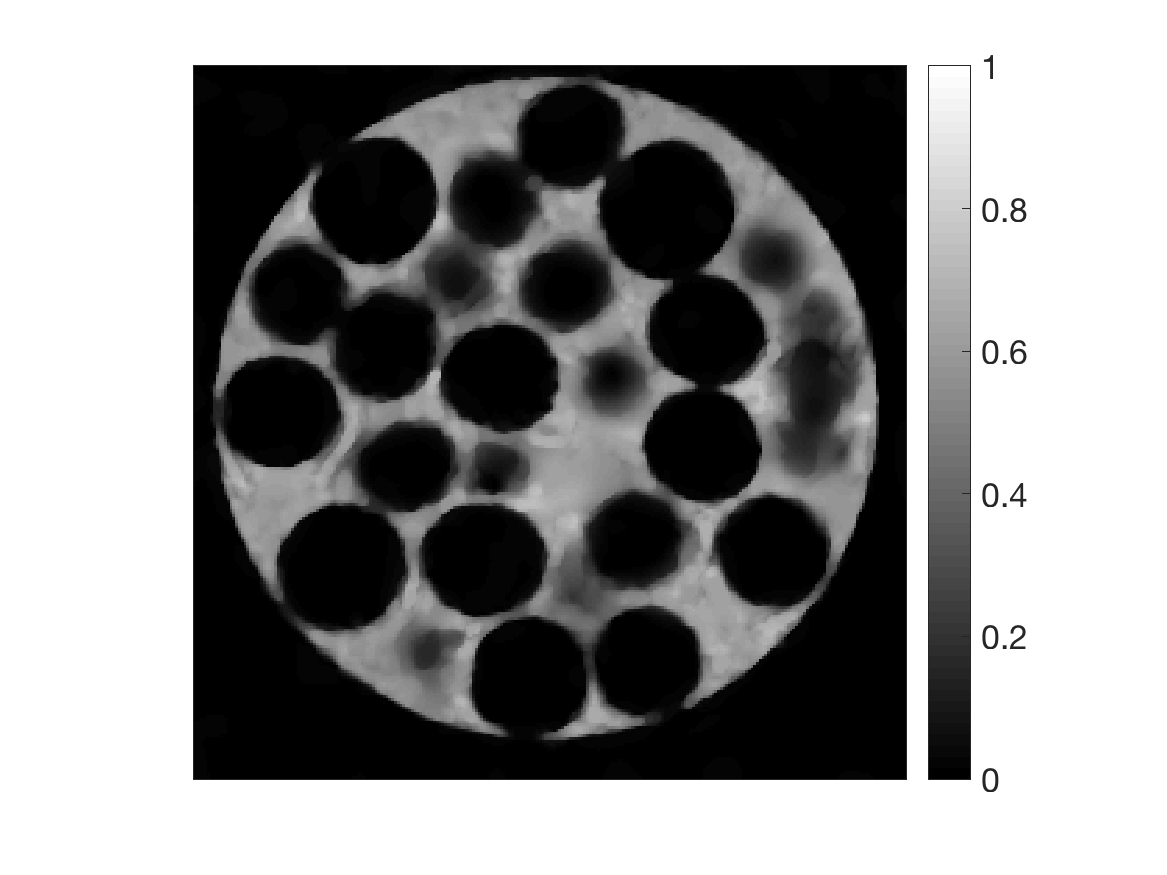}}\vspace{0.05cm}
\subfloat[Iterate 15]{\includegraphics[trim={3cm 1cm 2.2cm 1cm},width=0.24\textwidth]{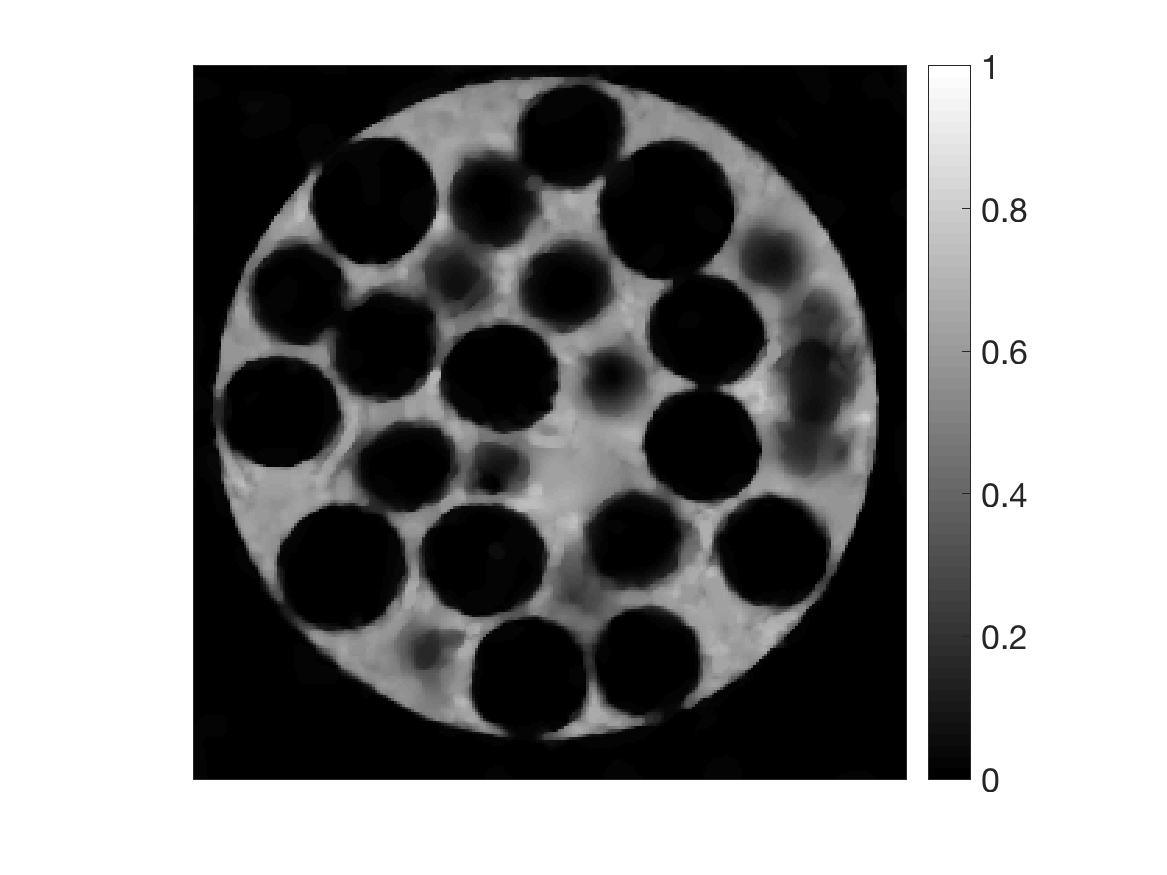}}\vspace{0.05cm}
\subfloat[Iterate 16]{\includegraphics[trim={3cm 1cm 2.2cm 1cm},width=0.24\textwidth]{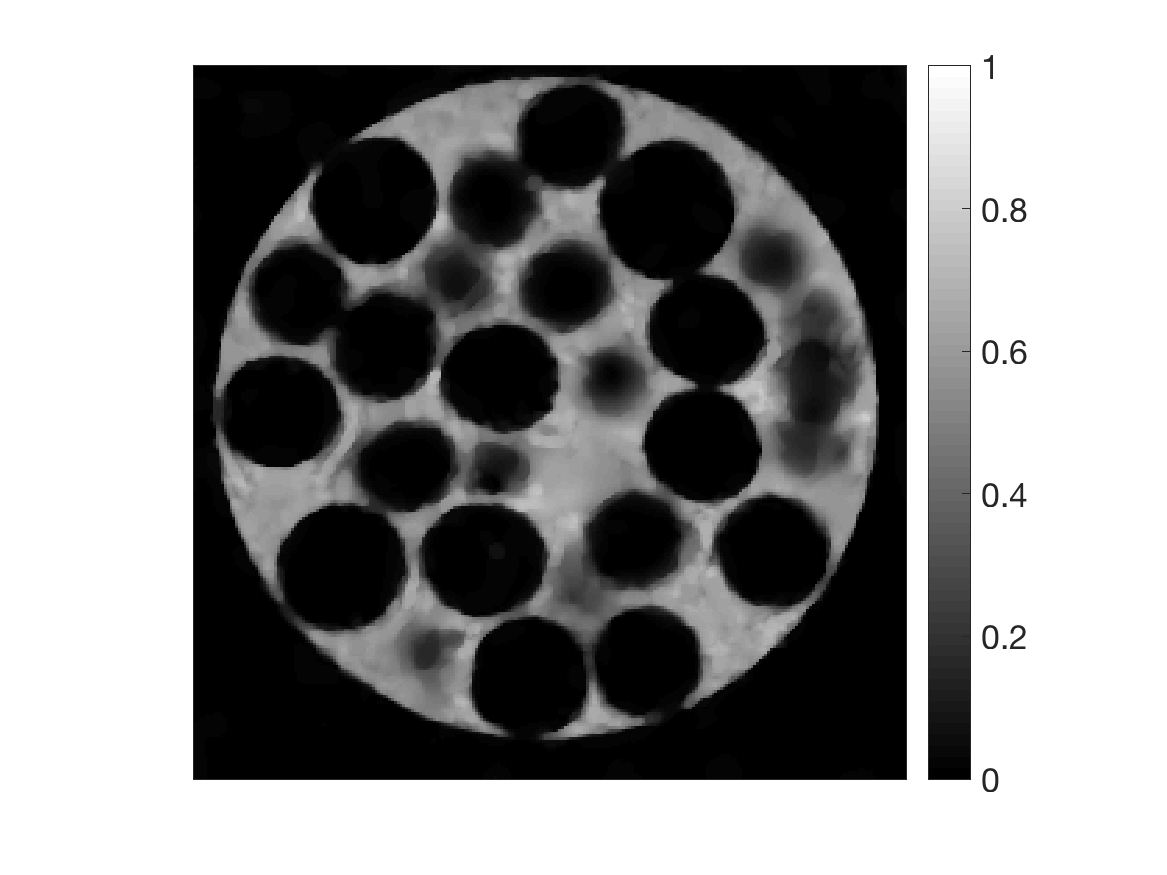}}\\
\subfloat[Iterate 17]{\includegraphics[trim={3cm 1cm 2.2cm 1cm},width=0.24\textwidth]{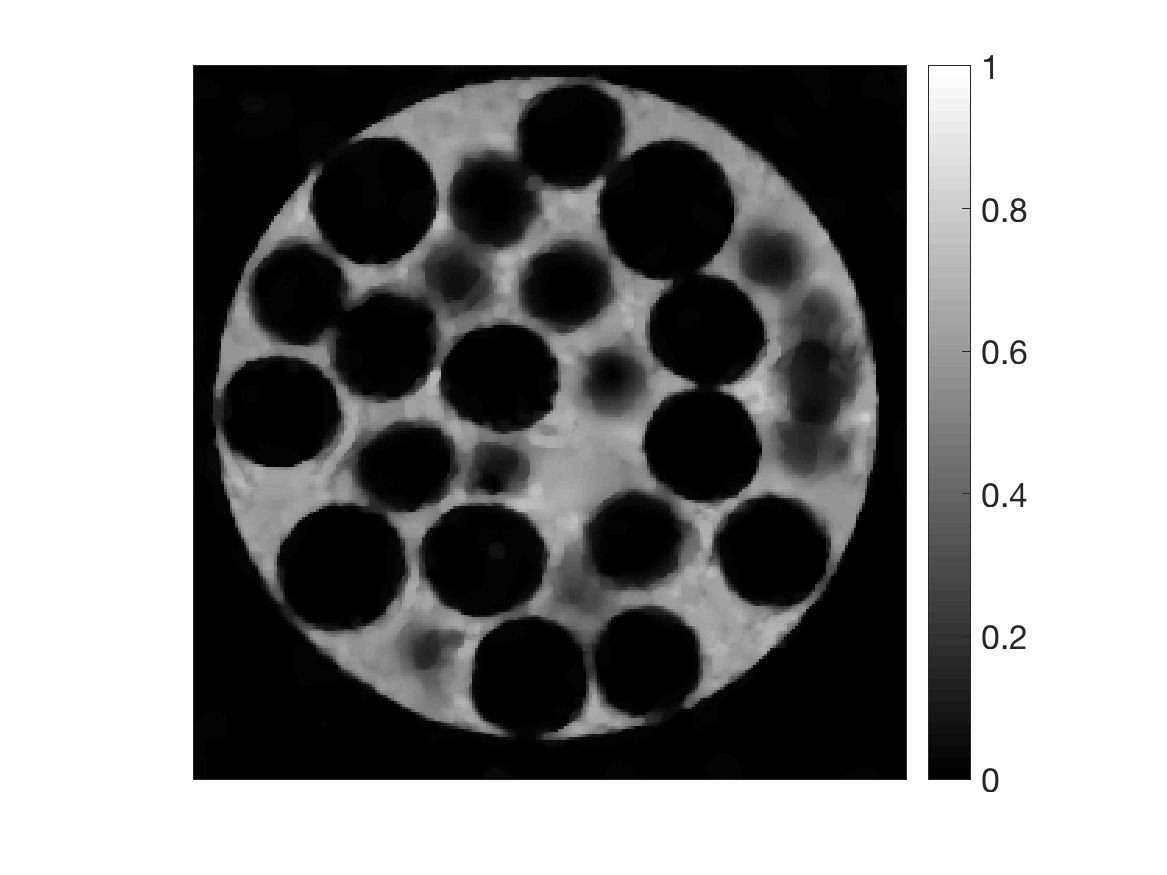}}\vspace{0.05cm}
\subfloat[Iterate 18]{\includegraphics[trim={3cm 1cm 2.2cm 1cm},width=0.24\textwidth]{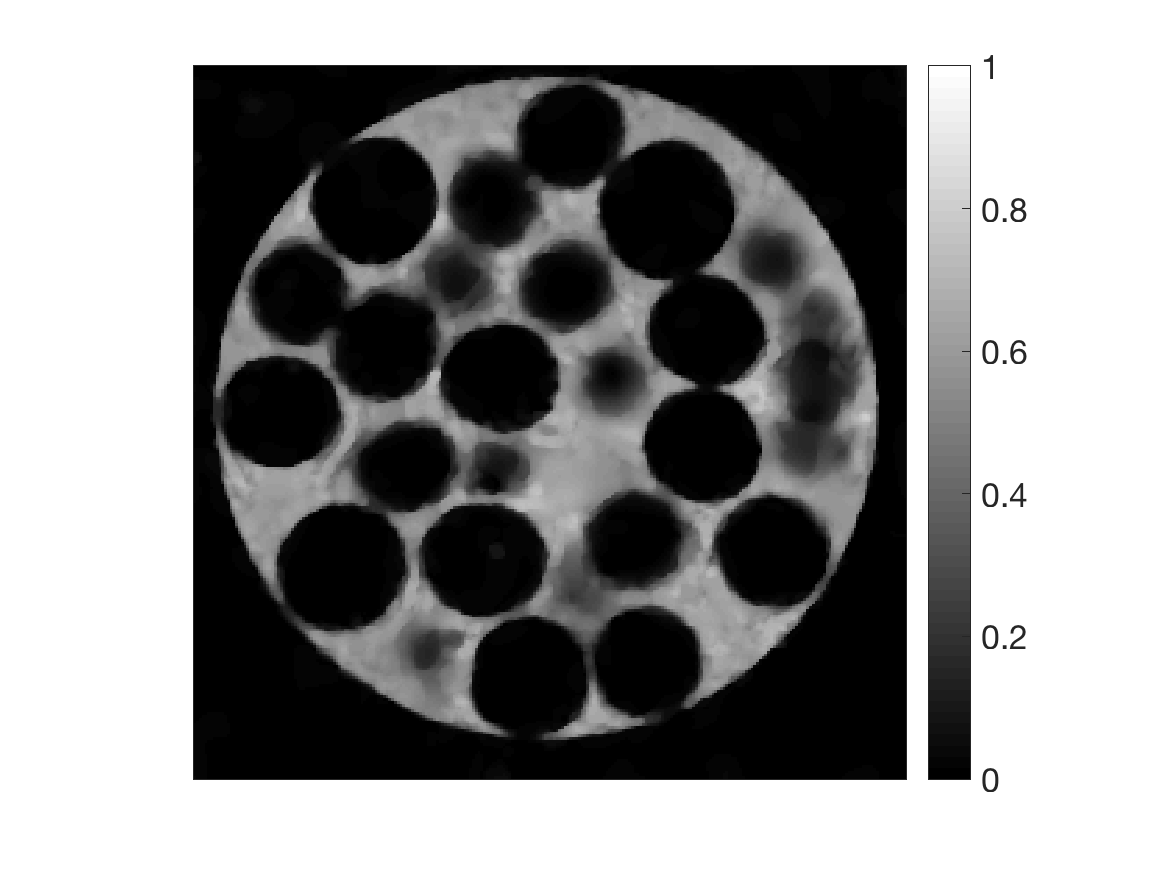}}\vspace{0.05cm}
\subfloat[Iterate 19]{\includegraphics[trim={3cm 1cm 2.2cm 1cm},width=0.24\textwidth]{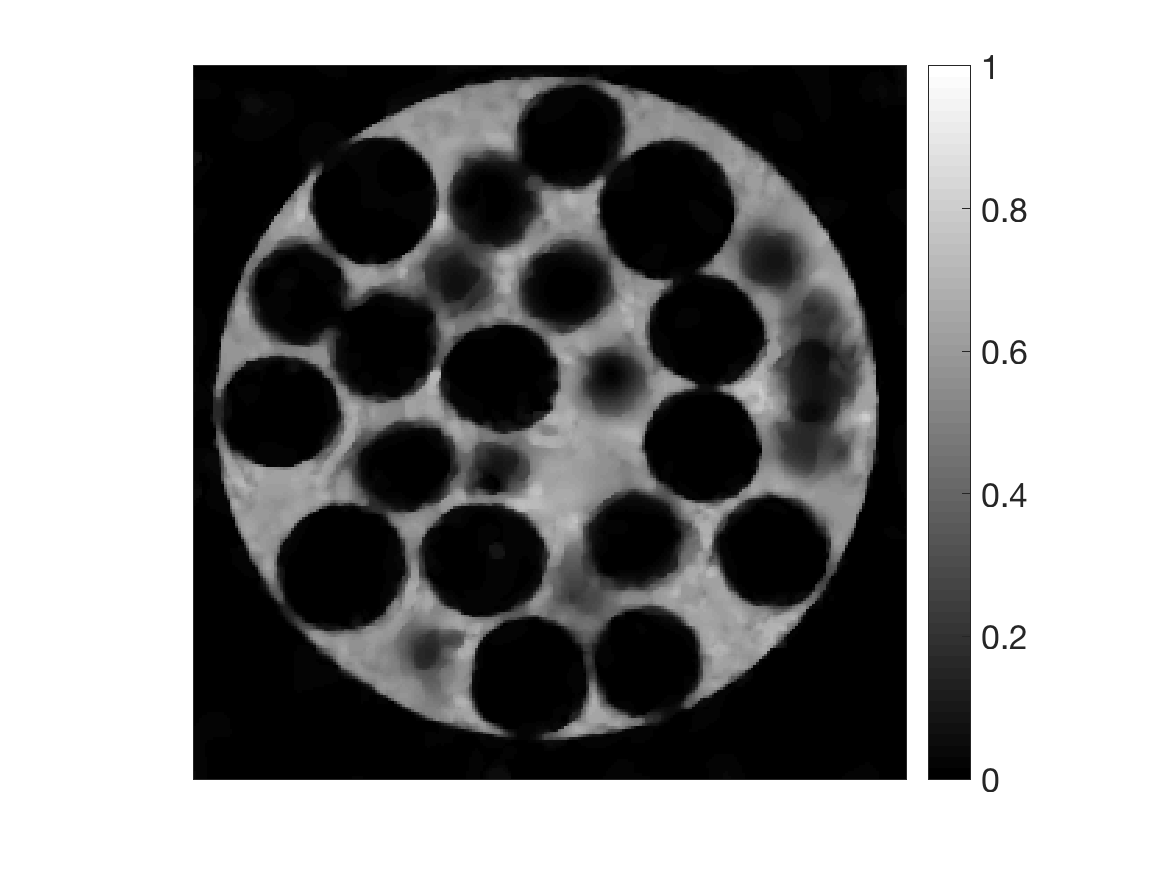}}\vspace{0.05cm}
\subfloat[Iterate 20]{\includegraphics[trim={3cm 1cm 2.2cm 1cm},width=0.24\textwidth]{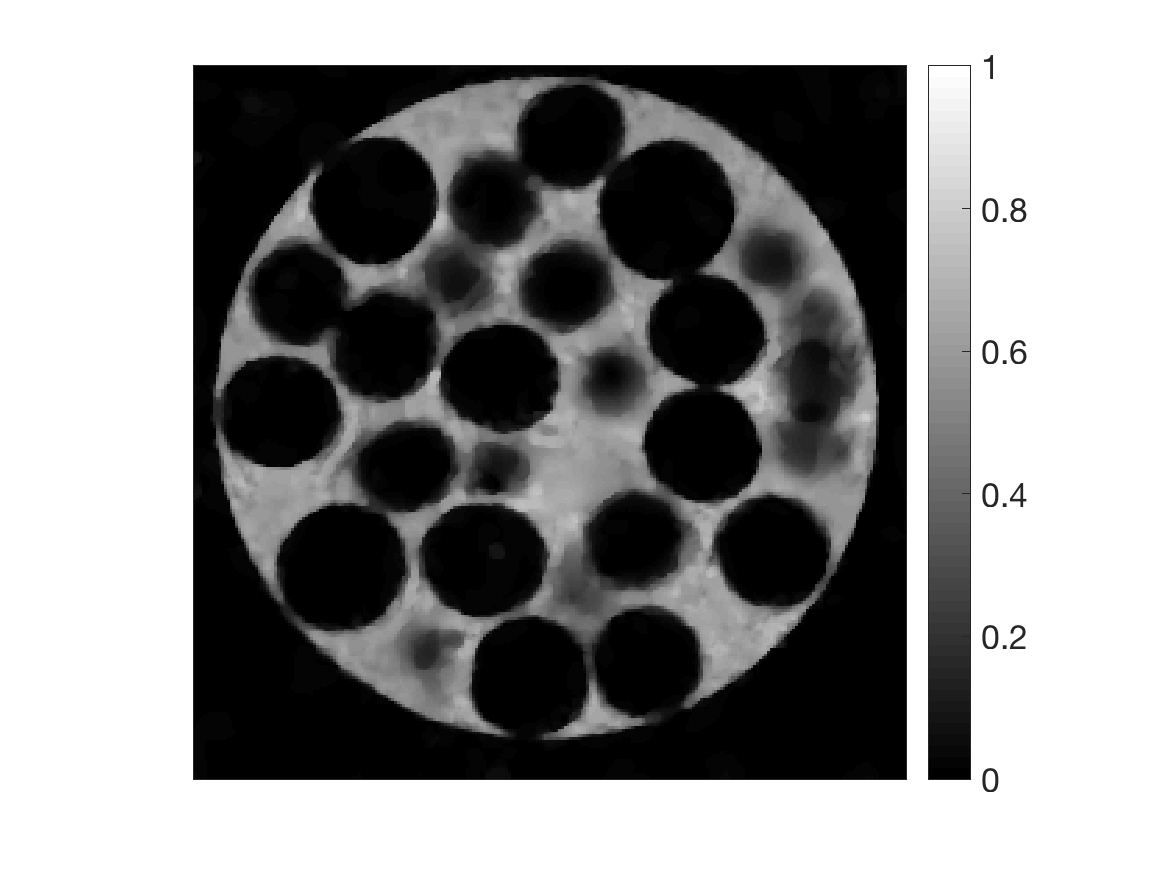}}\\
\end{center}
\caption{Magnitude images of 20 Bregman iterations computed via Algorithm \ref{alg:bregiter}, with $\regparam = 1.5$ and $\beta = 3$.}\label{fig:velmribregiter}
\end{figure}

We refer to \cite{benning2014phase} for more information on iterative regularization in the context of velocity-encoded MRI.

\subsection{Dynamic MRI with Structural Prior}

Dynamic MRI is a topic of high current relevance in biomedical imaging, with different techniques such as fMRI or DCE-MRI. The basic issue is to reconstruct a sequence of images $u=(u_1,\ldots,u_T)$ from measurements $(K_1u_1, \ldots, K_T u_T)$, with $K_t$ being a subsampled Fourier transform (with different subsampling at each time step). Due to the significant measurement times in MRI the subsampling is necessary to obtain a significant time resolution, the time resolution will improve with stronger  undersampling (e.g. in spokes).
The natural data fidelity in this case is thus
$$ \fidelfct(Ku,f) = \frac{1}2 \sum_{t=1}^T \Vert K_t u_t - f_t \Vert^2. $$
With a strong undersampling it becomes rather hopeless to reconstruct meaningful images from the data at a single time step, hence a regularization in time is needed in order to exploit correlations between close time steps. A natural assumptions is smoothness, in the time direction, for this sake a discrete gradient $\Vert u_{t+1} - u_t \Vert^2$ can be penalized in a regularization functional. Moreover, in order to take into account the edges it is natural to include some total variation regularization for each $u_t$. So far, this is an approach that can be used for many dynamic reconstruction problems. A particular feature of such MR investigations is however the existence of a structural prior $u_0$, which is a high resolution MR image at different contrast (e.g. a standard anatomical T1 scan) taken before the start of the dynamic imaging. The prior is reconstructed from a very dense sampling and thus at very high resolution. The important step is to notice that most edges in the images $u_t$ will arise from anatomical structures and are thus present in $u_0$. Hence, an additional structural regularization like the infimal convolution of Bregman distances
$$ ICBV^{p_0}(\cdot,u_0) = D_{TV}^{p_0}(\cdot,u_0) \square D_{TV}^{-p_0}(\cdot,-u_0)$$
can be used to achieve superresolution in the dynamic imaging series. 

The regularization functional
$$ J(u)= \sum_{t=1}^T \omega_t |u_t|_{BV} + 
\sum_{t=1}^T (1-\omega_t) ICBV^{p_0}(u,u_0) + 
\sum_{t=1}^{T-1} \frac{\gamma_t}2 \Vert u_{t+1} - u_t \Vert^2 $$
combining the three parts has been proposed and investigated in \cite{rasch2017dynamic}. The results indicate an enormous potential to obtain reconstructions at high resolution from rather extreme undersampling in time. Those are illustrated in Figure \ref{fig:dynamicMR} for a several different time steps of a simulated data set. The first line shows the sampling at different time steps, the last column shows the prior image $u_0$ instead. The second line provides direct reconstruction without regularization (note that the Fourier transform is continuously invertible, so without undersampling the direct inversion is a standard technique). The third line displays the results with the proposed method to be compared to the ground truth used for simulating data in the fourth line. These results are obtained on simulated MR data, we refer to \cite{rasch2017dynamic} for a further study on real data.

 \setlength{\tabcolsep}{0.2mm}
 \begin{figure} \label{fig:dynamicMR}
 \begin{tabular}{B{0.04\textwidth}B{0.15\textwidth}B{0.15\textwidth}B{0.15\textwidth}B{0.15\textwidth}B{0.15\textwidth}}
 
    \rotatebox[origin=c]{90}{samp/prior} &
    \includegraphics[width=0.15\textwidth]{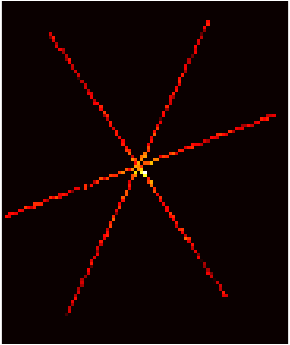} &
    \includegraphics[width=0.15\textwidth]{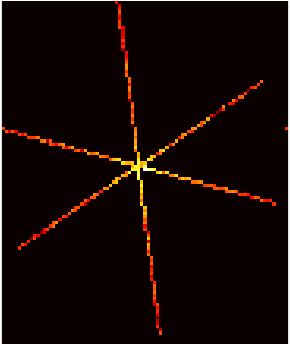} & 
    \includegraphics[width=0.15\textwidth]{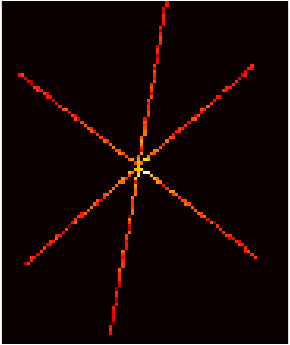} &
    \includegraphics[width=0.15\textwidth]{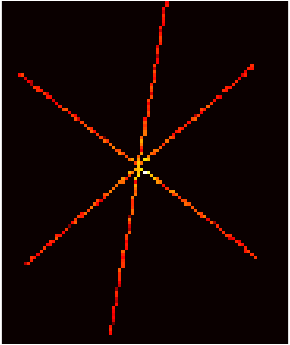} &
    \includegraphics[width=0.15\textwidth]{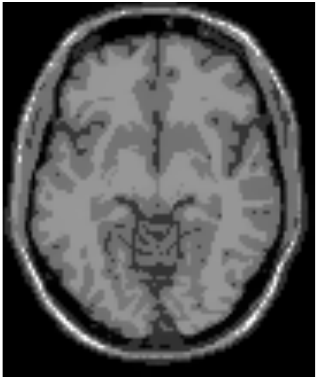}\\ \\[1mm]
 
    \rotatebox[origin=c]{90}{LS} &
    \includegraphics[width=0.15\textwidth]{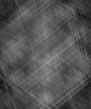} &
    \includegraphics[width=0.15\textwidth]{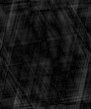} & 
    \includegraphics[width=0.15\textwidth]{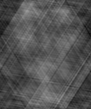} &
    \includegraphics[width=0.15\textwidth]{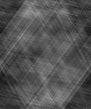} &
    \includegraphics[width=0.15\textwidth]{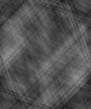} \\ 
    
    \rotatebox[origin=c]{90}{proposed} &
    \includegraphics[width=0.15\textwidth]{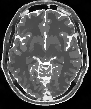} &
    \includegraphics[width=0.15\textwidth]{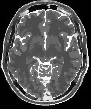} & 
    \includegraphics[width=0.15\textwidth]{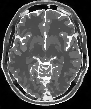} &
    \includegraphics[width=0.15\textwidth]{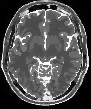} &
    \includegraphics[width=0.15\textwidth]{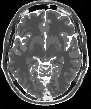} \\ 
    
    \rotatebox[origin=c]{90}{ground truth} &
    \includegraphics[width=0.15\textwidth]{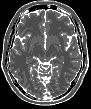} &
    \includegraphics[width=0.15\textwidth]{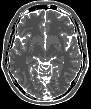} & 
    \includegraphics[width=0.15\textwidth]{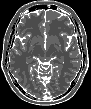} &
    \includegraphics[width=0.15\textwidth]{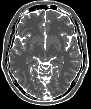} &
    \includegraphics[width=0.15\textwidth]{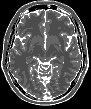} \\ \\[0.5mm] 
    & $t = 8$ & $t = 16$ & $t = 21$ & $t = 28$ & $t = 42$ \\
    
 \end{tabular}
\caption{Results of undersampled dynamic MRI reconstruction with different methods at five different time steps.}
\end{figure}

\subsection{Nonlinear Spectral Image Fusion}

\begin{figure}[!t]
\begin{center}
\includegraphics[width=\textwidth]{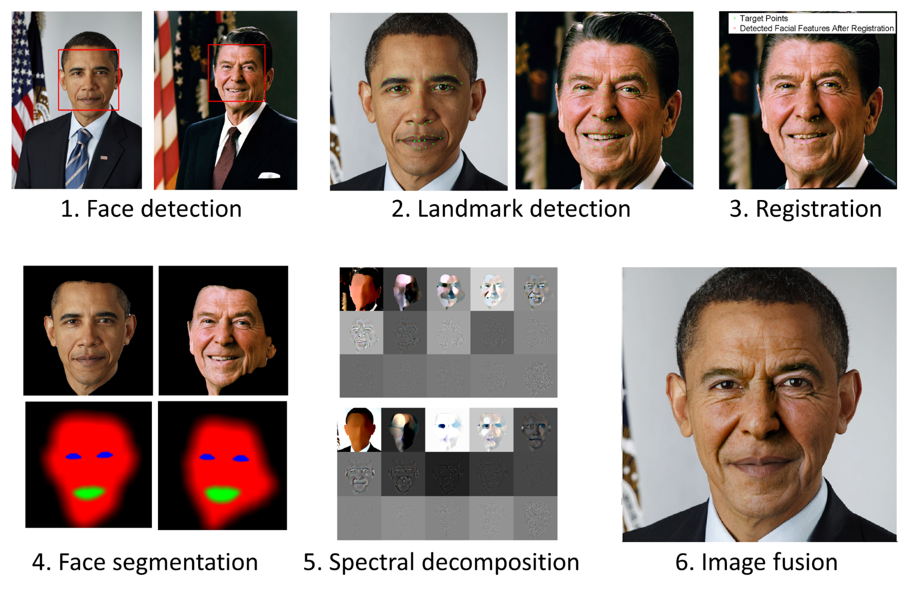}
\end{center}
\caption{Illustration of the pipeline for facial image fusion using nonlinear spectral decompositions. From \cite{Benning2017}.}
\label{fig:fusionpipeline}
\end{figure}

The nonlinear spectral transform as introduced in Section \ref{subsec:nonlinspectrafo} can be used to suppress, enhance or extract features of signals at different scales. In \cite{Benning2017} it has been used to fuse features at different scales from two images into a single image, in order to create realistically looking image fusions. The mathematical procedure is as follows: Given two images, both images are preprocessed such that they are aligned (registered) and that regions within the images are segmented such that the images are fused only in selected regions. Denoting the registered images as $f_1$ and $f_2$, they can be represented via their spectral transforms, i.e. 
\begin{align*}
u_1 = \mathcal{S}(f_1, (u)_{n=0}^{k^\ast}, \boldsymbol{c}_1, \regparam) + \underbrace{f_1 - \mathcal{S}(f_1, (u)_{n=0}^{k^\ast}, \textbf{1}, \regparam)}_{=: r_1^{\regparam, k^\ast}} \, ,
\intertext{and}
u_2 = \mathcal{S}(f_2, (u)_{n=0}^{k^\ast}, \boldsymbol{c}_2, \regparam) + \underbrace{f_2 - \mathcal{S}(f_2, (u)_{n=0}^{k^\ast}, \textbf{1}, \regparam)}_{=: r_2^{\regparam, k^\ast}} \, ,
\end{align*}
for $k^\ast \geq 1$, $\regparam \in \regdomain$ and coefficients $\boldsymbol{c}_1 \in \R^{k^\ast}$, $\boldsymbol{c}_2 \in \R^{k^\ast}$ and $\textbf{1} \in \{1 \}^{k^\ast}$ being the constant one-vector. Obviously we have $u_1 = f_1$ and $u_2 = f_2$ if $\boldsymbol{c}_1 = \textbf{1}$ and $\boldsymbol{c}_2 = \textbf{1}$. 

\begin{figure}[!t]
\begin{center}
\includegraphics[width=0.4\textwidth]{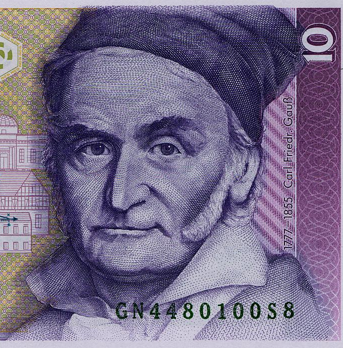}\vspace{0.05cm}
\includegraphics[width=0.4\textwidth]{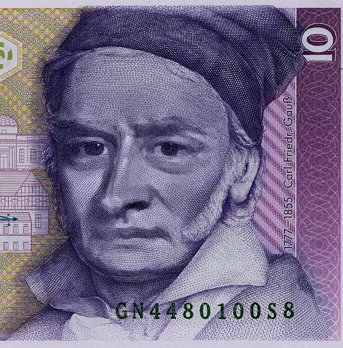}\\
\includegraphics[width=0.4\textwidth]{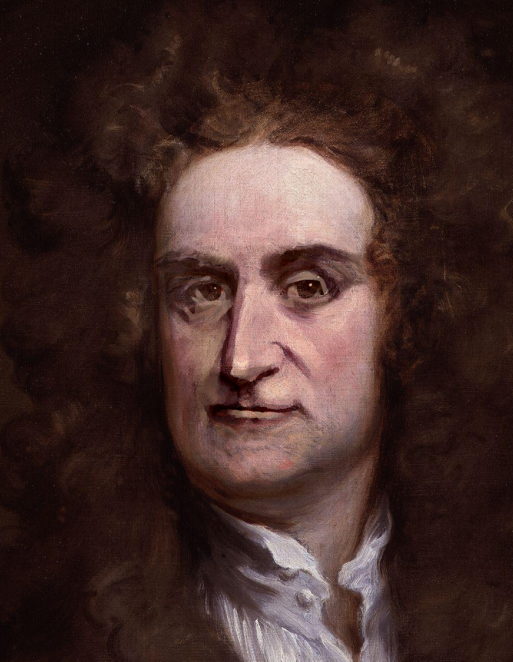}\vspace{0.05cm}
\includegraphics[width=0.4\textwidth]{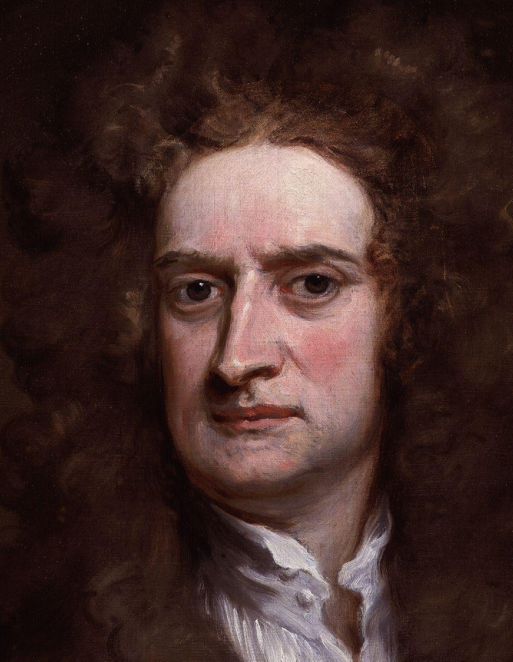}
\end{center}
\caption{Image fusion using the nonlinear spectral TV decomposition on the challenging example of fusing a banknote with a picture of Gau\ss\ and a painting of Newton. From the supplementary material of \cite{Benning2017}.}
\label{fig:gaussnewton}
\end{figure}

In order to incorporate the face segmentation into the image fusion process, we allow the coefficient vectors $\boldsymbol{c}_1$ and $\boldsymbol{c}_2$ to be spatially varying functions $\boldsymbol{c}_1 :\Omega \rightarrow \R^{k^\ast}$ and $\boldsymbol{c}_2 :\Omega \rightarrow \R^{k^\ast}$, respectively. Here $\Omega$ denotes the image domain. The image fusion process can then mathematically be described as 
\begin{align*}
u_{\text{fused}} := \mathcal{S}(f_1, k^\ast, \regparam, \boldsymbol{c}_1) + \mathcal{S}(f_2, k^\ast, \regparam, \boldsymbol{c}_2) + r_1^{\regparam, k^\ast} \, .
\end{align*}
The individual steps of the image fusion pipeline are visualized in Figure \ref{fig:fusionpipeline}. For challenging examples this automation may very well fail. Nevertheless, the spectral image fusion still works if registration and segmentation are carried out manually, as can be seen in Figure \ref{fig:gaussnewton}. For more information on the nonlinear spectral image fusion we refer to \cite{Benning2017}.


\section{Advanced Issues}
 
In the following we comment on some advanced issues in particular related to iterative variational methods extending the ones presented above, namely extension to nonconvex problems, in particular with respect to the data fidelity as arising in nonlinear inverse problems, and to modern machine learning approaches.

\subsection{Nonconvex Optimization}\label{subsec:nonconvreg}
In the context of inverse problems one usually deals with data fidelities of the form $\fidelity{Ku}{\noisy}$ that measure the deviation between $Ku$ and $\noisy$ in some sense. So far we have always assumed this particular structure, and also that $\fidelfct$ is convex. Both assumptions can be relaxed. In the following we assume that we simply have some nonconvex energy functional $\energy:\domain \rightarrow \R$ that is Fr\'{e}chet-differentiable with gradient $\nabla \energy$. As there may not exist critical points or finding them is unstable due to ill-posedness, it makes sense to generalize \eqref{eq:varreg} to
\begin{align}
\regop(\regparambold) = \argmin_{u \in \domain} \left\{ \energy(u) + \regfctarg{u} \right\} \, .\label{eq:nonconvexvarreg}
\end{align}
Here we want to emphasize that $\regop(\regparambold)$ is not necessarily a regularization operator in the classical sense as in general we do not deal with an inverse problem that depends on some data $\noisy$. In Section \ref{subsec:nonlininvprob} we particularly investigate the case in which $\energy$ is of the form $\energy(\cdot) = \fidelfct(K(\cdot), \noisy)$, where $K$ stems from a nonlinear inverse problem, and where $\fidelfct(\cdot, \noisy)$ is potentially nonconvex also in its first argument. 

It is important to emphasize that even for nonsmooth, nonconvex optimization there is a vast amount of recent publications, ranging from forward-backward, respectively proximal-type, schemes \cite{attouch2009convergence,attouch2010proximal,attouch2013convergence,Bonettini2015,Bonettini2016}, over linearized proximal schemes \cite{xu2013block,bolte2014proximal,xu2017globally,nikolova2017alternating}, to inertial methods \cite{ochs2014ipiano,pock2016inertial}, primal-dual algorithms \cite{valkonen2014primal,li2015global,moeller2015variational,benning2015preconditioned}, scaled gradient projection methods \cite{Prato.ea2016}, nonsmooth Gau\ss-Newton extensions \cite{drusvyatskiy2016nonsmooth,ochs2017non} and nonlinear Eigenproblems \cite{hein2010inverse,bresson2012convergence,benning2016learning,boct2017proximal,NIPS2016_6226,benning2017learning}. We focus mainly on recent generalizations of the proximal gradient method and the linearized Bregman iteration for nonconvex functionals $\energy$ in the following.

\subsubsection{Proximal Gradient Method}\label{subsubsec:proxgrad}
A most basic approach to find solutions of \eqref{eq:nonconvexvarreg} iteratively is via proximal gradient descent, respectively forward-backward splitting \cite{lions1979splitting}. The idea is to linearize the nonconvex part $\energy$ and to add a damping with respect to the previous iterate. If we allow this damping to be carried out via a Bregman distance w.r.t. a Legendre functional $H$, we obtain the recently proposed \emph{Bregman proximal gradient method} \cite{bolte2017first}
\begin{align}
\begin{split}
\regopit(u^{k - 1}, \regparambold) &= \argmin_{u \in \domain} \left\{ \regparam^{k - 1} \langle \nabla \energy(u^{k - 1}), u - u^{k - 1} \rangle + \bregdis[H]{}{u}{u^{k - 1}} + \regparam^{k - 1} \regfct(u, \regparam) \right\}\\
u^k &\in \regopit(u^{k - 1}, \regparambold)
\end{split} \, ,\label{eq:proxgrad}
\end{align}
for $\regparambold = (\regparam, \regparam^0, \ldots, \regparam^{k - 1})$. Here we want to emphasize that $\regopitarg{u^{k - 1}}$ is no longer a regularization operator in the classical sense as we do not necessarily deal with an inverse problem anymore. Obviously, if $\energy$ is a (potentially nonconvex) data fidelity of some nonlinear inverse problem, $\regopitarg{u^{k - 1}, p^{k - 1}}$ depends on some data $\noisy$ and we again deal with a regularization problem, which this time approaches the solution of a (potentially) nonlinear inverse problem. This more specific scenario will be addressed in Section \ref{subsec:nonlininvprob}. Without additional assumptions on $\energy$, $H$ and $\regfct$ there is little chance that we can carry out a convergence analysis for \eqref{eq:proxgrad} or even prove existence of the updates. A typical assumption is Lipschitz-continuity of $\nabla E$, i.e. we guarantee
\begin{align*}
\| \nabla E(u) - \nabla E(v) \|_{\domain^\ast} \leq L \| u - v \|_{\domain}
\end{align*}
for all $u, v \in \domain$ and a constant $L > 0$. A nice aspect about this property is that it implies convexity of the family of functionals
\begin{align}
\frac{L}{\gamma_i} H_i - E \, , \label{eq:famofconvfcts}
\end{align}
see \cite{bauschke2016descent,benning2016gradient,bolte2017first}, where $\{ H_i \}_{i = 1, \ldots}$ is a family of $\gamma_i$-strongly convex functionals, i.e.
\begin{align*}
\frac{\gamma_i}{2} \| u - v \|_{\domain}^2 \leq \bregdis[H_i]{}{u}{v} \, ,
\end{align*}
for all $u, v \in \domain$. Let us now assume that $H$ in \eqref{eq:proxgrad} is member of \eqref{eq:famofconvfcts} with strong convexity constant $\gamma$, i.e.
\begin{align}
H_\gamma(u) := \frac{L}{\gamma} H(u) - E(u) \label{eq:hgamma}
\end{align}
is convex for all $u \in \domain$. Then this convexity assumption is already enough to ensure a sufficient decrease of the energy $E + \regfct$ in each iteration of \eqref{eq:proxgrad}. 

\begin{lemma}\label{lem:proxgradsuffdec}
Suppose $\energy$ is coercive or has bounded level-sets, $\inf_u \energy(u) > - \infty$ and $\nabla \energy$ is Lipschitz continuous with constant $L$, and let $H$ be a Legendre functional in the sense of Definition \ref{def:legendre} that is also $\gamma$-strongly convex. Further assume 
\begin{align}
0 < \regparam^{k - 1} < \frac{\gamma C^k}{L + \gamma C^ k\rho} \qquad \text{for} \qquad C^k := \frac{\symbreg[H]{u^k}{u^{k - 1}}}{\bregdis[H]{}{u^k}{u^{k - 1}}} \, ,\label{eq:stepsizecond}
\end{align}
for a constant $0 <\rho$, for all $k \in \N$, and that $E + J(\cdot, \regparam)$ has at least one critical point. Then the iterates of \eqref{eq:proxgrad} satisfy
\begin{align}
\energy(u^k) + \regfct(u^k, \regparam) + \rho \symbreg[H]{u^k}{u^{k - 1}} \leq \energy(u^{k - 1}) + \regfct(u^{k - 1}, \regparam) \, ,\label{eq:proxgradsuffdecrease1}
\end{align}
for $u^k \in \regop(u^{k - 1}, \regparambold)$ and all $k \in \N$.
\begin{proof}
From the convexity of \eqref{eq:hgamma} we immediately observe
\begin{align*}
\begin{split}
0 {} \leq {} \bregdis[H_\gamma]{}{u^k}{u^{k - 1}} {} = {} &\frac{L}{\gamma} \bregdis[H]{}{u^k}{u^{k - 1}} \\
&- \left( \energy(u^k) - \energy(u^{k - 1}) - \langle \nabla \energy(u^{k - 1}), u^k - u^{k - 1} \rangle \right)
\end{split} \, .
\end{align*}
As a direct consequence, we have derived the estimate
\begin{align}
E(u^k) + \langle \nabla E(u^{k - 1}), u^{k - 1} - u^k \rangle - \frac{L}{\gamma} \bregdis[H]{}{u^k}{u^{k - 1}} \leq E(u^{k - 1})\, .\label{eq:suffdecreaseproxgrad1}
\end{align}
From the optimality condition of \eqref{eq:proxgrad} we obtain
\begin{align}
\nabla \energy(u^{k - 1}) = \frac{1}{\regparam^{k - 1}} \left( \nabla H(u^{k - 1}) - \nabla H(u^k) \right) - p^k \, ,\label{eq:proxgradopt}
\end{align}
for $p^k \in \partial \regfct(u^k, \regparam)$. Inserting \eqref{eq:proxgradopt} into \eqref{eq:suffdecreaseproxgrad1} yields
\begin{align}
E(u^k) + \frac{1}{\regparam^{k - 1}} \symbreg[H]{u^k}{u^{k - 1}} - \frac{L}{\gamma} \bregdis[H]{}{u^k}{u^{k - 1}} \leq E(u^{k - 1}) + \langle p^k, u^{k - 1} - u^k \rangle \, .\label{eq:suffdecreaseproxgrad2}
\end{align}
Due to the convexity of $\regfct(\cdot, \regparam)$ we can estimate $\langle p^k, u^{k - 1} - u^k \rangle \leq \regfct(u^{k - 1}, \regparam) - \regfct(u^k, \regparam)$. Applying this estimate to \eqref{eq:suffdecreaseproxgrad2} results in
\begin{align*}
\begin{split}
&E(u^k) + \regfct(u^k, \regparam) + \frac{1}{\regparam^{k - 1}} \symbreg[H]{u^k}{u^{k - 1}} - \frac{L}{\gamma} \bregdis[H]{}{u^k}{u^{k - 1}} \\
{} \leq {} &E(u^{k - 1}) + \regfct(u^{k - 1}, \regparam)
\end{split}
\, .
\end{align*}
Together with the stepsize bound \eqref{eq:stepsizecond} this concludes the proof.
\end{proof}
\end{lemma}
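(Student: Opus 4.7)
The plan is to exploit the convexity of the auxiliary functional $H_\gamma := \frac{L}{\gamma} H - E$, which holds precisely because $H$ is $\gamma$-strongly convex and $\nabla E$ is $L$-Lipschitz (this is the ``descent lemma'' in Bregman form, as cited in the excerpt just before the lemma). First I would write out $D_{H_\gamma}(u^k,u^{k-1}) \geq 0$, expand it, and rearrange to obtain the generalized descent inequality
\begin{equation*}
E(u^k) \leq E(u^{k-1}) + \langle \nabla E(u^{k-1}),\, u^k - u^{k-1}\rangle + \frac{L}{\gamma} D_H(u^k,u^{k-1}).
\end{equation*}

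Next I would write the optimality condition of the $\regopit$-subproblem in \eqref{eq:proxgrad}, namely
\begin{equation*}
\nabla E(u^{k-1}) = \tfrac{1}{\alpha^{k-1}}\bigl(\nabla H(u^{k-1}) - \nabla H(u^k)\bigr) - p^k, \qquad p^k \in \partial J(u^k,\alpha),
\end{equation*}
and substitute it into the descent inequality. The crucial bookkeeping step is to recognize the identity $\langle \nabla H(u^{k-1}) - \nabla H(u^k),\, u^k - u^{k-1}\rangle = -D_H^{\mathrm{symm}}(u^k,u^{k-1})$, so that the $\nabla E(u^{k-1})$ inner product contributes a symmetric Bregman term with a factor $\tfrac{1}{\alpha^{k-1}}$. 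For the subgradient part, convexity of $J(\cdot,\alpha)$ immediately gives $\langle p^k, u^{k-1}-u^k\rangle \leq J(u^{k-1},\alpha) - J(u^k,\alpha)$, so that $J$ moves cleanly onto the left-hand side.

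At this stage the inequality reads
\begin{equation*}
E(u^k) + J(u^k,\alpha) + \tfrac{1}{\alpha^{k-1}} D_H^{\mathrm{symm}}(u^k,u^{k-1}) - \tfrac{L}{\gamma} D_H(u^k,u^{k-1}) \leq E(u^{k-1}) + J(u^{k-1},\alpha),
\end{equation*}
and the remaining task is purely algebraic: show that the step-size bound \eqref{eq:stepsizecond} is equivalent to
\begin{equation*}
\tfrac{1}{\alpha^{k-1}} D_H^{\mathrm{symm}}(u^k,u^{k-1}) - \tfrac{L}{\gamma} D_H(u^k,u^{k-1}) \geq \rho\, D_H^{\mathrm{symm}}(u^k,u^{k-1}).
\end{equation*}
Dividing by $D_H(u^k,u^{k-1})$ and invoking the definition of $C^k$, this reduces to $(\tfrac{1}{\alpha^{k-1}} - \rho) C^k \geq \tfrac{L}{\gamma}$, which rearranges to exactly the prescribed bound on $\alpha^{k-1}$.

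The main technical caveat, rather than a deep obstacle, is that $C^k$ is only well defined when $D_H(u^k,u^{k-1}) > 0$; I would handle the trivial case $u^k = u^{k-1}$ separately, where $D_H^{\mathrm{symm}} = 0$ and the claim \eqref{eq:proxgradsuffdecrease1} collapses to the assertion that the objective does not increase, which follows directly from comparing the value of the subproblem at $u^k$ with that at $u^{k-1}$. The coercivity / level-set boundedness of $E$ and $\inf E > -\infty$ are not used in the one-step decrease itself but ensure the iterates stay in a regime where Lipschitzness of $\nabla E$ applies, and guarantee that the chain of inequalities can be iterated meaningfully; similarly the existence of a critical point is only used to make the setting nonvacuous rather than to prove the single-step estimate.
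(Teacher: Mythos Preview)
Your proposal is correct and follows essentially the same route as the paper: convexity of $H_\gamma$ for the descent inequality, the optimality condition of \eqref{eq:proxgrad} to replace $\nabla E(u^{k-1})$ by a difference of $\nabla H$ minus a subgradient of $J$, the identity turning that difference into a symmetric Bregman distance, convexity of $J$ to absorb the subgradient term, and finally the step-size bound. Your write-up is in fact slightly more explicit than the paper's---you spell out the algebra showing \eqref{eq:stepsizecond} is exactly what is needed, and you flag the degenerate case $u^k=u^{k-1}$ where $C^k$ is undefined and the unused hypotheses, none of which the paper comments on.
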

\begin{remark}
Note that we haven't made use of the Lipschitz continuity of $\nabla E$, but only of the convexity of \eqref{eq:hgamma} in order to obtain a sufficient decrease.
\end{remark}
\begin{remark}
Due to the $\gamma$-strong convexity of $H$ the estimate \eqref{eq:proxgradsuffdecrease1} automatically implies
\begin{align}
\energy(u^k) + \regfct(u^k, \regparam) + \rho \gamma \| u^k - u^{k - 1} \|_{\domain}^2 \leq \energy(u^{k - 1}) + \regfct(u^{k - 1}, \regparam) \, .\label{eq:proxgradsuffdecrease2}
\end{align}
\end{remark}
If we additionally assume that both $\nabla E$ and $\nabla H$ are Lipschitz-continuous, we further obtain a bound for the gradient of the energy $\energy + \regfct$ at iterate $u^k$.
\begin{lemma}\label{lem:proxgraditgap}
Suppose the same assumptions hold as in Lemma \ref{lem:proxgradsuffdec}. We further assume that $\nabla E$ is Lipschitz-continuous with constant $L$ and $\nabla H$ is Lipschitz-continuous with constant $\delta$. Then we observe
\begin{align*}
\| \nabla E(u^k) + p^k \|_{\domain^\ast} \leq \left(L + \frac{\delta}{\regparam^{k - 1}}\right) \| u^k - u^{k - 1} \|_{\domain}
\end{align*}
for all $p^k \in \partial \regfct(u^k, \regparam)$.
\begin{proof}
This follows trivially from \eqref{eq:proxgradopt} and the Lipschitz-continuity of both $\nabla E$ and $\nabla H$.
\end{proof}
\end{lemma}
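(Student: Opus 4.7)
The plan is to derive the bound directly from the optimality condition \eqref{eq:proxgradopt} already established in the proof of Lemma \ref{lem:proxgradsuffdec}, combined with the two Lipschitz hypotheses. The key observation is that \eqref{eq:proxgradopt} expresses $p^k$ at the \emph{current} iterate $u^k$ in terms of $\nabla E$ evaluated at the \emph{previous} iterate $u^{k-1}$ and a difference of $\nabla H$-values. All the information needed for the subgradient bound is therefore already encoded in that identity; no new variational argument is required.

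First, I would solve \eqref{eq:proxgradopt} for $p^k$, giving
\begin{equation*}
p^k = \frac{1}{\regparam^{k-1}}\bigl(\nabla H(u^{k-1}) - \nabla H(u^k)\bigr) - \nabla E(u^{k-1}).
\end{equation*}
Adding $\nabla E(u^k)$ to both sides produces the crucial algebraic identity
\begin{equation*}
\nabla E(u^k) + p^k = \bigl(\nabla E(u^k) - \nabla E(u^{k-1})\bigr) + \frac{1}{\regparam^{k-1}}\bigl(\nabla H(u^{k-1}) - \nabla H(u^k)\bigr),
\end{equation*}
which exhibits the quantity of interest as a sum of two terms, each expressible as a difference of a Lipschitz gradient evaluated at $u^k$ and $u^{k-1}$.

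Next, I would apply the triangle inequality in $\domain^\ast$ and invoke the Lipschitz assumptions on $\nabla E$ (with constant $L$) and on $\nabla H$ (with constant $\delta$) to estimate each of the two terms by a multiple of $\|u^k - u^{k-1}\|_{\domain}$. Collecting these two estimates yields
\begin{equation*}
\|\nabla E(u^k) + p^k\|_{\domain^\ast} \leq L\,\|u^k - u^{k-1}\|_{\domain} + \frac{\delta}{\regparam^{k-1}}\|u^k - u^{k-1}\|_{\domain},
\end{equation*}
which is the claimed bound after factoring out $\|u^k - u^{k-1}\|_{\domain}$.

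There is essentially no obstacle here: the proof is a two-line consequence of the optimality condition and the triangle inequality, and the hypotheses of Lipschitz continuity of $\nabla E$ and $\nabla H$ are precisely what one needs to convert the right-hand side of the identity into a norm bound in terms of $\|u^k - u^{k-1}\|_{\domain}$. The only point worth emphasizing is that the bound holds for \emph{every} choice of subgradient $p^k \in \partial J(u^k,\regparam)$ that arises from the optimality system, which is automatic because \eqref{eq:proxgradopt} pins down $p^k$ uniquely once $u^k$, $u^{k-1}$ and the step size are fixed.
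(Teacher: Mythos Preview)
Your proof is correct and follows exactly the approach indicated in the paper: rearranging the optimality condition \eqref{eq:proxgradopt}, adding $\nabla E(u^k)$ to both sides, and applying the triangle inequality together with the two Lipschitz bounds. Your closing remark about the subgradient $p^k$ being pinned down by the optimality system is a useful clarification of the quantifier in the statement.
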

In a finite dimensional setting $\domain = \R^n$ it is now sufficient to assume that $\energy + \regfct$ is a Kurdyka-\L ojasiewicz (KL) function \cite{lojasiewicz1963propriete,kurdyka1998gradients,bolte2007lojasiewicz} in order to show that the iterates \eqref{eq:proxgrad} converge globally to a critical point of $\energy + \regfct$.
\begin{theorem}\label{thm:proxgradglobconv}
Let the same assumptions hold true as in Lemma \ref{lem:proxgraditgap}. Further assume $\domain = \R^n$ and that $\energy + \regfct$ is a KL function that has at least one critical point. Then the iterates \eqref{eq:proxgrad} converge globally to a critical point of the energy $\energy + \regfct$.
\begin{proof}
See proof of \cite[Theorem 4.1 (ii)]{bolte2017first}.
\end{proof}
\end{theorem}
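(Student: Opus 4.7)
The plan is to follow the by-now standard template developed by Attouch--Bolte--Svaiter and Bolte--Sabach--Teboulle for proving that descent schemes on Kurdyka--\L ojasiewicz functions converge globally. Writing $\Phi(u) := \energy(u) + \regfctarg[\regparam]{u}$, the strategy is to verify three ``abstract'' properties of the iterates $\{u^k\}$ generated by \eqref{eq:proxgrad}:
\begin{itemize}
\item[(H1)] a \emph{sufficient decrease} of the form $\Phi(u^{k-1}) - \Phi(u^k) \ge a\,\|u^k-u^{k-1}\|^2$;
\item[(H2)] a \emph{relative subgradient bound} $\mathrm{dist}(0,\partial\Phi(u^k)) \le b\,\|u^k-u^{k-1}\|$;
\item[(H3)] a \emph{continuity property} along a convergent subsequence.
\end{itemize}
Steps (H1) and (H2) are essentially already at hand: Lemma \ref{lem:proxgradsuffdec}, combined with the $\gamma$-strong convexity of $H$ (as used in \eqref{eq:proxgradsuffdecrease2}), yields (H1) with $a = \rho\gamma$; Lemma \ref{lem:proxgraditgap} yields (H2), provided the stepsizes $\alpha^{k-1}$ stay bounded away from $0$ and $\infty$, which is readily arranged from \eqref{eq:stepsizecond} once one verifies that $C^k$ admits uniform positive bounds (this uses the strong convexity and the Lipschitz continuity of $\nabla H$).

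From these two ingredients I would first extract the basic consequences: $\{\Phi(u^k)\}$ is non-increasing, and since $\inf E > -\infty$ and $J \ge 0$ it converges to some limit $\Phi^*$; telescoping (H1) gives $\sum_k \|u^k-u^{k-1}\|^2 < \infty$, in particular $\|u^k-u^{k-1}\| \to 0$. Boundedness of level sets of $E$ (or coercivity) together with the monotonicity of $\Phi$ forces $\{u^k\} \subset \R^n$ to be bounded, hence to admit a cluster point $\bar u$. Passing to a subsequence $u^{k_j}\to\bar u$, using the continuity of $E$ and the lower semicontinuity of $J$, and combining with the optimality condition \eqref{eq:proxgradopt} to upgrade $\liminf J(u^{k_j},\regparam) \ge J(\bar u,\regparam)$ to an equality, yields (H3). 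Lemma \ref{lem:proxgraditgap} then forces $0\in\partial\Phi(\bar u)$, so every cluster point is a critical point.

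The heart of the argument — and the step I expect to require the most care — is turning square-summability of the increments into summability, which is where the KL hypothesis enters. Because $\Phi$ is KL at $\bar u$, there exist a neighbourhood $U\ni\bar u$, a level $\eta>0$, and a desingularising function $\varphi$ (concave, $C^1$, $\varphi'>0$) with
\begin{equation*}
\varphi'\bigl(\Phi(u)-\Phi(\bar u)\bigr)\,\mathrm{dist}\bigl(0,\partial\Phi(u)\bigr)\ \ge\ 1
\end{equation*}
whenever $u\in U$ and $\Phi(\bar u)<\Phi(u)<\Phi(\bar u)+\eta$. I would choose an index $k_0$ so large that $u^{k}\in U$ for $k\ge k_0$ (using $\|u^k-u^{k-1}\|\to 0$ and the cluster point property) and $\Phi(u^k)-\Phi(\bar u)<\eta$. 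Combining (H1), (H2), the KL inequality, and concavity of $\varphi$ in the usual way then yields the telescoping estimate
\begin{equation*}
\varphi\bigl(\Phi(u^k)-\Phi(\bar u)\bigr)-\varphi\bigl(\Phi(u^{k+1})-\Phi(\bar u)\bigr)\ \ge\ C\,\|u^{k+1}-u^k\|
\end{equation*}
for some constant $C>0$ and all $k\ge k_0$. Summing gives $\sum_k\|u^{k+1}-u^k\|<\infty$, so $\{u^k\}$ is Cauchy in $\R^n$ and therefore converges to $\bar u$. Since $\bar u$ has already been identified as a critical point, this closes the argument. The subtle points to watch are the uniform stepsize control needed to make (H2) quantitative, and the standard but delicate combination of the three ingredients at the final step, which must be handled carefully when $\Phi(u^k)=\Phi(\bar u)$ happens in finitely many steps (a trivial case in which finite termination gives the conclusion directly).
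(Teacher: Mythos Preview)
Your proposal is correct and follows precisely the Attouch--Bolte--Svaiter / Bolte--Sabach--Teboulle descent-scheme template that underlies the paper's cited reference \cite[Theorem~4.1(ii)]{bolte2017first}, to which the paper simply defers without giving its own argument. The ingredients (H1)--(H3) you identify are exactly Lemmas~\ref{lem:proxgradsuffdec} and~\ref{lem:proxgraditgap} together with the standard limiting-subgradient argument, and the KL-based finite-length step is the same one carried out in that reference; the only point worth tightening is the passage ``$u^k\in U$ for all $k\ge k_0$'', which in the cited works is handled via the uniformized KL property on the (compact, connected) cluster set rather than a single neighbourhood argument.
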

\noindent We refer the reader to \cite{bolte2010characterizations} for a detailed investigation of the class of KL functions, and to \cite{bolte2017first} for more information on the Bregman proximal gradient.

\subsubsection{Linearized Bregman Iteration for Nonconvex Functionals}
The linearized Bregman iteration introduced in Section \ref{subsec:linbregiter} can easily be adapted to tackle general, non-convex optimization problems. Suppose a Fr\'{e}chet-differentiable functional $\energy:\domain \rightarrow \R$ with Fr\'{e}chet-gradient $\nabla \energy$, then we can simply modify Algorithm \ref{alg:linbregiter} to
\begin{align}
\begin{split}
\regopitarg{u^{k - 1}, p^{k - 1}} &= \argmin_{u \in \domain} \left\{ \langle \nabla \energy(u^{k - 1}), u - u^{k - 1} \rangle + \regparam^{k - 1} \bregdis[\regfct(\cdot, \regparam)]{p^{k - 1}}{u}{u^{k - 1}} \right\} \\
u^k &\in \regopitarg{u^{k - 1}, p^{k - 1}}\\
p^k &= p^{k - 1} - \frac{1}{\regparam^{k - 1}} \nabla \energy(u^{k - 1})
\end{split} \, ,\label{eq:nonconvlinbregman}
\end{align}
for $\regparambold = (\regparam, \regparam^0, \ldots, \regparam^{k - 1})$ and $p^{k - 1} \in \partial \regfct(u^{k - 1}, \regparam)$. This method for arbitrary nonconvex energies $\energy$ has first been introduced in \cite{benning2016gradient} and mathematically analyzed in \cite{2017arXiv171204045B}. As in the previous section, $\regopitarg{u^{k - 1}, p^{k - 1}}$ is no longer a regularization operator in the classical sense, unless $\energy$ is a (potentially nonconvex) data fidelity of some nonlinear inverse problem.

It becomes evident that \eqref{eq:nonconvlinbregman} and \eqref{eq:proxgrad} coincide if $\regfct$ in \eqref{eq:nonconvlinbregman} is a Legendre functional and if $\regfct$ in \eqref{eq:proxgrad} is zero. Hence, the convergence analysis closely follows the convergence analysis of the proximal gradient method. We assume that $\regfct( \cdot, \regparam)$ is $\gamma$-strongly convex and that 
\begin{align}
\regfct_\gamma(u, \regparam) := \frac{L}{\gamma} \regfct(u, \regparam) - E(u)\label{eq:regfctgamma}
\end{align}
is convex. Then we can show the following sufficient decrease of the energy \cite{benning2016gradient}.
\begin{lemma}\label{lem:nonconvlinbregmansuffdecrease}
Suppose $\energy$ is coercive or has bounded level-sets, $\inf_u \energy(u) > - \infty$, $\regparam^{k - 1}$ satisfies \eqref{eq:stepsizecond} with
\begin{align*}
C^k := \frac{\symbreg[\regfct(\cdot, \regparam)]{u^k}{u^{k - 1}}}{\bregdis[\regfct(\cdot, \regparam)]{p^{k - 1}}{u^k}{u^{k - 1}}} \, ,
\end{align*}
and that $\energy$ has at least one critical point. Then the iterates of \eqref{eq:nonconvlinbregman} satisfy
\begin{align}
\energy(u^k) + \rho \bregdis[\regfct(\cdot, \regparam)]{p^{k - 1}}{u^k}{u^{k - 1}} \leq \energy(u^{k - 1}) \, .\label{eq:suffdecrease}
\end{align}
\begin{proof}
From the convexity of \eqref{eq:regfctgamma} we immediately observe
\begin{align*}
\begin{split}
0 {} \leq {} \bregdis[\regfct_\gamma( \cdot, \regparam)]{q^{k - 1}}{u^k}{u^{k - 1}} {} = {} &\frac{L}{\gamma} \bregdis[\regfct(\cdot, \regparam)]{p^{k - 1}}{u^k}{u^{k - 1}} \\
&- \left( \energy(u^k) - \energy(u^{k - 1}) - \langle \nabla \energy(u^{k - 1}), u^k - u^{k - 1} \rangle \right)
\end{split} \, ,
\end{align*}
for $q^{k - 1} \in \partial \regfct_\gamma(u, \regparam)$. As a direct consequence, we have derived the estimate
\begin{align}
\energy(u^k) + \langle \nabla \energy(u^{k - 1}), u^{k - 1} - u^k \rangle - \frac{L}{\gamma} \bregdis[\regfct(\cdot, \regparam)]{p^{k - 1}}{u^k}{u^{k - 1}} \leq \energy(u^{k - 1}) \, .\label{eq:suffdecreasestep1}
\end{align}
Inserting the dual update formula of \eqref{eq:nonconvlinbregman} into \eqref{eq:suffdecreasestep1} then yields
\begin{align*}
\energy(u^k) + \frac{1}{\regparam^{k - 1}} \symbreg[\regfct(\cdot, \regparam)]{u^k}{u^{k - 1}} - \frac{L}{\gamma} \bregdis[\regfct(\cdot, \regparam)]{p^{k - 1}}{u^k}{u^{k - 1}} \leq \energy(u^{k - 1}) \, .
\end{align*}
Together with the stepsize bound \eqref{eq:stepsizecond} we conclude \eqref{eq:suffdecrease}.
\end{proof}
\end{lemma}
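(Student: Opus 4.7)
The proof should parallel the template established for Lemma \ref{lem:proxgradsuffdec}, with the role played there by the Legendre function $H$ taken here by the regularizer $\regfct(\cdot,\regparam)$ itself. The structural fact that powers the entire argument is the assumed convexity of the surrogate $\regfct_\gamma(\cdot,\regparam) = (L/\gamma)\,\regfct(\cdot,\regparam) - \energy$ from \eqref{eq:regfctgamma}; this is the functional-analytic shadow of the classical descent lemma for an $L$-smooth $\energy$. I would pick $q^{k-1} := (L/\gamma)\,p^{k-1} - \nabla\energy(u^{k-1}) \in \partial \regfct_\gamma(u^{k-1},\regparam)$, which is a legitimate subgradient since $\energy$ is Fr\'{e}chet-differentiable and the sum rule is trivially available, and start from the non-negativity $\bregdis[\regfct_\gamma(\cdot,\regparam)]{q^{k-1}}{u^k}{u^{k-1}} \geq 0$.

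Expanding this Bregman distance by linearity of $D$ in its generating functional immediately produces the Bregman descent inequality
\[
\energy(u^k) - \energy(u^{k-1}) - \langle \nabla\energy(u^{k-1}), u^k - u^{k-1}\rangle \leq \frac{L}{\gamma}\,\bregdis[\regfct(\cdot,\regparam)]{p^{k-1}}{u^k}{u^{k-1}},
\]
which is the direct replacement for the usual $(L/2)\|u^k-u^{k-1}\|^2$ majorant. I would then invoke the dual update of \eqref{eq:nonconvlinbregman} in the form $\nabla\energy(u^{k-1}) = \regparam^{k-1}(p^{k-1}-p^k)$, together with the identity $\langle p^k-p^{k-1}, u^k-u^{k-1}\rangle = \symbreg[\regfct(\cdot,\regparam)]{u^k}{u^{k-1}}$, to rewrite the linearization term as a multiple of the symmetric Bregman distance. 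What emerges is an estimate of the shape
\[
\energy(u^k) + A(\regparam^{k-1})\,\symbreg[\regfct(\cdot,\regparam)]{u^k}{u^{k-1}} - \frac{L}{\gamma}\,\bregdis[\regfct(\cdot,\regparam)]{p^{k-1}}{u^k}{u^{k-1}} \leq \energy(u^{k-1}),
\]
for a positive coefficient $A(\regparam^{k-1})$ set by the stepsize. The closing move is pure bookkeeping: the condition \eqref{eq:stepsizecond}, read through the definition of $C^k$, is exactly the quantitative statement that $A(\regparam^{k-1})\,\symbreg[\regfct(\cdot,\regparam)]{u^k}{u^{k-1}} - (L/\gamma)\,\bregdis[\regfct(\cdot,\regparam)]{p^{k-1}}{u^k}{u^{k-1}} \geq \rho\,\bregdis[\regfct(\cdot,\regparam)]{p^{k-1}}{u^k}{u^{k-1}}$, from which \eqref{eq:suffdecrease} drops out.

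The main obstacle is conceptual rather than computational. The stepsize bound is implicit because $C^k$ depends on both $u^{k-1}$ and $u^k$, so one has to argue that an admissible $\regparam^{k-1}$ exists without trivializing the iteration. This is the same subtlety that appears in Lemma \ref{lem:proxgradsuffdec}, and the usual remedy is to combine the $\gamma$-strong convexity of $\regfct(\cdot,\regparam)$ with Lipschitz continuity of the relevant subgradient to bound $C^k$ away from degenerate values, so that an admissible $\regparam^{k-1}$ can be chosen from an interval uniformly separated from the critical endpoints. Once feasibility is secured, the passage from the convexity of $\regfct_\gamma(\cdot,\regparam)$ to \eqref{eq:suffdecrease} is essentially a one-line computation, which is why the statement can be proven without invoking Lipschitz continuity of $\nabla\energy$ at all.
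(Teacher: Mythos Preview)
Your proposal is correct and follows essentially the same approach as the paper's proof: start from the non-negativity of the Bregman distance of the convex surrogate $\regfct_\gamma(\cdot,\regparam)$ at the subgradient $q^{k-1} = (L/\gamma)p^{k-1} - \nabla\energy(u^{k-1})$, rearrange to obtain the Bregman descent inequality, substitute the dual update to convert $\langle \nabla\energy(u^{k-1}), u^{k-1}-u^k\rangle$ into a multiple of the symmetric Bregman distance, and then invoke the stepsize bound \eqref{eq:stepsizecond} via the ratio $C^k$. Your additional commentary on the implicit nature of the stepsize condition is a valid observation, though the paper's proof does not address it and simply takes the existence of an admissible $\regparam^{k-1}$ as part of the hypotheses.
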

\noindent If we further assume that $\regfct$ is $\delta$-strongly convex w.r.t. its first argument, i.e.
\begin{align*}
\frac{\delta}{2} \| p - q \|_{\domain^\ast}^2 \leq \bregdis[\regfct^\ast(\cdot, \regparam)]{v}{p}{q} \, ,
\end{align*}
for all $p, q \in \domain^\ast$ and $v \in \partial \regfct^\ast(q, \regparam)$, then we can easily derive the following bound for the gradient at each iteration \cite{benning2016gradient}.
\begin{lemma}\label{lem:nonconvlinbregmangradbound}
Let the same assumptions hold true as in Lemma \ref{lem:nonconvlinbregmansuffdecrease}, and further assume that $\regfct$ is $\delta$-strongly convex for all arguments and corresponding subgradients. Then the iterates \eqref{eq:nonconvlinbregman} satisfy
\begin{align*}
\| \nabla \energy(u^{k - 1}) \|_{\domain^\ast} \leq \frac{\regparam^{k - 1}}{\delta} \| u^k - u^{k - 1} \|_{\domain} \, ,
\end{align*}
for all $k \in \N$.
\begin{proof}
From the standard duality estimate $\langle p, u \rangle \leq \| u \|_{\domain} \| p \|_{\domain^\ast}$ we observe
\begin{align*}
\symbreg[\regfct(\cdot, \regparam)]{p^k}{p^{k - 1}} = \langle p^k - p^{k - 1}, u^k - u^{k - 1} \rangle \leq \| p^k - p^{k - 1} \|_{\domain^\ast} \| u^k - u^{k - 1} \|_{\domain} \, .
\end{align*}
Together with the strong convexity of $\regfct^\ast( \cdot, \regparam)$ we therefore estimate
\begin{align*}
\delta \| p^k - p^{k - 1} \|_{\domain^\ast} \leq \frac{\symbreg[\regfct(\cdot, \regparam)]{p^k}{p^{k - 1}}}{\| p^k - p^{k - 1} \|_{\domain^\ast}} \leq \| u^k - u^{k - 1} \|_{\domain} \, .
\end{align*}
Inserting the dual update formula from \eqref{eq:nonconvlinbregman} thus yields
\begin{align*}
\frac{\delta}{\regparam^{k - 1}} \| \nabla \energy(u^{k - 1}) \|_{\domain^\ast} \leq \| u^k - u^{k - 1} \|_{\domain} \, .
\end{align*}
This concludes the proof.
\end{proof}
\end{lemma}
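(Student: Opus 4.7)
The plan is to reduce the desired bound on $\|\nabla \energy(u^{k-1})\|_{\domain^\ast}$ to a bound on the dual increment $\|p^k - p^{k-1}\|_{\domain^\ast}$, and then control that dual increment by the primal increment $\|u^k - u^{k-1}\|_{\domain}$ via the assumed strong convexity of $\regfct^\ast(\cdot,\regparam)$.

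First I would invert the subgradient update in \eqref{eq:nonconvlinbregman}: rearranging $p^k = p^{k-1} - \tfrac{1}{\regparam^{k-1}} \nabla \energy(u^{k-1})$ gives $\nabla \energy(u^{k-1}) = \regparam^{k-1}(p^{k-1} - p^k)$, so that
\[
\|\nabla \energy(u^{k-1})\|_{\domain^\ast} = \regparam^{k-1}\,\|p^k - p^{k-1}\|_{\domain^\ast},
\]
and it suffices to establish $\|p^k - p^{k-1}\|_{\domain^\ast} \le \tfrac{1}{\delta}\,\|u^k - u^{k-1}\|_{\domain}$.

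Second, since $p^k \in \partial \regfct(u^k,\regparam)$ and $p^{k-1} \in \partial \regfct(u^{k-1},\regparam)$, Fenchel--Young duality yields the reciprocal inclusions $u^k \in \partial \regfct^\ast(p^k,\regparam)$ and $u^{k-1} \in \partial \regfct^\ast(p^{k-1},\regparam)$. Expanding the symmetric Bregman distance of $\regfct^\ast$ at $(p^k,p^{k-1})$ with these subgradients then produces the familiar primal--dual identity
\[
\symbreg[\regfct^\ast(\cdot,\regparam)]{p^k}{p^{k-1}} = \langle p^k - p^{k-1},\, u^k - u^{k-1} \rangle.
\]
Bounding the right-hand side above by the duality estimate $\langle q,w\rangle \le \|q\|_{\domain^\ast}\|w\|_{\domain}$, and bounding the left-hand side below by the hypothesised $\delta$-strong convexity of $\regfct^\ast$ (which gives the symmetric version $\delta\|p^k - p^{k-1}\|_{\domain^\ast}^{2} \le \symbreg[\regfct^\ast(\cdot,\regparam)]{p^k}{p^{k-1}}$), I obtain
\[
\delta\,\|p^k - p^{k-1}\|_{\domain^\ast}^{2} \le \|p^k - p^{k-1}\|_{\domain^\ast}\,\|u^k - u^{k-1}\|_{\domain}.
\]
Dividing by $\|p^k - p^{k-1}\|_{\domain^\ast}$ (the degenerate case $p^k = p^{k-1}$ makes the claim trivial since then $\nabla \energy(u^{k-1}) = 0$) yields the reduced bound; combining with the first step proves the lemma.

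I do not anticipate any genuine technical obstacle, as the computation is only a few lines and requires no regularity beyond what is already present in Lemma \ref{lem:nonconvlinbregmansuffdecrease}. The only interpretive subtlety is that the phrase ``$\regfct$ is $\delta$-strongly convex for all arguments and corresponding subgradients'' must be read in its conjugate form as strong convexity of $\regfct^\ast(\cdot,\regparam)$, since that is the form in which the hypothesis enters the estimate above; once that dual reading is fixed, the argument is essentially automatic.
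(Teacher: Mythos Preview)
Your proposal is correct and follows essentially the same argument as the paper: both bound the symmetric Bregman pairing $\langle p^k - p^{k-1}, u^k - u^{k-1}\rangle$ above by the duality estimate and below via $\delta$-strong convexity of $\regfct^\ast(\cdot,\regparam)$, then divide and insert the dual update formula. The only differences are cosmetic ordering (you substitute the dual update first, the paper last) and that you are slightly more explicit about the Fenchel--Young step, the degenerate case $p^k=p^{k-1}$, and the fact that the hypothesis must be read as strong convexity of the conjugate---a point the paper's statement leaves somewhat implicit.
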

Note that we require no Lipschitz-continuity assumptions for $\nabla \energy$ in order for Lemma \ref{lem:nonconvlinbregmansuffdecrease} and Lemma \ref{lem:nonconvlinbregmangradbound} to go through, but just that \eqref{eq:regfctgamma} is convex. As in the case of the proximal gradient method, we can prove global convergence of the iterates \eqref{eq:nonconvlinbregman} for finite-dimensional $\domain = \R^n$.

\begin{theorem}
Let the same assumptions hold true as in Lemma \ref{lem:nonconvlinbregmansuffdecrease}. Further assume $\domain = \R^n$ and that $\energy$ is a KL function. Then the iterates \eqref{eq:nonconvlinbregman} converge globally to a critical point of the energy $\energy$.
\begin{proof}
The proof is a special case of the more general proof of \cite[Theorem 5.6 \& Corollary 5.7]{2017arXiv171204045B}.
\end{proof}
\end{theorem}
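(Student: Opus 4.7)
The plan is to invoke the by-now standard Attouch--Bolte--Svaiter machinery for descent methods on KL functions in finite dimensions, using the two structural estimates already at our disposal: the sufficient decrease from Lemma \ref{lem:nonconvlinbregmansuffdecrease} and (implicitly) the subgradient bound from Lemma \ref{lem:nonconvlinbregmangradbound}. The aim is to produce the three classical ingredients (descent, relative error, continuity), from which KL-style global convergence follows via a summability argument.

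First I would upgrade the Bregman-type decrease \eqref{eq:suffdecrease} to a Euclidean one: the $\gamma$-strong convexity of $\regfct(\cdot,\regparam)$ implies $\bregdis[\regfct(\cdot,\regparam)]{p^{k-1}}{u^k}{u^{k-1}} \geq \tfrac{\gamma}{2}\|u^k-u^{k-1}\|^2$, so for a constant $\tilde\rho>0$
\begin{align*}
\energy(u^k) + \tilde\rho\,\|u^k-u^{k-1}\|^2 \leq \energy(u^{k-1}).
\end{align*}
Coercivity (or bounded level sets) of $\energy$ together with $\domain=\R^n$ then yield that $\{u^k\}$ is bounded, and telescoping the decrease over $k$ gives $\sum_k \|u^k-u^{k-1}\|^2 < \infty$, so in particular $\|u^k-u^{k-1}\|\to 0$ and $\{\energy(u^k)\}$ decreases to some finite limit $\energy^\ast$. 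Combining with the bound from Lemma \ref{lem:nonconvlinbregmangradbound} (assuming, as is natural in this context, the strong convexity of $\regfct$ in the conjugate variable and that $\regparam^{k-1}$ remains in a bounded interval) produces $\|\nabla \energy(u^{k-1})\|_{\domain^\ast}\leq C\|u^k-u^{k-1}\|\to 0$. Since $\nabla\energy$ is continuous and $\{u^k\}$ is precompact, every accumulation point $u^\ast$ is critical and $\energy(u^\ast)=\energy^\ast$; the accumulation set $\omega(u^0)$ is compact, connected, and $\energy$ is constant on it.

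At this point convergence of a subsequence is in hand, and the KL step upgrades it to convergence of the whole sequence. Using a uniform KL inequality on a neighborhood of $\omega(u^0)$ with desingularizing function $\varphi$, one obtains $\varphi'(\energy(u^{k-1})-\energy^\ast)\|\nabla\energy(u^{k-1})\|_{\domain^\ast}\geq 1$ for $k$ large. Multiplying by $\|u^k-u^{k-1}\|$, using the subgradient bound on the left and the square-root descent $\|u^k-u^{k-1}\|\leq \tilde\rho^{-1/2}\sqrt{\energy(u^{k-1})-\energy(u^k)}$ on the right, together with the concavity telescoping $\varphi(\energy(u^{k-1})-\energy^\ast)-\varphi(\energy(u^k)-\energy^\ast)\geq \varphi'(\energy(u^{k-1})-\energy^\ast)(\energy(u^{k-1})-\energy(u^k))$ and Young's inequality, yields an estimate of the form
\begin{align*}
\|u^{k+1}-u^k\| \leq c\bigl(\varphi(\energy(u^k)-\energy^\ast) - \varphi(\energy(u^{k+1})-\energy^\ast)\bigr) + \tfrac{1}{2}\|u^k-u^{k-1}\|.
\end{align*}
Summing gives $\sum_k\|u^{k+1}-u^k\|<\infty$, so $\{u^k\}$ is Cauchy in $\R^n$, its unique limit lies in $\omega(u^0)$, and by the previous paragraph this limit is critical.

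The main obstacle I anticipate is controlling the stepsize sequence $\{\regparam^{k-1}\}$: condition \eqref{eq:stepsizecond} couples $\regparam^{k-1}$ to the implicit ratio $C^k = \symbreg[\regfct(\cdot,\regparam)]{u^k}{u^{k-1}}/\bregdis[\regfct(\cdot,\regparam)]{p^{k-1}}{u^k}{u^{k-1}}$, and the subgradient bound of Lemma \ref{lem:nonconvlinbregmangradbound} only yields $\nabla\energy(u^{k-1})\to 0$ once the stepsizes are known to stay bounded (and bounded away from zero). Ruling out degeneration of $C^k$ along the sequence is where the argument is genuinely technical, and this is precisely the content of \cite[Thm.~5.6]{2017arXiv171204045B} in the more general Banach-space setting; in our finite-dimensional setting it can be achieved by restricting to stepsize rules that are a-priori bounded in a compact subinterval of $(0,\infty)$, which is compatible with \eqref{eq:stepsizecond} on any bounded region where the iterates live.
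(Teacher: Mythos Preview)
Your proposal is correct and follows the standard Attouch--Bolte--Svaiter KL machinery, which is exactly what the paper's cited reference \cite{2017arXiv171204045B} employs; the paper itself gives no self-contained argument here, so there is nothing further to compare. Your flagging of the stepsize boundedness issue and the implicit need for the dual strong-convexity hypothesis of Lemma~\ref{lem:nonconvlinbregmangradbound} (which is not formally among the theorem's stated assumptions) is apt and matches precisely the technical content that the cited result supplies.
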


We do want to emphasize that we require $\regfct^\ast(\cdot, \regparam)$ to be strongly convex, which in return implies the restrictive assumption that $\regfct( \cdot, \regparam)$ is a smooth functional with Lipschitz-continuous gradient. In order to get rid of this restrictive condition we split the functional $\regfct(\cdot, \regparam)$ into the two parts 
\begin{align*}
\regfct(u, \regparam) = H(u) + \frac{1}{\alpha^{k - 1}} G(u, \regparam) \, ,
\end{align*}
and assume that $H$ is $\gamma$-strongly convex and has $\delta$-Lipschitz gradient $\nabla H$, and that $G(\cdot, \regparam)$ is proper, l.s.c. and convex. Hence, we modify \eqref{eq:nonconvlinbregman} as follows. 
\begin{align}
\begin{split}
\regopitarg{u^{k - 1}, q^{k - 1}} &= \argmin_{u \in \domain} \left\{ \langle \nabla \energy(u^{k - 1}), u - u^{k - 1} \rangle + \bregdis[G(\cdot, \regparam)]{q^{k - 1}}{u}{u^{k - 1}} + \regparam^{k - 1} \bregdis[H]{}{u}{u^{k - 1}} \right\} \\
u^k &\in \regopitarg{u^{k - 1}, q^{k - 1}}\\
q^k &= q^{k - 1} - \left( \nabla \energy(u^{k - 1}) + \regparam^{k - 1} \left(\nabla H(u^k) - \nabla H(u^{k - 1}) \right) \right)
\end{split} \, ,\label{eq:nonconvlinbregmanmodified}
\end{align}
for $q^0 \in \partial G(u^0, \regparam)$. We then define the surrogate energy
\begin{align}
\energy^k(u^k) := \energy(u^k) + \bregdis[G( \cdot, \regparam)]{q^{k - 1}}{u^k}{u^{k - 1}} \, ,\label{eq:nonconvexsurrogate}
\end{align}
for $q^{k - 1} \in \partial G(u^{k - 1}, \regparam)$. For this surrogate energy we can show the following results. 
\begin{lemma}\label{lem:nonconvlinmodifiedsuffdecrease}
Suppose $\energy$ is coercive or has bounded level-sets, $\inf_u \energy(u) > -\infty$ and $\energy$ has at least one critical point, and assume $H$ is $\gamma$-strongly convex with $\delta$-Lipschitz gradient $\nabla H$, and $\regparam^{k - 1}$ satisfies \eqref{eq:stepsizecond}. Then the iterates of \eqref{eq:nonconvlinbregmanmodified} satisfy 
\begin{align*}
\energy^{k - 1}(u^k) + \rho \bregdis[H]{}{u^k}{u^{k - 1}} \leq \energy^{k - 2}(u^{k - 1}) \, .
\end{align*}
\begin{proof}
The proof follows the same principle as the proofs of Lemma \ref{lem:proxgradsuffdec} and Lemma \ref{lem:nonconvlinbregmansuffdecrease}. Convexity of $\frac{L}{\gamma} H - \energy$ implies the estimate in \eqref{eq:suffdecreaseproxgrad1}. Inserting the optimality condition (respectively the dual update formula) of \eqref{eq:nonconvlinbregmanmodified}, applying \eqref{eq:stepsizecond} and adding $\bregdis[G(\cdot, \regparam)]{q^{k - 2}}{u^{k - 1}}{u^{k - 2}}$ to both sides of the inequality then yields the desired estimate.
\end{proof}
\end{lemma}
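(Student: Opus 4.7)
The plan is to follow the authors' sketch, which combines the convexity-of-$\frac{L}{\gamma}H - \energy$ trick used in Lemma~\ref{lem:proxgradsuffdec} with the surrogate-energy bookkeeping of Lemma~\ref{lem:nonconvlinbregmansuffdecrease}. Since (as in \eqref{eq:famofconvfcts}) the Lipschitz continuity of $\nabla \energy$ together with $\gamma$-strong convexity of $H$ places $\frac{L}{\gamma}H - \energy$ in the relevant family of convex functionals, writing its Bregman distance at $u^k$ relative to $u^{k-1}$ as non-negative and rearranging yields the descent-lemma style estimate
\begin{equation*}
\energy(u^k) + \langle \nabla \energy(u^{k-1}), u^{k-1}-u^k\rangle - \frac{L}{\gamma}\bregdis[H]{}{u^k}{u^{k-1}} \leq \energy(u^{k-1}),
\end{equation*}
which is precisely the analogue of \eqref{eq:suffdecreaseproxgrad1} in the present setting.

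I would then use the optimality condition of the subproblem in \eqref{eq:nonconvlinbregmanmodified}, which (being the characterization behind the explicit dual update) reads
\begin{equation*}
\nabla \energy(u^{k-1}) + (q^k - q^{k-1}) + \regparam^{k-1}\bigl(\nabla H(u^k) - \nabla H(u^{k-1})\bigr) = 0,
\end{equation*}
with $q^k \in \partial G(u^k,\regparam)$. Pairing this identity with $u^{k-1}-u^k$ converts the inner product in the descent bound into the sum $\symbreg[G(\cdot,\regparam)]{u^k}{u^{k-1}} + \regparam^{k-1}\,\symbreg[H]{u^k}{u^{k-1}}$. Substituting back and invoking the stepsize bound \eqref{eq:stepsizecond} with $C^k=\symbreg[H]{u^k}{u^{k-1}}/\bregdis[H]{}{u^k}{u^{k-1}}$, the term $\frac{L}{\gamma}\bregdis[H]{}{u^k}{u^{k-1}}$ gets absorbed into $\regparam^{k-1}\symbreg[H]{u^k}{u^{k-1}}$ with a residue of at least $\rho\bregdis[H]{}{u^k}{u^{k-1}}$. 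Discarding the non-negative one-sided piece $\bregdis[G(\cdot,\regparam)]{q^k}{u^{k-1}}{u^k}$ inside $\symbreg[G(\cdot,\regparam)]{u^k}{u^{k-1}}$ then leaves
\begin{equation*}
\energy(u^k) + \bregdis[G(\cdot,\regparam)]{q^{k-1}}{u^k}{u^{k-1}} + \rho\bregdis[H]{}{u^k}{u^{k-1}} \leq \energy(u^{k-1}).
\end{equation*}

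To close the argument I would, following the sketch, relax the right-hand side by the non-negative quantity $\bregdis[G(\cdot,\regparam)]{q^{k-2}}{u^{k-1}}{u^{k-2}}$, so that both sides can be reinterpreted via the surrogate energy in \eqref{eq:nonconvexsurrogate}: the left-hand side becomes $\energy^{k-1}(u^k) + \rho\bregdis[H]{}{u^k}{u^{k-1}}$ and the right-hand side becomes $\energy^{k-2}(u^{k-1})$, yielding the claimed decrease. The main obstacle I anticipate is the bookkeeping of the two distinct Bregman contributions: the $G$-distance appears with successively shifted base pairs $(q^{k-1},u^{k-1})$ and $(q^{k-2},u^{k-2})$, and only by carefully absorbing the symmetric $G$-distance into the surrogate does the telescoping form of the decrease emerge. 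A secondary subtlety is checking that the stepsize condition \eqref{eq:stepsizecond} is applied in the correct direction here, since $\regparam^{k-1}$ weights the $H$-damping rather than the nonsmooth $G$-Bregman term, which reverses its role compared with the plain linearized Bregman iteration of Lemma~\ref{lem:nonconvlinbregmansuffdecrease}.
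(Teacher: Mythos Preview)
Your proposal is correct and follows essentially the same approach as the paper's sketch: you reproduce the descent estimate \eqref{eq:suffdecreaseproxgrad1} from the convexity of $\frac{L}{\gamma}H-\energy$, substitute the optimality condition of \eqref{eq:nonconvlinbregmanmodified} to turn the inner product into the sum of the symmetric $G$- and $H$-Bregman distances, absorb the $H$-terms via \eqref{eq:stepsizecond}, drop the non-negative one-sided $G$-piece, and finally add $\bregdis[G(\cdot,\regparam)]{q^{k-2}}{u^{k-1}}{u^{k-2}}$ to both sides to recover the surrogate energies. Your flagging of the reversed role of $\regparam^{k-1}$ (it now weights the $H$-damping rather than appearing as an inverse step size) is apt and is precisely the bookkeeping subtlety one must track when transcribing the argument from Lemmas~\ref{lem:proxgradsuffdec} and~\ref{lem:nonconvlinbregmansuffdecrease}.
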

A bound of the gradient of $\energy^{k - 1}(u^k)$ follows from the Lipschitz-continuity of both $\nabla \energy$ and $\nabla H$.
\begin{lemma}\label{lem:nonconvlinbregmanmodifiedgradbound}
Let the same assumptions hold true as in Lemma \ref{lem:nonconvlinmodifiedsuffdecrease}. Then the iterates \eqref{eq:nonconvlinbregmanmodified} satisfy
\begin{align*}
\| \nabla \energy(u^k) + q^k- q^{k - 1} \|_{\domain^\ast} \leq \left( L + \delta \regparam^{k - 1} \right) \| u^k - u^{k - 1} \|_{\domain} \, .
\end{align*}
\begin{proof}
Using the dual update formula \eqref{eq:nonconvlinbregmanmodified} and the Lipschitz-continuity of $\nabla E$ and $\nabla H$ leads to
\begin{align*}
\| \nabla \energy(u^k) + q^k- q^{k - 1} \|_{\domain^\ast} &= \left\| \nabla \energy(u^k) -  \nabla \energy(u^{k - 1}) + \regparam^{k - 1} \left( \nabla H(u^{k - 1}) - \nabla H(u^k) \right) \right\|_{\domain^\ast} \\
&\leq L \| u^k - u^{k - 1} \|_{\domain} + \regparam^{k - 1}\delta \| u^{k - 1} - u^k \|_{\domain} \, ,
\end{align*}
which proves the conjecture.
\end{proof}
\end{lemma}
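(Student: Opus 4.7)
The plan is to simply unwind the dual update formula in \eqref{eq:nonconvlinbregmanmodified} and then invoke the two Lipschitz estimates for $\nabla \energy$ and $\nabla H$. All assumed pieces of structure, namely Lipschitz continuity of $\nabla \energy$ with constant $L$ (inherited from Lemma \ref{lem:nonconvlinmodifiedsuffdecrease}) and Lipschitz continuity of $\nabla H$ with constant $\delta$, are designed so that this kind of gradient-gap bound comes out as a one-line triangle inequality.

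First I would rewrite the dual update
\begin{equation*}
q^k - q^{k-1} = -\nabla \energy(u^{k-1}) - \regparam^{k-1}\bigl(\nabla H(u^k) - \nabla H(u^{k-1})\bigr)
\end{equation*}
and substitute it into the quantity we want to estimate, which immediately yields
\begin{equation*}
\nabla \energy(u^k) + q^k - q^{k-1} = \bigl(\nabla \energy(u^k) - \nabla \energy(u^{k-1})\bigr) + \regparam^{k-1}\bigl(\nabla H(u^{k-1}) - \nabla H(u^k)\bigr).
\end{equation*}
Note that the leading $\nabla \energy(u^{k-1})$ term in the update cancels cleanly with the linearization point used to define $q^k$, so no further algebra is needed before taking norms.

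Second I would apply the triangle inequality in $\domain^\ast$ to split the right-hand side into two pieces and estimate each piece separately: $\|\nabla \energy(u^k) - \nabla \energy(u^{k-1})\|_{\domain^\ast} \leq L\|u^k - u^{k-1}\|_{\domain}$ by Lipschitz continuity of $\nabla \energy$, and $\regparam^{k-1}\|\nabla H(u^{k-1}) - \nabla H(u^k)\|_{\domain^\ast} \leq \regparam^{k-1}\delta \|u^k - u^{k-1}\|_{\domain}$ by Lipschitz continuity of $\nabla H$. Summing these two bounds factors out $\|u^k - u^{k-1}\|_{\domain}$ and produces the claimed constant $L + \delta \regparam^{k-1}$.

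I do not foresee a genuine obstacle here; the argument is purely mechanical once one has decided to bundle $\nabla \energy(u^k) + q^k - q^{k-1}$ as the residual of the dual update shifted from $u^{k-1}$ to $u^k$. The only mild subtlety is conceptual rather than technical: the constant degrades linearly with $\regparam^{k-1}$ because the $\nabla H$-correction in the dual update is scaled by $\regparam^{k-1}$, so the bound is only useful in conjunction with the step-size restriction \eqref{eq:stepsizecond}, which keeps $\regparam^{k-1}$ controlled. This matches the way Lemma \ref{lem:proxgraditgap} was used in the proof of Theorem \ref{thm:proxgradglobconv}, and I would anticipate that this lemma plays the analogous role when one eventually combines the sufficient-decrease estimate of Lemma \ref{lem:nonconvlinmodifiedsuffdecrease} with a KL-type argument to establish global convergence for the modified linearized Bregman iteration.
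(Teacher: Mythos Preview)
Your proposal is correct and follows exactly the same route as the paper: substitute the dual update to rewrite $\nabla \energy(u^k) + q^k - q^{k-1}$ as $(\nabla \energy(u^k) - \nabla \energy(u^{k-1})) + \regparam^{k-1}(\nabla H(u^{k-1}) - \nabla H(u^k))$, then apply the triangle inequality together with the Lipschitz bounds for $\nabla \energy$ and $\nabla H$. Your additional commentary on the role of the step-size restriction and the analogy with Lemma~\ref{lem:proxgraditgap} is also accurate.
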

As in the previous case, global convergence can be achieved under the assumption that the domain is finite-dimensional and that $\energy^k(u)$ is a KL-function.
\begin{theorem}\label{thm:nclbregmanglobalconv}
Let the same assumptions hold true as in Lemma \ref{lem:nonconvlinbregmanmodifiedgradbound}. Further assume $\domain = \R^n$ and that $\energy^k$ is a KL function for all $k \in \N$. Then the iterates \eqref{eq:nonconvlinbregmanmodified} converge globally. If, in addition, the sequence $\{ q^k \}_{k \in \N}$ is bounded, then the iterates even convergence to a critical point of the energy $\energy$.
\begin{proof}
The proof is a special case of the more general proof of The proof is a special case of the more general proof of \cite[Theorem 5.10]{2017arXiv171204045B}.
\end{proof}
\end{theorem}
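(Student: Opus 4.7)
The plan is to follow the Attouch--Bolte--Svaiter template for global convergence of descent methods under the Kurdyka--Łojasiewicz property, but with the surrogate energies $\energy^k$ from \eqref{eq:nonconvexsurrogate} playing the role of the Lyapunov function rather than $\energy$ itself. Three ingredients are needed: (i) sufficient decrease of $\energy^{k-1}(u^k)$ controlled by the step $u^k-u^{k-1}$, (ii) a relative error bound expressing $\|\nabla \energy^{k-1}(u^k)\|$ in terms of $\|u^k-u^{k-1}\|$, and (iii) the KL inequality, which lets us trade a concave function of decrease for a linear estimate on step length. Lemmas \ref{lem:nonconvlinmodifiedsuffdecrease} and \ref{lem:nonconvlinbregmanmodifiedgradbound} already furnish (i) and (ii), so the backbone is in place.

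First I would telescope the sufficient-decrease inequality $\energy^{k-1}(u^k) + \rho \bregdis[H]{}{u^k}{u^{k-1}} \leq \energy^{k-2}(u^{k-1})$ to get $\sum_k \bregdis[H]{}{u^k}{u^{k-1}} < \infty$; combined with $\gamma$-strong convexity of $H$ this yields $\|u^k-u^{k-1}\|\to 0$ and square-summability. Coercivity of $\energy$, together with nonnegativity of the Bregman summand in $\energy^k$, confines $\{u^k\}$ to a bounded (hence relatively compact) subset of $\R^n$, so the cluster set is nonempty and contained in a common sublevel set at the limit value of the nonincreasing sequence $\energy^{k-1}(u^k)$. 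On a neighborhood of any cluster point $u^\ast$ one then applies the KL inequality with desingularizing function $\varphi$ to $\energy^{k-1}$: combining monotonicity, concavity of $\varphi$, the gradient bound from Lemma \ref{lem:nonconvlinbregmanmodifiedgradbound}, and the standard chain-of-inequalities trick produces a telescoping estimate majorizing $\|u^{k+1}-u^k\|$ by a difference of two consecutive $\varphi$-values. Summation yields $\sum_k \|u^{k+1}-u^k\| < \infty$, so $\{u^k\}$ is Cauchy and converges to $u^\ast$.

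Identifying $u^\ast$ as a critical point of $\energy$ itself is the delicate step and is where the hypothesis of bounded $\{q^k\}$ enters. The dual update of \eqref{eq:nonconvlinbregmanmodified} reads
$$q^k - q^{k-1} = -\nabla \energy(u^{k-1}) - \alpha^{k-1}\bigl(\nabla H(u^k) - \nabla H(u^{k-1})\bigr),$$
and Lipschitz continuity of $\nabla H$ combined with $\|u^k-u^{k-1}\|\to 0$ makes the second term vanish. Boundedness of $\{q^k\}$ in $\R^n$ lets us pass to a subsequence along which $q^{k_j}\to \bar q$ and, upon a further extraction, $q^{k_j-1}\to \hat q$; by closedness of the graph of $\partial G(\cdot,\regparam)$ for the convex functional $G$, both limits lie in $\partial G(u^\ast,\regparam)$. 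The identity above then forces $\bar q - \hat q = -\nabla \energy(u^\ast)$, and the main obstacle -- also the most technical piece of the argument -- is to rule out a genuine gap between $\bar q$ and $\hat q$. This is achieved by combining the summability $\sum_k \|u^{k+1}-u^k\|<\infty$ with the fact that the optimality condition of \eqref{eq:nonconvlinbregmanmodified} controls $\|q^k-q^{k-1}\|$ on bounded dual sequences in a way that pins both subsequential limits to the same element of $\partial G(u^\ast,\regparam)$, forcing $\nabla \energy(u^\ast)=0$. The detailed bookkeeping is carried out in \cite{2017arXiv171204045B}.
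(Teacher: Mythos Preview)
The paper itself gives no self-contained argument: it simply defers to \cite[Theorem 5.10]{2017arXiv171204045B}. Your outline follows exactly the Attouch--Bolte--Svaiter template underlying that reference, so the high-level strategy is correct and matches what the paper invokes.

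Two remarks are worth making. First, the step ``apply the KL inequality with desingularizing function $\varphi$ to $\energy^{k-1}$'' hides the real technical difficulty: the Lyapunov functionals $\energy^{k-1}$ vary with $k$, so their critical values and desingularizing data may vary as well, and the usual telescoping argument needs a \emph{single} KL function. The clean way around this (and what the cited reference does) is to lift to a product space, working with an augmented functional of the pair $(u^k,u^{k-1})$ (and possibly the dual variable) so that the sufficient-decrease and relative-error conditions of Lemmas~\ref{lem:nonconvlinmodifiedsuffdecrease} and~\ref{lem:nonconvlinbregmanmodifiedgradbound} become the standard ABS hypotheses for one fixed KL function. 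Your sketch does not address this, and it is the step that genuinely requires the machinery of \cite{2017arXiv171204045B}.

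Second, your identification of the limit as a critical point of $\energy$ is more involved than necessary. Once full convergence $u^k\to u^\ast$ is established, Lemma~\ref{lem:nonconvlinbregmanmodifiedgradbound} (or directly the dual update together with Lipschitz continuity of $\nabla H$ and boundedness of the step sizes) gives $q^k-q^{k-1}\to -\nabla\energy(u^\ast)$ along the \emph{full} sequence. If this limit were nonzero the telescoping sum $q^N-q^0=\sum_{k=1}^N(q^k-q^{k-1})$ would be unbounded, contradicting the assumed boundedness of $\{q^k\}$. Hence $\nabla\energy(u^\ast)=0$ directly, with no need to juggle two subsequential limits $\bar q,\hat q$ of the dual sequence or to invoke graph-closedness of $\partial G$.
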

\begin{remark}
Given the structure of the problem, it is tempting to also look at a Fej\'{e}r-monotonicity w.r.t. $\bregdis[\regfct_{\regparam^{k - 1}}( \cdot, \regparam)]{q^k}{\minsol}{u^k}$, for
\begin{align*}
\regfct_{\regparam^{k - 1}}(u, \regparam) := \regparam^{k - 1} \regfctarg[\regparam]{u} - \energy(u) \, .
\end{align*}
If we make the same attempt as in Section \ref{sec:iterreg}, we observe
\begin{align*}
\bregdis[\regfct_{\regparam^{k - 1}}( \cdot, \regparam)]{q^k}{\minsol}{u^k} - \bregdis[\regfct_{\regparam^{k - 1}}( \cdot, \regparam)]{q^{k - 1}}{\minsol}{u^{k - 1}} {} = {} &-\bregdis[\regfct_{\regparam^{k - 1}}( \cdot, \regparam)]{q^{k - 1}}{u^k}{u^{k - 1}} - \langle q^k - q^{k - 1}, \minsol - u^k \rangle\\
{} = {} &-\regparam^{k - 1} \bregdis[\regfct( \cdot, \regparam)]{p^{k - 1}}{u^k}{u^{k - 1}} - \left( \energy(u^k) - \energy(u^{k - 1}) \right.\\
&\left.- \langle \nabla E(u^{k - 1}), u^k - u^{k - 1} \rangle \right) \\
&- \regparam^{k - 1} \langle p^k - p^{k - 1}, \minsol - u^k \rangle\\
&+ \langle \nabla \energy(u^k) - \energy(u^{k - 1}), \minsol - u^k \rangle\\
{} = {} &-\regparam^{k - 1} \bregdis[\regfct( \cdot, \regparam)]{p^{k - 1}}{u^k}{u^{k - 1}} - \left( \energy(u^k) - \energy(u^{k - 1}) \right.\\
&\left.- \langle \nabla E(u^{k - 1}), u^k - u^{k - 1} \rangle \right) \\
&+ \langle \nabla \energy(u^k), \minsol - u^k \rangle \, .
\end{align*}
Since we also know that 
\begin{align*}
\langle \nabla \energy(u^k), \minsol - u^k \rangle = \bregdis[\regfct_{\regparam^{k - 1}}( \cdot, \regparam)]{q^k}{\minsol}{u^k} - \regparam^{k - 1} \bregdis[\regfct( \cdot, \regparam)]{p^k}{\minsol}{u^k} + \energy(\minsol) - \energy(u^k) \, ,
\end{align*}
we can combine this equality with the previous to obtain
\begin{align*}
\bregdis[\regfct_{\regparam^{k - 1}}( \cdot, \regparam)]{q^k}{\minsol}{u^k} - \bregdis[\regfct_{\regparam^{k - 1}}( \cdot, \regparam)]{q^{k - 1}}{\minsol}{u^{k - 1}} {} = {} &-\bregdis[\regfct_{\regparam^{k - 1}}( \cdot, \regparam)]{q^{k - 1}}{u^k}{u^{k - 1}} \\
{} = {} &+\bregdis[\regfct_{\regparam^{k - 1}}( \cdot, \regparam)]{q^k}{\minsol}{u^k} - \regparam^{k - 1} \bregdis[\regfct( \cdot, \regparam)]{q^k}{\minsol}{u^k}\\
&+ \energy(\minsol) - \energy(u^k) \, .
\end{align*}
Hence, for $\energy(\minsol) \leq \energy(u^k)$ we only observe
\begin{align*}
\regparam^{k - 1} \bregdis[\regfct( \cdot, \regparam)]{q^k}{\minsol}{u^k} \leq \bregdis[\regfct_{\regparam^{k - 1}}( \cdot, \regparam)]{q^{k - 1}}{\minsol}{u^{k - 1}} \, ,
\end{align*}
which is not quite sufficient to achieve Fej\'{e}r-monotonicity.
\end{remark}

We mention that non-convex data fidelities find applications in problems with advanced noise models, e.g. multiplicative noise
(cf. \cite{rudin2003multiplicative,aubert2008variational}), image registration problems (cf. \cite{Modersitzki,}), or most nonlinear inverse problems. 
In the next subsection we focus on the special case of $\energy$ representing a convex data fidelity $\fidelfct$ of a potentially nonlinear inverse problem, which leads to an overall non-convex problem.

Let us mention that so far no suitable theory of iterative regularization methods in the case of non-convex regularizations is available, although there are several applications such as the Mumford-Shah or Ambrosio-Tortorelli functional (cf. \cite{mumford1989optimal,ambrosio1990approximation,Pock:MinimizingMumfordShah,rondi2008reconstruction,klann2011mumford,klann2013regularization}) or polyconvex energies in image registration (cf. \cite{droske2003nonrigid,burger2013hyperelastic,kirisits2017convergence}).  

\subsection{Nonlinear Inverse Problems}\label{subsec:nonlininvprob}
Nonlinear inverse problems are extensions of \eqref{eq:basicequation} with nonlinear forward operators $K:\domain \rightarrow \range$. Given a convex or nonconvex data fidelity term $\fidelfct:\range \times \range \rightarrow \R$, we can formulate variational regularizations and iterative regularizations in the exact same way as in the linear case. As these problems are special cases of the nonconvex methodology discussed in Section \ref{subsec:nonconvreg}, we can further apply the proposed methodologies. In the context of variational regularization \eqref{eq:varreg} for nonlinear forward operators and possibly nonconvex but Fr\'{e}chet-differentiable fidelity terms, the $k$-th iterate of the proximal gradient method discussed in Section \ref{subsubsec:proxgrad} reads as
\begin{align*}
\regopitarg{\noisy, u^{k - 1}} = \argmin_{u \in \domain} \left\{  \regparam^{k - 1} \langle \partial_x \fidelity{K(u^{k - 1})}{\noisy}, u - u^{k - 1} \rangle + D_H(u, u^{k - 1}) + \regparam^{k - 1} \regfct(u, \regparam) \right\} \, .
\end{align*}
The convergence theory discussed in Section \ref{subsubsec:proxgrad} applies in identical fashion. However, questions of the convergence of the regularization can now also be addressed.

\subsubsection*{Gau\ss-Newton Methods}
The special structure of the nonconvex energy functional $E$ in case of regularizations of nonlinear inverse problems enables different solution strategies compared to arbitrary nonconvex functionals. Having a Fr\'{e}chet-differentiable operator $K$, one can approximate $K(u^k)$ via a Taylor-approximation around $u^{k - 1}$, i.e.
\begin{align*}
K(u^k) \approx K(u^{k - 1}) + K^\prime(u^{k - 1})(u^k - u^{k - 1}) \, .
\end{align*}
As a consequence, another strategy for solving variational regularization problems for nonlinear inverse problems is via the following iteratively regularized Gau\ss-Newton approach
\begin{align}
\regopitarg{\noisy, u^{k - 1}} = \argmin_{u \in \domain} \left\{ \fidelity{K(u^{k - 1}) + K^\prime(u^{k - 1})(u - u^{k - 1})}{\noisy} + \regparam^{k - 1} \regfct(u, \regparam) \right\} \, .
\end{align}
 We refer to 
\cite{schopfer2006nonlinear,stuck2011iteratively,bauer2009iteratively,kaltenbacher2009iterative,stuck2011iteratively,hohage2013iteratively} for further discussion

In the following sections we discuss extensions of the iterative regularization methods presented in Section \ref{sec:iterreg} to nonlinear inverse problems.

\subsubsection{Nonlinear Landweber Regularization}\label{sec:nonlinlandweber}
We easily observe that \eqref{eq:nonconvlinbregman} for $\energy(u) := \fidelity{K(u)}{\noisy}$ with nonlinear operator $K$ reads as 
\begin{align*}
\begin{split}
\regopitarg{\noisy, v^{k - 1}} &= \argmin_{u \in \domain} \left\{ \langle K^\prime(u^{k - 1})^\ast \partial_x \fidelity{K(u^{k - 1})}{\noisy}, u - u^{k - 1} \rangle + \regparam^{k - 1}  \bregdis[\regfct(\cdot, \regparam)]{p^{k - 1}}{u}{u^{k - 1}} \right\} \\
u^k &\in \regopitarg{\noisy, v^{k - 1}} \\
p^k &= p^{k - 1} - \frac{1}{\regparam^{k - 1}} K^\prime(u^{k - 1})^\ast \partial_x  \fidelity{K(u^{k - 1})}{\noisy}
\end{split} \, ,
\end{align*}
with $v^{k - 1} := (u^{k - 1}, p^{k - 1})$. For $\fidelity{K(u)}{\noisy} = \frac{1}{2} \| K(u) - \noisy \|_{L^2(\Sigma)}^2$ and $\regfct(u, \regparam) = \frac{\regparam}{p} \| u \|_{L^p(\Omega)}^p$ this method has first been introduced and analyzed in \cite{kaltenbacher2009iterative}. General convex regularization functionals $\regfct(\cdot, \regparam)$ with multi-valued subdifferential $\partial \regfct(\cdot, \regparam)$ have been considered in \cite{bachmayr2009iterative}. Both convergence analyses have been carried under additional assumptions on the nonlinear forward operator, such as the tangential cone condition. In a finite dimensional setting, convergence follows from Theorem \ref{thm:nclbregmanglobalconv}, see \cite{2017arXiv171204045B}. However, it is important to point out that, although existence of a critical point of $\energy(u)$ can usually be guaranteed in finite dimensions, ill-conditioning of the problem still requires early stopping of the iterates.

\subsubsection{Levenberg-Marquardt Regularization}
Replacing the regularization functional in the iterative Gau\ss-Newton regularization with a generalized Bregman distance w.r.t. the current and the previous iterate yields the following generalized Levenberg-Marquardt regularization
\begin{align*}
\begin{split}
\regopitarg{\noisy, v^{k - 1}} &= \argmin_{u \in \domain} \left\{ \fidelity{K(u^{k - 1}) + K^\prime(u^{k - 1})(u - u^{k - 1})}{\noisy} + \regparam^{k - 1} \bregdis[\regfct(\cdot, \regparam)]{p^{k - 1}}{u}{u^{k - 1}} \right\} \\
u^k &\in \regopitarg{\noisy, v^{k - 1}} \\
p^k &= p^{k - 1} - \frac{1}{\regparam^{k - 1}} K^\prime(u^{k - 1})^\ast \partial_x  \fidelity{K(u^{k - 1}) + K^\prime(u^{k - 1})(u^k - u^{k - 1})}{\noisy}
\end{split} \, ,
\end{align*}
for $v^{k - 1} := (u^{k - 1}, p^{k - 1})$. This method reduces to the classical Levenberg-Marquardt method \cite{levenberg1944method,marquardt1963algorithm} for the choices $\fidelity{K(u)}{\noisy} = \frac{1}{2}\| K(u) - \noisy \|_{L^2(\Sigma)}^2$ and $\regfct(u, 1) = \frac{1}{2}\| u \|_{L^2(\Omega)}^2$. For $\fidelity{K(u)}{\noisy} = \frac{1}{2}\| K(u) - \noisy \|_{L^2(\Sigma)}^2$ and proper, l.s.c. and convex $\regfct(u, \regparam)$ with potentially multi-valued subdifferential $\partial \regfct(u, \regparam)$ this method has been introduced and analyzed in \cite{bachmayr2009iterative}.

\subsubsection{Examples}
In the following we discuss two nonlinear inverse problems that are natural extensions of the linear inverse problems introduced in Example \ref{exm:deconvolution} and Section \ref{sec:velmri}.

\begin{figure}[!t]
\begin{center}
\subfloat[Original $\minsol$]{\includegraphics[width=0.24\textwidth]{Images/Pixelsmall.png}\label{subfig:pixelnclbregiter1}}\vspace{0.05cm}
\subfloat[Blurred \& noisy $\noisy$]{\begin{tikzpicture}[      
        every node/.style={anchor=south west,inner sep=0pt},
        x=1mm, y=1mm,
      ]   
     \node (fig1) at (0,0)
       {\includegraphics[width=0.24\textwidth]{Images/Pixelblurrednoisy.png}};
     \node (fig2) at (0,0)
       {\includegraphics[width=0.1\textwidth]{Images/convkernel.png}};  
\end{tikzpicture}\label{subfig:pixelnclbregiter2}}\vspace{0.05cm}
\subfloat[1st iterate]{\begin{tikzpicture}[      
        every node/.style={anchor=south west,inner sep=0pt},
        x=1mm, y=1mm,
      ]   
     \node (fig1) at (0,0)
       {\includegraphics[width=0.24\textwidth]{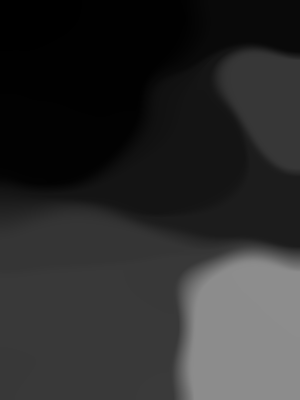}};
     \node (fig2) at (0,0)
       {\includegraphics[width=0.1\textwidth]{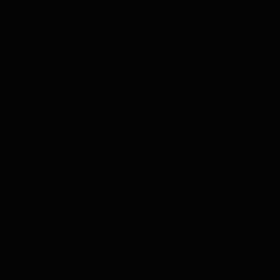}};  
\end{tikzpicture}\label{subfig:pixelnclbregiter3}}\vspace{0.05cm}
\subfloat[11th iterate]{\begin{tikzpicture}[      
        every node/.style={anchor=south west,inner sep=0pt},
        x=1mm, y=1mm,
      ]   
     \node (fig1) at (0,0)
       {\includegraphics[width=0.24\textwidth]{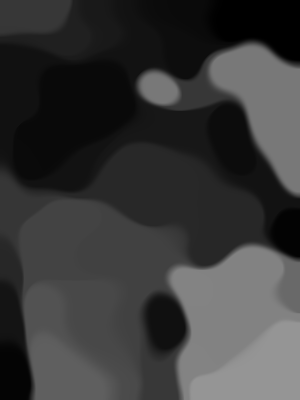}};
     \node (fig2) at (0,0)
       {\includegraphics[trim={2.5cm 2.5cm 2.5cm 2.5cm},clip,width=0.1\textwidth]{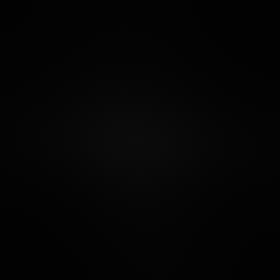}};  
\end{tikzpicture}}\vspace{0.05cm}\\
\subfloat[49th iterate]{\begin{tikzpicture}[      
        every node/.style={anchor=south west,inner sep=0pt},
        x=1mm, y=1mm,
      ]   
     \node (fig1) at (0,0)
       {\includegraphics[width=0.24\textwidth]{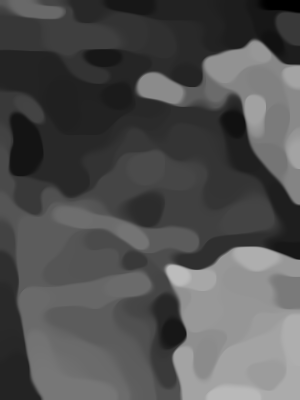}};
     \node (fig2) at (0,0)
       {\includegraphics[trim={2.5cm 2.5cm 2.5cm 2.5cm},clip,width=0.1\textwidth]{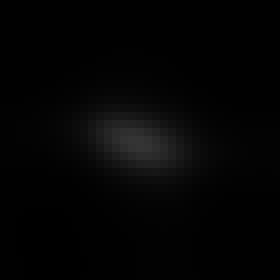}};  
\end{tikzpicture}}\vspace{0.05cm}
\subfloat[999th iterate]{\begin{tikzpicture}[      
        every node/.style={anchor=south west,inner sep=0pt},
        x=1mm, y=1mm,
      ]   
     \node (fig1) at (0,0)
       {\includegraphics[width=0.24\textwidth]{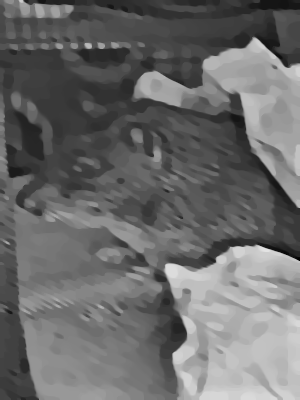}};
     \node (fig2) at (0,0)
       {\includegraphics[trim={2.5cm 2.5cm 2.5cm 2.5cm},clip,width=0.1\textwidth]{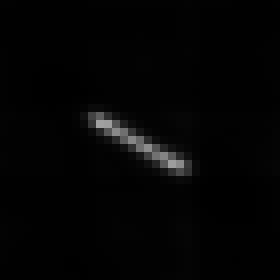}};  
\end{tikzpicture}}\vspace{0.05cm}
\subfloat[2199th iterate]{\begin{tikzpicture}[      
        every node/.style={anchor=south west,inner sep=0pt},
        x=1mm, y=1mm,
      ]   
     \node (fig1) at (0,0)
       {\includegraphics[width=0.24\textwidth]{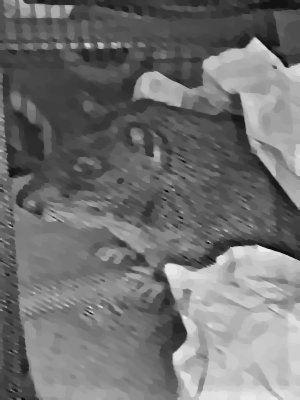}};
     \node (fig2) at (0,0)
       {\includegraphics[trim={2.5cm 2.5cm 2.5cm 2.5cm},clip,width=0.1\textwidth]{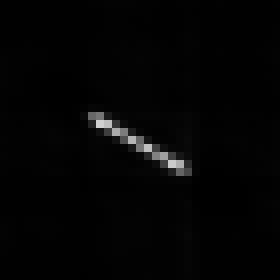}};  
\end{tikzpicture}}\vspace{0.05cm}
\subfloat[3474th iterate]{\begin{tikzpicture}[      
        every node/.style={anchor=south west,inner sep=0pt},
        x=1mm, y=1mm,
      ]   
     \node (fig1) at (0,0)
       {\includegraphics[width=0.24\textwidth]{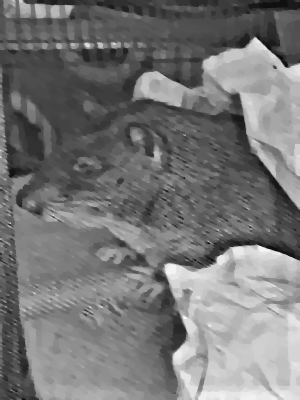}};
     \node (fig2) at (0,0)
       {\includegraphics[trim={2.5cm 2.5cm 2.5cm 2.5cm},clip,width=0.1\textwidth]{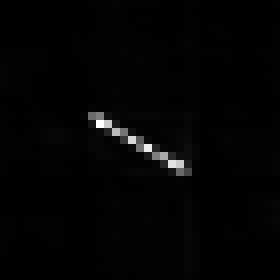}};  
\end{tikzpicture}\label{subfig:pixelnclbregiter4}}\vspace{0.05cm}
\end{center}
\caption{Figure \ref{subfig:pixelnclbregiter1} shows the image $\minsol \in \R^{400 \times 300}$ of Pixel, the Gambian pouched rat, originally introduced in Figure \ref{subfig:pixelbregiter1}. In Figure \ref{subfig:pixelnclbregiter2} we see the same degraded and noisy version $\noisy \in \R^{400 \times 300}$ together with the convolution kernel $h$ as shown in Figure \ref{subfig:pixelbregiter2} and Figure \ref{subfig:pixellinbregiter2}. Figure \ref{subfig:pixelnclbregiter3} - \ref{subfig:pixelnclbregiter4} show different iterates of Algorithm \ref{alg:linbregiter} for $\fidelity{K(u, h)}{\noisy}$ and $\regfct(u, h)$ as in Equation \eqref{eq:blinddeconvdatafidel} and Equation \eqref{eq:blinddeconvregfct}, respectively and $\regparam = 10$. The 3474th iterate visualized in Figure \ref{subfig:pixelnclbregiter4} is the first that violates Definition \ref{def:morozov}, for $\delta = 5.95$. The reconstructed kernels have been magnified for better visualization.}\label{fig:pixelnclbregiter}
\end{figure}

\subsubsection*{Blind deconvolution}
Following up on Example \ref{exm:deconvolution}, an obvious nonconvex extension of the problem of deconvolution is blind deconvolution, where the convolution kernel that degrades the image is also unknown (cf. \cite{Kundur1996blinddeblurring,Chan2005book,Campisi2016book}). We basically follow the setup of \cite[Section 6.2]{2017arXiv171204045B}, where we assume 
\begin{align}
\begin{split}
\fidelity{K(u, h)}{\noisy} &= \frac{1}{2} \| K(u, h) - \noisy \|_{L^2(\R^2)}^2 \\
&= \frac{1}{2} \| u \ast h - \noisy \|_{L^2(\R^2)}^2
\end{split}\label{eq:blinddeconvdatafidel}
\end{align}
and apply the nonlinear Landweber regularization as described in Section \ref{sec:nonlinlandweber} with
\begin{align}
\regfct(u, h, \regparam) = \frac{1}{2} \| u \|_{L^2(\R^2)}^2 + \regparam \tv(u) + \int_{\R^2} h(x) \log(h(x)) - h(x) \, dx + \chi_{P(\R^2)}(h) \, ,\label{eq:blinddeconvregfct}
\end{align}
where
\begin{align*}
\chi_{P(\R^2)}(h) = \begin{cases}
0 & h \in P(\R^2) \\ \infty & h \not\in P(\R^2)
\end{cases}
\end{align*}
denotes the characteristic functional over the (convex) set of probability distributions
\begin{align*}
P(\R^2) := \left\{ h \in L^2(\R^2) \, \left| \, h(x) \geq 0 \, a.e., \, \int_{\R^2} h(x) \, dx = 1 \right. \right\} \, .
\end{align*}
The rationale behind this choice of $\regfct$ is that convolution kernels in applications such as motion deblurring are usually non-negative and preserve the mean of the underlying signal. We refer to \cite[Section 6.2 \& Section 7.2]{2017arXiv171204045B} for more information on the discrete formulation of the problem and its numerical realization. 

\begin{figure}[!t]
\begin{center}
\subfloat[]{\includegraphics[width=0.24\textwidth]{Images/Pixelsmall.png}\label{subfig:deconvcomparison1}}\vspace{0.05cm}
\subfloat[]{\includegraphics[width=0.24\textwidth]{Images/PixelBregmanIteration96.png}\label{subfig:deconvcomparison2}}\vspace{0.05cm}
\subfloat[]{\includegraphics[width=0.24\textwidth]{Images/PixelLinBregmanIteration128.png}\label{subfig:deconvcomparison3}}\vspace{0.05cm}
\subfloat[]{\includegraphics[width=0.24\textwidth]{Images/PixelNCLBregmanIteration3474.png}\label{subfig:deconvcomparison4}}
\end{center}
\caption{Deconvolution results for the image of Pixel, the Gambian pouched rat. Figure \ref{subfig:deconvcomparison1}: the original image. Figure \ref{subfig:deconvcomparison2}: the reconstruction discussed in Example \ref{exm:deconvolution}. Figure \ref{subfig:deconvcomparison3}: the reconstruction with Algorithm \ref{alg:linbregiter}. Figure \ref{subfig:deconvcomparison4}: the blind deconvolution result computed with the nonlinear Landweber regularization.}\label{fig:deconvcomparison}
\end{figure}

We use $\minsol$ and $\noisy$ from Example \ref{exm:deconvolution}, and therefore stop the nonlinear Landweber regularization via discrepancy principle for $\delta = 5.95$. The parameter $\regparam$, however, is chosen to be $\regparam = 10$ and is therefore much larger than in Example \ref{exm:deconvolution} and in Section \ref{subsec:linbregiter}. Hence, we require many more iterations in order to reach the same discrepancy. The necessity for this large choice of $\regparam$ stems from the fact that the iterates otherwise converge to unstable solutions with Dirac-delta-like convolution kernels. Several iterates of the nonlinear Landweber regularization are visualized in Figure \ref{fig:pixelnclbregiter}.

To conclude we visually compare the first iterates that violate the discrepancy principle of the Bregman iteration, the linearized Bregman iteration and the nonlinear Landweber regularization in Figure \ref{fig:deconvcomparison}. Between the reconstructions from the Bregman iteration and the linearized Bregman iteration there are at best small differences in contrast. The reconstruction from the the nonlinear Landweber regularization does have slight artifacts that originate from small imperfections in the reconstructed convolution kernel. Nevertheless, the result is still remarkable given that both image and convolution kernel were unknown and had to both be estimated.

\begin{figure}[!t]
\begin{center}
\subfloat[Fully-sampled]{\includegraphics[trim={3cm 1cm 2.2cm 1cm},clip,width=0.24\textwidth]{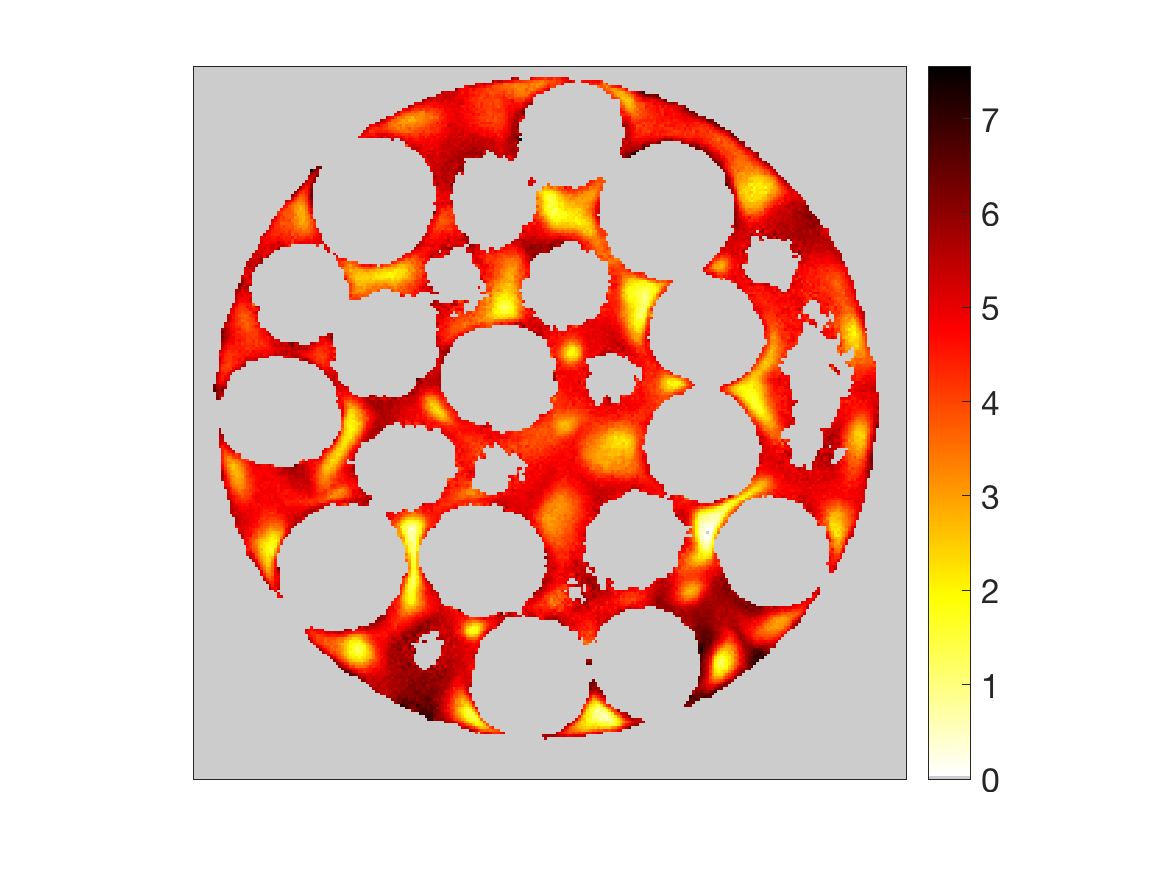}\label{subfig:nonlinvelmri1}}\vspace{0.05cm}
\subfloat[Zero-filled]{\includegraphics[trim={3cm 1cm 2.2cm 1cm},clip,width=0.24\textwidth]{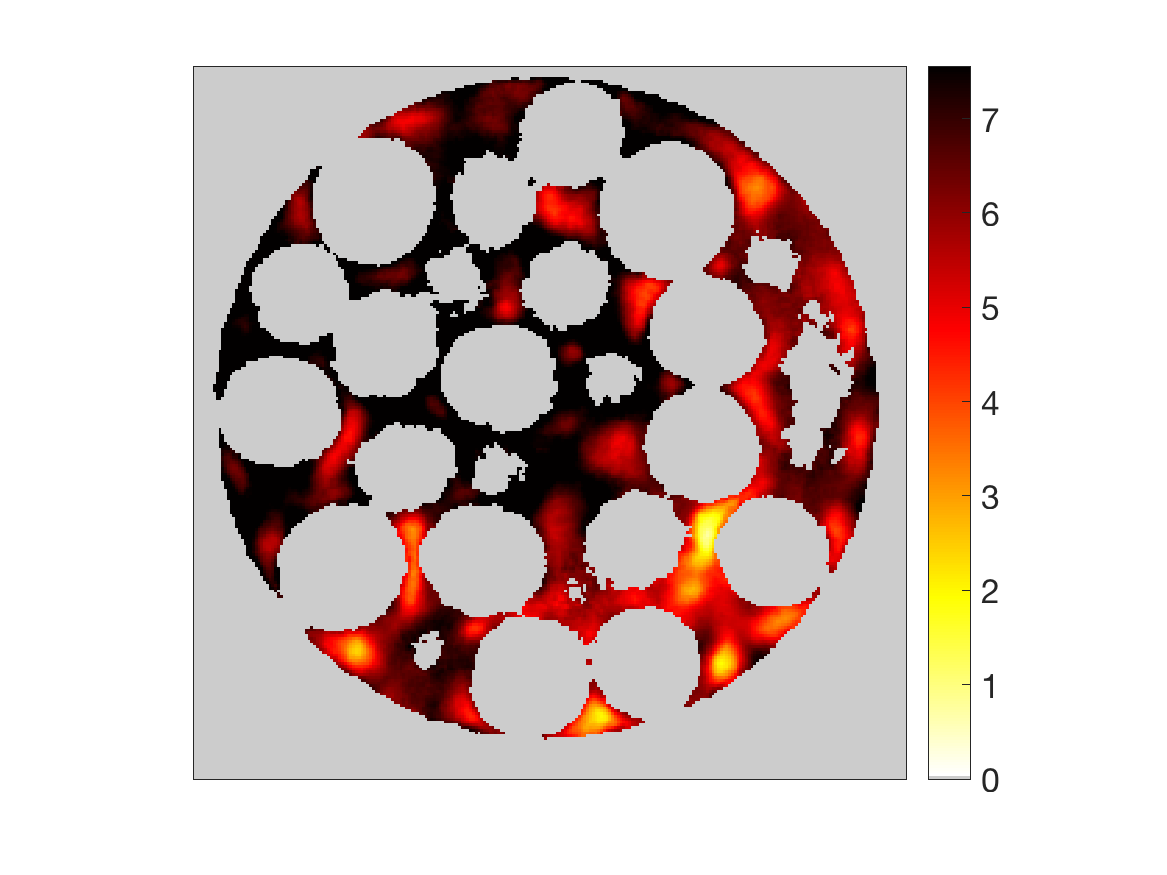}\label{subfig:nonlinvelmri2}}\vspace{0.05cm}
\subfloat[TGV recon.]{\includegraphics[trim={3cm 1cm 2.2cm 1cm},clip,width=0.24\textwidth]{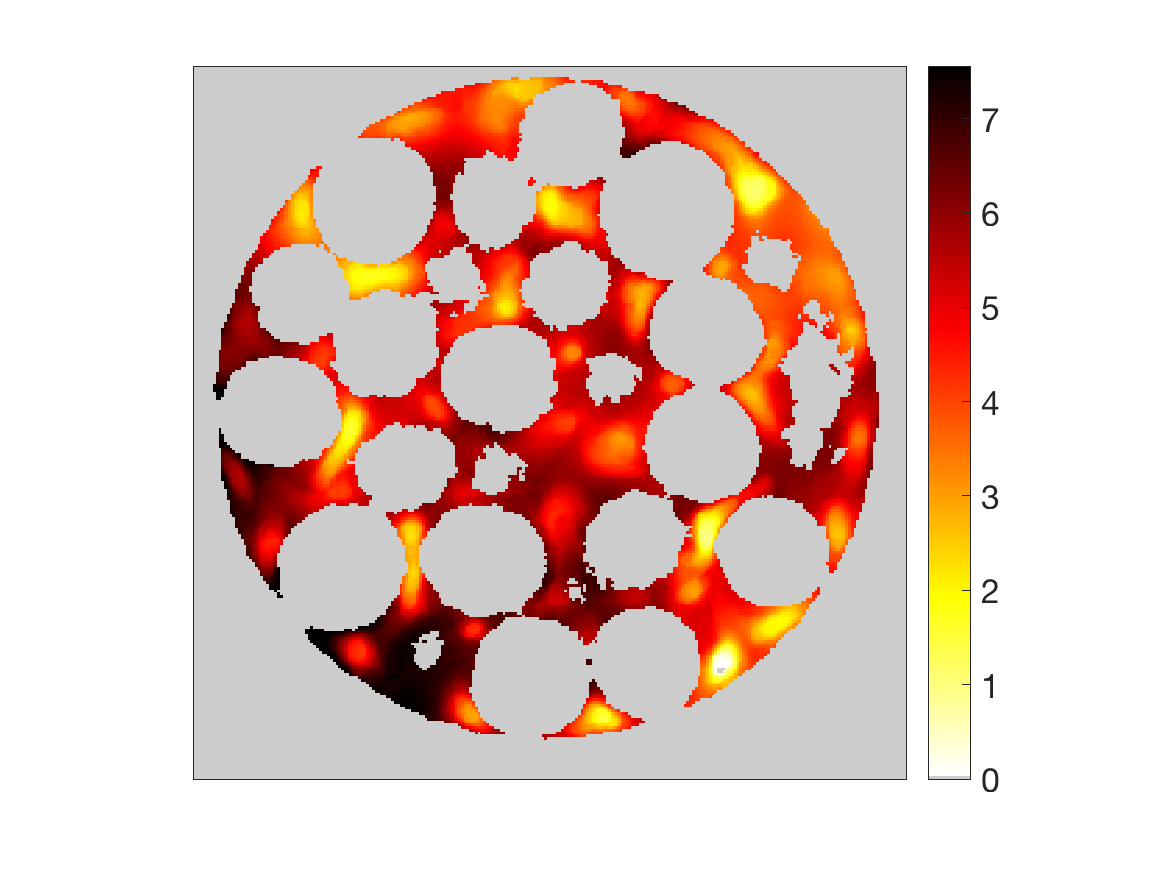}\label{subfig:nonlinvelmri3}}\vspace{0.05cm}
\subfloat[Nonlin. recon.]{\includegraphics[trim={3cm 1cm 2.2cm 1cm},clip,width=0.24\textwidth]{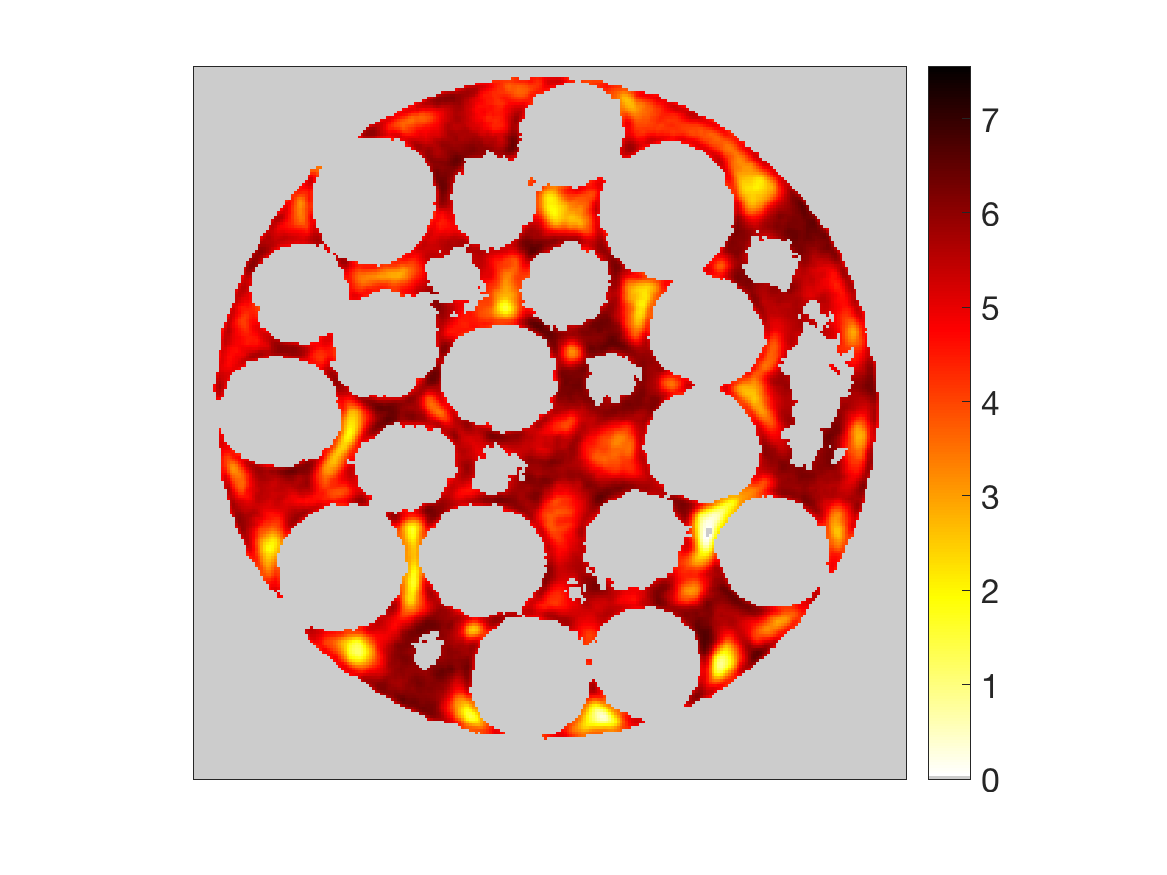}\label{subfig:nonlinvelmri4}}
\end{center}
\caption{Comparison of the different $z$-velocity reconstructions. Figure \ref{subfig:nonlinvelmri1}: the unwrapped velocity reconstruction from fully-sampled data. Figure \ref{subfig:nonlinvelmri2}: the unwrapped velocity reconstruction obtained from filling the missing samples of the sub-sampled data with zero. Figure \ref{subfig:nonlinvelmri3}: the unwrapped TGV-based reconstruction of the velocity from sub-sampled data. Figure \ref{subfig:nonlinvelmri4}: a nonlinear reconstruction of the velocity, computed via the nonlinear Landweber regularization.}\label{fig:nonlinvelmri}
\end{figure}

\subsubsection*{Velocity-encoded MRI}
We briefly revisit the velocity-encoded MRI problem of Section \ref{sec:velmri}. As the original forward problem \eqref{eq:velmriforward} is nonlinear, it is perfectly sensible to recover $v_z$ directly (instead of taking a detour via $w = u \exp(-i \sigma v_z)$). This idea is not new and has for instance already been addressed in \cite{zhao2012separate}. We again use the nonlinear Landweber regularization with the functionals
\begin{align*}
\fidelity{K(v_z)}{\noisy} = \frac{1}{2} \sum_{t = t_0}^{t_m} \left( \mathcal{F}(u, v_z) - \noisy_t \right)^2 \, ,
\end{align*}
where $u$ is a precomputed spin-proton density, and a scaled $H^1$-norm
\begin{align*}
\regfct(v_z, \regparam) = \frac{1}{2} \| v_z \|_{L^2(\R^2)}^2 + \frac{\regparam}{2} \| \nabla v_z \|_{L^2(\R^2)}^2
\end{align*}
as the regularization functional of choice. 

Figure \ref{fig:nonlinvelmri} shows the comparison of the velocity reconstruction from the fully-sampled data (Fig. \ref{subfig:nonlinvelmri1}), the zero-filled reconstruction from the sub-sampled data (Fig. \ref{subfig:nonlinvelmri2}), the TGV-based reconstruction from the sub-sampled data (Fig. \ref{subfig:nonlinvelmri3}) and a reconstruction from the sub-sampled data via the nonlinear Landweber regularization (Fig. \ref{subfig:nonlinvelmri4}) all clipped to the same intensity range. The latter has been initialized with $u^0(x) = \pi$ (on some compact domain), $p^0 = u^0$ and $\regparam = 200$. The result shown in Figure \ref{subfig:nonlinvelmri4} is the first iterate that violates the. discrepancy principle for $\eta = 1$ and $\delta = 80$. The inner subproblem has again been computed with the PDHGM.

\subsection{Learning} 
A very important question that always pops up when dealing with regularization of inverse problems is the question of how to choose the (regularization) parameters, respectively how to develop a useful parameter choice strategy. For the iterative regularization strategies discussed in Section \ref{sec:iterreg} we have used Morozov's discrepancy principle as an a-posteriori parameter choice rule to determine when to stop the iteration (which is the regularization parameter in case of iterative regularizations), based on the noisy data $\noisy$ and the noise level $\delta$. 
In addition to the standard alternatives, which are a-priori and heuristic parameter choice rules, supervised learning strategies have become popular in recent years. The idea is to choose optimal parameters based on pairs $\{ (\minsol_j, \noisy_j) \}_{j = 1}^m$ of training data by minimizing an empirical risk functional, which is just the empirical expectation of the loss between $\minsol_j$ and a $\regsol_j$ that can be obtained with data $\noisy_j$. 
Based on the previous notation of regularization operators, a relatively generic approach is to estimate optimal parameters $\hat\regparambold \in \regdomain$ via 
\begin{align}
\hat\regparambold \in \argmin_{\regparambold \in \regdomain} \left\{ \frac{1}m \sum_{j = 1}^m \ell_j(\minsol_j, \regsol_j) + \regfct(\regparambold, \betabold) \, \, \,  \text{subject to} \, \, \, \regsol_j \in \regoparg{\noisy_j} \, , \, \forall j \in \{1, \ldots, m\} \right\} \, .\label{eq:paramlearning}
\end{align}
Here $\{ \ell_j \}_{j = 1}^m$, with $\ell_j:\domain \times \domain \rightarrow \R$ for all $j \in \{1, \ldots, m\}$, denotes a family of loss functionals that measures the deviation between the reconstructions $\regsol_j$ and the ground truth signals $\minsol_j$, and $\regfct:\regdomain \times B \rightarrow \R$ is a regularization functional that, together with some parameters $\betabold$ in some parameter domain $B$, incorporates prior knowledge to steer the reconstruction of $\hat\regparambold$ into a certain direction. The operator $\regop:\range \times A \rightrightarrows \domain$ is a regularization operator that takes $\noisy_j$ and $\regparambold$ as an input and produces at least one reconstruction $\regsol_j$ as its output. If $\regsol_j \in \regoparg{\noisy_j}$ stems from an optimization problem, then \eqref{eq:paramlearning} is also known as a bilevel optimization problem \cite{doi:10.1137/120882706,reyes2013image}. It is also quite evident that \eqref{eq:paramlearning} is a regularization problem in itself. An even more generic way to formulate parameter learning would therefore be
\begin{align*}
\hat\regparambold \in P(\{ \minsol_j \}_{j = 1}^m, \{ \noisy_j \}_{j = 1}^m, \betabold) \, ,
\end{align*}
where $P: \domain^m \times \range^m \times B \rightrightarrows A$ is a regularization operator that also depends on some other regularization operator $\regop:\range \times \regdomain \rightrightarrows \domain$. A likely application of this scenario is supervised machine learning with early-stopping of, for instance, stochastic gradient descent methods (see \cite{Johnson2013,Defazio2014,Bertsekas2011a}). However, \eqref{eq:paramlearning} is sufficient to explain the majority of current state-of-the-art parameter learning approaches in the context of inverse problems. These cover the finite-dimensional Markov random field models proposed in \cite{roth2005fields,tappen2007utilizing,domke2012generic,chen2014insights,schmidt2014shrinkage}, the optimal model design approaches in \cite{haber2003learning,haber2009numerical,bui2008model,biegler2011large}, the optimal regularization parameter estimation in variational regularization \cite{calatroni2013dynamic,chung2014optimal,de2016structure,de2017bilevel,calatroni2017infimal,chung2016learning}, to training optimal operators in regularization functionals \cite{chen2013revisiting,chen2014learning}, reaction diffusion process \cite{chen2015learning,chen2017trainable}, so-called variational networks \cite{hammernik2017learning,kobler2017variational,klatzer2017trainable} and other works related to image processing \cite{ochs2015bilevel,hintermuller2015bilevel}.

In the following, we want to focus in particular on the connection between modern deep neural network approaches and iterative regularization methods as discussed in Section \ref{sec:iterreg}.
 
\subsubsection{Iterative Regularization and Deep Neural Networks}
In this section we discuss how certain (deep) neural network architectures are closely related (or even equivalent) to the linearized Bregman iteration described in Section \ref{subsec:linbregiter}, for a data fidelity term with variable metric. This connection will give insight into how more stable neural network architectures can be learned. For an overview on deep learning and neural network architectures we refer to \cite{lecun2015deep}.

We make the assumption that the data fidelity is given in terms of $\fidelity{Ku}{\noisy} = \frac{1}{2} \| Ku - \noisy \|_{Q_k}^2$, for $\| \cdot \|_{Q_k} := \sqrt{\langle Q_k \cdot, \cdot \rangle}$ and some positive definite matrix $Q_k$. We now aim to minimize this data fidelity with the help of Algorithm \ref{alg:linbregiter}, but deviate from the standard procedure by allowing the underlying positive definite matrix $Q_k$ to vary throughout the iterations.

If we reformulate Algorithm \ref{alg:linbregiter} for this particular choice of variable metric data fidelity and linearize around the previous iterate we obtain the following modification of Algorithm \ref{alg:linbregiter}:
\begin{align}
\begin{split}
\regopitarg[\regparambold^{k - 1}]{\noisy, v^{k - 1}} &= \argmin_{u \in \domain} \left\{ \langle K^\ast Q_{k - 1}(Ku^k - \noisy), u \rangle + \bregdis[{\regfctarg[\regparam]{\cdot}}]{p^{k - 1}}{u}{u^{k - 1}} \right\} \\
u^k &\in \regopitarg[\regparambold^{k - 1}]{\noisy, v^{k - 1}}\\
p^k &= p^{k - 1} - K^{\ast} Q_{k - 1}(Ku^{k - 1} - \noisy)
\end{split} \, .\label{eq:neuralnetworklinbregman}
\end{align}
Here we define $\regparambold^{k - 1} = (\regparam, Q_0, Q_1, \ldots, Q_{k - 1})$ and $v^{k - 1} = (u^{k - 1}, p^{k - 1})$. If we now choose $\regfct$ to be of the form $\regfctarg[\regparam]{u} = \frac{1}{2} \| u \|_{L^2(\Omega)}^2 + H(u, \regparam)$, the algorithm simplifies to
\begin{align*}
\begin{split}
u^k &= \prox[H(\cdot, \regparam)]\left((I - K^\ast Q_{k - 1} K)u^{k - 1} + K^\ast Q_{k - 1} \noisy + q^{k - 1} \right)\\
q^k &= u^{k - 1} - u^k + q^{k - 1} - K^{\ast} Q_{k - 1}(Ku^k - \noisy)
\end{split} \, ,
\end{align*}
for $q^k \in \partial H(u^k, \regparam )$, for all $k \in \N$. Here $\prox[H(\cdot, \regparam)]$ denotes the proximal mapping of $H$. If we define $A_k := I - K^\ast Q_k K$ and $b^k := K^\ast Q_k \noisy + q^k$ for all $k \in \N$, and choose $H$ to be the point-wise characteristic functional over the convex set of non-negative real numbers, i.e.
\begin{equation*}
(H(u, \regparam))(x) = (\chi_{\geq 0}(u))(x) = \begin{cases}
0 & u(x) \geq 0 \\ \infty & \text{else} 
\end{cases} \, ,
\end{equation*}
we obtain the standard ReLU neural network architecture
\begin{align*}
u^k = \max\left(0, A_{k - 1} u^{k - 1} + b^{k - 1} \right)
\end{align*}
for the primal update. However, rather than stopping at this analogy, we want to discuss how the insights of Section \ref{subsec:linbregiter} can help to impose rather natural conditions on the learning of the parameters $A_k$ and $b_k$.

Naturally, $A_k$ and $b_k$ have to be of the specific form as described above, but we want to look into more detail of what kind of conditions have to be imposed on the free parameters $Q_k$. We start by defining a surrogate functional that depends on the variable metric data fidelity in the same fashion as we have defined the surrogate functional in Section \ref{subsec:linbregiter}, i.e. we define
\begin{align*}
\regfct_k(u, \regparam) := \regfctarg[\regparam]{u} - \frac{1}{2} \| Ku - \noisy \|_{Q_k}^2 \, .
\end{align*}
If we guarantee convexity of $\regfct_k$, we can guarantee the following monotonic decrease result.
\begin{corollary}[Monotonic decrease]
Suppose $u^0$ satisfies $\| Ku^0 - \noisy \|_{Q_0}^2 < \infty$. Then the iterates of \eqref{eq:neuralnetworklinbregman} satisfy 
\begin{align}
\frac{1}{2} \| Ku^{k + 1} - \noisy \|_{Q_k}^2 + \bregdis[\regfct_k(\cdot, \regparam)]{q^k}{u^{k + 1}}{u^k} \leq \| Ku^k - \noisy \|_{Q_k}^2
\end{align}
for $u^k \in \regopitarg[\regparambold^{k - 1}]{\noisy, v^{k - 1}}$ and $q^k \in \partial \regfct_k(u^k, \regparam)$.
\begin{proof}
The proof is identical to the proof of Corollary \ref{cor:mondeclinbreg}.
\end{proof}
\end{corollary}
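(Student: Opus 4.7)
The plan is to mimic the proof of Corollary \ref{cor:mondeclinbreg} essentially verbatim, with the only subtlety being that the effective data fidelity is now the $k$-dependent weighted squared norm $F_k(v) := \tfrac{1}{2}\|v-\noisy\|_{Q_k}^2$, whose Fr\'echet derivative is $F_k'(v) = Q_k(v-\noisy)$ and whose Bregman distance is again a quadratic: $D_{F_k}(v,w) = \tfrac{1}{2}\|v-w\|_{Q_k}^2$. After a reindexing $k \leadsto k+1$ this brings the claim into exactly the form handled by the earlier argument.

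First I would rewrite the linear (forward) term occurring in the primal subproblem of \eqref{eq:neuralnetworklinbregman} using the identity
\begin{equation*}
\langle K^\ast Q_k(Ku^k-\noisy),\, u-u^k\rangle \;=\; \tfrac{1}{2}\|Ku-\noisy\|_{Q_k}^2 - \tfrac{1}{2}\|Ku^k-\noisy\|_{Q_k}^2 - \tfrac{1}{2}\|K(u-u^k)\|_{Q_k}^2,
\end{equation*}
which is just the definition of the Bregman distance of the quadratic $u\mapsto F_k(Ku)$. Substituting this into the $(k+1)$-th primal update and collecting the $\frac{1}{2}\|K\cdot\|_{Q_k}^2$ terms together with $\regfctarg[\regparam]{\cdot}$, one obtains that
\begin{equation*}
u^{k+1} \in \argmin_{u\in\domain}\Bigl\{ \tfrac{1}{2}\|Ku-\noisy\|_{Q_k}^2 - \tfrac{1}{2}\|Ku^k-\noisy\|_{Q_k}^2 + \bregdis[\regfct_k(\cdot,\regparam)]{q^k}{u}{u^k}\Bigr\},
\end{equation*}
where $q^k := p^k - K^\ast Q_k(Ku^k-\noisy)$. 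By the dual update in \eqref{eq:neuralnetworklinbregman} (read at the next index) this $q^k$ is precisely an element of $\partial\regfct_k(u^k,\regparam)$, since $p^k\in\partial\regfctarg[\regparam]{u^k}$ and $\regfct_k = \regfct(\cdot,\regparam) - F_k(K\cdot)$, so that the Bregman distance $\bregdis[\regfct_k(\cdot,\regparam)]{q^k}{u}{u^k}$ is meaningful and nonnegative provided $\regfct_k$ is convex (the standing assumption stated just before the corollary, which I take for granted).

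Next, testing the minimization property at the feasible competitor $u = u^k$ makes the Bregman-distance term vanish and turns the two quadratic $\|K\cdot-\noisy\|_{Q_k}^2$ pieces into the single value $0$ on the right-hand side. Consequently
\begin{equation*}
\tfrac{1}{2}\|Ku^{k+1}-\noisy\|_{Q_k}^2 - \tfrac{1}{2}\|Ku^k-\noisy\|_{Q_k}^2 + \bregdis[\regfct_k(\cdot,\regparam)]{q^k}{u^{k+1}}{u^k} \;\leq\; 0,
\end{equation*}
which after rearrangement is exactly the claimed inequality. The only real step that requires any thought is the verification that $q^k \in \partial\regfct_k(u^k,\regparam)$, and that the identity above between linearization, fidelity difference and Bregman distance really does hold for the variable-metric quadratic — both of which are entirely mechanical. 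Because every manipulation is algebraic and the convexity of $\regfct_k$ is assumed, there is no substantive obstacle; the statement really is a direct transcription of Corollary \ref{cor:mondeclinbreg} to the $k$-dependent metric $Q_k$.
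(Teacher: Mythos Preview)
Your proof is correct and follows exactly the approach the paper intends: rewrite the linear term via the Bregman identity for the quadratic $u\mapsto \tfrac{1}{2}\|Ku-\noisy\|_{Q_k}^2$, absorb the resulting quadratic piece into the surrogate $\regfct_k$, verify $q^k = p^k - K^\ast Q_k(Ku^k-\noisy)\in\partial\regfct_k(u^k,\regparam)$, and test against $u=u^k$. One small remark: your rearranged inequality actually has $\tfrac{1}{2}\|Ku^k-\noisy\|_{Q_k}^2$ on the right, which is sharper than the stated bound (the missing $\tfrac{1}{2}$ on the right-hand side of the displayed inequality is almost certainly a typo in the paper, parallel to the analogous typo in Corollary~\ref{cor:mondeclinbreg}); your version is the correct one and of course implies the weaker printed statement.
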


If we go back to the assumption $\regfctarg[\regparam]{u} = \frac{1}{2} \| u \|_{L^2(\Omega)}^2 + H(u, \regparam)$, we need to ensure that $Q_k$ is chosen such that not just $Q_k$, but also $I - K^\ast Q_k K$, is positive (semi-)definite for all $k$ in order to guarantee convexity of $\regfct_k$. With the next lemma we even observe that this is already enough to ensure Fej\'{e}r monotonicity of the iterates.
\begin{lemma}
Let $\clean \in \oprange_\fidelfct(K)$, $\minsol \in \select(\clean, \regparam)$ and let $\noisy \in \range$. Then the iterates satisfy the Fej\'{e}r monotonicity 
\begin{align}
\bregdis[\regfct_k(\cdot, \regparam)]{q^k}{\minsol}{u^k} \leq \bregdis[\regfct_{k - 1}(\cdot, \regparam)]{q^{k - 1}}{\minsol}{u^{k - 1}}
\end{align}
as long as $\| K\minsol - \noisy \|_{Q_{k - 1}} \leq \| Ku^k - \noisy \|_{Q_{k - 1}}$ is satisfied, for all $u^k \in \regopitarg[\regparambold^{k - 1}]{\noisy, v^{k - 1}}$ with $q^k \in \partial \regfct_k(u^k, \regparam)$ and $k \in \N$.
\begin{proof}
As in the previous Fej\'{e}r-monotonicity proofs we start with computing
\begin{align*}
\begin{split}
\bregdis[\regfct_k(\cdot, \regparam)]{q^k}{\minsol}{u^k} - \bregdis[\regfct_{k - 1}(\cdot, \regparam)]{q^{k - 1}}{\minsol}{u^{k - 1}} {} = {} &\bregdis[{\regfctarg[\regparam]{\cdot}}]{p^k}{\minsol}{u^k} - \bregdis[{\regfctarg[\regparam]{\cdot}}]{p^{k - 1}}{\minsol}{u^{k - 1}}\\
&+ \bregdis[\frac{1}{2} \| K\cdot - \noisy \|_{Q_{k - 1}}^2]{}{\minsol}{u^{k - 1}} - \bregdis[\frac{1}{2} \| K\cdot - \noisy \|_{Q_k}^2]{}{\minsol}{u^k}
\end{split} \, ,
\end{align*}
for all $k \in \N$. We further compute
\begin{align*}
\begin{split}
\bregdis[{\regfctarg[\regparam]{\cdot}}]{p^k}{\minsol}{u^k} - \bregdis[{\regfctarg[\regparam]{\cdot}}]{p^{k - 1}}{\minsol}{u^{k - 1}} &= - \bregdis[{\regfctarg[\regparam]{\cdot}}]{p^{k - 1}}{u^k}{u^{k - 1}} - \langle p^k - p^{k - 1}, \minsol - u^k \rangle \\
&= - \bregdis[{\regfctarg[\regparam]{\cdot}}]{p^{k - 1}}{u^k}{u^{k - 1}} + \langle K^\ast Q_{k - 1} (Ku^{k - 1} - \noisy), \minsol - u^k \rangle
\end{split} \, ,
\end{align*}
and estimate
\begin{align*}
\begin{split}
\bregdis[\frac{1}{2} \| K\cdot - \noisy\|_{Q_{k - 1}}^2]{}{\minsol}{u^{k - 1}} - \bregdis[\frac{1}{2} \| K\cdot - \noisy \|_{Q_k}^2]{}{\minsol}{u^k} {} \leq {} &\bregdis[\frac{1}{2} \| K\cdot - \noisy\|_{Q_{k - 1}}^2]{}{\minsol}{u^{k - 1}}\\
{} = {} &\frac{1}{2} \| K\minsol - \noisy \|_{Q_{k - 1}}^2 - \frac{1}{2} \| Ku^{k - 1} - \noisy \|_{Q_{k - 1}}^2\\
&- \langle K^\ast Q_{k - 1} (Ku^{k - 1} - \noisy), \minsol - u^{k - 1} \rangle\\
\end{split} \, .
\end{align*}
Thus, we observe 
\begin{align*}
\begin{split}
\bregdis[\regfct_k(\cdot, \regparam)]{q^k}{\minsol}{u^k} - \bregdis[\regfct_{k - 1}(\cdot, \regparam)]{q^{k - 1}}{\minsol}{u^{k - 1}} {} \leq {} &- \bregdis[{\regfctarg[\regparam]{\cdot}}]{p^{k - 1}}{u^k}{u^{k - 1}} \\
&+ \frac{1}{2} \| K\minsol - \noisy \|_{Q_{k - 1}}^2 - \frac{1}{2} \| Ku^{k - 1} - \noisy \|_{Q_{k - 1}}^2\\
&- \langle K^\ast Q_{k -1} (Ku^{k - 1} - \noisy), u^k - u^{k - 1} \rangle \\
{} = {} &- \bregdis[{\regfctarg[\regparam]{\cdot}}]{p^{k - 1}}{u^k}{u^{k - 1}}\\
&+ \frac{1}{2} \| K\minsol - \noisy \|_{Q_{k - 1}}^2 - \frac{1}{2} \| Ku^k - \noisy \|_{Q_{k - 1}}^2\\
&+ \bregdis[\frac{1}{2} \| K\cdot - \noisy \|_{Q_{k - 1}}^2]{}{u^k}{u^{k - 1}} \\
{} = {} &-\bregdis[\regfct_k(\cdot, \regparam)]{q^{k - 1}}{u^k}{u^{k - 1}} \\
&+ \frac{1}{2} \| K\minsol - \noisy \|_{Q_{k - 1}}^2 - \frac{1}{2} \| Ku^k - \noisy \|_{Q_{k - 1}}^2\\
{} \leq {} &\frac{1}{2} \| K\minsol - \noisy \|_{Q_{k - 1}}^2 - \frac{1}{2} \| Ku^k - \noisy \|_{Q_{k - 1}}^2
\end{split} \, .
\end{align*}
Hence, we guarantee Fej\'{e}r monotonicity as long as $\| K\minsol - \noisy \|_{Q_{k - 1}} \leq \| Ku^k - \noisy \|_{Q_{k - 1}}$ is satisfied.
\end{proof}
\end{lemma}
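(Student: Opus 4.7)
The plan is to decompose the surrogate Bregman distance into its two natural pieces coming from the definition $\regfct_k(u,\regparam) = \regfctarg[\regparam]{u} - \tfrac12 \|Ku-\noisy\|_{Q_k}^2$, and then track each piece across one step of the iteration. Concretely, I would first write, for any $q^k = p^k - K^\ast Q_k(Ku^k - \noisy)$ with $p^k \in \partial \regfctarg[\regparam]{u^k}$,
\[
\bregdis[\regfct_k(\cdot,\regparam)]{q^k}{\minsol}{u^k}
= \bregdis[{\regfctarg[\regparam]{\cdot}}]{p^k}{\minsol}{u^k}
- \bregdis[\tfrac12\|K\cdot-\noisy\|_{Q_k}^2]{}{\minsol}{u^k},
\]
and analogously at index $k-1$. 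Subtracting the two identities reduces the problem to estimating the differences for each of these two standard Bregman distances separately.

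Next, for the regularization part I would use the classical three-point identity
\[
\bregdis[{\regfctarg[\regparam]{\cdot}}]{p^k}{\minsol}{u^k} - \bregdis[{\regfctarg[\regparam]{\cdot}}]{p^{k-1}}{\minsol}{u^{k-1}}
= -\bregdis[{\regfctarg[\regparam]{\cdot}}]{p^{k-1}}{u^k}{u^{k-1}} - \langle p^k - p^{k-1}, \minsol - u^k\rangle,
\]
and then substitute $p^k - p^{k-1} = -K^\ast Q_{k-1}(Ku^{k-1}-\noisy)$ from the dual update. This turns the inner product into $\langle K^\ast Q_{k-1}(Ku^{k-1}-\noisy), \minsol - u^k\rangle$. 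For the fidelity part, I would keep the term at index $k-1$ and drop the nonnegative term at index $k$ (since $Q_k$ is positive semidefinite) to obtain an upper bound, and then expand $\bregdis[\tfrac12\|K\cdot-\noisy\|_{Q_{k-1}}^2]{}{\minsol}{u^{k-1}}$ explicitly as a quadratic in the $Q_{k-1}$-norm minus the linear residual at $u^{k-1}$.

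Then the plan is to combine: the linear term $\langle K^\ast Q_{k-1}(Ku^{k-1}-\noisy), \minsol - u^{k-1}\rangle$ appearing from the fidelity expansion cancels with the corresponding piece from the regularization estimate after rewriting $\minsol - u^k = (\minsol - u^{k-1}) + (u^{k-1}-u^k)$. What remains of the cross terms assembles, together with $\bregdis[\tfrac12\|K\cdot-\noisy\|_{Q_{k-1}}^2]{}{u^k}{u^{k-1}}$, into the full Bregman distance $\bregdis[\regfct_k(\cdot,\regparam)]{q^{k-1}}{u^k}{u^{k-1}}$, which is nonnegative and can be dropped. The net effect should reduce everything to
\[
\bregdis[\regfct_k(\cdot,\regparam)]{q^k}{\minsol}{u^k} - \bregdis[\regfct_{k-1}(\cdot,\regparam)]{q^{k-1}}{\minsol}{u^{k-1}}
\leq \tfrac12\|K\minsol - \noisy\|_{Q_{k-1}}^2 - \tfrac12\|Ku^k - \noisy\|_{Q_{k-1}}^2,
\]
and the stated hypothesis $\|K\minsol-\noisy\|_{Q_{k-1}} \leq \|Ku^k-\noisy\|_{Q_{k-1}}$ immediately yields nonpositivity.

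The main obstacle I anticipate is bookkeeping: the variable metric $Q_k$ differs from $Q_{k-1}$, so one has to be careful that the algebraic cancellation of the linear residual term uses the matrix $Q_{k-1}$ on both sides, which is exactly why the hypothesis is phrased in the $Q_{k-1}$-norm rather than $Q_k$. A subtler point is that the step really discards $\bregdis[\tfrac12\|K\cdot-\noisy\|_{Q_k}^2]{}{\minsol}{u^k}\geq 0$, which is why the estimate is only one-sided and cannot be strengthened to equality without further structure on the sequence $(Q_k)$.
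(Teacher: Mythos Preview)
Your proposal is correct and follows essentially the same route as the paper: decompose $D_{\regfct_k}$ into the $D_{J}$ piece and the quadratic $Q_k$-fidelity piece, apply the three-point identity with the dual update $p^k-p^{k-1}=-K^\ast Q_{k-1}(Ku^{k-1}-\noisy)$, drop the nonnegative $Q_k$-Bregman term, and recombine the leftover into the nonnegative surrogate Bregman distance between consecutive iterates to arrive at $\tfrac12\|K\minsol-\noisy\|_{Q_{k-1}}^2-\tfrac12\|Ku^k-\noisy\|_{Q_{k-1}}^2$. Your remark about why the hypothesis must be stated in the $Q_{k-1}$-norm is exactly the point, and the only cosmetic difference is that the paper does the cancellation directly rather than via the splitting $\minsol-u^k=(\minsol-u^{k-1})+(u^{k-1}-u^k)$.
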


The previous corollary and lemma suggest that a sensible model for learning the parameters $\regparambold^k$ based on a set of training data pairs $\{ (\minsol_j, \noisy_j) \}_{j \in \{1, \ldots, m \}}$ is the following:
\begin{align*}
\begin{split}
\hat{\regparambold}^{k^\ast} {} = {} \argmin_{\regparambold^{k^\ast}} \left\{ \sum_{k = 1}^{k^\ast} \left[ \sum_{j = 1}^m \bregdis[\regfct_k(\cdot, \regparam)]{q^k}{\minsol_j }{u^k_j} + \chi_{\succeq 0}(I - K^\ast Q_{k - 1} K) + \chi_{\succeq 0}(Q_{k - 1}) \right] \right.\\
\left. \quad \text{subject to} \quad u^k_j \in \regopitarg[\regparambold^{k - 1}]{\noisy_j, v^{k - 1}_j} \quad  \vphantom{\left[ \sum_{k = 1}^{k^\ast} \right]} \right\}
\end{split} \, .
\end{align*}
The minimization problem can either be solved simultaneously for all parameters, or subsequently, keeping all previously computed parameters fixed. 
The minimization problem can further be equipped with additional constraints, such as $\| K\minsol_j - \noisy_j \|_{Q_{k - 1}} \leq \| Ku_j^k - \noisy_j \|_{Q_{k - 1}}$ or $\| Ku^{k + 1}_j - \noisy_j \|_{Q_{k + 1}} \leq \| Ku^k_j - \noisy_j \|_{Q_k}$ for all $k \in \{0, \ldots, k^\ast - 1\}$ and $j \in \{1, \ldots, m\}$.


\section{Conclusions \& Outlook}

Modern regularization techniques, in particular those based on (nonsmooth) convex variational models are a versatile tool for improved reconstruction in inverse problems when appropriate prior information is available. Further improvements can be made by constructing iterative regularization methods using the same underlying variational model. Those can reduce systematic errors and bias, but also yield interesting novel insights into scale properties, spectral and multi-scale decompositions, and even link to deep neural network architectures.

Several aspects are expected to play a role in the future development and understanding of regularization methods. A key issue are stochastic models and uncertainty quantification, which we have only touched superficially in this survey. This topic appears to be at a similar stage as the deterministic regularization theory around the year 2000, the Gaussian case (corresponding to linear regularization methods in Hilbert space) seems to be well understood reasonably well now for linear and nonlinear inverse problems. Much less is known about non-Gaussian priors in Banach spaces, but there is a boost of papers tackling those recently. Relevant problems are e.g. the link between Bayesian models and variational approaches, the convergence of posterior distributions, and advanced statistical inference in infinite-dimensional Banach spaces. So far there are also basically no results on the analysis of iterative regularization methods in a stochastic setup.

A topic of strong recent interest are eigenvalue problems and spectral decompositions. While it remains unclear how far they can be pushed for practical purposes, they already yield a new understanding of the geometry of inverse problems and regularization methods, partly closing the gap to the standard tool of singular value decomposition for linear regularization methods.

A topic that has not yet been investigated from a theoretical point of view, but are often used in engineering practice, are methods that effectively compute Nash equilibria instead of minimizers. Such methods arise from problems where two (or more) unknowns are reconstructed in an iterative fashion. Then often one of the variables is frozen and a variational problem with respect to the other one is solved, e.g. in motion-corrected reconstruction when in alternating iteration images are reconstructed from indirect data with given motion and motion is estimated directly from images data. Convergence of such procedures is often observed in practice and yields good results, but so far there is no systematic theory.

From an application point of view high-dimensional and joint reconstruction problems are a key subject for current and future development, many aspects of modelling and analysis are still open in this context. Examples of current interest are joint reconstruction of images and motion in many biomedical applications or reconstructions in dynamic or spectral problems with strong undersampling. 

Finally, machine learning is expected to play an important role in regularization methods for inverse problems (as in other disciplines related to processing data). The learning theory will need to be adapted to the special needs of inverse problems due to the aspects of ill-posedness, which cannot be captured by current learning architectures, and the particular difficulties to obtain meaningful training data for inverse problems.

\section*{Acknowledgements}

The authors thank Eva-Maria Brinkmann and Julian Rasch (WWU M\"unster) for proof-reading, comments improving the paper, and providing computational results related to debiasing and dynamic MR reconstruction. 
MBe acknowledges support from the Leverhulme Trust Early Career Fellowship 'Learning from mistakes: a supervised feedback-loop for imaging applications', the Isaac Newton Trust and the Cantab Capital Institute for the Mathematics of Information. MBu 
acknowledges support by ERC via Grant EU FP 7 - ERC Consolidator Grant
615216 LifeInverse and by the German Ministry for Science and Education (BMBF)
through the project MED4D. The authors would like to thank the Isaac Newton Institute
for Mathematical Sciences, Cambridge, for support and hospitality during the
programme Variational Methods for Imaging and Vision, where work on this
paper was undertaken, supported by EPSRC grant no EP/K032208/1.

\bibliographystyle{siam}
\bibliography{references}

\end{document}